\documentclass{amsart}

\usepackage{graphicx}
\usepackage[all]{xy}
\usepackage{amsmath}
\usepackage{hyperref}
\usepackage{amsfonts,graphics,amsthm,amsfonts,amscd,latexsym}
\usepackage{epsfig}
\usepackage{flafter}
\usepackage{mathtools}
\usepackage{comment}
\usepackage{stmaryrd}
\usepackage{tabularx}
\usepackage{pst-node}

\usepackage{tikz-cd}
\hypersetup{
    colorlinks=true,    
    linkcolor=blue,          
    citecolor=blue,      
    filecolor=blue,      
    urlcolor=blue           
}
\usepackage{comment}

\usepackage{placeins}

\usepackage{pgfplots}
\pgfplotsset{compat=1.15}
\usepackage{mathrsfs}
\usepackage{tikz}
\usetikzlibrary{graphs,positioning,arrows,shapes.misc,decorations.pathmorphing}

\tikzset{
    >=stealth,
    every picture/.style={thick},
    graphs/every graph/.style={empty nodes},
}

\tikzstyle{vertex}=[
    draw,
    circle,
    fill=black,
    inner sep=1pt,
    minimum width=5pt,
]
\usepackage[position=top]{subfig}
\usepackage{amssymb}
\usepackage{color}

\calclayout

\usetikzlibrary{decorations.pathmorphing}
\tikzstyle{printersafe}=[decoration={snake,amplitude=0pt}]

\newcommand{\Aut}{\operatorname{Aut}}

\newcommand{\Pic}{\operatorname{Pic}}

\newcommand{\Cl}{\operatorname{Cl}}

\newcommand{\Bi}{\operatorname{Bi}}

\newcommand{\GL}{\operatorname{GL}}

\newcommand{\NS}{\operatorname{NS}}

\newcommand{\Vol}{\operatorname{Vol}}
\newcommand{\Herm}{\operatorname{Herm}}
\newcommand{\MW}{\operatorname{MW}}

\newcommand{\PGL}{\operatorname{PGL}}

\newcommand{\free}{\mathrm{free}}

\usepackage{tikz}
\usetikzlibrary{matrix,arrows,decorations.pathmorphing}
\usetikzlibrary{arrows}

\definecolor{uuuuuu}{rgb}{0.26666666666666666,0.26666666666666666,0.26666666666666666}

  \newtheorem{introthm}{Theorem}

  \newtheorem{theorem}{Theorem}[section]
  \newtheorem{lemma}[theorem]{Lemma}
  \newtheorem{proposition}[theorem]{Proposition}
  \newtheorem{corollary}[theorem]{Corollary}

\theoremstyle{definition}

  \newtheorem{definition}[theorem]{Definition}

  \newtheorem{construction}[theorem]{Construction}

\theoremstyle{remark}
\newtheorem{remark}[theorem]{Remark}

\newtheorem{example}[theorem]{Example}

\numberwithin{equation}{section}

\keywords{$K3$ surfaces, Bianchi group, Kummer surfaces, Mordell--Weil group}

\subjclass[2020]{Primary: 14J28, Secondary:11F06}

\begin{document}
\title[BIANCHI GROUPS AND AUTOMORPHISMS OF RANK-FOUR K3 SURFACES]{BIANCHI GROUPS AND AUTOMORPHISMS OF RANK-FOUR K3 SURFACES}
\author[K.~Hashimoto]{Kenji Hashimoto}
\address{
}
\email{kenji.hashimoto.math@gmail.com}

\author[T.~Oda]{Tomoki Oda}
\address{UCLA Mathematics Department, Box 951555, Los Angeles, CA 90095-1555, USA
}
\email{tomokioda0723@math.ucla.edu}
\thanks{The second author was partially supported by NSF research grant DMS-2443425.}

\begin{abstract}
We relate the arithmetic of Bianchi groups to automorphism groups of
Picard-rank-four $K3$ surfaces. Let $K$ be an imaginary quadratic field
with ring of integers $\mathcal O_K$, and let $S_K=\operatorname{Herm}_2(\mathcal O_K)$
be the rank-four lattice of $2\times2$ Hermitian matrices over
$\mathcal O_K$, equipped with the quadratic form $2\det$. For an odd
integer $N\geq1$, we consider a very general $S_K(2N)$-polarized $K3$
surface $X_{K,2N}$.

We prove that its automorphism group is commensurable with a level-$2N$
congruence subgroup of the Bianchi group. Furthermore, we also obtain exact
realizations of congruence subgroups as full automorphism groups. Namely,
if $K=\mathbb Q(i)$ or $K=\mathbb Q(\sqrt{-p})$, where $p$ is prime,
then
\[
    \operatorname{Aut}(X_{K,2})
    \cong
    P\Gamma_K(2).
\]
Thus, for every prime $p$, the projective principal congruence subgroup
of level $2$ over $\mathcal O_{\mathbb Q(\sqrt{-p})}$ occurs as the full
automorphism group of a Picard-rank-four $K3$ surface. At higher levels,
the full automorphism group may be either the projective principal
congruence subgroup or the strictly larger projective level subgroup
$\operatorname{Bi}_K(2N)$, depending on the arithmetic of the primes
dividing the level.

We further explain these arithmetic groups geometrically. The surfaces
$X_{K,2}$ arise as deformations of the Kummer surfaces
$\operatorname{Km}(E_K\times E_K)$, yielding explicit double-cover models
and genus-one fibrations. For $K=\mathbb Q(\sqrt{-2})$ and
$K=\mathbb Q(\sqrt{-7})$, the automorphism group is generated by
Mordell--Weil translations associated with genus-one fibrations coming
from cusps, together with the covering involution. For $K=\mathbb Q(i)$
and $K=\mathbb Q(\sqrt{-3})$, we construct complete-intersection models
in products of projective spaces and show that their automorphism groups
are generated by deck involutions.
\end{abstract}

\maketitle

\setcounter{tocdepth}{1}
\tableofcontents

\section{Introduction}

A fundamental question in algebraic geometry, broadly posed by Mazur~\cite[Section~7]{Mazur93}, asks to what extent the automorphism groups of projective varieties are governed by arithmetic groups. However, Totaro demonstrated that this expectation fails, even for \(K3\) surfaces of Picard rank four: there exist Picard-rank-four \(K3\) surfaces whose automorphism groups are not commensurable with any arithmetic group~\cite[Theorem~6.1 and Example~6.3]{Tot09}. Despite this general failure, there are also positive realization results. For example, the first author and Lee~\cite{HL25} have shown how to realize congruence subgroups of the modular group as the automorphism group of certain \(K3\) surfaces. These explicit constructions prove that while arithmeticity is not guaranteed, highly structured arithmetic groups can still be captured by the geometry of specific \(K3\) surfaces.

The purpose of this paper is to push this positive realization program into
the realm of three-dimensional hyperbolic geometry. The example constructed
by the first author and Lee has Picard rank three, so that the projectivized
positive cone of the N\'eron--Severi lattice is a model of the hyperbolic plane
\(\mathbb H^2\), and the automorphism group is realized through its action on
this space. In Picard rank four, the corresponding projectivized positive cone
is instead a model of hyperbolic three-space \(\mathbb H^3\), providing a
natural source of discrete groups of hyperbolic isometries. For any \(K3\) surface, the action on the N\'eron--Severi lattice gives a
representation
$\Aut(X)\longrightarrow O(\operatorname{NS}(X)).$ 
In the cases considered in this paper, this representation is injective by
Lemma~\ref{lem-twoelementary}. Hence we can identify $\Aut(X)$ with subgroup of $O(\NS(X))$. We construct explicit examples of
Picard-rank-four \(K3\) surfaces whose automorphism groups are not merely
abstract discrete groups of hyperbolic isometries, but are described precisely
as congruence subgroups of Bianchi groups; see
Theorem~\ref{thm:intro-main}.
\medskip

Let \(K\) be an imaginary quadratic field with ring of integers
\(\mathcal O_K\). We consider the lattice
\[
    S_K := \operatorname{Herm}_2(\mathcal O_K)=  \left\{ A=\begin{pmatrix} a & b\\ \bar b & c \end{pmatrix} : a,c\in\mathbb Z,\ b\in\mathcal O_K \right\}   ,
    \qquad
    q(A):=2\det(A).
\]
This lattice has signature \((1,3)\), and the Bianchi group $\operatorname{Bi}_K:=\operatorname{PGL}_2(\mathcal O_K)$
acts on \(S_K\) by Hermitian conjugation; see Subsection~\ref{subsecBianchi} for this identification. For every odd integer \(N\geq 1\),
we construct very general \(K3\) surfaces \(X_{K,2N}\) satisfying $\operatorname{NS}(X_{K,2N})\cong S_K(2N).$ 

The very generality assumption, together with
Lemma~\ref{lem:very-general-hodge-isometries} and the global Torelli theorem,
gives a concrete criterion for which Bianchi isometries are realized by
automorphisms of \(X_{K,2N}\). Namely, Proposition~\ref{prop:level-containment}
shows that an element of \(\operatorname{Bi}_K\) can be induced by an
automorphism only if its action on the discriminant group
\(A_{S_K(2N)}\) is multiplication by a single global sign \(\pm 1\). This
condition is arithmetic and can be expressed in terms of congruence subgroups.

For \(m\geq 1\), we define the projective level-\(m\) congruence subgroup
\[
    \Bi_K(m)
    :=
    \ker\!\left(
        \operatorname{PGL}_2(\mathcal O_K)
        \longrightarrow
        \operatorname{PGL}_2(\mathcal O_K/m\mathcal O_K)
    \right).
\]
 We also consider the
finer projective principal congruence subgroup
\[
    P\Gamma_K(m)
    :=
    \left\{
        [M]\in \operatorname{PGL}_2(\mathcal O_K)
        \,\middle|\,
        \begin{array}{c}
        \text{some representative }M\in\operatorname{GL}_2(\mathcal O_K)\\
        \text{satisfies }M\equiv I_2\pmod{m\mathcal O_K}
        \end{array}
    \right\}.
\]
There is always an inclusion
$P\Gamma_K(m)\subseteq \Bi_K(m).$
In particular, when \(m=2\) and
\(\mathcal O_K^\times=\{\pm1\}\), the two groups agree if \(2\) is split
or inert, whereas they may differ if \(2\) is ramified; see
Proposition~\ref{cor:Bi-vs-PGamma-2}.
\medskip 

Our first result compares the automorphism group with the arithmetic
congruence subgroups introduced above. It shows that the part of the
automorphism group lying in the Bianchi group is contained in
\(\Bi_K(2N)\) and, under an additional unramifiedness assumption at \(2\),
has explicitly bounded index. The index bound measures the discrepancy between the congruence subgroup
determined by the level structure and the subgroup satisfying the
global-sign condition on the discriminant group from
Proposition~\ref{prop:level-containment}. This is a uniform estimate that depends only on level $2N$. 

\begin{introthm}[Corollaries~\ref {Bianchicontainment}, \ref{cor:index-bound}]
\label{thm:intro-main}
Let \(K\) be an imaginary quadratic field with fundamental discriminant
\(-D\), and let \(N\geq 1\) be an odd integer such that every prime divisor
of \(N\) is unramified in \(K\). Let $X_{K,2N}$ be a very general \(S_K(2N)\)-polarized \(K3\) surface. Then the following
statements hold:
\begin{enumerate}
    \item $\Bi_K\cap\Aut(X_{K,2N})
        \subseteq
        \Bi_K(2N).$
        \item If \(2\) is also unramified in \(K\), then
    \[
        \left[
            \Bi_K(2N):
            \Bi_K\cap\Aut(X_{K,2N})
        \right]
        \leq
        2^{\omega(N)},
    \]
    where \(\omega\) denotes the number of distinct
    prime divisors of \(N\).
\end{enumerate}
\end{introthm}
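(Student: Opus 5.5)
Both parts will be derived from Proposition~\ref{prop:level-containment}. It says that an element of \(\Bi_K\), acting on \(S_K(2N)\) by Hermitian conjugation \(A\mapsto MAM^*\) (up to the scalar normalization fixed in Subsection~\ref{subsecBianchi}), is induced by an automorphism of the very general surface \(X_{K,2N}\) if and only if two things hold: it preserves the nef cone, and it acts on \(A_{S_K(2N)}\) by a single global sign \(\pm1\). Since \(\NS\) has no \((-2)\)-vectors in the very general case (or one handles the Weyl group separately), the nef cone condition should reduce to a condition on the positive cone, which Hermitian conjugation automatically preserves. So the work is to compute the discriminant action.

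The discriminant group of \(S_K(2N)\) is \(S_K^\vee/2N\,S_K\), which contains the subgroup \(S_K/2N S_K\) of large rank. Because \(N\) is odd we can decompose over primes: \(A_{S_K(2N)} = A_2\oplus\bigoplus_{p\mid N}(S_K/p^{e}S_K)\), where the \(2\)-part also absorbs \(A_{S_K}\).

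\textbf{Part (1).} Suppose \(M\) acts as \(\varepsilon=\pm1\) on \(A_{S_K(2N)}\). Then \(MAM^*\equiv \varepsilon A \pmod{2N}\) for all \(A\in\Herm_2(\mathcal O_K)\), possibly after reducing the dual-lattice contributions. Testing this on \(A=E_{11}\), \(E_{22}\), \(\begin{pmatrix}0&b\\ \bar b&0\end{pmatrix}\) for \(b=1,\omega\), where \(\omega\) generates \(\mathcal O_K\), gives the following:
\begin{itemize}
\item the off-diagonal entries of \(M\) vanish modulo the relevant primes;
\item \(m_{11}\bar m_{22}\equiv \varepsilon\);
\item \(|m_{11}|^2\equiv|m_{22}|^2\equiv\varepsilon\).
\end{itemize}
Together these say that \(M\) is scalar modulo \(2N\), so \([M]\in\Bi_K(2N)\). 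The key input is that \(1,\omega\) span \(\mathcal O_K/p\) at each \(p\mid 2N\). The unramified hypothesis at primes dividing \(N\) is what ensures that \(x\bar y\equiv0\) for all \(y\) forces \(x\equiv0\); at ramified primes, nilpotents would break this.

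\textbf{Part (2).} Conversely, take \([M]\in\Bi_K(2N)\), so \(M\equiv\lambda I\) with \(\lambda\in(\mathcal O_K/2N)^\times\). Then \(M\) acts on the \(p\)-part by multiplication by \(N_{K/\mathbb Q}(\lambda)=\lambda\bar\lambda \bmod p^{e}\). With \(2\) unramified, the action on the \(2\)-part should be trivial up to sign. The reason is that norms of units modulo a power of \(2\) at level \(2\) are \(\equiv1\), and the dual lattice \(S_K^\vee\) adds nothing beyond \(S_K\) since \(D\) is odd when \(2\) is unramified. The subgroup \(\Bi_K\cap\Aut\) is therefore the kernel of a homomorphism
\[
\Bi_K(2N)\to \prod_{p\mid N}(\mathbb Z/p^{e_p})^\times/\{\pm1\},\qquad [M]\mapsto (\lambda\bar\lambda)_p,
\]
modulo a global sign. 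Equivalently, we need \(\lambda\bar\lambda\equiv\pm1\) simultaneously with the same sign at all \(p\) and at \(2\).

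The main obstacle is bounding the image. Write \(\lambda=\mu\cdot(\text{unit})\), where \([M]\) determines \(\lambda\) only up to \(\mathcal O_K^\times\). Since \(M\in\GL_2(\mathcal O_K)\), we have \(\det M\equiv\lambda^2\) and \(\det M\in\mathcal O_K^\times\). Hence \(\lambda^2\equiv u\) for a global unit \(u\), and \((\lambda\bar\lambda)^2\equiv u\bar u=1\). So each local norm \(\lambda\bar\lambda\) is a square root of \(1\) modulo \(p^{e_p}\), which means it equals \(\pm1\) for odd \(p\). The image therefore lies in \(\{\pm1\}^{\omega(N)}\). The global-sign condition cuts this down, but at worst the index is at most \(2^{\omega(N)}\). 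This gives the index bound, and it also accounts for the unramified hypothesis at \(2\), which is used to control the \(2\)-part.
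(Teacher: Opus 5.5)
Your part (1) follows the paper's argument and is fine. Your part (2) has the right architecture: local norm characters $\lambda\bar\lambda$ on the level components, each forced to be $\pm1$ by $(\lambda\bar\lambda)^2\equiv\det M\,\overline{\det M}=1$. The gap is the decomposition $A_{S_K(2N)}=A_2\oplus\bigoplus_{p\mid N}S_K/p^eS_K$, with $A_{S_K}$ ``absorbed'' into the $2$-part. Under the hypothesis of (2), $D\equiv 3\pmod 4$ is odd and $>1$. So $A_{S_K}$, which has order $D$, has no $2$-part at all; it lives at the odd ramified primes $\ell\mid D$, and these divide neither $2$ nor $N$. Hence $A_{S_K(2N)}$ also contains the components $A_{S_K,\ell}\cong\mathbb Z/\ell$, generated by $\frac1\ell P(\sqrt{-D})$. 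Proposition~\ref{prop:level-containment} requires $g$ to act on these by the same global sign as on the level parts. Your condition ``all $\lambda\bar\lambda$ share one sign'' therefore describes a group that may be strictly larger than $\Bi_K\cap\Aut(X_{K,2N})$. An index bound for that larger group says nothing about the true one.

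The missing input is Lemma~\ref{lem:odd-ramified-character}: $[M]$ acts on $A_{S_K,\ell}$ by $\epsilon_\ell=\det M\bmod\mathfrak p_\ell\in\{\pm1\}$. These signs agree for all $\ell\mid D$, because $\mathcal O_K^\times=\{\pm1\}$, or else $K=\mathbb Q(\sqrt{-3})$ has a single ramified prime. With this, the full invariant lands in $\{\pm1\}\times\{\pm1\}^{\omega(N)}$. The group in question is then the preimage of the two-element diagonal $\Delta_N$, and its index is $[I_N\Delta_N:\Delta_N]\le 2^{\omega(N)+1}/2=2^{\omega(N)}$.

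Your final number matches only by cancellation. With the level characters alone, the preimage of the diagonal in $\{\pm1\}^{\omega(N)}$ has index at most $2^{\omega(N)-1}$ for $N>1$. The extra factor of $2$ is exactly the ramified sign you dropped.

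Separately, your explanation of where unramifiedness enters part (1) is wrong. The relation $a\bar a\equiv\varepsilon$ makes $a$ a unit mod $2N$, so $c\bar a\equiv 0$ gives $c\equiv 0$ in any residue ring, nilpotents or not. The hypothesis is really used in part (2). There it makes $S_K\otimes\mathbb Z_p$ unimodular, so the $p$-part of $A_{S_K(2N)}$ is exactly $S_K/p^eS_K$ and $g$ acts on it by the scalar $\lambda\bar\lambda$.
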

The intersections in parts~\((1)\) and~\((2)\) are necessary because,
with our definitions, \(\Aut(X_{K,2N})\) is not a priori realized as a
subgroup of the Bianchi group; see Proposition~\ref{Lem:extendedBianchi}.
Thus, the theorem first identifies the portion of the automorphism group
lying in \(\Bi_K\) and then compares it with the projective level subgroup
\(\Bi_K(2N)\).

\medskip

Our second result determines the full automorphism group at level \(2\)
for the fields \(K=\mathbb Q(i)\) and
\(K=\mathbb Q(\sqrt{-p})\), where \(p\) is prime.

\begin{introthm}
\label{realization}
Let $K=\mathbb Q(i)$ \text{or} $
    K=\mathbb Q(\sqrt{-p}),$
where \(p\) is a prime. Then $\Aut(X_{K,2})
    \cong
    P\Gamma_K(2).$
\end{introthm}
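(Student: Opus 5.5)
We plan to realize $\Aut(X_{K,2})$ inside $O(S_K(2))=O(S_K)$ and pin it down in two steps: an upper bound coming from the discriminant-group condition, and a matching lower bound coming from the effective chamber structure.

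\emph{Step 1: reduction to isometries of $S_K$.} By Lemma~\ref{lem-twoelementary} the action on $\NS(X_{K,2})\cong S_K(2)$ is faithful. By Lemma~\ref{lem:very-general-hodge-isometries} and very generality, the only Hodge isometries of $T_X$ are $\pm1$. Combining this with the global Torelli theorem, $\Aut(X_{K,2})$ is identified with
\[
\{g\in O(S_K(2)) : g \text{ preserves the nef cone and } \bar g=\pm1 \text{ on } A_{S_K(2)}\}.
\]

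\emph{Step 2: the discriminant condition.} The lattice $S_K(2)$ has discriminant group $S_K^\vee(2)/S_K(2)\supseteq S_K/2S_K$. The sign condition, which Proposition~\ref{prop:level-containment} formulates for Bianchi elements, says that $g$ acts as $\pm1$ on this group. Since $-1\equiv 1$ on the $2$-torsion part, this forces $g\equiv \mathrm{id}$ modulo $2$ on $S_K$, up to the odd part coming from $S_K^\vee/S_K$.

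Next we need to handle $O^+(S_K)$ versus $\Bi_K$. By Proposition~\ref{Lem:extendedBianchi}, $O^+(S_K)$ is generated by $\Bi_K$ together with Galois conjugation and, possibly, extra involutions. We will check that these extra elements act nontrivially on $S_K/2S_K$, or on the odd discriminant, so they are excluded. This confines the group to $\Bi_K\cap\{\text{acts trivially mod }2\}$.

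The remaining task is to translate ``$M A M^*\equiv A \pmod 2$ for all Hermitian $A$'' into $M\equiv \lambda I\pmod 2$ with $\lambda\in\mathcal O_K^\times$. Combined with Proposition~\ref{cor:Bi-vs-PGamma-2}, this identifies the bound as $P\Gamma_K(2)$. In the ramified case $K=\mathbb Q(\sqrt{-p})$ with $p=2$, and for $\mathbb Q(i)$, the gap between $\Bi_K(2)$ and $P\Gamma_K(2)$ must be removed using the finer condition on $A_{S_K(2)}$: elements with $M\equiv\lambda I$ only modulo the ramified prime should act nontrivially on the discriminant form. For $K=\mathbb Q(\sqrt{-p})$ with $p\equiv 1\pmod 4$, where $D=4p$, we also check the $2$-adic part carefully.

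\emph{Step 3: the lower bound.} We must show that every element of $P\Gamma_K(2)$ preserves the nef cone. Equivalently, we must show that $P\Gamma_K(2)$ preserves the set of $(-2)$-classes of $S_K(2)$. These are the classes $A$ with $4\det A=-2$, and no such classes exist since $4\det A\equiv 0\pmod 4$. So $S_K(2)$ has no roots. The nef cone is then the whole positive cone, and every isometry preserving the positive cone and satisfying the discriminant condition is realized.

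This gives $\Aut(X_{K,2})\cong P\Gamma_K(2)$, once we check that $P\Gamma_K(2)\subseteq \Bi_K$ acts orientation-preservingly on the positive cone. This holds because $PGL_2(\mathbb C)$ acts on $\mathbb H^3$, and the Hermitian action preserves positive definiteness.

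The main obstacle is Step 2: the precise computation of the image of $O(S_K)$ in $O(A_{S_K(2)})$ and the exclusion of non-Bianchi isometries. This has to be done case by case, separating $\mathbb Q(i)$, $\mathbb Q(\sqrt{-2})$, $p\equiv 3\pmod 4$, and $p\equiv 1\pmod 4$. The difficulty is concentrated at the ramified prime $2$ and, for $\mathbb Q(i)$, in the extra units. We would first write explicit bases of $S_K$ and of the discriminant form in each case, and test the generators of $\Bi_K(2)/P\Gamma_K(2)$ and of $O^+(S_K)/\Bi_K$ against them.
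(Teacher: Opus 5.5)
Your architecture matches the paper's: very generality and Torelli reduce everything to the global-sign condition of Proposition~\ref{prop:level-containment}; $S_K(2)$ has no $(-2)$-classes, so the nef cone is the positive cone; the non-Bianchi cosets of $O^+(S_K)$ are excluded; and $\Bi_K(2)$ is cut down to $P\Gamma_K(2)$ when $2$ ramifies. However, two steps as you describe them would not go through.

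\textbf{The lower bound.} It is not ``equivalently'' a statement about roots. You also need every $[M]\in P\Gamma_K(2)$ to act on all of $A_{S_K(2)}$ by a single sign, and you never check this. When $2$ ramifies ($K=\mathbb Q(i)$, $\mathbb Q(\sqrt{-2})$, or $\mathbb Q(\sqrt{-p})$ with $p\equiv1\pmod4$), $A_{S_K(2)}$ contains classes of order $4$ or more: $P(1)/4$, $P(i)/4$, $P(\tau)/8$, $P(\tau)/(4p)$. Acting trivially on $S_K/2S_K$ only tells you that $g$ fixes these classes modulo $2$-torsion.

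The missing computation is Lemma~\ref{lem:pgamma-sign}:
\begin{itemize}
\item from $M\equiv I_2\pmod{2\mathcal O_K}$ get $\delta=\det M\in\{\pm1\}$ (for $\mathbb Q(i)$ this uses $\pm i\not\equiv1\pmod 2$);
\item using $a\equiv d\equiv1$ and $b\equiv c\equiv0\pmod{2\mathcal O_K}$, check $MP(1)M^*\equiv\delta P(1)\pmod{4S_K}$ and $MP(\tau)M^*\equiv\delta P(\tau)\pmod{DS_K}$.
\end{itemize}
Without this you only get $\Aut(X_{K,2})\subseteq P\Gamma_K(2)$. The lemma is also needed for your upper bound. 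Testing one representative of $\Bi_K(2)\setminus P\Gamma_K(2)$, as in Lemma~\ref{lem:scalar-defect}, decides the whole coset only once you know $P\Gamma_K(2)$ lies in the automorphism group.

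\textbf{Excluding $O^+(S_K)\setminus\Bi_K$.} Your criterion (nontrivial action on $S_K/2S_K$ or on the odd discriminant) fails for the Galois coset in exactly the ramified cases.
\begin{itemize}
\item There $\overline\tau\equiv\tau\pmod{2\mathcal O_K}$, so every $M\sigma$ with $[M]\in\Bi_K(2)$ acts trivially on $S_K/2S_K$.
\item For $D=4$ and $D=8$ there is no odd part at all.
\item For $D=4p$ the odd part only sees the sign $-\det M$, which gives no contradiction by itself.
\end{itemize}
You need the $2$-primary classes of order $4$ or $8$. For an arbitrary $M\sigma$: the level part gives $M\equiv uI_2\pmod 2$, the class $P(1)/4$ gives $a\overline d\equiv\varepsilon\pmod 4$, and $P(\tau)/D$ then forces $2\tau\in4\tau\mathcal O_K$, a contradiction (Lemma~\ref{lem:galois-coset-exclusion}).

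Relatedly, testing ``the generators'' of $O^+(S_K)/\Bi_K$ is not enough. A coset can contain an admissible element even if the chosen representative fails. You must rule out $M\sigma$, $MW_2$ and $MW_2\sigma$ for arbitrary $M\in\Bi_K$, as the paper does uniformly in $M$. For the $W_2$-coset, a mod-$2$ computation gives $\tau\det M\equiv1\pmod 2$, which is impossible.

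A minor point: for $\mathbb Q(i)$ there is no gap between $\Bi_K(2)$ and $P\Gamma_K(2)$, since $i$ reduces to the nontrivial unit of $\mathcal O_K/2\mathcal O_K$. The index-two defects occur for $\mathbb Q(\sqrt{-2})$ and for $\mathbb Q(\sqrt{-p})$ with $p\equiv1\pmod4$.
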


The proof depends on the arithmetic behavior of the prime \(2\) in \(K\).
When \(2\) is unramified, the result follows from the local
discriminant-form computation and the index estimate in
Theorem~\ref{thm:intro-main}. When \(2\) is ramified, a separate level-two
analysis is required; this is carried out in
Lemmas~\ref{lem:pgamma-sign}--\ref{lem:extended-coset}. One must also
account for the fact that the natural lattice representation places the
automorphism group inside the extended Bianchi group a priori.
Proposition~\ref{Lem:extendedBianchi} and
Lemma~\ref{lem:extended-coset} show that the additional extended-Bianchi
cosets do not produce further automorphisms in the cases under
consideration.

As a consequence of Theorem~\ref{realization}, for every prime \(p\), the
group $P\Gamma_{\mathbb Q(\sqrt{-p})}(2)$
occurs as the full automorphism group of a Picard-rank-four \(K3\)
surface.

\medskip

We next give exact realizations at higher level. Here two distinct
phenomena occur. In the first family, the full automorphism group is the
projective principal congruence subgroup. In the second, it is the
strictly larger projective level subgroup \(\Bi_K(2N)\).

\begin{introthm}
[Theorems~\ref{prop:split-prime-power-principal-congruence}, ~\ref{thm:prime-condition-Aut-Bi}]
\label{thm:intro-higher-level}
The following statements hold under the normalized Bianchi action:
\begin{enumerate}
    \item Let \(p\) be a prime satisfying  $p\equiv 7\pmod 8,$
    and let \(K=\mathbb Q(\sqrt{-p})\). Suppose that $N=q^e,$ where \(q\equiv 3\pmod 4\) is an odd prime that splits in \(K\).
    Then
    \[
        \Aut(X_{K,2N})
        \cong
        P\Gamma_K(2N).
    \]

    \item Let \(K=\mathbb Q(\sqrt{-p})\), where \(p>3\) is a prime
    satisfying $p\equiv 3\pmod 4.$ Suppose that \(N>1\) is odd and that every prime divisor
    \(\ell\mid N\) is inert in \(K\) and satisfies $\ell\equiv 1\pmod 4.$
    Then
    \[
        P\Gamma_K(2N)
        \subsetneq
        \Bi_K(2N)
  \cong
        \Aut(X_{K,2N}).
    \]
\end{enumerate}
\end{introthm}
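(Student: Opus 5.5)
Both parts rest on the same reduction. Because $X_{K,2N}$ is very general, Lemma~\ref{lem:very-general-hodge-isometries} and the global Torelli theorem identify $\Aut(X_{K,2N})$ with the group of isometries $g\in O^+(S_K(2N))$ that preserve the nef cone and act on $A_{S_K(2N)}$ by a global sign $\pm1$; by Lemma~\ref{lem-twoelementary} this identification is faithful. The plan is to determine first the subgroup of $\Bi_K$ (or of the extended Bianchi group, via Proposition~\ref{Lem:extendedBianchi}) that satisfies the global-sign condition. Afterwards we show that this subgroup preserves the nef cone, or equivalently that no $(-2)$-reflections survive. Since $S_K(2N)$ is scaled by $2N$, it contains no $(-2)$-vectors: $q(x)\in 4N\mathbb Z$. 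Hence the nef cone is a full component of the positive cone, and the second step is automatic. We then work prime by prime. The discriminant group splits as $A_{S_K(2N)}=\bigoplus_{\ell\mid 2N} A_\ell$, and the sign condition requires one common sign across all the $\ell$-parts.

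For part (1), take $p\equiv 7\pmod 8$. Then $2$ splits in $K$, so Theorem~\ref{thm:intro-main}(1) gives $\Bi_K\cap\Aut\subseteq\Bi_K(2N)$. At $q$ with $q=\mathfrak q\bar{\mathfrak q}$ split, we have $\mathcal O_K/q^e\cong(\mathbb Z/q^e)^2$, and $S_K\otimes\mathbb Z_q\cong M_2(\mathbb Z_q)$ with the action $A\mapsto MAM^{*}$, which becomes $X\mapsto M_1XM_2^{T}$. Take $[M]\in\Bi_K(2N)$ with $M\equiv\lambda I$ and $\lambda\in(\mathcal O_K/2N)^\times$. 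Its action on $A_q\cong S_K/q^e$ is multiplication by $\lambda\bar\lambda=N(\lambda)\bmod q^e$. The sign condition therefore forces $N(\lambda)\equiv\pm1\pmod{q^e}$, and the $2$-part, which is trivial or $\pm1$ automatically, must carry the same sign. Since $q\equiv3\pmod4$, $-1$ is not a square mod $q$. Combining this with the freedom to rescale $M$ by units $\pm1$ only (here $p>3$, so $\mathcal O_K^\times=\{\pm1\}$), we show that the compatible-sign condition forces $\lambda\equiv\pm1$ componentwise modulo $\mathfrak q^e$ and $\bar{\mathfrak q}^e$. This is exactly $[M]\in P\Gamma_K(2N)$. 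We also need the $2$-adic contribution at $\mathfrak p_2,\bar{\mathfrak p}_2$ to be compatible with the chosen sign, which is where $p\equiv7\pmod 8$ enters. The remaining step is to rule out the non-Bianchi (extended) cosets by the analogue of Lemma~\ref{lem:extended-coset}. We expect this to work because an extended element acts on $A_q$ through the swap of $\mathfrak q,\bar{\mathfrak q}$ or through a non-norm scalar, which is not $\pm1$.

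For part (2), every $\ell\mid N$ is inert, so $\mathcal O_K/\ell^e$ is a local ring with residue field $\mathbb F_{\ell^2}$. A scalar $\lambda$ acts on $A_\ell$ by $N(\lambda)$, and the norm map onto $(\mathbb Z/\ell^e)^\times$ is surjective. Here we must show that every element of $\Bi_K(2N)$ satisfies the sign condition. The relevant classes are those $\lambda$ with $\lambda\bar\lambda\equiv1$. Norm-one elements that are not $\pm1$ exist and give $P\Gamma_K(2N)\subsetneq\Bi_K(2N)$: we exhibit an explicit $[\lambda I+2N(\ast)]\in\Bi_K(2N)$ that has no representative congruent to $I$, using $\mathcal O_K^\times=\{\pm1\}$. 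The main obstacle is the converse, namely that the quotient $\Bi_K(2N)/(\Bi_K\cap\Aut)$ vanishes. This requires that $\lambda$ be normalizable so that $N(\lambda)$ is $\pm1$ simultaneously at all $\ell$ and at $2$. We would use $\ell\equiv1\pmod4$ to write $-1=N(\mu)$, and then argue via the determinant: $\det M=\lambda^2\cdot(\text{unit})$ forces $N(\lambda)^2\equiv1$, and the $2$-adic part, controlled by $p\equiv3\pmod 4$, fixes a common sign. Finally, we treat the extended cosets as in Proposition~\ref{Lem:extendedBianchi}, which gives $\Aut\cong\Bi_K(2N)$ under the normalized action.
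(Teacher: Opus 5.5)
Your proposal has a genuine gap, and it is the same in both parts: you drop the $p$-primary component of the discriminant group. You write $A_{S_K(2N)}=\bigoplus_{\ell\mid 2N}A_\ell$, but $|A_{S_K(2N)}|=(2N)^4p$. The ramified prime contributes a summand $A_{S_K(2N),p}\cong A_{S_K,p}\cong\mathbb Z/p$, on which $g=[M]$ acts by $\epsilon_p(g)=\det M$ (Lemma~\ref{lem:odd-ramified-character}). Conversely, the $2$-part is $S_K/2S_K$, which is $2$-elementary because $D=p$ is odd. It imposes no condition and cannot carry or fix a sign, as you claim in both parts. The $p$-part is the only anchor that ties the local scalars to one global sign.

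In part~(1), write $M\equiv(a,b)I_2$ under $\mathcal O_K/q^e\cong(\mathbb Z/q^e)^2$. The determinant gives $a^2=b^2=\delta:=\det M$, so $ab=\pm1$ holds automatically and your condition $N(\lambda)\equiv\pm1\pmod{q^e}$ is vacuous. Your conclusion, $\lambda\equiv\pm1$ componentwise, still allows $(a,b)=(1,-1)$ with $\delta=1$. Such an $[M]$ exists in $\Bi_K(2N)$: lift $\lambda I_2\in\mathrm{SL}_2(\mathcal O_K/2N\mathcal O_K)$ to $\mathrm{SL}_2(\mathcal O_K)$. It is not in $P\Gamma_K(2N)$. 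It is excluded from $\Aut(X_{K,2N})$ only because $\alpha_{q^e}(g)=-1\neq 1=\epsilon_p(g)$. The paper's key step is exactly $ab=\epsilon_p(g)=\delta=a^2$, hence $a=b$. Then $q\equiv3\pmod4$ rules out $\delta=-1$, so $a=\pm1$. The hypothesis $p\equiv7\pmod8$ is used only to make $2$ split. Then $(\mathcal O_K/2\mathcal O_K)^\times=1$ forces $M\equiv I_2\pmod 2$, and the Chinese remainder theorem gives $\pm M\equiv I_2\pmod{2q^e}$.

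In part~(2), $N(\lambda)^2\equiv1$ only gives $\alpha_{\ell^e}(g)\in\{\pm1\}$ separately for each $\ell$. That is the index bound $2^{\omega(N)}$ of Corollary~\ref{cor:index-bound}, not equality. Writing $-1=N(\mu)$ does not help, since you may rescale $M$ only by the global units $\pm1$. The missing idea is that every $\alpha_{\ell^e}(g)$ equals $\det M$, which also equals $\epsilon_p(g)$. From $u_\ell^2=\delta$ one gets $\bar u_\ell/u_\ell=\pm1$. Conjugation acts as Frobenius on $\mathbb F_{\ell^2}$, and since $\ell\equiv1\pmod4$ the roots of $x^2-\delta$ lie in $\mathbb F_\ell$. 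Hensel's lemma then gives $\bar u_\ell=u_\ell$, hence $\alpha_{\ell^e}(g)=u_\ell^2=\delta$.

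The same computation shows your strictness witness is misdirected. A norm-one scalar $\lambda$ attached to an element of $\Bi_K(2N)$ must have $\lambda^2=1$, hence $\lambda\equiv\pm1$ at each $\ell$. So for $N=\ell^e$ every such element already lies in $P\Gamma_K(2N)$; mixing signs across primes works only when $\omega(N)\ge2$. The paper instead takes $u\in\mathbb Z$ with $u^2\equiv-1\pmod{2N}$, again using $\ell\equiv1\pmod4$. It then builds an explicit $M\in\mathrm{GL}_2(\mathbb Z)$ with $\det M=-1$ and $M\equiv uI_2\pmod{2N}$, whose norm scalar is $-1$.

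Finally, no analogue of Lemma~\ref{lem:extended-coset} is needed in either part. Since $D=p$ has a single prime factor, $\widehat{\Bi}_K=\Bi_K$ by Proposition~\ref{Lem:extendedBianchi}, and the Galois coset is excluded by Lemma~\ref{lem:galois-coset-exclusion}.
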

The two parts of Theorem~\ref{thm:intro-higher-level} exhibit a genuine
higher-level dichotomy. In part~\((1)\), where the prime dividing \(N\)
splits in \(K\), the full automorphism group is the projective principal
congruence subgroup. In part~\((2)\), where every prime dividing \(N\)
is inert in \(K\), the additional projective level elements survive the
global-sign condition of Proposition~\ref{prop:level-containment}, and
the full automorphism group is the strictly larger group
\(\Bi_K(2N)\). Thus, part~\((2)\) shows that
\(\Bi_K(2N)\), rather than \(P\Gamma_K(2N)\), is the natural ambient
congruence subgroup in the general higher-level theory.

\medskip 
The first part of the paper identifies the automorphism group arithmetically.
The second part explains how this arithmetic group is realized on the surface
itself. After projectivizing the
positive cone, automorphisms of a \(K3\) surface act by isometries of hyperbolic
space, and these isometries are elliptic, parabolic, or loxodromic
\cite[Exercise 4.7-18]{Rac19}. In the present paper, the parabolic elements and involutions play central roles. Proposition~\ref{prop:cusp_translations_XK2N} shows that the parabolic
elements are precisely those preserving the genus-one fibrations, and that
their unipotent actions are given by the corresponding Mordell--Weil translation
groups. The distinguished involution is the deck involution of the double-cover
model. It is also tied to the genus-one fibrations; see
Lemma~\ref{lem:covering-involution-Hermitian-action}, Subsection~\ref{subsec:MW}, and Subsection~\ref{subsec:rank-four-complete-intersections}, especially Lemmas~\ref{lem:degree-two-product-fibrations}--\ref{lem:deck-involution-edge-isometry}.
\medskip 

At level \(2\), the surfaces \(X_{K,2}\) arise from Kummer surfaces. Let \(E_K \cong \mathbb{C}/\mathcal{O}_K\) be the elliptic curve with complex multiplication by \(\mathcal{O}_K\), and set \(A_K := E_K \times E_K\). The Kummer surface \(\operatorname{Km}(A_K)\) contains a natural non-exceptional lattice isomorphic to \(S_K(2)\). Deforming the Kummer surface while preserving this lattice produces the \(S_K(2)\)-polarized surface \(X_{K,2}\). This construction is useful because it also produces explicit double-cover models and genus-one fibrations.

\begin{introthm}[Proposition~\ref{prop:main_existence} and Corollary~\ref{cor:final_generation}]\label{thm:intro_kummer_deformation}
Let \(K=\mathbb{Q}(\sqrt{-d})\), and let \(E_K \cong \mathbb{C}/\mathcal{O}_K\) be the elliptic curve with complex multiplication by \(\mathcal{O}_K\). Set \(A_K := E_K \times E_K\). Then the following hold.
\begin{enumerate}
    \item Choose a double-cover model $E_K = \left\{ s^2 = B(x,y) \right\} \subset \mathbb{P}(1,1,2),$
    where \(B(x,y)\) is a binary quartic form. Then \(X_{K,2}\) can be realized as a double cover of \(\mathbb{P}^1\times \mathbb{P}^1\) of the form
    \[
        X_{K,2} = \left\{ S^2 = B(x,y)B(z,w)+H_{4,4}(x,y;z,w) \right\} \longrightarrow \mathbb{P}^1 \times \mathbb{P}^1,
    \]
    where \(H_{4,4}\) is a bihomogeneous form of bidegree \((4,4)\) satisfying the corresponding Noether--Lefschetz condition. There is a covering involution $\iota \colon S \longmapsto -S.$
\item For $K=\mathbb{Q}(\sqrt{-2})$ \text{and} $K=\mathbb{Q}(\sqrt{-7}),$
the group \(\operatorname{Aut}(X_{K,2})\) is generated by Mordell--Weil translation groups associated with genus-one fibrations, together with the covering involution
$\iota \colon S \longmapsto -S.$
\end{enumerate}
\end{introthm}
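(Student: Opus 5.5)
For part (1) I would start from the Kummer surface $\operatorname{Km}(A_K)$ with $A_K=E_K\times E_K$. First I identify the non-exceptional sublattice. The image of $\NS(A_K)$ under the Kummer map $\pi_*$ is $\NS(A_K)(2)$, and $\NS(A_K)\cong \Herm_2(\mathcal O_K)$ with the form $2\det$. This comes from the standard description of $\NS(E\times E)$ as $\mathbb Z e_1\oplus\mathbb Z e_2\oplus \operatorname{End}(E)$: the class of a divisor corresponds to the Hermitian matrix $\begin{pmatrix} a & b\\ \bar b & c\end{pmatrix}$, and its self-intersection is $2(ac-b\bar b)$. Hence $\operatorname{Km}(A_K)$ contains a primitive copy of $S_K(2)$, after checking primitivity against the $16$ nodal classes via the Nikulin lattice. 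The double-cover model is then built concretely. The fibre classes $e_1,e_2$ of the two projections $A_K\to E_K$ descend to two elliptic pencils on $\operatorname{Km}(A_K)$. The quotient map $E_K\to E_K/\pm1\cong\mathbb P^1$, given by $(x:y)$, yields the morphism
\[
A_K/\pm1\to (E_K/\pm1)\times(E_K/\pm1)=\mathbb P^1\times\mathbb P^1 .
\]
Its Galois closure, pulled back from the $(\pm1,\pm1)$-action, identifies $\operatorname{Km}(A_K)$ with the minimal resolution of $\{S^2=B(x,y)B(z,w)\}$, where $S=s\,s'$. The class $e_1+e_2$ descends to a degree-$2$ polarization $h=f_1+f_2$ with $h^2=4$ on the resolved Kummer. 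The orthogonal sublattice $\langle f_1,f_2\rangle^\perp$ inside $S_K(2)$ is the rank-two part coming from $\operatorname{End}(E_K)$.

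Next I deform. The moduli of $S_K(2)$-polarized K3 surfaces has dimension $20-4=16$. The family of double covers of $\mathbb P^1\times\mathbb P^1$ branched along $(4,4)$-curves has dimension $25-1-6=18$, and the $S_K(2)$ condition imposes a $2$-dimensional Noether--Lefschetz condition: the classes in $\langle f_1,f_2\rangle^\perp$ must stay algebraic. A very general member of the $16$-dimensional locus through $\operatorname{Km}(A_K)$ still carries the pencils $f_1,f_2$, which are nef with $f_i^2=0$ and $f_1f_2=2$. So $|f_1+f_2|$ remains a degree-$2$ map onto $\mathbb P^1\times\mathbb P^1$, provided no $(-2)$-curve $C$ has $C\cdot(f_1+f_2)=0$ or $C\cdot f_i=0$ in a bad way. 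I would check this using the fact that $S_K(2)$ has no vectors of norm $-2$, since all norms in it lie in $4\mathbb Z$. The branch curve is therefore a $(4,4)$-curve specializing to $B(x,y)B(z,w)=0$, which gives the displayed equation with a perturbation $H_{4,4}$ subject to the NL condition. The covering involution $S\mapsto -S$ is automatic.

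For part (2) I would take $\Aut(X_{K,2})\cong P\Gamma_K(2)$ from Theorem~\ref{realization}, since $2$ ramifies in $\mathbb Q(\sqrt{-2})$ and $\mathbb Q(\sqrt{-7})$ is covered via $p=7$ and $p=2$. The problem then becomes group-theoretic: generate $P\Gamma_K(2)$ by parabolic elements together with one involution. By Proposition~\ref{prop:cusp_translations_XK2N}, the stabilizer of each cusp in $\Aut$ is realized by the Mordell--Weil group of the corresponding genus-one fibration, namely the fibration given by an isotropic class, that is, a rank-one Hermitian matrix. Upper unipotents $\begin{pmatrix}1&2\beta\\0&1\end{pmatrix}$ give translations for the fibration at $\infty$, lower unipotents give translations at $0$, and $\iota$ should act as a specific element. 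I would compute this element explicitly via Lemma~\ref{lem:covering-involution-Hermitian-action}, expecting something like conjugation-transpose composed with $-1$ on $\langle f_1,f_2\rangle^\perp$. For these two fields, $\mathcal O_K$ is Euclidean, and I would use known presentations of $\PGL_2(\mathcal O_K)$ (Swan, Fine) to produce generators of the finite-index normal subgroup $P\Gamma_K(2)$ by a Reidemeister--Schreier computation. I would then verify that each generator is a product of $\iota$ and unipotents at the finitely many $P\Gamma_K(2)$-inequivalent cusps.

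The main obstacle is this last step: showing that the subgroup generated by the cusp translations and $\iota$ is all of $P\Gamma_K(2)$ and not a proper finite-index subgroup. The Reidemeister--Schreier computation may produce elliptic generators of $P\Gamma_K(2)$. I would try to express these as $\iota\cdot u$ or $\iota u\iota u'$, using the fact that the level-$2$ quotient kills most torsion. This is the step where the restriction to the fields $\mathbb Q(\sqrt{-2})$ and $\mathbb Q(\sqrt{-7})$ enters.
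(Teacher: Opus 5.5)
Your part (1) is essentially the paper's argument (Proposition~\ref{prop:main_existence}). It uses primitivity of $S_K(2)=\NS(A_K)(2)$ in $\mathcal K^\perp$, a very general $S_K(2)$-polarized deformation of $\operatorname{Km}(A_K)$, and the absence of $(-2)$-vectors to get nefness of the two isotropic classes and a finite degree-two map to $\mathbb P^1\times\mathbb P^1$. The $(4,4)$ branch curve then degenerates to $B(x,y)B(z,w)=0$, and your count $18-2=16$ is a harmless addition.

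Part (2) has a genuine gap, exactly at the step you call the main obstacle: you never show that the cusp translations and $\iota$ generate $P\Gamma_K(2)$. A Reidemeister--Schreier rewrite of a Swan/Fine presentation only produces generators. Nothing in your plan explains why they would be products of unipotents and $\iota$, and for the remaining class-number-one fields the analogous statement fails.

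The paper closes this with two ingredients you do not use. First, since $\mathcal O_K^\times=\{\pm1\}$, the determinant gives a well-defined map $P\Gamma_K(2)\to\{\pm1\}$ with kernel $\Gamma_K(2)$. It is split by $\eta=[J]$ with $J=\operatorname{diag}(1,-1)\equiv I_2 \pmod 2$, so $P\Gamma_K(2)=\Gamma_K(2)\rtimes\langle\eta\rangle$ (Lemma~\ref{lem:PGamma-splits-over-Gamma}). Moreover $\eta$ is exactly $\iota^*$, acting as the identity on $e,f$ and as $-1$ on every $P(z)$ (Lemma~\ref{lem:covering-involution-Hermitian-action}). No complex conjugation enters; that would put you in the Galois coset excluded by Lemma~\ref{lem:galois-coset-exclusion}.

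Since $\Gamma_K(2)$ is torsion-free (Lemma~\ref{lem:level-two-torsion}), every elliptic element of $P\Gamma_K(2)$ already lies in $\Gamma_K(2)\eta$, so your worry about elliptic generators disappears. The real question is whether the torsion-free group $\Gamma_K(2)$, whose nontrivial non-parabolic elements are loxodromic, is generated by its parabolic elements.

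Second, that is precisely the Baker--Goerner--Reid theorem invoked in Theorem~\ref{thm:psl-level-two-parabolic-generation}. Among class-number-one fields it holds exactly for $d\in\{1,2,3,7\}$, and this is where the restriction to $d=2,7$ enters. Proposition~\ref{prop:cusp_translations_XK2N} identifies each unipotent cusp stabilizer with the Mordell--Weil group of the corresponding fibration, and with that part (2) follows immediately.

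Also, aim for the subgroup generated by the unipotent stabilizers of \emph{all} cusps, i.e.\ all genus-one fibrations. That subgroup is normal and is what the parabolic-generation theorem controls. Asking for products of unipotents at one representative cusp per orbit, as you propose, is a strictly stronger claim that the theorem does not give you.
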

This involution is particularly significant. By 
Lemma~\ref{lem:level-two-torsion}, although there are infinitely many involutions, the complete-intersection
models constructed below exhibit finite collections of geometrically natural
deck involutions that generate the full automorphism group in the Gaussian
and Eisenstein cases.

\begin{introthm}[cf.~Theorem~\ref{lem:period-maps-dominant} and ~\ref{thm:deck-generates-automorphism}]
Let $K=\mathbb{Q}(i)$ \text{or} $K=\mathbb{Q}(\sqrt{-3})$.
Then \(X_{K,2}\) admits a complete-intersection model in a product of projective spaces, and \(\operatorname{Aut}(X_{K,2})\) is generated by deck involutions.
\end{introthm}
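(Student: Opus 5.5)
The plan has two parts: build explicit complete-intersection models for $K=\mathbb Q(i)$ and $K=\mathbb Q(\sqrt{-3})$ and show they are generic in moduli, and then prove that their deck involutions generate $\Aut(X_{K,2})\cong P\Gamma_K(2)$. The second identification is Theorem~\ref{realization}.

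\textbf{Step 1: the models.} I start from the double-cover model of Theorem~\ref{thm:intro_kummer_deformation}(1) and choose several degree-two maps given by divisor classes in $S_K(2)$. Natural candidates are the classes $h$ with $h^2=4$ (or $h^2=2$) together with a companion class $h'$ with $h\cdot h'=2$. Such a pair gives a genus-one fibration pair, hence a map to $\mathbb P^1\times\mathbb P^1$.

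For $\mathbb Q(i)$ I expect three such maps to realize $X$ as a complete intersection of three $(2,2)$-type divisors in $(\mathbb P^1)^3$, or similarly in $\mathbb P^1\times\mathbb P^1\times\mathbb P^1\times\mathbb P^1$. For $\mathbb Q(\sqrt{-3})$ the cusp geometry suggests a different product, for instance $\mathbb P^2\times\mathbb P^2$ with a $(1,1)$ and a $(2,2)$ section, or a triple of $\mathbb P^1$'s.

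I then count moduli: the dimension of the space of such complete intersections modulo the automorphisms of the ambient product should equal $20-4=16$. Dominance of the period map onto the $S_K(2)$-polarized period domain follows from two facts: local injectivity (infinitesimal Torelli restricted to the family), and the check that the Néron--Severi lattice of a general member is exactly $S_K(2)$. For the lattice check, I compute the intersection matrix of the pulled-back hyperplane classes and match it with $\Herm_2(\mathcal O_K)(2)$ via discriminant forms.

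\textbf{Step 2: the deck involutions as isometries.} Each projection onto a subproduct is generically $2{:}1$, so it yields a deck involution $\iota_j$. Its action on $\NS$ is a reflection-type isometry, computed as in Lemma~\ref{lem:deck-involution-edge-isometry}: it fixes the pulled-back classes from the base and acts by $-1$ on their orthogonal complement, modulo the correction coming from the ramification. Through the identification of Subsection~\ref{subsecBianchi}, I translate each $\iota_j$ into an explicit matrix in $\PGL_2(\mathcal O_K)$, possibly composed with the Galois conjugation of the extended Bianchi group. By Theorem~\ref{realization}, these elements lie in $P\Gamma_K(2)$.

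\textbf{Step 3: generation (the main obstacle).} It remains to show that the $\iota_j$ generate all of $P\Gamma_K(2)$. I would try a fundamental-domain argument. Take the hyperbolic polyhedron in $\mathbb H^3$ bounded by the fixed hyperplanes of the reflection parts of the $\iota_j$; equivalently, use the ample-cone chamber cut out by the $(-2)$-curves, which are absent here, so the relevant chamber is the one cut out by the walls $h_j^\perp$-type data.

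Using the known presentations of the Picard group $\PGL_2(\mathbb Z[i])$ and of the Eisenstein Bianchi group (Swan, Fine), I compare the index of $P\Gamma_K(2)$ with the covolume of this polyhedron. If the polyhedron is a fundamental domain for the group generated by the $\iota_j$, and its volume equals $[\Bi_K:P\Gamma_K(2)]$ times the covolume of $\Bi_K$, then generation follows.

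Concretely, I apply Poincaré's polyhedron theorem: check that the edge cycles close up and that the face pairings are exactly the $\iota_j$. The delicate point is handling the cusps, which correspond to the genus-one fibrations, and verifying that the parabolic stabilizers there are generated by products of the $\iota_j$. Here I expect to use the Mordell--Weil description in Proposition~\ref{prop:cusp_translations_XK2N}, showing each translation is a product of two deck involutions.
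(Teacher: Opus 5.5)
Your Step~3 has the same outline as the paper: Poincar\'e's polyhedron theorem plus a comparison of covolume with $[\Bi_K:\Bi_K(2)]$. But both halves of the proposal are missing the concrete input that makes the argument run, and the specific suggestions you do make fail.

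\textbf{The models.} Step~1 never fixes a model, and none of your candidates produces $S_K(2)$.
Three divisors in $(\mathbb P^1)^3$ cut out a finite set, not a surface. A $(2,2,2)$-surface in $(\mathbb P^1)^3$ has only a rank-three ambient lattice. A $(1,1)\cap(2,2)$ surface in $\mathbb P^2\times\mathbb P^2$ has ambient lattice $\left(\begin{smallmatrix}2&4\\4&2\end{smallmatrix}\right)$, of rank two.
In $(\mathbb P^1)^4$, the natural choice of two $(1,1,1,1)$-divisors has $G_i\cdot G_j=2$ for all $i\neq j$. This gives $U(2)\perp A_2(-2)\cong S_{\mathbb Q(\sqrt{-3})}(2)$, so it is the paper's Eisenstein (Type~II) model, not a Gaussian one.
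The Gaussian lattice does occur as the ambient lattice of $(2,1,1,0)\cap(0,1,1,2)\subset(\mathbb P^1)^4$: the basis change $G_1,G_4,G_2-G_1-G_4,G_3-G_1-G_4$ gives $U(2)\perp\langle-4\rangle^2$. However, that family has only $22-12=10$ moduli. This shows that matching Gram matrices, as your Step~1 proposes, cannot certify that the very general Picard lattice is $S_K(2)$. You need two further ingredients:
\begin{enumerate}
\item primitivity, which the paper gets by excluding even overlattices through $(-2)$-classes that would pair negatively with a nef pencil or with the ample $H$;
\item a $16$-dimensional family with finite period fibres, which the paper gets from the Koszul reconstruction.
\end{enumerate}
For $\mathbb Q(i)$ one needs a degree-$8$ class $H$ (three quadrics in $\mathbb P^5$) together with three pencils satisfying $H\cdot E_i=4$ and $E_i\cdot E_j=2$. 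This is the paper's Type~I surface in $\mathbb P^5\times(\mathbb P^1)^3$, cut out by two divisors in each $|h+e_i|$.

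\textbf{Generation.} The polyhedron you describe does not exist.
By Lemma~\ref{lem:deck-involution-edge-isometry}, a deck involution fixes the signature-$(1,1)$ plane $\langle\alpha,\beta\rangle$ and acts as $-1$ on its negative-definite complement. On $\mathbb H^3$ it is therefore the half-turn about the geodesic from $[\alpha]$ to $[\beta]$. It has no fixed hyperplane and no ``reflection part.'' Moreover, $S_K(2)$ has no $(-2)$-vectors, so the ample cone is the whole positive cone and there are no walls or chambers to use.

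The missing idea is the configuration of the ambient isotropic classes.
\begin{enumerate}
\item In Type~I, the classes $E_1,E_2,E_3,H-E_1,H-E_2,H-E_3$ pair to $2$ except for the three pairs $\{E_i,H-E_i\}$. So they are the ideal vertices of a regular ideal octahedron $O$, and the $12$ deck involutions are its edge half-turns. These generate the even subgroup of the right-angled reflection group of $O$, with fundamental domain $O\cup s_{F_0}(O)$ of volume $2\Vol(O)=48\cdot\Vol(O)/24$.
\item In Type~II, $G_1,\dots,G_4$ are the vertices of a regular ideal tetrahedron $T$. Its six edge half-turns are the face pairings of the ideal cube formed by $T$ and its four reflected neighbours, which has volume $5v_3=60\cdot v_3/12$.
\end{enumerate}
Comparing these volumes with $|\PGL_2(\mathbb Z[i]/2)|=48$, $|\PGL_2(\mathbb F_4)|=60$ and the covolumes $\Vol(O)/24$ and $v_3/12$ of $\Bi_K$ is what closes the proof.

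``Projections onto subproducts'' also do not suffice. In the Type~I model they give only the three involutions for $\{E_i,E_j\}$. All three preserve the totally geodesic plane through $[E_1],[E_2],[E_3]$, so they generate a group of infinite covolume. The nine involutions coming from the residual pencils $|H-E_i|$ are essential.

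Finally, the cusp condition in Poincar\'e's theorem concerns cycle transformations at the ideal vertices of a specific polyhedron. So your Mordell--Weil step has nothing to act on until that polyhedron is identified.
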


In the Gaussian case, these deck involutions act on hyperbolic three-space as edge half-turns in the regular ideal octahedral tiling. In the Eisenstein case, they act as edge half-turns in the regular ideal tetrahedral tiling (see Proposition~\ref{prop:natural-involutions-type-I-II}). Hence, in these exceptional unit cases, the Bianchi group is visible not only through the lattice \(S_K(2)\), but also through  projective symmetries of the corresponding \(K3\) surface.

\subsection*{Acknowledgements}
The second author is grateful to Burt Totaro, Joaquín Moraga and Zachary Baugher for helpful discussions related to this project, and to Professor Brandhorst, Professor Dolgachev, Professor Oguiso and Professor Roulleau for valuable email correspondence.
\section{Preliminaries}
\label{sec:preliminaries}

Throughout this article, we work over $\mathbb{C}$ . All $K3$ surfaces are assumed to be projective. The $K3$ lattice is defined by $\Lambda_{K3} := U^{\oplus 3} \oplus E_8(-1)^{\oplus 2}$.
Since \(H^1(X,\mathcal O_X)=0\), we identify \(\operatorname{Pic}(X)\)
with the Néron--Severi lattice \(\operatorname{NS}(X)\) via the first Chern class. We shall use the following theorem of Morrison, which guarantees the existence
of complex projective \(K3\) surfaces with prescribed low-rank Picard lattice.

\begin{theorem}[{\cite[cf.~Corollary~2.9]{Mor84}}]
\label{thm:existence-picard-lattice}
Let \(L\) be an even lattice of signature \((1,\rho-1)\) with \(\rho\le 10\). Then there exists a projective \(K3\) surface \(X\) such that $\Pic(X)\cong L.$
\end{theorem}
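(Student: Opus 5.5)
The statement is Morrison's existence result: every even lattice $L$ of signature $(1,\rho-1)$ with $\rho\le 10$ occurs as $\Pic(X)$ of a projective $K3$ surface. The plan is to embed $L$ primitively into $\Lambda_{K3}$ and then use surjectivity of the period map to find a $K3$ surface whose N\'eron--Severi lattice is exactly the image of $L$.

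\emph{Step 1: a primitive embedding.} Since $L$ is even of rank $\rho\le 10$ and $\Lambda_{K3}$ is even unimodular of signature $(3,19)$, Nikulin's embedding criterion applies. It says a primitive embedding $L\hookrightarrow\Lambda_{K3}$ exists once
\[
\operatorname{rk}L+\ell(A_L)\le \operatorname{rk}\Lambda_{K3}-\operatorname{rk}L .
\]
Here $\ell(A_L)\le\rho$ and $\operatorname{rk}\Lambda_{K3}=22$, so it suffices that $2\rho\le 22-\rho$, i.e.\ $\rho\le 7$. For $8\le\rho\le10$ this numerical bound is not enough, and I would use the sharper form of Nikulin's theorem instead: existence of an even lattice $T$ of signature $(2,20-\rho)$ with $A_T\cong A_L$ and $q_T=-q_L$. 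With rank gap $22-2\rho\ge 2$, such a $T$ exists by Nikulin's existence theorem for genera of indefinite even lattices. Checking this case is the main obstacle, since it is exactly Morrison's refinement; I would invoke it directly rather than reprove it.

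\emph{Step 2: choosing a period.} Set $T=L^\perp\subset\Lambda_{K3}$, which has signature $(2,20-\rho)$. Choose $\omega\in T\otimes\mathbb C$ with $\omega^2=0$ and $\omega\bar\omega>0$, very general in the sense that it avoids the countably many hyperplanes $v^\perp$ for $v\in T\setminus\{0\}$. Such $\omega$ exist because the period domain of $T$ is nonempty (two positive directions) and open.

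\emph{Step 3: the surface.} By surjectivity of the period map (Todorov, Looijenga), there is a marked $K3$ surface $X$ with period $\omega$. Then
\[
\NS(X)=\omega^\perp\cap\Lambda_{K3}=L ,
\]
where the second equality uses primitivity of $L$ and the genericity of $\omega$. Finally, $X$ is projective because $L$ contains a class of positive square (its signature is $(1,\rho-1)$), and a $K3$ surface with such a class in $\NS(X)$ is projective by the projectivity criterion. Hence $\Pic(X)\cong L$.
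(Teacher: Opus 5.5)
Your argument is correct and is the standard proof of the cited result of Morrison, which the paper quotes without giving a proof. It uses a primitive embedding via Nikulin, then a very general period in $\Omega_L$ together with surjectivity of the period map, which is the same mechanism the paper itself uses in Propositions~\ref{prop:kummer_deformation} and~\ref{prop:main_existence}.

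One small correction. The sufficient condition in Nikulin's Corollary~1.12.3 is $\operatorname{rk}\Lambda_{K3}-\operatorname{rk}L>\ell(A_L)$, together with the signature conditions. This is weaker than the inequality you wrote, so $\ell(A_L)\le\rho<22-\rho$ already settles every $\rho\le 10$. Your separate treatment of $8\le\rho\le 10$ is therefore unnecessary, though still correct: it is just Nikulin's existence criterion $\operatorname{rk}T>\ell(A_T)$ applied to the complement $T$, not a refinement due to Morrison.
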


We recall the basic language of discriminant forms and primitive embeddings.

\begin{definition}
  Let $L$ be an integral, nondegenerate, even lattice. Its \emph{dual lattice} is
  \[
    L^\vee := \{x \in L \otimes \mathbb{Q} \mid (x,y) \in \mathbb{Z} \text{ for all } y \in L\}.
  \]
  Because $L$ is integral and even, the \emph{discriminant group} $A_L := L^\vee/L$ carries a natural finite quadratic form:
  \[
    q_L: A_L \longrightarrow \mathbb{Q}/2\mathbb{Z}, \qquad q_L(x+L) = (x,x) \pmod{2\mathbb{Z}}
  \]
\end{definition}

Every isometry $g \in O(L)$ canonically induces an isometry $\bar{g} \in O(A_L, q_L)$. 

\begin{proposition}[{\cite[Proposition~1.6.1]{Nik80}}]
\label{prop:nikulin-primitive-embedding}
Let $\Lambda$ be an even unimodular lattice. If $S \subset \Lambda$ is a nondegenerate primitive  sublattice and $T = S^\perp$ is its orthogonal complement, then $\Lambda$ induces a canonical anti-isometry
\[
  (A_S, q_S) \cong (A_T, -q_T)
\]
between their discriminant forms.
\end{proposition}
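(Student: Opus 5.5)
The plan is to construct the anti-isometry directly from the chain of inclusions
\[
S\oplus T\subset \Lambda=\Lambda^\vee\subset S^\vee\oplus T^\vee .
\]
The second inclusion holds because every $\lambda\in\Lambda$ pairs integrally with $S$ and with $T$. Since $S$ is nondegenerate and $S\cap T=0$, the sum $S\oplus T$ has finite index in $\Lambda$, because it has full rank. Projecting orthogonally to $S\otimes\mathbb Q$ and to $T\otimes\mathbb Q$ gives maps $p_S\colon\Lambda\to S^\vee$ and $p_T\colon\Lambda\to T^\vee$. I would then study the subgroup
\[
H:=\Lambda/(S\oplus T)\subset A_S\oplus A_T .
\]

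The first step is to show that both projections $H\to A_S$ and $H\to A_T$ are isomorphisms.

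For injectivity of $H\to A_S$: suppose $\lambda\in\Lambda$ has $p_S(\lambda)\in S$. Then $\lambda-p_S(\lambda)\in\Lambda\cap (T\otimes\mathbb Q)$. This intersection equals $T$, since $T=S^\perp$ is primitive, being an orthogonal complement. Hence $\lambda\in S\oplus T$.

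For surjectivity: take $x\in S^\vee$. The functional $(x,\cdot)$ on $S$ extends to a functional on $\Lambda$. This uses the primitivity of $S$: $\Lambda/S$ is torsion-free, so $\operatorname{Hom}(\Lambda,\mathbb Z)\to\operatorname{Hom}(S,\mathbb Z)$ is surjective. By unimodularity this functional is $(\lambda,\cdot)$ for some $\lambda\in\Lambda$, and then $p_S(\lambda)=x$.

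The symmetric argument for $T$ needs only that $T$ is primitive, which is automatic. I expect this surjectivity step, namely the extension of functionals from the primitive sublattice combined with unimodularity, to be the main point where the hypotheses are genuinely used.

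This gives the isomorphism
\[
\gamma:=p_T\circ p_S^{-1}\colon A_S\to A_T,
\]
which is canonical, as it depends only on the embedding.

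The final step is to check that $\gamma$ reverses the forms. For $\lambda\in\Lambda$ we have $(\lambda,\lambda)=(p_S\lambda,p_S\lambda)+(p_T\lambda,p_T\lambda)$, and $(\lambda,\lambda)\in2\mathbb Z$ because $\Lambda$ is even. Therefore
\[
q_T(\gamma(x))\equiv -q_S(x)\pmod{2\mathbb Z}.
\]
Well-definedness modulo $S$ and $T$ is immediate because $S$ and $T$ are even.
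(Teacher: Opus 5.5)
Your proof is correct. The paper gives no argument of its own for this statement; it simply quotes \cite[Proposition~1.6.1]{Nik80}. The relevant comparison is therefore with Nikulin's proof, which takes a different route.

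Nikulin deduces the result from his general correspondence (\S1.4--1.5 of \cite{Nik80}) between even overlattices of $S\oplus T$ and isotropic subgroups of $A_S\oplus A_T$. For $H=\Lambda/(S\oplus T)$ the hypotheses translate as follows:
\begin{itemize}
\item Unimodularity of $\Lambda$ becomes $H^\perp=H$. This gives $|H|^2=|A_S|\,|A_T|$, equivalently $|\operatorname{disc}(S\oplus T)|=[\Lambda:S\oplus T]^2$.
\item Primitivity of $S$ and of $T$ becomes $H\cap A_S=0=H\cap A_T$, so both projections of $H$ are injective.
\end{itemize}
The order count then makes both projections bijective, so $H$ is the graph of an isomorphism $\gamma$, and isotropy of $H$ is exactly $q_T\circ\gamma=-q_S$.

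You avoid the counting altogether. You prove surjectivity of $H\to A_S$ directly, by extending $(x,\cdot)$ from the direct summand $S$ and then using $\Lambda=\Lambda^\vee$. Your version is self-contained and shows precisely where each hypothesis enters. Nikulin's formalism costs a preliminary overlattice lemma, but it also gives the converse: every anti-isometry $A_S\to A_T$ glues $S\oplus T$ to an even unimodular overlattice in which $S$ and $T$ are primitive and are each other's orthogonal complements. That converse is what the existence and uniqueness theory of primitive embeddings uses.

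One correction to your bookkeeping: the symmetric argument for $T$ does not use only the primitivity of $T$.
\begin{itemize}
\item Surjectivity of $H\to A_T$ uses that $T$ is primitive and nondegenerate. Nondegeneracy holds because $\Lambda\otimes\mathbb Q=(S\otimes\mathbb Q)\oplus(T\otimes\mathbb Q)$ is an orthogonal splitting and $\Lambda$ is nondegenerate.
\item Injectivity of $H\to A_T$ needs $\Lambda\cap(S\otimes\mathbb Q)=S$, i.e.\ the primitivity of $S$ a second time.
\end{itemize}
Since primitivity of $S$ is a hypothesis, nothing breaks. But primitivity of $S$ is used in two places, not only in the surjectivity step you singled out.
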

Let $S$ be a Picard lattice and let $T := S^\perp_{\Lambda_{K3}}$ be its orthogonal complement. The corresponding period domain is:
\[
  \Omega_S := \left\{ [\omega] \in \mathbb{P}(T \otimes \mathbb{C}) \ \middle|\ (\omega,\omega)=0, \quad (\omega,\overline{\omega})>0 \right\}
\]

The next lemma says that, for a very general period in \(\Omega_S\), the
transcendental Hodge structure has the smallest possible group of Hodge
isometries.
\begin{lemma}
\label{lem:very-general-hodge-isometries}
Let \(S\) be a primitive hyperbolic sublattice of \(\Lambda_{K3}\), and put
$T:=S^\perp_{\Lambda_{K3}}$.
For a very general period point \([\omega]\in \Omega_S\), the group of Hodge
isometries of the Hodge structure \(T_\omega\) is $ O_{\mathrm{Hdg}}(T_\omega)=\{\pm \mathrm{id}\}.$
\end{lemma}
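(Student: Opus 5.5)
The plan is a countability argument: show that every Hodge isometry other than $\pm\mathrm{id}$ forces the period into a proper closed analytic subset of $\Omega_S$, and that there are only countably many such subsets.

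First note that $\pm\mathrm{id}$ are always Hodge isometries, so only the reverse inclusion needs proof. The group $O(T)$ is countable, since $T$ is a finitely generated free abelian group. For each $g\in O(T)$ with $g\neq\pm\mathrm{id}$, we define
\[
  Z_g:=\{[\omega]\in\Omega_S \mid g_{\mathbb C}(\omega)\in\mathbb C\,\omega\}.
\]
An isometry of $T$ preserving the Hodge structure must preserve the line $H^{2,0}=\mathbb C\omega$. So if $g$ is a Hodge isometry of $T_\omega$, then $[\omega]\in Z_g$. It therefore suffices to show that each $Z_g$ is a proper closed analytic subset of $\Omega_S$. The complement of $\bigcup_g Z_g$ is then a very general set, and for $[\omega]$ in it the only Hodge isometries are $\pm\mathrm{id}$.

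Closedness is routine. $Z_g$ is the intersection of $\Omega_S$ with $\bigcup_\lambda \mathbb P(\ker(g_{\mathbb C}-\lambda))$, a finite union of linear subspaces. Properness is the key point. Since $\Omega_S$ is a nonempty open subset of the irreducible quadric $Q=\{(\omega,\omega)=0\}\subset\mathbb P(T\otimes\mathbb C)$, it suffices to show $Q\not\subset\mathbb P(V_\lambda)$ for each eigenspace $V_\lambda\subsetneq T_{\mathbb C}$. Here irreducibility of $Q$ uses $\operatorname{rk}T\ge 3$, which holds because $S$ is hyperbolic in a lattice of rank $22$ and hence $\operatorname{rk}T\le 21$ with $\operatorname{rk}T\geq 3$ in our situation.

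The main obstacle is this step: a nondegenerate quadric is not contained in any proper linear subspace, because it spans $\mathbb P(T_{\mathbb C})$. Given that, if $Q\subset\bigcup_\lambda\mathbb P(V_\lambda)$, then irreducibility puts $Q$ inside a single $\mathbb P(V_\lambda)$. This forces $V_\lambda=T_{\mathbb C}$, i.e.\ $g=\lambda\,\mathrm{id}$. Since $g$ is defined over $\mathbb Z$ and is an isometry, $\lambda^2=1$, so $g=\pm\mathrm{id}$, a contradiction. Hence each $Z_g$ is a proper analytic subset of the connected complex manifold $\Omega_S$ (or of each of its two components, both open in $Q$) and is nowhere dense. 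This yields the lemma.
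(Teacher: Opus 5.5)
Your proposal is correct and follows essentially the same route as the paper: countability of $O(T)$, the closed fixed loci cut out by $g\omega\wedge\omega=0$, and properness because a nonempty open part of the period quadric spans $\mathbb P(T_{\mathbb C})$, which forces $g$ to be a scalar and hence $\pm\mathrm{id}$. Your eigenspace-plus-irreducibility argument spells out what the paper compresses into ``the quadric spans,'' and the condition $\mathrm{rk}\,T\ge 3$ (i.e.\ $\mathrm{rk}\,S\le 19$) is a real hypothesis that the paper leaves implicit: for $\mathrm{rk}\,S=20$ the quadric is two points and the statement can fail, but the condition holds automatically for the rank-four lattices used here.
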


\begin{proof}
For \(g\in O(T)\), define
$\operatorname{Fix}(g)
  :=
  \{[\omega]\in \Omega_S \mid g(\mathbb C\omega)=\mathbb C\omega\}.$
This is a closed analytic subset of \(\Omega_S\), since the condition
\(g(\mathbb C\omega)=\mathbb C\omega\) is equivalent to $g\omega\wedge \omega=0$ in
  $\bigwedge^2 T_{\mathbb C}.$

We claim that if \(\operatorname{Fix}(g)=\Omega_S\), then \(g=\pm \mathrm{id}\).
Indeed, in that case \(g\) fixes every period line in \(\Omega_S\). Hence the
induced projective transformation of \(\mathbb P(T_{\mathbb C})\) is the
identity on a nonempty open subset of the period quadric. Since this quadric
spans \(\mathbb P(T_{\mathbb C})\), the transformation is the identity on
\(\mathbb P(T_{\mathbb C})\). Therefore \(g\) acts as a scalar on
\(T_{\mathbb C}\). Since \(g\) is an isometry of the lattice \(T\), this scalar
must be \(\pm 1\). Thus \(g=\pm \mathrm{id}\).

Consequently, for every \(g\in O(T)\setminus\{\pm \mathrm{id}\}\), the locus
\(\operatorname{Fix}(g)\) is a proper closed analytic subset of \(\Omega_S\).
Since \(O(T)\) is countable, the union
$\bigcup_{g\in O(T)\setminus\{\pm \mathrm{id}\}} \operatorname{Fix}(g)$
is a countable union of proper closed analytic subsets. A very general period
point \([\omega]\in \Omega_S\) avoids this union.

Now let \(h\in O_{\mathrm{Hdg}}(T_\omega)\). Since \(h\) is a Hodge isometry,
it preserves the line $H^{2,0}(T_\omega)=\mathbb C\omega.$
Hence \([\omega]\in \operatorname{Fix}(h)\). By the choice of \([\omega]\), this
implies \(h=\pm \mathrm{id}\). Conversely, both \(\mathrm{id}\) and
\(-\mathrm{id}\) are Hodge isometries. Therefore
$O_{\mathrm{Hdg}}(T_\omega)=\{\pm \mathrm{id}\}.$
\end{proof}

We use the following version of the Torelli theorem to compute the automorphism group of the $K3$ surfaces.

\begin{theorem}\cite[Chapter~15 Corollary~2.3]{Huy16}
\label{thm:torelli_mod_weyl}
Let \(X\) be a projective K3 surface and put $S=\NS(X) \text{ and }  T=S^\perp\subset H^2(X,\mathbb Z).$
Let $\Delta(S)=\{\delta\in S\mid \delta^2=-2\},$
and let \(W(S)\subset O^+(S)\) be the Weyl group generated by the reflections for $\Delta(S)$.

Let \(\Gamma_X\subset O^+(S)\) be the subgroup of isometries which are
compatible with the Hodge structure on \(T\), namely
\[
\Gamma_X
=
\left\{
g\in O^+(S)\ \middle|\ 
\begin{array}{c}
\text{there exists }h\in O_{\mathrm{Hdg}}(T)
\text{ such that }g\oplus h\\
\text{extends to an isometry of }H^2(X,\mathbb Z)
\end{array}
\right\}.
\]
Then \(W(S)\triangleleft \Gamma_X\), and there is a natural exact sequence
\[1 \longrightarrow \ker\bigl(\operatorname{Aut}(X)\to O(S)\bigr)
\longrightarrow \operatorname{Aut}(X)
\longrightarrow \Gamma_X/W(S)
\longrightarrow 1.\]
In particular, if the restriction homomorphism $\Aut(X)\longrightarrow O(S)$
is injective, then $\Aut(X)\cong \Gamma_X/W(S).$
\end{theorem}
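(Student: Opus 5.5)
The statement is the Torelli-type exact sequence of Theorem~\ref{thm:torelli_mod_weyl}. I would derive it from the strong global Torelli theorem (Burns--Rapoport, Piatetski-Shapiro--Shafarevich). That theorem says a Hodge isometry of \(H^2(X,\mathbb Z)\) is induced by a unique automorphism exactly when it maps the ample cone to itself, i.e.\ when it is effective.

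The plan is to build a homomorphism \(\Phi\colon \operatorname{Aut}(X)\to \Gamma_X/W(S)\), \(f\mapsto [f^*|_S]\), and then check it is well defined, surjective, and has the stated kernel.

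\textbf{Well-definedness.} For an automorphism \(f\), the pullback \(f^*\) is a Hodge isometry of \(H^2(X,\mathbb Z)\). It preserves \(S\) and \(T\), and it preserves the ample cone, hence also the positive cone. So \(g=f^*|_S\) lies in \(O^+(S)\). Moreover \(h=f^*|_T\) lies in \(O_{\mathrm{Hdg}}(T)\), and \(g\oplus h\) extends to \(f^*\). Hence \(g\in\Gamma_X\).

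\textbf{Normality of \(W(S)\).} Each reflection \(s_\delta\) with \(\delta\in\Delta(S)\) acts trivially on \(A_S\), since \(s_\delta(x)=x+(x,\delta)\delta\). Therefore \(s_\delta\oplus \mathrm{id}_T\) extends to \(H^2(X,\mathbb Z)\) by Proposition~\ref{prop:nikulin-primitive-embedding}, so \(W(S)\subset\Gamma_X\). For \(g\in\Gamma_X\) we have \(g s_\delta g^{-1}=s_{g\delta}\), which gives \(W(S)\triangleleft\Gamma_X\).

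\textbf{Surjectivity.} This is the key step. I would use the standard fact that \(W(S)\) acts simply transitively on the chambers of the positive cone cut out by the hyperplanes \(\delta^\perp\), \(\delta\in\Delta(S)\). The ample cone of \(X\) is one of these chambers, since by Riemann--Roch every \((-2)\)-class is effective or anti-effective.

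Given \(g\in\Gamma_X\), choose \(w\in W(S)\) with \(wg(\mathrm{Amp}_X)=\mathrm{Amp}_X\). Then \(wg\oplus h\) is still a Hodge isometry extending to \(H^2(X,\mathbb Z)\), again because \(w\) acts trivially on \(A_S\). It is effective, so strong Torelli gives an automorphism \(f\) with \(f^*=wg\oplus h\). Thus \([g]\) lies in the image of \(\Phi\).

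\textbf{Kernel.} If \(f^*|_S\in W(S)\), then \(f^*|_S\) preserves the ample chamber, since \(f\) is an automorphism. Simple transitivity then forces \(f^*|_S=\mathrm{id}\). So \(\ker\Phi=\ker(\operatorname{Aut}(X)\to O(S))\).

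The main obstacle is the chamber/Weyl-group input: identifying the ample cone with a fundamental chamber, and simple transitivity, which relies on the faithfulness of the Weyl action on chambers. The final assertion of the theorem then follows immediately from exactness.
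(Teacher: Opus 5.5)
Your proposal is correct and is the standard argument behind the cited result; the paper itself gives no proof and only cites Huybrechts. That argument combines the strong global Torelli theorem for effective Hodge isometries with the simply transitive action of \(W(S)\) on the chambers of the positive cone, one of which is the ample cone. Two small points need tightening. First, Riemann--Roch only shows that the ample cone lies inside a chamber; to get equality you also need that a class in the positive cone that is positive on all smooth rational curves is ample. Second, \(f\mapsto f^*|_S\) is an anti-homomorphism, so you should use \((f^{-1})^*\) (or \(f_*\)) to obtain a homomorphism; this does not change its kernel or image.
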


We use the following injectivity criterion
for the automorphism groups.

\begin{lemma}\label{lem-twoelementary}
Let $X$ be a very general $S$-polarized projective $K3$ surface, and
assume that the discriminant group $A_S$ is not $2$-elementary. Then the kernel of the natural representation
 $\operatorname{Aut}(X)\longrightarrow O(S)$ is trivial. 
\end{lemma}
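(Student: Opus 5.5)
Let \(g\in\Aut(X)\) act trivially on \(S=\NS(X)\). The plan is to show that \(g\) also acts trivially on \(T=S^\perp\subset H^2(X,\mathbb Z)\). Once this holds, \(g^*=\mathrm{id}\) on \(H^2(X,\mathbb Z)\), and the classical fact that an automorphism of a \(K3\) surface acting trivially on \(H^2(X,\mathbb Z)\) is the identity (part of the global Torelli theorem) gives \(g=\mathrm{id}\).

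First, the restriction \(h:=g^*|_T\) is a Hodge isometry of \(T_\omega\), since \(g^*\) preserves \(H^{2,0}(X)\subset T\otimes\mathbb C\). Because \(X\) is very general in its moduli, Lemma~\ref{lem:very-general-hodge-isometries} gives \(h=\pm\mathrm{id}_T\). It remains to exclude \(h=-\mathrm{id}_T\).

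Suppose \(g^*=\mathrm{id}_S\oplus(-\mathrm{id}_T)\) on \(S\oplus T\), extended to \(H^2(X,\mathbb Z)\). An isometry of the overlattice \(\Lambda\supset S\oplus T\) must preserve the gluing subgroup \(\Lambda/(S\oplus T)\subset A_S\oplus A_T\). By Proposition~\ref{prop:nikulin-primitive-embedding}, this subgroup is the graph of the anti-isometry \(\gamma\colon A_S\xrightarrow{\sim}A_T\). The map \(g^*\) sends a graph element \((x,\gamma(x))\) to \((x,-\gamma(x))\), so preserving the graph forces \(\gamma(x)=-\gamma(x)\), that is, \(2\gamma(x)=0\) for all \(x\in A_S\). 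Since \(\gamma\) is an isomorphism, this says \(2A_S=0\), so \(A_S\) is \(2\)-elementary. This contradicts the hypothesis, so \(h=\mathrm{id}_T\) and hence \(g^*=\mathrm{id}\) on \(H^2(X,\mathbb Z)\).

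The main point requiring care is the final step, that \(g^*=\mathrm{id}\) on \(H^2\) implies \(g=\mathrm{id}\). I would cite the standard result, for instance the injectivity of \(\Aut(X)\to O(H^2(X,\mathbb Z))\) in \cite{Huy16}, Chapter~15. I would also make sure that "very general" is used in the sense of Lemma~\ref{lem:very-general-hodge-isometries}, so that the Hodge isometries of \(T_\omega\) are only \(\pm\mathrm{id}\).
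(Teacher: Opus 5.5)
Your proposal is correct and follows the same route as the paper: Lemma~\ref{lem:very-general-hodge-isometries} gives $g^*|_T=\pm\mathrm{id}_T$, the case $-\mathrm{id}_T$ is excluded by the gluing condition from Proposition~\ref{prop:nikulin-primitive-embedding}, and the case $+\mathrm{id}_T$ is handled by Torelli injectivity. Your graph-of-$\gamma$ computation simply spells out the paper's one-line claim that the actions on the discriminant groups must match.
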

\begin{proof}
Let $f \in \operatorname{Aut}(X)$ be in the kernel, so $f^*|_S = \operatorname{id}_S$. Then $f^*$ restricts to a Hodge isometry of the transcendental lattice $T_X = S^\perp$. Since $X$ is very general, Lemma~\ref{lem:very-general-hodge-isometries} gives $f^*|_{T_X} = \pm \operatorname{id}_{T_X}$. 

If $f^*|_{T_X} = +\operatorname{id}_{T_X}$, $f^*$ is the identity on the full lattice $H^2(X, \mathbb{Z})$. By the Global Torelli theorem, $f = \operatorname{id}_X$, giving the trivial kernel $\{1\}$.

If $f^*|_{T_X} = -\operatorname{id}_{T_X}$, the split isometry $\operatorname{id}_S \perp (-\operatorname{id}_{T_X})$ extends to the unimodular overlattice $H^2(X, \mathbb{Z})$ if and only if the actions on the discriminant groups match. By Proposition~\ref{prop:nikulin-primitive-embedding}, this requires $\operatorname{id}_{A_S} = -\operatorname{id}_{A_S}$, which means $2x = 0$ for all $x \in A_S$ i.e., $A_S$ is $2$-elementary.
\end{proof}

\section{Bianchi groups and automorphism groups}
\subsection{Bianchi Groups}\label{subsecBianchi}
Throughout this article, let $d>0$ be a squarefree integer, and set $K=\mathbb Q(\sqrt{-d})$. Let $\mathcal O_K$ be the ring of integers of $K$, and let $-D$ be the fundamental discriminant of $K$. We consider the $\mathbb Z$-module
$$
S_K := \Herm_2(\mathcal{O}_K)= \left\{ A=\begin{pmatrix} a & b\\ \bar b & c \end{pmatrix} : a,c\in\mathbb Z,\ b\in\mathcal O_K \right\},
$$
equipped with the quadratic form $q_d(A) := 2\det(A)$. 

This module $S_K$ can be naturally identified with a rank-four lattice equipped with an intersection form. To maintain notation independent of the choice of $d$, we will write $S_K \cong U \perp M_K$, where $U$ is the standard hyperbolic plane and $M_K$ is a rank-$2$ negative-definite lattice defined according to the fundamental discriminant:
\begin{enumerate}
    \item If $-D \equiv 0 \pmod 4$, then $M_K := \langle -2 \rangle \perp \langle -\frac{D}{2} \rangle$.
    \item If $-D \equiv 1 \pmod 4$, then $M_K := \Lambda_D$, where $\Lambda_D$ is the lattice with Gram matrix $\begin{pmatrix} -2 & -1\\ -1 & -\frac{1+D}{2} \end{pmatrix}$.
\end{enumerate}
\begin{definition}
The \emph{Bianchi group} $\Bi_K$ is defined as $\PGL_2(\mathcal{O}_K) = \GL_2(\mathcal{O}_K)/\mathcal{O}_K^\times$. \end{definition}

The Bianchi group acts on \(S_K=\operatorname{Herm}_2(\mathcal{O}_K)\) by
\[
[M]\cdot H := MHM^{\ast},
\qquad
M\in \operatorname{GL}_2(\mathcal{O}_K),
\quad
H\in \operatorname{Herm}_2(\mathcal{O}_K),
\]
where \(M^{\ast}:={}^{t}\overline{M}\).
This action is well-defined on \(\operatorname{PGL}_2(\mathcal{O}_K)\), since multiplying \(M\) by a unit
\(\lambda\in \mathcal{O}_K^{\times}\) multiplies \(MHM^{\ast}\) by
\(\lambda\overline{\lambda}=1\).
Moreover, $\det(MHM^{\ast})
=
\det(M)\overline{\det(M)}\det(H)
=
\det(H),$
because \(\det(M)\in \mathcal{O}_K^{\times}\). Hence this gives a homomorphism \[\operatorname{Bi}_K
\longrightarrow
O(S_K).\]
We choose the positive cone component \(C_K^+\) to be the component containing the positive definite
Hermitian matrices. Since positive definiteness is preserved by $H\longmapsto MHM^{\ast},$
the image of \(\operatorname{Bi}_K\) lies in \(O^+(S_K)\).

\begin{remark}In the literature (e.g., Vinberg \cite{V87}), the \emph{Bianchi group} $\Bi_K$ is often defined by the semidirect product of $\PGL_2(\mathcal{O}_K)$ and the non-trivial Galois automorphism $\sigma$ of $K/\mathbb{Q}$. Geometrically, $\sigma$ acts as an anti-holomorphic isometry. Lemma~\ref{lem:galois-coset-exclusion} shows that this Galois coset does not contribute automorphisms of the very general \(K3\) surfaces considered here.
\end{remark}
Vinberg~\cite{V87} proved that $O^+(S_K)
    =
    \widehat{\operatorname{Bi}}_K
    \rtimes
    \langle \sigma\rangle,$
where $\widehat{\operatorname{Bi}}_K$ denotes the orientation-preserving
extended Bianchi group and $\sigma$ is induced by complex conjugation.  $\widehat{\Bi}_K$ 
 acts by orientation-preserving isometries on $\mathbb{H}^3$
 through the normalized action
\[
    A
    \longmapsto
    \frac{1}{|\det M|}MAM^\ast.
\]
 The normalized action realizes
$\widehat{\operatorname{Bi}}_K$ as a subgroup of $O^+(S_K)$. $\widehat{\operatorname{Bi}}_K$ also contains projective classes of
matrices $M\in\operatorname{GL}_2(K)$ satisfying $M\mathcal O_K^2=\mathfrak a\,\mathcal O_K^2$
for a fractional ideal $\mathfrak a$ whose ideal class has order dividing
two. By \cite[Section~4]{BM13}, there is an exact sequence:
$$1 \longrightarrow \Bi_K\longrightarrow \widehat{\Bi}_K\longrightarrow \Cl(\mathcal{O}_K)[2] \longrightarrow 1.$$
Here $\Cl(\mathcal{O}_K)[2]$  denotes the $2$-torsion subgroup of the ideal class group of $\mathcal{O}_K$. Consequently, using the order estimate of the $2$-torsion group (for example see~\cite[Theorem~6.1]{Cox}), we have the following statement.

\begin{proposition}\label{Lem:extendedBianchi}
Let $K=\mathbb{Q}(\sqrt{-d})$ with $d$ square-free. Then $\Bi_K$ is a normal subgroup of $\widehat{\Bi}_K$, and their index is given by
$[\widehat{\Bi}_K:\Bi_K] = 2^{\omega(-D)-1}$
 where $\omega(-D)$ is the number of distinct primes dividing the discriminant $-D$.
\end{proposition}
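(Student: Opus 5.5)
The plan is to read off both claims from the exact sequence
\[
1 \longrightarrow \Bi_K\longrightarrow \widehat{\Bi}_K\longrightarrow \Cl(\mathcal{O}_K)[2] \longrightarrow 1
\]
of \cite[Section~4]{BM13}, together with Gauss's genus theory for the $2$-rank of the class group.

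First, normality. By the exact sequence, $\Bi_K$ is the kernel of a homomorphism, so it is normal in $\widehat{\Bi}_K$. I would also check this directly, to make sure the homomorphism is well defined. Send the class of $M$, where $M\mathcal O_K^2=\mathfrak a\,\mathcal O_K^2$, to the class of $\mathfrak a$ in $\Cl(\mathcal O_K)$.

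\begin{itemize}
\item Changing $M$ by a scalar $\lambda\in K^\times$ replaces $\mathfrak a$ by $\lambda\mathfrak a$, so the ideal class is unchanged.
\item Multiplicativity follows from $MM'\mathcal O_K^2 = M\mathfrak a'\mathcal O_K^2=\mathfrak a'\mathfrak a\,\mathcal O_K^2$.
\item The kernel consists of those $M$ with $\mathfrak a=(\lambda)$ principal. Then $\lambda^{-1}M$ preserves $\mathcal O_K^2$, so it lies in $\GL_2(\mathcal O_K)$. This identifies the kernel with $\PGL_2(\mathcal O_K)=\Bi_K$.
\end{itemize}

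Second, surjectivity onto $\Cl(\mathcal O_K)[2]$. Take $\mathfrak a$ with $\mathfrak a^2=(\alpha)$. Since $\mathcal O_K$ is Dedekind, there is a matrix $M\in M_2(\mathfrak a)$ with $M\mathcal O_K^2=\mathfrak a\oplus\mathfrak a=\mathfrak a\,\mathcal O_K^2$. Then $\det M$ generates $\mathfrak a^2=(\alpha)$, so after rescaling the normalized action preserves $S_K$. This is the content cited from \cite{BM13}, and I would take it as given. It follows that $[\widehat{\Bi}_K:\Bi_K]=|\Cl(\mathcal O_K)[2]|$.

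Third, the order of the $2$-torsion. Gauss's genus theory \cite[Theorem~6.1]{Cox} says that for the fundamental discriminant $-D$ the number of genera is $2^{\mu-1}$. Here $\mu$ is the number of assigned characters, and for a fundamental discriminant $\mu=\omega(-D)$.

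\begin{itemize}
\item When $-D\equiv 1\pmod 4$, the assigned characters correspond to the odd primes dividing $D$.
\item When $4\mid D$, the extra characters $\delta,\epsilon,\delta\epsilon$ account for exactly one added character attached to the prime $2$. This is because $D/4\equiv 1,2\pmod 4$ for fundamental discriminants, which rules out the $D\equiv 0\pmod{32}$-type case with two extra characters.
\end{itemize}

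Genus theory further identifies the principal genus with $\Cl^2$, and the genera with $\Cl/\Cl^2$. For a finite abelian group, $|\Cl/\Cl^2|=|\Cl[2]|$, so $|\Cl(\mathcal O_K)[2]|=2^{\omega(-D)-1}$, which gives the stated index.

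The main obstacle is this bookkeeping at the prime $2$. One has to match Cox's count of assigned characters with $\omega(-D)$ in each residue class of $D$ modulo $8$ and $32$. This requires using that $-D$ is fundamental, so that $-D\in\{-4m,-8m'\}$ with $m\equiv 1\pmod 4$ or $m'$ odd.
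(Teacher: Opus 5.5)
Your proposal is correct and follows the paper's own route: the paper also obtains normality and $[\widehat{\Bi}_K:\Bi_K]=|\Cl(\mathcal O_K)[2]|$ from the exact sequence of \cite[Section~4]{BM13}, and then evaluates $|\Cl(\mathcal O_K)[2]|=2^{\omega(-D)-1}$ by genus theory \cite[Theorem~6.1]{Cox}. Your additional checks are correct details the paper leaves implicit: the well-definedness and kernel of the ideal-class map, the existence of $M$ with $M\mathcal O_K^2=\mathfrak a\,\mathcal O_K^2$ (which really rests on the Steinitz isomorphism $\mathfrak a\oplus\mathfrak a\cong\mathcal O_K\oplus\mathfrak a^2\cong\mathcal O_K^2$), and the matching $\mu=\omega(-D)$ of assigned characters for fundamental discriminants.
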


\begin{example}\label{exampleextemdedBianchi}
Let $K=\mathbb Q(\sqrt{-p}),$ $p\equiv 1\pmod 4,$
and write $\tau=\sqrt{-p}$. Then $\mathcal O_K=\mathbb Z[\tau],$ $
    \operatorname{disc}(K)=-4p.$ By the preceding proposition, we have $[\widehat{\operatorname{Bi}}_K:
      \operatorname{Bi}_K]=2.$
The nontrivial class is represented by the ramified prime ideal $\mathfrak p_2=(2,1+\tau),$ $
    \mathfrak p_2^2=(2).$ Set
\[
    W_2=
    \begin{pmatrix}
        1+\tau&2\\
        2s&1-\tau
    \end{pmatrix},
    \qquad
    s=\frac{p-1}{4}.
\]
Then $\det(W_2)=2$ and a direct ideal calculation gives  $W_2\mathcal O_K^2
    =
    \mathfrak p_2\mathcal O_K^2.$
Since $\mathfrak p_2$ is nonprincipal, no scalar multiple of $W_2$
belongs to $\operatorname{GL}_2(\mathcal O_K)$. Hence
\[
    [W_2]
    \in
    \widehat{\operatorname{Bi}}_K
    \setminus
    \operatorname{Bi}_K,
\]
and $[W_2]$ represents the unique nontrivial coset of
$\widehat{\operatorname{Bi}}_K/\operatorname{Bi}_K$. Nevertheless, the normalized action
\[
    A\longmapsto
    \frac{1}{2}W_2AW_2^\ast
\]
preserves the integral Hermitian lattice $S_K$. Indeed, the entries of
$W_2$ lie in $\mathfrak p_2$, and therefore
\[
    W_2AW_2^\ast
    \in
    \mathfrak p_2\overline{\mathfrak p}_2
    \operatorname{Herm}_2(\mathcal O_K)
    =
    2\operatorname{Herm}_2(\mathcal O_K).
\]
Thus the factor $1/2$ restores integrality, and the resulting
transformation preserves the determinant quadratic form.
\end{example}

We compute the discriminant group $A_{S_K(2N)}$.

\begin{lemma}
\label{lem:dual_generators_Hermitian_lattice}
Let $K$ be an imaginary quadratic field of discriminant $-D$, $D>0$. For an integer $N \geq 1$, consider the scaled lattice $S_K(2N)$. Let
$$e=\begin{pmatrix}1&0\\0&0\end{pmatrix}, \qquad f=\begin{pmatrix}0&0\\0&1\end{pmatrix}, \qquad P(z)=\begin{pmatrix}0&z\\ \bar z&0\end{pmatrix}.$$
On $S_K(2N)$, the elements $e,f$ span a scaled copy of $U$, and the classes
$$\frac{e}{2N},\qquad\frac{f}{2N}$$
generate the $U(2N)^\vee/U(2N)$-part of the discriminant group. The remaining generators are as follows:
\begin{enumerate}
    \item If $D\equiv 0 \pmod 4$, let $\tau=\frac{\sqrt{-D}}{2}$. The off-diagonal block is generated by $P(1)$ and $P(\tau)$, yielding the corresponding dual generators:
    $$\frac{P(1)}{4N},\qquad \frac{P(\tau)}{DN}.$$

    \item If $D\equiv 3 \pmod 4$, let $\tau=\frac{1+\sqrt{-D}}{2}$. In the basis $P(1),P(\tau)$, the corresponding dual generators are:
    $$\frac{(D+1)P(1)-2P(\tau)}{4DN}, \qquad \frac{-2P(1)+4P(\tau)}{4DN}.$$
\end{enumerate}
\end{lemma}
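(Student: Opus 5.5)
The plan is to compute the Gram matrix of $S_K(2N)$ in an explicit $\mathbb Z$-basis, check that it splits orthogonally into a diagonal block and an off-diagonal block, and invert each block. Since $\det(A)=ac-|b|^2$, polarizing $q_d(A)=2\det(A)$ gives the bilinear form on $S_K$
\[
(A,A')=ac'+a'c-\operatorname{Tr}_{K/\mathbb Q}(b\bar b').
\]
On $S_K(2N)$ this form is multiplied by $2N$. In particular the diagonal matrices $e,f$ are orthogonal to every $P(z)$. So $S_K(2N)=\langle e,f\rangle\perp P(\mathcal O_K)$, and the dual lattice and discriminant group split accordingly.

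\emph{Diagonal block.} In $S_K(2N)$ we have $e^2=f^2=0$ and $(e,f)=2N$, so $\langle e,f\rangle\cong U(2N)$. Its dual basis is $f/2N,\ e/2N$. These generate $U(2N)^\vee/U(2N)\cong(\mathbb Z/2N)^2$.

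\emph{Off-diagonal block.} Here $(P(z),P(w))=-2N\operatorname{Tr}(z\bar w)$. For $z,w\in\{1,\tau\}$ we need $\operatorname{Tr}(1)=2$, $\operatorname{Tr}(\tau)$ and $\operatorname{Tr}(\tau\bar\tau)=2N_{K/\mathbb Q}(\tau)$.

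In case (1), $\tau=\sqrt{-D}/2$. Then $\operatorname{Tr}(\tau)=0$ and $\tau\bar\tau=D/4$. The Gram matrix is $-2N\operatorname{diag}(2,D/2)=\operatorname{diag}(-4N,-DN)$, so the dual basis is $P(1)/4N$ and $P(\tau)/DN$, up to sign. Signs are irrelevant for generating the discriminant group.

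In case (2), $\tau=(1+\sqrt{-D})/2$. Then $\operatorname{Tr}(\tau)=1$ and $\tau\bar\tau=(1+D)/4$. The Gram matrix is $-2N G$ with
\[
G=\begin{pmatrix}2&1\\1&\tfrac{1+D}{2}\end{pmatrix},\qquad \det G=D,\qquad G^{-1}=\frac1D\begin{pmatrix}\tfrac{1+D}{2}&-1\\-1&2\end{pmatrix}.
\]
The dual basis vectors are the columns of $-(2N)^{-1}G^{-1}$ applied to $(P(1),P(\tau))$. This gives, up to an overall sign,
\[
\frac{\tfrac{1+D}{2}P(1)-P(\tau)}{2DN}=\frac{(D+1)P(1)-2P(\tau)}{4DN},\qquad \frac{-P(1)+2P(\tau)}{2DN}=\frac{-2P(1)+4P(\tau)}{4DN},
\]
which are exactly the stated generators. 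This is consistent with the identification $M_K\cong\langle-2\rangle\perp\langle-D/2\rangle$, respectively $\Lambda_D$, recorded earlier.

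No step is a real obstacle. The only points needing care are the following.
\begin{itemize}
\item The factor of $2$ coming from polarizing $2\det$.
\item The fact that $\{1,\tau\}$ is a $\mathbb Z$-basis of $\mathcal O_K$ in each case, so that $P(1),P(\tau)$ together with $e,f$ form a basis of $S_K$.
\item The observation that a dual basis of each orthogonal summand generates the full discriminant group, because $A_{L_1\perp L_2}=A_{L_1}\oplus A_{L_2}$.
\end{itemize}
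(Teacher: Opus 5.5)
Your proposal is correct and follows essentially the same route as the paper. Both arguments polarize $2\det$, note that $e,f$ span a copy of $U(2N)$ orthogonal to the off-diagonal block, compute the Gram matrices $\operatorname{diag}(-4N,-DN)$ and $N\left(\begin{smallmatrix}-4&-2\\-2&-(D+1)\end{smallmatrix}\right)$, and invert them to read off the dual generators up to sign. You additionally state the orthogonal splitting $A_{L_1\perp L_2}=A_{L_1}\oplus A_{L_2}$, which the paper uses only implicitly.
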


\begin{proof}
The bilinear form on $S_K(2N)$ is $(X,Y)_{2N}=2N(\det(X+Y)-\det(X)-\det(Y))$.
Hence $(e,e)_{2N}=(f,f)_{2N}=0$ and $(e,f)_{2N}=2N$. Therefore $e,f$ span a scaled hyperbolic plane, and its dual is directly generated by $e/(2N)$ and $f/(2N)$.

For the off-diagonal matrices, $(P(z),P(w))_{2N}=-2N\operatorname{Tr}_{K/\mathbb Q}(z\bar w)$.

If $D\equiv 0\pmod 4$, with $\tau=\sqrt{-D}/2$, the Gram matrix of $P(1),P(\tau)$ scaled by $N$ is
$$\begin{pmatrix} -4N&0\\ 0&-DN \end{pmatrix}.$$
Inverting this matrix immediately yields the dual generators $P(1)/(4N)$ and $P(\tau)/(DN)$.

If $D\equiv 3\pmod 4$, with $\tau=(1+\sqrt{-D})/2$, the Gram matrix of $P(1),P(\tau)$ scaled by $N$ is
$$\Lambda_{D,N}= N \begin{pmatrix} -4&-2\\ -2&-(D+1) \end{pmatrix}\qquad \Lambda_{D,N}^{-1} = \frac{1}{4DN} \begin{pmatrix} -(D+1)&2\\ 2&-4 \end{pmatrix}.$$
Thus the dual basis is generated, up to signs, by
$$\frac{(D+1)P(1)-2P(\tau)}{4DN}, \qquad \frac{-2P(1)+4P(\tau)}{4DN}.$$
This proves the claim.
\end{proof}

\subsection{Automorphism groups of $K3$ surfaces}
\label{subsec:automorphism-groups}

For an integer $m \geq 1$, we define the congruence subgroups of level $m$ as follows:

\begin{definition}
Let $m \geq 1$ be an integer. The \emph{congruence subgroup} of level $m$ is defined as
\[ \operatorname{Bi}_K(m) := \ker\!\left( \operatorname{PGL}_2(\mathcal{O}_K) \longrightarrow \operatorname{PGL}_2(\mathcal{O}_K/m\mathcal{O}_K) \right). \]
We define the \emph{principal congruence subgroup} of level $m$ as
\[
    P\Gamma_K(m)
    :=
    \left\{
        [M]\in \operatorname{PGL}_2(\mathcal O_K)
        \,\middle|\,
        \begin{array}{c}
        \text{some representative }M\in\operatorname{GL}_2(\mathcal O_K)\\
        \text{satisfies }M\equiv I_2\pmod{m\mathcal O_K}
        \end{array}
    \right\}.
\]
\end{definition}

We first compare the two congruence subgroups introduced above.

\begin{proposition}
\label{cor:Bi-vs-PGamma-2}
Let $K$ be an imaginary quadratic field. The index $[\operatorname{Bi}_K(2) : P\Gamma_K(2)]$ and the structure of $\operatorname{Bi}_K(2)$ depend on the factorization of $2$ in $\mathcal{O}_K$ as follows:
\begin{enumerate}
    \item If $2$ is unramified, then $\operatorname{Bi}_K(2) = P\Gamma_K(2)$. 
    \item If $2$ ramifies, then $[\operatorname{Bi}_K(2) : P\Gamma_K(2)] \le 2$.
\end{enumerate}
\end{proposition}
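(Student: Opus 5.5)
Let $[M]\in\operatorname{Bi}_K(2)$. The plan is to show that $M\equiv uI_2\pmod{2\mathcal O_K}$ for some $u\in\mathcal O_K$, and then to ask how much freedom is left in adjusting this $u$ by units. Indeed, $[M]$ maps to the identity of $\operatorname{PGL}_2(\mathcal O_K/2\mathcal O_K)$, and by definition this means $M\equiv \bar u I_2 \pmod{2}$ for some $\bar u\in(\mathcal O_K/2\mathcal O_K)^\times$. Replacing $M$ by $\lambda M$ with $\lambda\in\mathcal O_K^\times$ does not change $[M]$, and it changes $\bar u$ to $\lambda\bar u$. Hence $[M]\in P\Gamma_K(2)$ if and only if $\bar u$ lies in the image of $\mathcal O_K^\times$ in $(\mathcal O_K/2)^\times$. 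We therefore obtain a well-defined homomorphism
\[
\operatorname{Bi}_K(2)\longrightarrow (\mathcal O_K/2\mathcal O_K)^\times\big/\operatorname{im}(\mathcal O_K^\times),\qquad [M]\mapsto \bar u,
\]
whose kernel is exactly $P\Gamma_K(2)$. It is a homomorphism because scalar matrices mod $2$ multiply.

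The index is therefore bounded by the order of the target group $Q:=(\mathcal O_K/2)^\times/\operatorname{im}(\mathcal O_K^\times)$, and we compute $(\mathcal O_K/2)^\times$ case by case.

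\emph{Split case.} Here $\mathcal O_K/2\cong\mathbb F_2\times\mathbb F_2$, whose unit group is trivial, so $Q=1$.

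\emph{Inert case.} Here $\mathcal O_K/2\cong\mathbb F_4$, with unit group cyclic of order $3$. If $\mathcal O_K^\times=\{\pm1\}$, the image of the units is trivial, so $Q$ could have order $3$. This looks like a problem, but we must also use the determinant. Since $M\equiv \bar uI$, we get $\det M\equiv \bar u^2$, and $\det M\in\mathcal O_K^\times$. When the units are only $\pm1$, this forces $\bar u^2=1$ in $\mathbb F_4^\times$, hence $\bar u=1$. The only inert case with extra units is $K=\mathbb Q(\sqrt{-3})$, where $\omega$ maps onto a generator of $\mathbb F_4^\times$, so $Q$ is trivial directly. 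So in the unramified case the combined map $[M]\mapsto \bar u$ is trivial, which gives (1). The determinant constraint will be used systematically: $\bar u^2\in\operatorname{im}(\mathcal O_K^\times)$.

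\emph{Ramified case.} Here $\mathcal O_K/2\cong\mathbb F_2[\epsilon]/(\epsilon^2)$, whose unit group is $\{1,1+\epsilon\}$, of order $2$. Hence $|Q|\le 2$, and the index is at most $2$, which gives (2). In this case we also have $(1+\epsilon)^2=1$, so the determinant constraint gives nothing new, which is consistent with the statement allowing index exactly $2$.

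The main obstacle is the inert case with $\mathcal O_K^\times=\{\pm1\}$. The naive quotient there has order $3$, and only the determinant argument rules it out. That step needs care: one must check that $\det M\equiv\bar u^2$ is computed correctly mod $2$, and that the image of $\pm1$ in $\mathbb F_4$ is trivial.
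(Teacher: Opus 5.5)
Your proof is correct and matches the paper's argument step for step. You inject $\operatorname{Bi}_K(2)/P\Gamma_K(2)$ into $(\mathcal O_K/2\mathcal O_K)^\times/\operatorname{im}(\mathcal O_K^\times)$. In the inert case with units $\pm1$ you use $\bar u^2=\overline{\det M}$ to rule out the order-$3$ group $\mathbb F_4^\times$, and for $\mathbb Q(\sqrt{-3})$ you use that the units surject onto $\mathbb F_4^\times$. In the ramified case you bound the index by the order-$2$ unit group of $\mathbb F_2[\epsilon]/(\epsilon^2)$.
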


\begin{proof}
Let $R_K = \mathcal{O}_K/2\mathcal{O}_K$. For any $[M] \in \operatorname{Bi}_K(2)$, its reduction modulo $2$ is a scalar matrix $\overline{M} = a I_2$ for some $a \in R_K^\times$. Scaling the representative $M$ by a global unit $u \in \mathcal{O}_K^\times$ multiplies $a$ by $\overline{u}$, yielding a well-defined, injective homomorphism
\[
    \operatorname{Bi}_K(2)/P\Gamma_K(2) \hookrightarrow R_K^\times/\overline{\mathcal{O}_K^\times}.
\]

Suppose first that $2$ is unramified. Then either $2$ splits or $2$ is inert.
If $2$ splits, $R_K \cong \mathbb{F}_2 \times \mathbb{F}_2$, so $R_K^\times = \{1\}$. Thus $a = 1$, and $[M] \in P\Gamma_K(2)$.
If $2$ is inert, $R_K \cong \mathbb{F}_4$, and $a^2 = \overline{\det(M)}$. If $K \neq \mathbb{Q}(\sqrt{-3})$, then $\mathcal{O}_K^\times = \{\pm 1\}$, so $\overline{\det(M)} = 1$. Since $\mathbb{F}_4^\times$ has order $3$, the equality $a^2 = 1$ implies $a = 1$. If $K = \mathbb{Q}(\sqrt{-3})$, the reduction homomorphism $\mathcal{O}_K^\times \to \mathbb{F}_4^\times$ is surjective. We may therefore choose $u \in \mathcal{O}_K^\times$ such that $\overline{u} = a^{-1}$. Then $uM \equiv I_2 \pmod{2\mathcal{O}_K}$, meaning $uM$ represents the same element in $\operatorname{PGL}_2(\mathcal{O}_K)$. Thus $[M] \in P\Gamma_K(2)$. This establishes $\operatorname{Bi}_K(2) = P\Gamma_K(2)$ in the unramified case. 

Finally, suppose that $2$ ramifies. In this case, the local ring structure gives $R_K \cong \mathbb{F}_2[\epsilon]/(\epsilon^2)$, so $|R_K^\times| = 2$. The injective homomorphism therefore bounds the index to $[\operatorname{Bi}_K(2) : P\Gamma_K(2)] \le 2$.
\end{proof}

We now turn to $K3$ surfaces. Throughout this subsection, $N$ is an odd positive integer. Let $X_{K,2N}$ be a very general K3 surface with $\operatorname{NS}(X_{K,2N}) \cong S_K(2N)$.

The guiding principle is that a Bianchi isometry is induced by an automorphism of $X_{K,2N}$ only when its action on the discriminant group matches the action of a Hodge isometry on the transcendental lattice. Since $X_{K,2N}$ is very general, the only Hodge isometries of the transcendental lattice are $\pm\operatorname{id}$. Thus, the $K3$ condition is a \emph{global sign condition} on the discriminant group. 

\begin{proposition}\label{prop:level-containment}
Let $N$ be an odd positive integer, and let $X$ be a very general $K3$ surface with $S := \operatorname{NS}(X) \simeq S_K(2N)$. An isometry $g \in O(S)$ is induced by an automorphism of $X$ if and only if $g$ preserves the ample cone and its induced action on the discriminant group $A_S$ is multiplication by a single sign $\pm 1$.
\end{proposition}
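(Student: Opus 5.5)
The plan is to combine the global Torelli theorem with the description of Hodge isometries of a very general transcendental lattice. We write $T := S^\perp \subset H^2(X,\mathbb Z)$. By Proposition~\ref{prop:nikulin-primitive-embedding} the overlattice $H^2(X,\mathbb Z)\supset S\oplus T$ is the graph of an anti-isometry $\gamma\colon A_S\to A_T$. The standard extension criterion then says that $g\oplus h$ ($g\in O(S)$, $h\in O(T)$) extends to an isometry of $H^2(X,\mathbb Z)$ exactly when $\gamma\circ\bar g=\bar h\circ\gamma$.

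\textbf{Necessity.} Suppose $g=f^*|_S$ for some $f\in\operatorname{Aut}(X)$. Then $f^*$ preserves the ample cone, so $g$ does as well. Moreover $f^*|_T$ is a Hodge isometry of $T$. Since $X$ is very general, Lemma~\ref{lem:very-general-hodge-isometries} gives $f^*|_T=\epsilon\,\mathrm{id}_T$ with $\epsilon\in\{\pm1\}$. Because $f^*$ is an isometry of $H^2(X,\mathbb Z)$, the gluing compatibility forces $\bar g=\gamma^{-1}\circ(\epsilon\,\mathrm{id}_{A_T})\circ\gamma=\epsilon\,\mathrm{id}_{A_S}$.

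\textbf{Sufficiency.} Suppose $g$ preserves the ample cone and $\bar g=\epsilon\,\mathrm{id}_{A_S}$. Put $h:=\epsilon\,\mathrm{id}_T$. This is a Hodge isometry, since it preserves $H^{2,0}$, and it satisfies $\bar h=\epsilon\,\mathrm{id}_{A_T}=\gamma\bar g\gamma^{-1}$. Hence $\phi:=g\oplus h$ extends to an isometry of $H^2(X,\mathbb Z)$, and this extension is a Hodge isometry. It preserves the ample cone because it agrees with $g$ on $S$, and in particular it maps some Kähler class to a Kähler class. The strong global Torelli theorem therefore produces a unique $f\in\operatorname{Aut}(X)$ with $f^*=\phi$, so $g$ is induced by an automorphism. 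Equivalently, in the language of Theorem~\ref{thm:torelli_mod_weyl}, such $g$ lie in $\Gamma_X$. Among these, the ones preserving the ample chamber form a set of representatives of $\Gamma_X/W(S)$, because $W(S)$ acts simply transitively on chambers.

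\textbf{Main obstacle.} The delicate step is the bookkeeping of the sign $\epsilon$ and of the uniqueness of the lift. The condition $\bar g=\pm\mathrm{id}$ only requires the two signs to match, so one must check that choosing $h=-\mathrm{id}_T$ does not create a conflict. This is where the structure of $A_S$ enters. The discriminant group $A_{S_K(2N)}$ contains $A_{U(2N)}\cong(\mathbb Z/2N)^2$ with $2N\ge 2$. It is not $2$-elementary once $N>1$ or the $M_K$-part has exponent greater than $2$. In that case $+\mathrm{id}_{A_S}\neq-\mathrm{id}_{A_S}$, so $\epsilon$ is determined by $\bar g$, and $h$ is determined as well. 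In the $2$-elementary case both choices of $h$ give valid lifts, so the statement still holds as an existence statement. I would record this remark and invoke Lemma~\ref{lem-twoelementary} for injectivity wherever it is later needed.
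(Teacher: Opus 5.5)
Your proof is correct and follows essentially the same route as the paper: necessity comes from Lemma~\ref{lem:very-general-hodge-isometries} together with compatibility under the canonical anti-isometry $A_S\cong A_T$, and sufficiency comes from gluing $g\oplus\sigma\,\mathrm{id}_T$ and applying the global Torelli theorem. Your closing discussion of the $2$-elementary case is not needed, since the statement only asserts existence of an automorphism, and $A_{S_K(2N)}$ in fact always contains elements of order greater than $2$ coming from the $M_K(2N)$ summand; it is therefore a harmless aside rather than a genuine obstacle.
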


\begin{proof}
If $g = f^\ast|_S$ for $f \in \operatorname{Aut}(X)$, then $g$ trivially preserves the ample cone. Since $X$ is very general, $f^\ast|_T = \pm \operatorname{id}_T$ by Lemma~\ref{lem:very-general-hodge-isometries}. Compatibility via the canonical anti-isometry $A_S \to A_T$ of discriminant forms forces $g$ to act on $A_S$ by the same sign $\pm 1$.

Conversely, if $g$ preserves the ample cone and acts on $A_S$ by a sign $\sigma \in \{\pm 1\}$, setting $h := \sigma \operatorname{id}_T \in O(T)$ ensures compatibility on the discriminant groups. Thus, $g \oplus h$ extends to a Hodge isometry of $H^2(X, \mathbb{Z})$. Since $g$ preserves the ample cone, the global Torelli theorem implies this isometry is induced by an automorphism of $X$.
\end{proof}

\begin{corollary}\label{Bianchicontainment}

Let $X$ be as above. An element $g \in \operatorname{Bi}_K$ is induced by an automorphism of $X$ if and only if $g \in \operatorname{Bi}_K(2N)$ and the induced action of $g$ on $A_S$ is multiplication by a single sign $\pm 1$.
\end{corollary}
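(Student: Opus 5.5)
The plan is to derive the corollary directly from Proposition~\ref{prop:level-containment}, restricted to $g\in\Bi_K$. Two things are needed. First, every element of $\Bi_K$ preserves the ample cone of $X$. Second, any $g\in\Bi_K$ acting on $A_S$ by a global sign $\pm1$ automatically lies in $\Bi_K(2N)$. Given these, the two conditions in the proposition reduce to the two conditions in the corollary. The implication ``$\Leftarrow$'' is then immediate. The implication ``$\Rightarrow$'' follows because the sign condition is part of the proposition and it forces membership in $\Bi_K(2N)$.

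\emph{Ample cone.} For $A\in S=S_K(2N)$ we have $A^2=2N\cdot 2\det(A)=4N\det(A)$, which is divisible by $4$. Hence $\Delta(S)=\emptyset$, the Weyl group $W(S)$ is trivial, and the ample cone is the entire positive cone component containing an ample class. We choose the identification $\NS(X)\cong S_K(2N)$ so that this component is $C_K^+$, the component of positive definite Hermitian matrices; composing with $-\mathrm{id}$ if necessary arranges this. We saw in Subsection~\ref{subsecBianchi} that $H\mapsto MHM^\ast$ preserves positive definiteness. Hence $\Bi_K\subset O^+(S)$ preserves the ample cone.

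\emph{Sign condition implies level $2N$.} We have $S^\vee=\frac1{2N}S_K^\vee\supseteq\frac1{2N}S_K$. Suppose $g=[M]$ acts on $A_S$ by $\varepsilon\in\{\pm1\}$. Applying this to $x=H/2N$ with $H\in S_K$ gives
\[
MHM^\ast\equiv \varepsilon H \pmod{2N\operatorname{Herm}_2(\mathcal O_K)}\qquad\text{for all } H\in\operatorname{Herm}_2(\mathcal O_K).
\]
Write $M=(m_{ij})$. Taking $H=e$ gives $|m_{21}|^2\equiv0$ and $m_{21}\overline{m_{11}}\equiv0$. Taking $H=f$ gives $|m_{12}|^2\equiv0$ and $m_{12}\overline{m_{22}}\equiv0$. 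Taking $H=I$ gives, off the diagonal, $m_{11}\overline{m_{21}}+m_{12}\overline{m_{22}}\equiv0$, and on the diagonal, $|m_{11}|^2+|m_{12}|^2\equiv\varepsilon$. Taking $H=P(z)$ gives $\operatorname{Tr}_{K/\mathbb Q}(m_{21}z\overline{m_{22}})\equiv0 \pmod{2N}$ for all $z\in\mathcal O_K$; the analogous entry for $m_{12}$ arises the same way.

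From these relations we aim to show $m_{21}\overline{m_{11}},\,m_{21}\overline{m_{22}}\in 2N\mathcal O_K$ and $m_{12}\overline{m_{11}},\,m_{12}\overline{m_{22}}\in 2N\mathcal O_K$. The row $(m_{21},m_{22})$ of $M\in\GL_2(\mathcal O_K)$ generates the unit ideal, so this gives $m_{21}\in 2N\mathcal O_K$, and similarly $m_{12}\in 2N\mathcal O_K$. The diagonal relations, together with $MeM^\ast$ and $MfM^\ast$, then give $m_{11}\overline{m_{22}}\equiv\varepsilon$ and $|m_{11}|^2\equiv|m_{22}|^2\equiv\varepsilon$, hence $m_{11}\equiv m_{22}$ modulo $2N$. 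So $M$ is scalar modulo $2N\mathcal O_K$, that is, $[M]\in\Bi_K(2N)$. I would organize this prime by prime, working in $\mathcal O_K\otimes\mathbb Z_\ell$ for each $\ell\mid 2N$.

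\emph{Main obstacle.} The hardest step is passing from trace conditions to ideal membership. The relation $\operatorname{Tr}(az)\in 2N\mathbb Z$ for all $z\in\mathcal O_K$ only gives $a\in 2N\mathfrak d_K^{-1}$, where $\mathfrak d_K$ is the different. At odd $\ell\mid N$ that is unramified this is harmless, but at $\ell=2$ (and at ramified $\ell$) there is a genuine gap. There I would first try to use the full dual lattice $S_K^\vee$, via the explicit generators of Lemma~\ref{lem:dual_generators_Hermitian_lattice}, rather than just $S_K$. The sign condition on these finer classes should supply exactly the missing factor of the different. As a fallback, I would extend scalars and view $H\mapsto MHM^\ast$ as $M\otimes\overline M$ on $M_2(\mathcal O_K)$. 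The index of $\operatorname{Herm}_2(\mathcal O_K)\otimes\mathcal O_K$ inside $M_2(\mathcal O_K)$ is supported at ramified primes, and I would control it locally at those primes.
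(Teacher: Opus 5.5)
\textbf{Verdict: same approach as the paper, but your key step is not completed, and the obstacle you name does not actually exist.}

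Your overall route is the paper's. For the ample cone you argue that $A^2=4N\det A$, so there are no $(-2)$-classes, and that $H\mapsto MHM^\ast$ preserves positive definiteness. You then reduce the sign condition to the visible level subgroup $S_K/2NS_K\subset A_S$ and test it on $e$, $f$ and $P(1)$. The paper does exactly this.

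The gap is in the decisive step, sign condition $\Rightarrow$ $[M]\in\Bi_K(2N)$, which your proposal never completes. You route it through $m_{21}\overline{m_{22}},\,m_{12}\overline{m_{11}}\in 2N\mathcal O_K$. You can only extract these from the \emph{diagonal} entries of $MP(z)M^\ast$, which give trace conditions. You declare this the main obstacle and leave it to speculative fixes: finer dual generators, or a tensor-product argument at ramified primes. The closing deduction also fails as written. Knowing $m_{21}\overline{m_{11}},\,m_{21}\overline{m_{22}}\in 2N\mathcal O_K$ does not combine with unimodularity of the row $(m_{21},m_{22})$, because $(m_{11},m_{22})$ need not generate the unit ideal; for example $M=\left(\begin{smallmatrix}0&1\\1&0\end{smallmatrix}\right)$.

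The relations you already wrote down from $H=e$ suffice. The congruence $MeM^\ast\equiv\varepsilon e$ holds modulo $2N\operatorname{Herm}_2(\mathcal O_K)$, whose off-diagonal entries form the full ideal $2N\mathcal O_K$. So the off-diagonal entry gives a genuine ideal membership $m_{21}\overline{m_{11}}\in 2N\mathcal O_K$, not merely a trace condition. The $(1,1)$-entry gives $m_{11}\overline{m_{11}}\equiv\varepsilon\pmod{2N}$, so $m_{11}$, and hence $\overline{m_{11}}$, is a unit in $\mathcal O_K/2N\mathcal O_K$. Therefore $m_{21}\equiv 0$ immediately.

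Alternatively, the first column $(m_{11},m_{21})$ is unimodular, since $\det M$ lies in the ideal it generates. Both $m_{21}\overline{m_{11}}$ and $m_{21}\overline{m_{21}}$ lie in $2N\mathcal O_K$, so again $m_{21}\in 2N\mathcal O_K$.

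Symmetrically, $f$ gives $m_{22}\overline{m_{22}}\equiv\varepsilon$ and $m_{12}\equiv 0$. The off-diagonal entry of $MP(1)M^\ast$ is $m_{12}\overline{m_{21}}+m_{11}\overline{m_{22}}$. It gives $m_{11}\overline{m_{22}}\equiv\varepsilon\equiv m_{11}\overline{m_{11}}$, hence $m_{22}\equiv m_{11}$ and $M\equiv m_{11}I_2\pmod{2N\mathcal O_K}$.

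With this fix your argument is the paper's proof. It needs no trace conditions, no different, no dual generators beyond $H/2N$, and no prime-by-prime analysis.
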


\begin{proof}
Let $g \in \operatorname{Bi}_K$ be represented by a matrix $M \in \operatorname{GL}_2(\mathcal{O}_K)$. The Bianchi action on $S_K$ is given by $$H \longmapsto MHM^\ast.$$ 
If $H$ is positive definite Hermitian, $MHM^\ast$ is also positive definite Hermitian. Hence, $g$ preserves the connected component $C_S^+$ of the positive cone containing $I_2$. Since $S_K(2N)$ has no $(-2)$-classes, $C_S^+$ is exactly the ample cone, meaning $g$ always preserves it. By Proposition~\ref{prop:level-containment}, $g$ is induced by an automorphism of $X$ if and only if its action on $A_S$ is a sign $\pm 1$.

It remains to show that this sign action forces $g \in \operatorname{Bi}_K(2N)$ (the converse is trivial). Inside $A_S$ we have the level subgroup $S_K/(2N)S_K \hookrightarrow A_S$ given by $x \mapsto \frac{x}{2N}$. Under the Hermitian model, this subgroup is naturally identified with $\operatorname{Herm}_2(R_{2N})$, where $R_{2N} := \mathcal{O}_K/2N\mathcal{O}_K$. 

Let $\overline{M}$ denote the reduction of $M$ modulo $2N$. Since $g$ acts on this visible level subgroup by some sign $\epsilon \in \{\pm 1\}$, we have $\overline{M} H \overline{M}^\ast = \epsilon H$ for every $H \in \operatorname{Herm}_2(R_{2N})$.

Applying this identity to the standard basis elements of $\operatorname{Herm}_2(R_{2N})$:
\begin{itemize}
    \item For $e = \begin{pmatrix} 1 & 0 \\ 0 & 0 \end{pmatrix}$, we get $aa^\ast = \epsilon$ (so $a \in R_{2N}^\times$) and $ca^\ast = 0$ (so $c=0$).
    \item For $f = \begin{pmatrix} 0 & 0 \\ 0 & 1 \end{pmatrix}$, we get $b=0$ and $dd^\ast = \epsilon$.
    \item For $P(1) = \begin{pmatrix} 0 & 1 \\ 1 & 0 \end{pmatrix}$, we get $ad^\ast = \epsilon$. 
\end{itemize}

Together with $aa^\ast = \epsilon$, the last condition forces $d=a$. Hence $\overline{M} = aI_2$. The reduction of $g$ modulo $2N$ is therefore projectively trivial, concluding that $g \in \operatorname{Bi}_K(2N)$.
\end{proof}

Now using the expression of Lemma~\ref{lem:dual_generators_Hermitian_lattice}, we will justify why no Galois-conjugation coset element is induced by an automorphism.

\begin{lemma}
\label{lem:galois-coset-exclusion}
Let $K$ be an imaginary quadratic field with ring of integers $\mathcal{O}_K$, and let $N\geq 1$ be odd. Let $X=X_{K,2N}$ be a very general $S_K(2N)$-polarized K3 surface. Let $\sigma$ be the nontrivial Galois automorphism of $K/\mathbb{Q}$, acting on $S_K=\operatorname{Herm}_2(\mathcal{O}_K)$ by $\sigma(P(z))=P(\overline{z})$. Then no element of the coset $\operatorname{Bi}_K\sigma$ is induced by an automorphism of $X$.
\end{lemma}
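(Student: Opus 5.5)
The plan is to argue by contradiction, using Proposition~\ref{prop:level-containment} and the same kind of discriminant-group test as in Corollary~\ref{Bianchicontainment}. Suppose $g=[M]\sigma$ is induced by an automorphism of $X$, with $M\in\operatorname{GL}_2(\mathcal O_K)$. Then $g$ acts by $H\mapsto M\overline{H}M^\ast$, where $\overline{H}$ means entrywise conjugation; this is just $\sigma(H)$ followed by the Bianchi action of $M$. By Proposition~\ref{prop:level-containment}, $g$ must act on $A_S$ as $\epsilon\cdot\mathrm{id}$ for some $\epsilon\in\{\pm1\}$. In particular it acts by $\epsilon$ on the visible subgroup $\operatorname{Herm}_2(R_{2N})\subset A_S$, with $R_{2N}=\mathcal O_K/2N\mathcal O_K$. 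So we get the congruence
\[
\overline{M}\,\overline{H}\,\overline{M}^\ast\equiv \epsilon H \pmod{2N}
\]
for every $H\in\operatorname{Herm}_2(\mathcal O_K)$.

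The first step is to pin down $\overline{M}$. Testing the congruence on $e$, $f$ and $P(1)$, which are fixed by $\sigma$, gives exactly the equations from the proof of Corollary~\ref{Bianchicontainment}. These force $\overline{M}=aI_2$ with $a\bar a=\epsilon$ in $R_{2N}$. After substituting this, the congruence becomes $a\bar a\,\overline{H}\equiv\epsilon H$, that is, $\overline{H}\equiv H \pmod{2N}$ for all $H$.

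The second step is to apply this to $P(\tau)$, with $\tau$ as in Lemma~\ref{lem:dual_generators_Hermitian_lattice}. We would need $\bar\tau\equiv\tau\pmod{2N\mathcal O_K}$, so $\tau-\bar\tau\in 2N\mathcal O_K$. If $D\equiv 0\pmod 4$, then $\tau-\bar\tau=\sqrt{-D}=2\tau$, and $2\tau\in 2N\mathcal O_K$ forces $\tau\in N\mathcal O_K$. That is impossible when $N>1$, because $\{1,\tau\}$ is a $\mathbb Z$-basis; for $N=1$ it does hold, so this case needs more. If $D\equiv 3\pmod 4$, then $\tau-\bar\tau=2\tau-1$, which has odd coordinate $-1$ on the basis vector $1$, so it never lies in $2\mathcal O_K$. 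This gives a contradiction.

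The main obstacle is the case $D\equiv 0\pmod 4$ and $N=1$. There the visible subgroup $S_K/2S_K$ cannot tell $\sigma$ apart from the identity, and we have to use the finer dual generator $P(\tau)/(DN)$ from Lemma~\ref{lem:dual_generators_Hermitian_lattice}. Since $\overline{M}\equiv aI_2 \pmod 2$, I would first reduce to checking how $g$ acts on the class of $P(\tau)/(DN)$ in $A_S$, which has order $DN$. Under $\sigma$ this class goes to $-P(\tau)/(DN)$. Meanwhile $M$ acts on it, modulo lattice vectors, essentially through multiplication by $a\bar a$ and $\det$-type scalars, by the same computation done at level $DN$ rather than $2N$. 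Comparing with the sign forced on $e/(2N)$ and $P(1)/(4N)$, where $\sigma$ acts trivially, the classes would receive opposite signs. This contradicts $\epsilon$ being a single global sign, since $DN>2$. If handling $M$ at level $DN$ turns out to be messy, the fallback is to run the whole argument of the first step directly at level $4DN$: write down the $\mathcal O_K$-module structure of $A_S$ explicitly and redo the $e$, $f$, $P(1)$ computation there, which should still give $\overline{M}$ scalar.
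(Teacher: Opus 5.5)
Your reduction to a single sign $\varepsilon$ and your treatment of the cases $D\equiv 3\pmod 4$ and $D\equiv 0\pmod 4$, $N>1$ are the same as the paper's proof, and they are correct. There is a gap in the remaining case $D\equiv 0\pmod 4$, $N=1$. This is where the lemma has real content, and your sketch does not yet give an argument there.

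From the visible subgroup you only know $M\equiv aI_2\pmod{2\mathcal O_K}$. That says nothing about how $M$ acts on classes of order $4$ and $D$. In particular, the relevant scalar is not $a\bar a$ (that agrees with it only modulo $2$), and it is not $\det M$.

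The missing step is the explicit computation. Write $M=\left(\begin{smallmatrix}a&b\\c&d\end{smallmatrix}\right)$.
\begin{itemize}
\item The off-diagonal entry of $MP(1)M^\ast$ is $a\bar d+b\bar c$, and that of $MP(\tau)M^\ast$ is $\tau(a\bar d-b\bar c)$.
\item Since $b,c\in 2\mathcal O_K$, the cross term $b\bar c$ lies in $4\mathcal O_K$.
\item The class $P(1)/4$ is fixed by $\sigma$, so it forces $a\bar d\equiv\varepsilon\pmod{4\mathcal O_K}$.
\item The class $P(\tau)/D$ is negated by $\sigma$, because $\bar\tau=-\tau$. So it forces $-\tau(a\bar d-b\bar c)\equiv\varepsilon\tau\pmod{D\mathcal O_K}$.
\item We have $D\mathcal O_K=4\tau^2\mathcal O_K\subset 4\tau\mathcal O_K$ and $\tau b\bar c\in 4\tau\mathcal O_K$. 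Reducing modulo $4\tau\mathcal O_K$ therefore gives $-\varepsilon\tau\equiv\varepsilon\tau$, i.e.\ $2\tau\in 4\tau\mathcal O_K$, which is false.
\end{itemize}
So the contradiction comes from comparing the two classes modulo $4$. It works only because $b\bar c\in 4\mathcal O_K$; the inequality $DN>2$ by itself is not what drives it.

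Your fallback would fail. In $A_{S_K(2)}$ the classes $e/2$ and $f/2$ have order $2$. So testing $e$, $f$, $P(1)$ cannot give any condition on $M$ modulo $4$, let alone modulo $4D$, and $M$ need not be scalar there. For instance, $M=\left(\begin{smallmatrix}1&2\\0&1\end{smallmatrix}\right)$ acts as follows:
\begin{itemize}
\item it fixes $e$ and $P(\tau)$ (using $\operatorname{Tr}\tau=0$);
\item it sends $f\mapsto f+4e+2P(1)$ and $P(1)\mapsto P(1)+4e$.
\end{itemize}
Hence $M$, and also $M\sigma$ on the $\sigma$-fixed classes $e/2$, $f/2$, $P(1)/4$, acts trivially there, even though $M$ is not scalar modulo $4$.
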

\begin{proof}
Suppose, for a contradiction, that an element of the form $M\sigma$, with $M\in \operatorname{GL}_2(\mathcal{O}_K)$, is induced by an automorphism of $X$. Since $X$ is very general, the only Hodge isometries of the transcendental lattice are $\pm \operatorname{id}$. Hence the action of $M\sigma$ on the discriminant group $A_{S_K(2N)}$ must be multiplication by a single sign $\varepsilon\in\{\pm 1\}$.

In particular, on the visible level subgroup $S_K/(2N)S_K \subset A_{S_K(2N)}$, the induced action must also be multiplication by $\varepsilon$. Thus, for every $H\in \operatorname{Herm}_2(\mathcal{O}_K/2N\mathcal{O}_K)$, we have $M\sigma(H)M^\ast \equiv \varepsilon H \pmod{2N\mathcal{O}_K}$.

Applying this congruence to $e$, $f$, and $P(1)$, which are fixed by $\sigma$, gives the same calculation as in Proposition~\ref{prop:level-containment}: there exists a unit $u\in(\mathcal{O}_K/2N\mathcal{O}_K)^\times$ such that $M\equiv uI_2 \pmod{2N\mathcal{O}_K}$ and $u\overline{u}\equiv \varepsilon \pmod{2N}$.

Now choose an integral basis $1,\tau$ of $\mathcal{O}_K$. We distinguish the two cases.

First suppose that the fundamental discriminant is odd, so that $\tau=\frac{1+\sqrt{-D}}{2}$ and $\overline{\tau}=1-\tau$. Applying the above congruence to $P(\tau)$, we obtain $M\sigma(P(\tau))M^\ast \equiv u\overline{u}P(\overline{\tau}) \equiv \varepsilon P(1-\tau) \pmod{2N}$. But the global sign condition requires this to be congruent to $\varepsilon P(\tau)$ modulo $2N$. Hence $P(1-2\tau)\equiv 0 \pmod{2N}$. This is impossible, because $1-2\tau\notin 2N\mathcal{O}_K$.

Next suppose that the fundamental discriminant is even. Write $\tau=\frac{\sqrt{-D}}{2}$ and $\overline{\tau}=-\tau$. If $N>1$, the same argument gives $P(-\tau)\equiv P(\tau)\pmod{2N}$, or equivalently $2\tau\in 2N\mathcal{O}_K$, which is impossible.

It remains to treat the case $N=1$. In this case, the level subgroup alone does not give a contradiction, so we use the two-primary discriminant classes. The class $P(1)/4\in A_{S_K(2)}$ gives $MP(1)M^\ast\equiv \varepsilon P(1)\pmod{4S_K}$. Writing $M= \left(\begin{smallmatrix} a & b \\ c & d \end{smallmatrix}\right)$ and using $M\equiv uI_2\pmod{2\mathcal{O}_K}$, we get $a\overline{d}+b\overline{c}\equiv \varepsilon \pmod{4\mathcal{O}_K}$. Since $b,c\in 2\mathcal{O}_K$, this implies $a\overline{d}\equiv \varepsilon \pmod{4\mathcal{O}_K}$.

On the other hand, the class $P(\tau)/D\in A_{S_K(2)}$ gives $M\sigma(P(\tau))M^\ast \equiv \varepsilon P(\tau) \pmod{DS_K}$. Since $\sigma(P(\tau))=P(-\tau)$, the off-diagonal entry of $M\sigma(P(\tau))M^\ast$ is congruent to $-\tau a\overline{d} \equiv -\varepsilon\tau \pmod{4\tau\mathcal{O}_K}$. The global sign condition requires this to be congruent to $\varepsilon\tau$ modulo $D\mathcal{O}_K$. Hence $2\tau\in D\mathcal{O}_K+4\tau\mathcal{O}_K$. Since $D=-4\tau^2$, we have $D\mathcal{O}_K\subset 4\tau\mathcal{O}_K$. Therefore $2\tau\in 4\tau\mathcal{O}_K$, which forces $1\in 2\mathcal{O}_K$, a contradiction.

Thus no element of $\operatorname{Bi}_K\sigma$ can be induced by an automorphism of $X$.
\end{proof}

We now compute the necessary local characters. The discriminant group $A_{S(2N)} = \frac{1}{2N}S^\vee/S$ sits in an exact sequence:
\[ 0 \longrightarrow S/(2N)S \longrightarrow A_{S(2N)} \longrightarrow A_S \longrightarrow 0. \]
The global sign condition thus has two sources: the level part $S/(2N)S$, and the original discriminant group $A_S$. First consider the primes dividing $N$. For an integer $m \geq 1$, put $\mathcal{N}_m := \{ u\overline{u} \mid u \in R_m^\times \} \subset (\mathbb{Z}/m\mathbb{Z})^\times$.

\begin{lemma}
\label{lem:level-norm-character}
Let $g \in \operatorname{Bi}_K(2N)$, and let $p^e \Vert N$ meaning that \(p^e\) exactly divides
\(N\). Choose a representative $M \in \operatorname{GL}_2(\mathcal{O}_K)$. Then there exists $u_{p^e} \in (\mathcal{O}_K/p^e\mathcal{O}_K)^\times$ such that $M \equiv u_{p^e}I_2 \pmod{p^e\mathcal{O}_K}$. The element $\alpha_{p^e}(g) := u_{p^e}\overline{u}_{p^e} \in \mathcal{N}_{p^e}$ is independent of the representative $M$. Moreover, on $S/p^eS \cong \operatorname{Herm}_2(\mathcal{O}_K/p^e\mathcal{O}_K)$, $g$ acts by scalar multiplication with $\alpha_{p^e}(g)$.
\end{lemma}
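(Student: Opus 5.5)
The plan is to reduce everything to the definition of $\operatorname{Bi}_K(2N)$ together with the Chinese remainder theorem. We then run the same direct computation as in the proof of Corollary~\ref{Bianchicontainment}, now on the $p$-primary piece.

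First, since $[M]\in\operatorname{Bi}_K(2N)$, the reduction $\overline{M}\in \operatorname{GL}_2(\mathcal O_K/2N\mathcal O_K)$ is projectively trivial. This means $\overline{M}=vI_2$ for some $v\in(\mathcal O_K/2N\mathcal O_K)^\times$: a matrix whose image in $\operatorname{PGL}_2$ of the ring is trivial is a scalar by a unit of the ring. Since $p^e\mid 2N$, the further reduction modulo $p^e\mathcal O_K$ gives $M\equiv u_{p^e}I_2\pmod{p^e\mathcal O_K}$. Here $u_{p^e}$ is the image of $v$, so it is a unit in $\mathcal O_K/p^e\mathcal O_K$.

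Next comes independence of the representative. Any other representative is $\lambda M$ with $\lambda\in\mathcal O_K^\times$, and its scalar is $\lambda u_{p^e}$. Then
\[
(\lambda u_{p^e})\overline{(\lambda u_{p^e})}=\lambda\overline{\lambda}\,u_{p^e}\overline{u}_{p^e}=u_{p^e}\overline{u}_{p^e},
\]
since $\lambda\overline\lambda=|\lambda|^2=1$ for a unit of an imaginary quadratic field. We should also observe that $u\overline u$ lies in $\mathbb Z/p^e\mathbb Z$, because it is fixed by conjugation. More precisely, it is the image of the norm map on $(\mathcal O_K/p^e)^\times$, and that image lands in $(\mathbb Z/p^e)^\times$ because $\mathcal O_K/p^e\cong \mathbb Z/p^e\otimes\mathcal O_K$ and the norm is $\mathbb Z$-valued. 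Hence $\alpha_{p^e}(g)\in\mathcal N_{p^e}$ is well defined.

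Finally, for the action, the identification $S/p^eS\cong\operatorname{Herm}_2(\mathcal O_K/p^e\mathcal O_K)$ comes from reducing the entries, which is compatible with $\mathbb Z$-bases: $a,c\in\mathbb Z/p^e$ and $b\in\mathcal O_K/p^e$. For $H\in S_K$ we compute
\[
MHM^\ast\equiv u_{p^e}H\overline{u}_{p^e}=u_{p^e}\overline{u}_{p^e}H \pmod{p^e}.
\]
So $g$ acts on $S/p^eS$ as multiplication by $\alpha_{p^e}(g)$.

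None of these steps is a real obstacle. The only point needing care is the first: justifying that projective triviality over the finite ring forces a scalar by a ring unit. That is the definition of the kernel of $\operatorname{GL}_2(R)\to\operatorname{PGL}_2(R)$, namely the scalar units $R^\times I_2$.
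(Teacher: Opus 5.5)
Your proof is correct and follows the paper's own argument. Projective triviality modulo $2N$ gives a scalar $u_{p^e}$ after reducing modulo $p^e$. The unit ambiguity $\lambda$ drops out because $\lambda\overline{\lambda}=1$, and then $MHM^\ast\equiv u_{p^e}\overline{u}_{p^e}H \pmod{p^e}$.

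Your extra justifications fill in points the paper leaves implicit. These are that the kernel of $\operatorname{GL}_2(R)\to\operatorname{PGL}_2(R)$ is $R^\times I_2$, and that $u\overline{u}$ lies in $(\mathbb Z/p^e\mathbb Z)^\times$. The second holds because it is the reduction of the norm of a lift, not merely because it is conjugation-fixed, which alone would not suffice if $p$ ramified.
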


\begin{proof}
Since $g \in \operatorname{Bi}_K(2N)$, its reduction modulo $p^e\mathcal{O}_K$ is projectively scalar, so $M \equiv u_{p^e}I_2 \pmod{p^e\mathcal{O}_K}$. For any $H \in \operatorname{Herm}_2(\mathcal{O}_K/p^e\mathcal{O}_K)$, $MHM^\ast \equiv (u_{p^e}I_2)H(\overline{u}_{p^e}I_2) = u_{p^e}\overline{u}_{p^e}H \pmod{p^e}$. Replacing $M$ by $\lambda M$ for $\lambda \in \mathcal{O}_K^\times$ scales $u_{p^e}$ by $\lambda$. Since $\lambda\overline{\lambda} = 1$, the norm $u_{p^e}\overline{u}_{p^e}$ is well-defined.
\end{proof}

Next, consider $A_S$. Its odd primary part only appears at odd primes ramified in $K$, where the action is controlled by the determinant.

\begin{lemma} \label{lem:odd-ramified-character}
Let $\ell\neq 2$ be a prime dividing $D$, and write $\ell\mathcal{O}_K=\mathfrak{p}^2$. Let $\delta=\sqrt{-D}$. Then the $\ell$-primary part $A_{S_K,\ell}$ is generated by $u_\ell:=\frac{1}{\ell}P(\delta)\in S_K^\vee/S_K$. For $g=[M]\in \operatorname{Bi}_K$, with $M\in \operatorname{GL}_2(\mathcal{O}_K)$, the induced action of $g$ on $A_{S_K,\ell}$ is multiplication by $\varepsilon_\ell(g) := \det(M)\bmod \mathfrak{p} \in (\mathcal{O}_K/\mathfrak{p})^\times \cong \mathbb{F}_\ell^\times$. Moreover, this scalar is a sign: $\varepsilon_\ell(g)\in\{\pm 1\}$.
\end{lemma}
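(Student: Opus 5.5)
The plan is to prove the three assertions in order: identify $A_{S_K,\ell}$ and a generator, compute the action of $M$ on that generator, and then show the resulting scalar is $\pm1$.

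\emph{Step 1: the $\ell$-primary part and its generator.} Work with the decomposition $S_K = U\perp P(\mathcal O_K)$, where $P(\mathcal O_K)$ carries the form $(P(z),P(w)) = -\operatorname{Tr}_{K/\mathbb Q}(z\bar w)$. Since $U$ is unimodular, $A_{S_K}$ is the discriminant group of this off-diagonal block. Its dual is
\[
P(\mathcal O_K)^\vee = P(\mathfrak d^{-1}),
\]
with $\mathfrak d = \delta\mathcal O_K$ the different. Hence $A_{S_K}\cong \delta^{-1}\mathcal O_K/\mathcal O_K\cong \mathcal O_K/\delta\mathcal O_K$, which has order $D$. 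Its $\ell$-part is $\mathcal O_K/\mathfrak p \cong \mathbb F_\ell$, cyclic of order $\ell$. The element $\delta/\ell$ equals $\delta^{-1}\cdot(\delta^2/\ell)$, and $\delta^2/\ell = -D/\ell$ is an integer. So $P(\delta)/\ell$ lies in $S_K^\vee$. It is nonzero modulo $S_K$ because $\delta/\ell\notin\mathcal O_K$, and it is killed by $\ell$. Therefore $u_\ell$ generates $A_{S_K,\ell}$. This matches the explicit dual generators of Lemma~\ref{lem:dual_generators_Hermitian_lattice} with $N=1$, up to a unit multiple.

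\emph{Step 2: the action of $M$ on $u_\ell$.} Write $M=\left(\begin{smallmatrix} a&b\\ c&d\end{smallmatrix}\right)$. The key identity is that for $z$ with $\bar z=-z$,
\[
M P(z) M^\ast = \begin{pmatrix} a\bar b z - b\bar a z & a\bar d z - b\bar c z\\ \ast & c\bar d z - d\bar c z\end{pmatrix}.
\]
Apply this with $z=\delta/\ell$. Then the diagonal entry $a\bar b z-\bar a b z$ equals $z(a\bar b-\overline{a\bar b})$. Now $a\bar b-\overline{a\bar b}\in\delta\mathbb Z$, since the imaginary part of any element of $\mathcal O_K$ lies in $\tfrac{\delta}{2}\mathbb Z$ and twice it lies in $\delta\mathbb Z$. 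So this entry lies in $\tfrac{\delta^2}{\ell}\mathbb Z\subset\mathbb Z$, and the same holds for the other diagonal entry. The diagonal contributions are therefore trivial in $A_{S_K}$.

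The off-diagonal entry is $z(a\bar d-b\bar c)$. Now $a\bar d-b\bar c\equiv ad-bc=\det M \pmod{\mathfrak p}$, because $\bar x\equiv x \pmod{\mathfrak p}$ for a ramified prime $\mathfrak p=\bar{\mathfrak p}$. Also, multiplying $z=\delta/\ell$ by an element of $\mathfrak p$ lands in $\mathcal O_K$ plus $\mathfrak d^{-1}$-trivial terms. Indeed, $\ell z=\delta$, and $\mathfrak p\cdot\delta/\ell\subset\mathfrak d^{-1}$ with trivial $\ell$-component. Hence in $A_{S_K,\ell}\cong\mathcal O_K/\mathfrak p$ we get $g\cdot u_\ell = (\det M \bmod \mathfrak p)\,u_\ell$. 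The well-definedness under $M\mapsto\lambda M$ follows because $\lambda^2\equiv\lambda\bar\lambda=1 \pmod{\mathfrak p}$.

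\emph{Step 3: the scalar is a sign.} The determinant $\det M$ is a unit of $\mathcal O_K$. For $K\ne\mathbb Q(i),\mathbb Q(\sqrt{-3})$ we have $\det M=\pm1$ directly. In the exceptional cases, $\ell$ odd and $\ell\mid D$ forces $K=\mathbb Q(\sqrt{-3})$ with $\ell=3$. There $\mathcal O_K/\mathfrak p=\mathbb F_3$, and every sixth root of unity reduces to $\pm1$ since $\zeta_3\equiv1 \pmod{\sqrt{-3}}$. Alternatively, an isometry of the order-$\ell$ cyclic form $q$ must multiply by $x$ with $x^2=1$, so $x=\pm1$. I would use this last argument as the clean uniform proof.

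The main obstacle is the bookkeeping in Step 2. The danger is that the diagonal terms and the $\mathfrak p$-adic error terms genuinely vanish in the $\ell$-part rather than only in the full group. I would handle this by working throughout in $\mathfrak d^{-1}/\mathcal O_K$ localized at $\mathfrak p$.
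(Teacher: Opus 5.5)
Your proposal is correct and follows the paper's proof. Like the paper, you expand $MP(\delta)M^\ast$ and kill the diagonal entries using $x-\bar x\in\delta\mathbb{Z}$ for $x\in\mathcal O_K$. You then reduce the off-diagonal entry to $\det(M)\bmod\mathfrak p$, using that conjugation is trivial modulo the ramified prime $\mathfrak p$ and that $\mathfrak p\cdot\delta/\ell=\mathfrak d\mathfrak p^{-1}\subset\mathcal O_K$; so the error term is genuinely integral, not merely trivial in the $\ell$-part.

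The differences are refinements of detail. You justify that $u_\ell$ generates $A_{S_K,\ell}$ via $P(\mathcal O_K)^\vee=P(\mathfrak d^{-1})$, and your arguments $\lambda^2\equiv\lambda\bar\lambda=1 \pmod{\mathfrak p}$ and $q(xu_\ell)=x^2q(u_\ell)$ give well-definedness and the sign uniformly, whereas the paper asserts the generator and checks both points by listing unit groups.
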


\begin{proof}
Write $M= \left(\begin{smallmatrix} a & b \\ c & d \end{smallmatrix}\right)$. A direct calculation gives $MP(\delta)M^\ast = \operatorname{Tr}_{K/\mathbb{Q}}(a\delta\overline{b})e + \operatorname{Tr}_{K/\mathbb{Q}}(c\delta\overline{d})f + P(\delta(a\overline{d}-b\overline{c}))$. For every $x\in\mathcal{O}_K$, we have $\operatorname{Tr}_{K/\mathbb{Q}}(\delta x) = \delta(x-\overline{x}) \in D\mathbb{Z} \subset \ell\mathbb{Z}$. Hence the diagonal terms vanish modulo $\ell S_K$. Since $\ell$ is ramified, complex conjugation acts trivially on $\mathcal{O}_K/\mathfrak{p}$. Therefore $a\overline{d}-b\overline{c} \equiv ad-bc = \det(M) \pmod{\mathfrak{p}}$. Because $\delta\in \mathfrak{p}$, this implies $\delta\bigl(a\overline{d}-b\overline{c}-\det(M)\bigr) \in \mathfrak{p}^2 = \ell\mathcal{O}_K$. Thus $MP(\delta)M^\ast \equiv P(\delta\det(M)) \pmod{\ell S_K}$. Dividing by $\ell$, we obtain $g(u_\ell) = \varepsilon_\ell(g)u_\ell$, where $\varepsilon_\ell(g)=\det(M)\bmod\mathfrak{p}$.

It remains to check that $\varepsilon_\ell(g)$ is well-defined and is a sign. If $M$ is replaced by $\lambda M$, with $\lambda\in\mathcal{O}_K^\times$, then $\det(\lambda M)=\lambda^2\det(M)$. For an odd ramified prime $\ell$, the reduction of every global unit satisfies $\lambda^2\equiv 1\pmod{\mathfrak{p}}$. Indeed, if $\mathcal{O}_K^\times=\{\pm 1\}$, this is immediate. The only imaginary quadratic field with extra units and an odd ramified prime is $K=\mathbb{Q}(\sqrt{-3})$, where $\ell=3$ and $(\mathcal{O}_K/\mathfrak{p})^\times\cong \mathbb{F}_3^\times=\{\pm 1\}$. Therefore the scalar $\varepsilon_\ell(g)$ is independent of the representative $M$. Finally, since $M\in \operatorname{GL}_2(\mathcal{O}_K)$, we have $\det(M)\in \mathcal{O}_K^\times$. By the preceding discussion, its reduction modulo $\mathfrak{p}$ is a sign. Hence $\varepsilon_\ell(g)\in\{\pm 1\}$.
\end{proof}

We now package these local scalars. Define
\[ R_N := \left( \prod_{\ell \mid D, \ell \neq 2} \{\pm 1\} \right) \times \left( \prod_{p^e \Vert N} \mathcal{N}_{p^e} \right), \]
and let $\Psi_N \colon \operatorname{Bi}_K(2N) \longrightarrow R_N$ be given by $\Psi_N(g) := \big( (\epsilon_\ell(g))_{\ell \mid D, \ell \neq 2}, (\alpha_{p^e}(g))_{p^e \Vert N} \big)$.
The diagonal sign subgroup is $\Delta_N := \big\{ \big( (\sigma), (\sigma) \big) \mid \sigma \in \{\pm 1\}, \sigma \in \mathcal{N}_{p^e} \big\}$.

\begin{definition}
\label{def:CN}
Let $\mathcal{C}_N \subset \operatorname{Bi}_K(2N)$ be the subgroup of elements $g$ for which there exists a single sign $\sigma \in \{\pm 1\}$ such that:
\begin{enumerate}
    \item $\alpha_{p^e}(g) = \sigma$ for every $p^e \Vert N$;
    \item $\epsilon_\ell(g) = \sigma$ for every odd prime $\ell \mid D$;
    \item If $2 \mid D$, the induced action of $g$ on $A_{S_K(2N),2}$ is multiplication by $\sigma$.
\end{enumerate}
\end{definition}

\begin{theorem}
\label{thm:main-bianchi-aut-group}
Let $K$ be an imaginary quadratic field with fundamental discriminant $-D$. Let $N$ be an odd positive integer, and assume that no prime divisor of $N$ ramifies in $K$. Let $X = X_{K,2N}$ be very general with $\operatorname{NS}(X) \cong S_K(2N)$. Then $\operatorname{Bi}_K \cap \operatorname{Aut}(X) = \mathcal{C}_N$. In particular, if $2 \nmid D$, then $\operatorname{Bi}_K \cap \operatorname{Aut}(X) = \Psi_N^{-1}(\Delta_N)$.
\end{theorem}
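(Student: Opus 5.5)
By Corollary~\ref{Bianchicontainment}, an element $g\in\operatorname{Bi}_K$ is induced by an automorphism of $X$ if and only if $g\in\operatorname{Bi}_K(2N)$ and $g$ acts on $A_S$, $S=S_K(2N)$, by a single sign $\sigma\in\{\pm1\}$. So I only need to analyse this sign condition for $g\in\operatorname{Bi}_K(2N)$ and show it is equivalent to conditions (1)--(3) of Definition~\ref{def:CN}.

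First decompose $A_S=\bigoplus_p A_{S,p}$ into $p$-primary parts. Acting on $A_S$ by $\sigma$ is the same as acting on every $A_{S,p}$ by $\sigma$ with one common $\sigma$. The primes that occur are $2$, the primes dividing $N$, and the odd primes dividing $D$. The hypothesis that no prime divisor of $N$ ramifies means an odd prime $p$ cannot divide both $N$ and $D$. Hence each odd $p$ lies in exactly one of the following cases.

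\textbf{Case $p^e\Vert N$.} Here $p\nmid\operatorname{disc}(S_K)=D$ (up to the power of $2$), so $S_K\otimes\mathbb Z_p$ is unimodular. Thus $A_{S,p}\cong S_K/p^eS_K\cong\operatorname{Herm}_2(\mathcal O_K/p^e)$, embedded via $x\mapsto x/(2N)$ up to a unit. By Lemma~\ref{lem:level-norm-character}, $g$ acts on it by $\alpha_{p^e}(g)$. So the condition on this part is $\alpha_{p^e}(g)=\sigma$, which is condition (1).

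\textbf{Case $\ell\mid D$ odd.} Here $\ell\nmid 2N$, so scaling by $2N$ is invertible $\ell$-adically, and $A_{S,\ell}\cong A_{S_K,\ell}$ equivariantly. The multiplication $x\mapsto 2N x$ identifies them and commutes with $g$. Lemma~\ref{lem:odd-ramified-character} then gives the action as $\varepsilon_\ell(g)$, which is condition (2).

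\textbf{Case $p=2$.} If $2\mid D$, condition (3) is literally the requirement on $A_{S,2}$. If $2\nmid D$, then $S_K\otimes\mathbb Z_2$ is unimodular and $A_{S,2}\cong S_K/2S_K\cong\operatorname{Herm}_2(\mathcal O_K/2)$. Since $g\in\operatorname{Bi}_K(2N)$, a representative satisfies $M\equiv uI_2\pmod 2$, so $g$ acts on this part by $u\bar u\in(\mathbb Z/2)^\times=\{1\}$. Since $-1=1$ on a $2$-torsion group, the $2$-part imposes no condition.

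Combining the cases: $g$ acts by a global $\sigma$ exactly when (1)--(3) hold, so $\operatorname{Bi}_K\cap\operatorname{Aut}(X)=\mathcal C_N$. When $2\nmid D$, condition (3) is vacuous and (1)--(2) say precisely that $\Psi_N(g)\in\Delta_N$. A small point is needed when $\sigma=-1$ but $-1\notin\mathcal N_{p^e}$: such $\sigma$ simply cannot occur, which matches the definition of $\Delta_N$.

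The main obstacle is making the local identifications rigorous. I must check that $A_{S,p}$ really equals the image of $S_K/p^eS_K$ (respectively $A_{S_K,\ell}$) inside the exact sequence $0\to S_K/2NS_K\to A_S\to A_{S_K}\to0$, which splits at each odd prime because the orders are coprime. I must also check that the Bianchi action is compatible with these identifications. I would handle this $\ell$-adically by tensoring with $\mathbb Z_p$, using that $A_{S,p}=S^\vee\otimes\mathbb Z_p/S\otimes\mathbb Z_p$ and $S^\vee=\frac1{2N}S_K^\vee$.
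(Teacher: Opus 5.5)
Your proposal is correct and follows essentially the same route as the paper. Both arguments reduce, via the global-sign criterion, to $g\in\operatorname{Bi}_K(2N)$ acting on $A_{S_K(2N)}$ by a single sign, and then split into primary parts. At $p^e\Vert N$ you both use unimodularity and Lemma~\ref{lem:level-norm-character}; at odd $\ell\mid D$ the identification $A_{S_K(2N),\ell}\cong A_{S_K,\ell}$ and Lemma~\ref{lem:odd-ramified-character}; and when $2\nmid D$ the triviality of the $2$-part. Working $p$-adically with $S^\vee=\frac{1}{2N}S_K^\vee$ supplies the local identifications that the paper's proof only asserts.
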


\begin{proof}
By Proposition~\ref{prop:level-containment}, $g \in \operatorname{Bi}_K$ is induced by an automorphism of $X$ if and only if $g \in \operatorname{Bi}_K(2N)$ and $g$ acts on $A_{S_K(2N)}$ by a single sign $\sigma \in \{\pm 1\}$. For $p^e \parallel N$, $p$ is unramified in $K$, so $S_K \otimes \mathbb{Z}_p$ is unimodular; $g$ acts on $A_{S_K(2N),p}$ by $\alpha_{p^e}(g)$, forcing $\alpha_{p^e}(g) = \sigma$. For an odd prime $\ell \mid D$, $\ell \nmid N$, so $A_{S_K(2N),\ell} \cong A_{S_K,\ell}$, forcing $\epsilon_{\ell}(g) = \sigma$. If $2 \mid D$, the condition on $A_{S_K(2N),2}$ completes the definition of $\mathcal{C}_N$. If $2 \nmid D$, $S_K \otimes \mathbb{Z}_2$ is unimodular; since $g \in \operatorname{Bi}_K(2N)$, its $2$-primary action is trivial, and $+1 \equiv -1 \pmod 2$ poses no obstruction. The condition then reduces exactly to $\Psi_N(g) \in \Delta_N$.
\end{proof}

\begin{corollary}
\label{cor:index-bound}
Let $K$ be an imaginary quadratic field with fundamental discriminant
$-D$ such that $2\nmid D$. Let $N$ be an odd positive integer, and
assume that no prime divisor of $N$ ramifies in $K$. Let
$X=X_{K,2N}$ be very general with $\operatorname{NS}(X)\cong S_K(2N).$
Then
\[
    [\operatorname{Bi}_K(2N):
    \operatorname{Bi}_K\cap\operatorname{Aut}(X)]
    =
    \frac{|\operatorname{im}\Psi_N|}
    {|\operatorname{im}\Psi_N\cap\Delta_N|}.
\]
Moreover,
\[
    [\operatorname{Bi}_K(2N):
    \operatorname{Bi}_K\cap\operatorname{Aut}(X)]
    \leq
    2^{\omega(N)},
\]
where $\omega(N)$ denotes the number of distinct prime divisors of $N$.
\end{corollary}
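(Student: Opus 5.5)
The index formula follows from Theorem~\ref{thm:main-bianchi-aut-group}, and the bound from counting the order-two elements of $R_N$. Since $2\nmid D$, that theorem gives $\operatorname{Bi}_K\cap\operatorname{Aut}(X)=\Psi_N^{-1}(\Delta_N)$. First I check that $\Psi_N$ is a group homomorphism. Each component $\alpha_{p^e}$ is the reduction of the scalar $u_{p^e}$ followed by the norm $u\mapsto u\overline u$, hence multiplicative. Each $\epsilon_\ell$ is $\det$ modulo $\mathfrak p$, also multiplicative. Both are well defined by Lemmas~\ref{lem:level-norm-character} and~\ref{lem:odd-ramified-character}. The target $R_N$ is abelian and $\Delta_N\subset R_N$ is a subgroup, so $\Psi_N^{-1}(\Delta_N)$ is a subgroup containing $\ker\Psi_N$. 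Passing to the quotient $\operatorname{Bi}_K(2N)/\ker\Psi_N\cong\operatorname{im}\Psi_N$, the preimage corresponds to $\operatorname{im}\Psi_N\cap\Delta_N$. This gives
\[
[\operatorname{Bi}_K(2N):\Psi_N^{-1}(\Delta_N)]=[\operatorname{im}\Psi_N:\operatorname{im}\Psi_N\cap\Delta_N],
\]
which is the stated formula.

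For the bound, I would show that every element of $\operatorname{im}\Psi_N$ has all coordinates in $\{\pm1\}$. The components $\epsilon_\ell$ are already signs. For $\alpha_{p^e}(g)=u\overline u$, note that $M\equiv uI_2 \pmod{p^e}$ gives $\det M\equiv u^2$. Since $\det M\in\mathcal O_K^\times$, this makes $u^2$ the image of a global unit. Using Lemma~\ref{lem:level-norm-character}, $u\overline u$ is an element of $(\mathbb Z/p^e)^\times$ whose relation to $\det M$ I expect to pin it down to a sign. Concretely, $u\overline u\cdot \overline{u}/u$ equals a Galois-twisted expression of $\det M$. The cleanest route is to use the normalized Hermitian action: $g$ preserves $\det$ on $S_K$. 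Reducing modulo $p^e$ on the level part, where $g$ acts by the scalar $\alpha=\alpha_{p^e}(g)$, gives $\det(\alpha H)=\alpha^2\det H\equiv\det H$ for all $H$. Taking $H=I_2$ yields $\alpha^2\equiv1 \pmod{p^e}$. Since $p$ is odd, $(\mathbb Z/p^e)^\times$ is cyclic, so $\alpha=\pm1$.

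Hence $\operatorname{im}\Psi_N$ lies in the elementary abelian subgroup $\{\pm1\}^{r+\omega(N)}$, where $r$ is the number of odd primes dividing $D$. The subgroup $\Delta_N$ meets it in at least $\{(1,\dots,1)\}$. To reach $2^{\omega(N)}$ rather than $2^{r+\omega(N)}$, I would use the sign constraint on the $\epsilon_\ell$ coordinates. For $\mathcal O_K^\times=\{\pm1\}$, $\det M=\pm1$, so all $\epsilon_\ell(g)$ equal the same global sign $\det M$. The case $K=\mathbb Q(\sqrt{-3})$ has $r=1$, so the same conclusion holds there. Thus the projection of $\operatorname{im}\Psi_N$ to the $\epsilon$-block lies in the diagonal $\{\pm1\}$. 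So $\operatorname{im}\Psi_N\subseteq\{\pm1\}_{\mathrm{diag}}\times\{\pm1\}^{\omega(N)}$, a group of order $2^{1+\omega(N)}$.

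The main obstacle is controlling $\operatorname{im}\Psi_N\cap\Delta_N$ when there are no odd ramified primes ($r=0$, e.g.\ $D$ odd forces $r\ge1$, so this is fine) and whether $-1\in\mathcal N_{p^e}$. The plan is to split into two cases. If $\operatorname{im}\Psi_N$ contains an element with all coordinates equal to $-1$, the diagonal intersection has order $2$ and the index is at most $2^{1+\omega(N)}/2$. Otherwise the image misses the $\epsilon=-1$ diagonal completely, so it already lies in the subgroup of order $2^{\omega(N)}$ where $\epsilon\equiv1$. In either case the index is at most $2^{\omega(N)}$.
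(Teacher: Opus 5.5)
Your index formula and the containment $\operatorname{im}\Psi_N\subseteq\{\pm1\}_{\mathrm{diag}}\times\{\pm1\}^{\omega(N)}$ follow the paper's proof. Deriving $\alpha_{p^e}(g)^2\equiv 1$ from invariance of $\det$ on $S_K$ at $H=I_2$ is a valid substitute for the paper's computation $(u\overline u)^2\equiv\det M\,\overline{\det M}=1$.

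The gap is in your final case split. The negation of ``$\operatorname{im}\Psi_N$ contains $(-1,\dots,-1)$'' is not ``$\operatorname{im}\Psi_N\subseteq\{\epsilon=1\}$''. The image can contain elements with $\epsilon=-1$ and some $\alpha_{p^e}=+1$ while still missing the all-$(-1)$ element, and this actually happens. Take $K=\mathbb Q(\sqrt{-7})$ and $N=3$. The prime $3$ is inert with $3\equiv 3\pmod 4$. If $u^2=\delta\in\{\pm1\}$ in $\mathbb F_9$, then $\overline u=u^3=\delta u$, so $\alpha_3=u\overline u=\delta^2=1$ identically. On the other hand, $\epsilon_7=\det M$ does attain $-1$ on $\operatorname{Bi}_K(6)$. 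Indeed, $-1$ is a square in $(\mathcal O_K/6\mathcal O_K)^\times\cong(\mathbb F_2^\times)^2\times\mathbb F_9^\times$, and $\operatorname{SL}_2(\mathcal O_K)\to\operatorname{SL}_2(\mathcal O_K/6\mathcal O_K)$ is surjective. Hence $\operatorname{im}\Psi_3=\{(1,1),(-1,1)\}$: it falls into your second case but is not contained in $\{\epsilon=1\}$. So the stated reasoning for that case is invalid, even though the bound itself holds here, since the index is $2=2^{\omega(N)}$.

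The repair is one line and needs no case split. Write $I:=\operatorname{im}\Psi_N$, and let $\eta$ denote the common $\epsilon$-coordinate. Define
\[
  x\longmapsto\bigl(\eta(x)\,\alpha_{p^e}(x)\bigr)_{p^e\Vert N}\in\{\pm1\}^{\omega(N)}.
\]
This map is a homomorphism on the elementary abelian group containing $I$. Its kernel on $I$ consists exactly of the elements of $I$ with all coordinates equal. Such an element lies in $\Delta_N$, because its common value is an $\alpha_{p^e}$-value and hence lies in $\mathcal N_{p^e}$. So the kernel is $I\cap\Delta_N$, and therefore $|I|/|I\cap\Delta_N|\le 2^{\omega(N)}$.

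The paper phrases the same count as $[I\Delta_N:\Delta_N]\le[H_N:\Delta_N]=2^{\omega(N)}$. That version implicitly uses $|\Delta_N|=2$, i.e.\ $-1\in\mathcal N_{p^e}$, which is true because the norm is surjective at unramified $p$. The kernel formulation avoids needing this.
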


\begin{proof}
By Theorem~\ref{thm:main-bianchi-aut-group}, $\operatorname{Bi}_K\cap\operatorname{Aut}(X)
    =
    \Psi_N^{-1}(\Delta_N).$
The First Isomorphism Theorem therefore gives
\[
    [\operatorname{Bi}_K(2N):
    \operatorname{Bi}_K\cap\operatorname{Aut}(X)]
    =
    \frac{|\operatorname{im}\Psi_N|}
    {|\operatorname{im}\Psi_N\cap\Delta_N|}.
\]

We now describe a subgroup containing
$\operatorname{im}\Psi_N$. For
$g=[M]\in\operatorname{Bi}_K(2N)$ and $p^e\Vert N$, choose
$u_{p^e}\in(\mathcal O_K/p^e\mathcal O_K)^\times$ such that $M\equiv u_{p^e}I_2
    \pmod{p^e\mathcal O_K}.$
Taking determinants gives $u_{p^e}^2
    \equiv
    \det(M)
    \pmod{p^e\mathcal O_K}.$
Hence
\[
    \alpha_{p^e}(g)^2
    =
    (u_{p^e}\overline{u}_{p^e})^2
    \equiv
    \det(M)\overline{\det(M)}
    =
    1
    \pmod{p^e}.
\]
Since $p$ is odd, the only solutions of $x^2=1$ in
$(\mathbb Z/p^e\mathbb Z)^\times$ are $\pm1$. Therefore $\alpha_{p^e}(g)\in\{\pm1\}.$

On the other hand, Lemma~\ref{lem:odd-ramified-character} gives $\epsilon_\ell(g)
    =
    \det(M)\pmod{\mathfrak p_\ell}$ for every odd prime $\ell\mid D$. If
$K\neq\mathbb Q(\sqrt{-3})$, then
$\mathcal O_K^\times=\{\pm1\}$, so all the values
$\epsilon_\ell(g)$ are equal to a single sign. If
$K=\mathbb Q(\sqrt{-3})$, then $D=3$, so there is only one odd
ramified prime and the same conclusion is automatic.

It follows that $\operatorname{im}\Psi_N\subset H_N,$
where
\[
    H_N
    :=
    \left\{
        \left(
            (\eta)_{\substack{\ell\mid D\\ \ell\neq 2}},
            (a_{p^e})_{p^e\Vert N}
        \right)
        \,\middle|\,
        \eta,a_{p^e}\in\{\pm1\}
    \right\}.
\]
The diagonal sign subgroup $\Delta_N$ is contained in $H_N$, and $[H_N:\Delta_N]=2^{\omega(N)}.$
Writing $I_N:=\operatorname{im}\Psi_N$, we obtain
\[
    \frac{|I_N|}{|I_N\cap\Delta_N|}
    =
    [I_N\Delta_N:\Delta_N]
    \leq
    [H_N:\Delta_N]
    =
    2^{\omega(N)}.
\]
This proves the assertion.
\end{proof}

\begin{corollary}
\label{cor:class-number-one-level-two}
Let $p \equiv 3 \pmod 4$ be a prime, and let $K = \mathbb{Q}(\sqrt{-p})$. Let $X_{K,2}$ be very general with $\operatorname{NS}(X_{K,2}) \cong S_K(2)$. Then $\operatorname{Aut}(X_{K,2}) \cong \operatorname{Bi}_K(2)\cong P\Gamma_K(2)$.
\end{corollary}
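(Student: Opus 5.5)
Take $N=1$ and $K=\mathbb Q(\sqrt{-p})$ with $p\equiv 3\pmod 4$. Then $-D=-p\equiv 1\pmod 4$, so $2\nmid D$, and the only ramified prime is $p$. For $p=3$ or $p=7$ we have $-p\equiv 5$ or $1\pmod 8$; in general $2$ is inert when $p\equiv 3\pmod 8$ and split when $p\equiv 7\pmod 8$. In either case $2$ is unramified. Proposition~\ref{cor:Bi-vs-PGamma-2}(1) then gives $\operatorname{Bi}_K(2)=P\Gamma_K(2)$. This also covers $p=3$, where the extra units are handled there. So it remains to show $\operatorname{Aut}(X_{K,2})\cong \operatorname{Bi}_K(2)$.

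\emph{Step 1: identify $\operatorname{Bi}_K\cap\operatorname{Aut}(X)$.} I apply Theorem~\ref{thm:main-bianchi-aut-group} with $N=1$. The hypothesis on primes of $N$ is vacuous, and since $2\nmid D$ we get $\operatorname{Bi}_K\cap\operatorname{Aut}(X)=\Psi_1^{-1}(\Delta_1)$. Here $R_1=\{\pm1\}$, indexed by the single odd ramified prime $\ell=p$. There are no $p^e\Vert N$ factors, so $\Delta_1$ is all of $\{\pm1\}=R_1$. Hence the index computed in Corollary~\ref{cor:index-bound} equals $2^{\omega(1)}=1$, and $\operatorname{Bi}_K\cap\operatorname{Aut}(X)=\operatorname{Bi}_K(2)$. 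Concretely, every $g\in\operatorname{Bi}_K(2)$ acts on the $2$-part of $A_{S_K(2)}$, which is $S_K/2S_K$, trivially, so by $+1=-1$. It acts on the $p$-part by the sign $\varepsilon_p(g)$. These fit into a single global sign, and Corollary~\ref{Bianchicontainment} applies.

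\emph{Step 2: rule out the rest of $O^+(S)$.} By Lemma~\ref{lem-twoelementary}, $\operatorname{Aut}(X)\to O(S)$ is injective, because $A_{S_K(2)}\cong(\mathbb Z/2)^4\oplus\mathbb Z/p$ is not $2$-elementary. Since $S_K(2)$ has no $(-2)$-vectors, $W(S)$ is trivial, so Theorem~\ref{thm:torelli_mod_weyl} gives $\operatorname{Aut}(X)=\Gamma_X\subset O^+(S_K)$. By Vinberg, $O^+(S_K)=\widehat{\operatorname{Bi}}_K\rtimes\langle\sigma\rangle$. Lemma~\ref{lem:galois-coset-exclusion} excludes the $\sigma$-cosets. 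Since $D=p$ is prime, $\omega(-D)=1$, and Proposition~\ref{Lem:extendedBianchi} gives $\widehat{\operatorname{Bi}}_K=\operatorname{Bi}_K$. (This matches the fact that the class number is odd for $-p$ with $p$ prime.) Therefore $\operatorname{Aut}(X)\subset\operatorname{Bi}_K$, and Step 1 gives $\operatorname{Aut}(X)=\operatorname{Bi}_K(2)$.

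The main point to check carefully is Step 2's reduction $\operatorname{Aut}(X)\subset\operatorname{Bi}_K$. It uses the identification $O(S_K(2))=O(S_K)$, since scaling does not change the isometry group, together with the fact that the normalized Bianchi action agrees with the Hermitian action on $\operatorname{Bi}_K$. The other point needing care is that, in Step 1, the $2$-primary part imposes no constraint for $g\in\operatorname{Bi}_K(2)$. This holds because $S_K\otimes\mathbb Z_2$ is unimodular, so the $2$-part of $A_{S_K(2)}$ is exactly the level subgroup $S_K/2S_K$, on which $g$ acts as $u\bar u=1$.
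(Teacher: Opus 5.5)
Your proposal is correct and follows essentially the same route as the paper: Corollary~\ref{cor:index-bound} at $N=1$ gives $\Bi_K\cap\Aut(X)=\Bi_K(2)$. The extended-Bianchi cosets vanish (odd class number, i.e.\ $\omega(-D)=1$ in Proposition~\ref{Lem:extendedBianchi}), and Lemma~\ref{lem:galois-coset-exclusion} then forces $\Aut(X)\subseteq\Bi_K$; injectivity and Proposition~\ref{cor:Bi-vs-PGamma-2} finish the argument, and your extra checks (that $2$ is unramified because $-p\equiv 1\pmod 4$, and that $A_{S_K(2)}\cong(\mathbb Z/2)^4\oplus\mathbb Z/p$ is not $2$-elementary) simply make explicit what the paper leaves implicit.
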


\begin{proof}
Because $p \equiv 3 \pmod 4$, the fundamental discriminant of $K$ is exactly $-p$, which is an odd prime (so $2 \nmid p$). Applying Corollary~\ref{cor:index-bound} with $N=1$, we get $\Bi_K(2)=\Bi_K\cap\Aut(X).$

Furthermore, because the fundamental discriminant $-p$ has only one prime divisor, genus theory implies that the $2$-rank of the ideal class group $\operatorname{Cl}(K)$ is $0$. Thus, $\operatorname{Cl}(K)$ has odd order, meaning the quotient $\operatorname{Cl}(K)/\operatorname{Cl}(K)^2$ is trivial. Consequently, the extended Bianchi group is simply $O^+(S_K) = \operatorname{Bi}_K \rtimes \langle \sigma \rangle$. By Lemma~\ref{lem:galois-coset-exclusion}, the coset $\operatorname{Bi}_K\sigma$ is not geometrically induced, ensuring $\operatorname{Aut}(X_{K,2}) \subseteq \operatorname{Bi}_K$. Injectivity of the lattice representation gives $\operatorname{Aut}(X_{K,2}) = \operatorname{Bi}_K(2)$. Combine with Proposition~\ref{cor:Bi-vs-PGamma-2}, we also got $\Aut(X_{K,2})=P\Gamma(2).$
\end{proof}

We now specialize to very general $S_K(2)$-polarized K3 surfaces where the prime $2$ ramifies in the imaginary quadratic field $K$. The odd level characters disappear, and by Proposition~\ref{prop:level-containment}, we need only determine which isometries satisfy the global sign condition on the discriminant group $A_{S_K(2)}$. We examine three distinct settings where $2$ ramifies: $K = \mathbb{Q}(i)$, $K = \mathbb{Q}(\sqrt{-2})$, and the infinite family $K = \mathbb{Q}(\sqrt{-p})$ for primes $p \equiv 1 \pmod 4$.
\begin{theorem}\label{thm:ramified-level-two}
Let $K\in\{\mathbb{Q}(i),\mathbb{Q}(\sqrt{-2})\}$, or let $K=\mathbb{Q}(\sqrt{-p})$ for an odd prime $p\equiv 1\pmod 4$. Let $X_{K,2}$ be a very general $S_K(2)$-polarized K3 surface. The natural representation $\rho\colon\operatorname{Aut}(X_{K,2})\to O(S_K(2))$ is injective, and its image is precisely the principal congruence subgroup, so that:
$\operatorname{Aut}(X_{K,2})\,\cong P\Gamma_K(2).$
\end{theorem}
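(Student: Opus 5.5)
Injectivity of $\rho$ comes directly from Lemma~\ref{lem-twoelementary}. The discriminant group $A_{S_K(2)}$ is not $2$-elementary: by Lemma~\ref{lem:dual_generators_Hermitian_lattice} with $N=1$, the class $P(1)/4$ has order $4$. For the image, I will argue in three steps.

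First I would show that $\rho(\Aut(X_{K,2}))\subseteq\Bi_K$. By Vinberg's decomposition, $O^+(S_K)=\widehat{\Bi}_K\rtimes\langle\sigma\rangle$, and Lemma~\ref{lem:galois-coset-exclusion} excludes the Galois coset. For $K=\mathbb Q(i)$ and $K=\mathbb Q(\sqrt{-2})$ the only ramified prime is $2$, so Proposition~\ref{Lem:extendedBianchi} gives $\widehat{\Bi}_K=\Bi_K$. For $K=\mathbb Q(\sqrt{-p})$ with $p\equiv1\pmod 4$ the index is $2$, and by Example~\ref{exampleextemdedBianchi} the extra coset is $\Bi_K[W_2]$. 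Here I would take an element $[MW_2]$ with $M\in\GL_2(\mathcal O_K)$. Its action on the $p$-part $A_{S_K,p}$, generated by $P(\sqrt{-p})/p$ as in Lemma~\ref{lem:odd-ramified-character}, is multiplication by $\det(MW_2)/2=\pm1$ modulo $\mathfrak p$. Its action on the $2$-primary part must be the same sign, and this forces a congruence on $MW_2$ modulo $4$ on the classes $e/2$, $f/2$, $P(1)/4$. I expect this to contradict the fact that the entries of $W_2$ lie in $\mathfrak p_2$. Concretely, applying $\tfrac12 MW_2(\cdot)W_2^*M^*$ to $e$ and $f$ modulo $2S_K$ should show that the reduction cannot be a scalar. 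This step is the main obstacle, since one has to handle $W_2$ explicitly modulo $4$ for arbitrary $M$. I would reduce to $M$ modulo $2$ using $\GL_2(\mathcal O_K/2)$ and check the finitely many cases.

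Next, Corollary~\ref{Bianchicontainment} gives $\Aut(X)\cap\Bi_K\subseteq\Bi_K(2)$, and Proposition~\ref{cor:Bi-vs-PGamma-2} gives $[\Bi_K(2):P\Gamma_K(2)]\le2$. It then remains to prove two things. First, every $g\in P\Gamma_K(2)$ acts on $A_{S_K(2)}$ by a single sign. Second, every $g\in\Bi_K(2)\setminus P\Gamma_K(2)$ does not.

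For the first, take $M\equiv I\pmod 2$ and write $M=I+2B$. Then $MHM^*\equiv H+2(BH+HB^*)\pmod 4$. I would check on the generators $e/2$, $f/2$, $P(1)/4$, $P(\tau)/D$ that the action is $\pm1$, with the sign determined by $\det M\in\{\pm1\}$ (or by $\pm i$ when $K=\mathbb Q(i)$). This matches the odd character $\varepsilon_p=\det M \bmod \mathfrak p$. The key identity is $\operatorname{Tr}(B)\equiv(\det M-1)/2\pmod 2$, which ties the $P(1)/4$ coefficient to the determinant sign. The diagonal classes $e/2$ and $f/2$ are insensitive to sign, since $2$-torsion absorbs it.

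For the second, I would represent the nontrivial coset by $M\equiv(1+\pi)I\pmod2$, where $\pi$ is a uniformizer of $\mathfrak p_2$; for instance $\pi=1+i$ or $\pi=\sqrt{-2}$, or $1+\sqrt{-p}$. Then on $P(1)/4$ the action multiplies by $|1+\pi|^2$ modulo $4$, and on $e/2$ by $N(1+\pi)$ modulo $2$. I would show these are inconsistent with a global sign: for example, $N(1+\pi)$ is odd, yet $a\bar d\not\equiv\pm1\pmod 4$ or the $P(\tau)$-class picks up a non-sign factor. If such an element exists but the coset is empty (index $1$), the claim holds trivially. Combining the three steps with Proposition~\ref{prop:level-containment}, which shows that every element of $P\Gamma_K(2)$ preserves the ample cone since there are no $(-2)$-classes, gives the equality $\Aut(X_{K,2})\cong P\Gamma_K(2)$.
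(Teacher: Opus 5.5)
Your outline is the paper's: injectivity from Lemma~\ref{lem-twoelementary}, exclusion of the Galois and ideal-class cosets, level containment, $P\Gamma_K(2)$ acting by a sign, and the other coset of $\Bi_K(2)/P\Gamma_K(2)$ failing. The $P\Gamma_K(2)$ step via $M=I+2B$ and $\operatorname{Tr}B\equiv(\det M-1)/2$ is fine apart from details. For $\mathbb Q(\sqrt{-2})$ the class $P(\sqrt{-2})/8$ needs a check modulo $8S_K$, so the term $4BHB^*$ cannot be dropped. Also, $\det M=\pm i$ cannot occur when $M\equiv I\pmod 2$.

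The genuine gap is the step separating $P\Gamma_K(2)$ from $\Bi_K(2)$, which rests on a false formula. If $M\equiv\lambda I\pmod 2$, $M$ does not act on $P(1)/4$ by $|\lambda|^2$. The off-diagonal entry of $MP(1)M^*$ is $a\bar d+b\bar c\equiv a\bar d\equiv\det(M)\,\bar d d^{-1}\pmod 4$, which depends on $d$ modulo $4$. For $\mathbb Q(\sqrt{-2})$ your formula gives $3\equiv-1$, a sign, whereas the paper's $M_0$ gives $3+2\sqrt{-2}$, which is not a sign. For $\mathbb Q(\sqrt{-p})$ the entire $2$-primary part does act by a sign, namely $-\det M$, because $d\equiv\sqrt{-p}\pmod 2$ forces $\bar d\equiv-d\pmod 4$; the paper's $M_0$ even fixes $P(1)$. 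The obstruction there is the clash with the $p$-primary character $\varepsilon_p=\det M$ of Lemma~\ref{lem:odd-ramified-character}. Your ``for example \dots\ or \dots'' does not identify this. You also need the observation that makes a single test legitimate. The isometries acting on $A_{S_K(2)}$ by $\pm1$ form a group containing $P\Gamma_K(2)$, and the index is at most $2$ by Proposition~\ref{cor:Bi-vs-PGamma-2}. So one explicit representative decides the whole coset, which is what Lemma~\ref{lem:scalar-defect} does with $M_0$.

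The extended-coset step also needs repair. Testing only $e$ and $f$ modulo $2S_K$ cannot work. For $p\equiv1\pmod 8$, the element $[MW_2]$ with $M=\left(\begin{smallmatrix}1&1+\sqrt{-p}\\0&1\end{smallmatrix}\right)$ fixes both $e$ and $f$ modulo $2S_K$. The decisive class is $P(1)$: one has $\tfrac12W_2P(1)W_2^*\equiv P(\sqrt{-p})$, and $MP(\sqrt{-p})M^*\equiv P(\sqrt{-p}\,\det M)\pmod{2S_K}$ for every $M\in\GL_2(\mathcal O_K)$. This is never $P(1)$, since $\sqrt{-p}\not\equiv1\pmod{2\mathcal O_K}$, so no case check over $\GL_2(\mathcal O_K/2)$ and no appeal to the $p$-primary sign is needed. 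Finally, Lemma~\ref{lem:galois-coset-exclusion} excludes only $\Bi_K\sigma$, not all of $\widehat{\Bi}_K\sigma$, so your proposal leaves the coset $\Bi_KW_2\sigma$ untreated. Because $\sigma$ fixes $P(1)$, the same computation excludes it, as in Lemma~\ref{lem:extended-coset}.
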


We evaluate which isometries of the lattice $S_K(2)$ are induced by geometric automorphisms.
The proof of the theorem relies on the following three lemmas.

\begin{lemma}\label{lem:pgamma-sign}
Every element of $P\Gamma_K(2)$ acts on the discriminant group $A_{S_K(2)}$ by a global sign. Consequently, $P\Gamma_K(2)\subseteq \Aut(X_{K,2})$.
\end{lemma}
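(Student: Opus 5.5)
The plan is to compute the action of $g=[M]\in P\Gamma_K(2)$ on $A_{S_K(2)}=\frac12 S_K^\vee/S_K$, using the explicit generators from Lemma~\ref{lem:dual_generators_Hermitian_lattice} with $N=1$. Then we deduce the inclusion from Proposition~\ref{prop:level-containment}. The argument has four steps.

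First, the setup. Choose a representative $M=\left(\begin{smallmatrix} a&b\\ c&d\end{smallmatrix}\right)\equiv I_2 \pmod{2\mathcal O_K}$, so $a=1+2a'$, $d=1+2d'$, $b=2b'$, $c=2c'$. The group $A_{S_K(2)}$ sits in the exact sequence
\[
0\to S_K/2S_K\to A_{S_K(2)}\to A_{S_K}\to 0 .
\]
The visible subgroup $S_K/2S_K$ is acted on by $\overline M H\overline M^\ast=H$. There, $+1$ and $-1$ coincide, so no constraint arises.

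Second, the generators $e/2$ and $f/2$. We compute $MeM^\ast=\left(\begin{smallmatrix} a\bar a& a\bar c\\ c\bar a& c\bar c\end{smallmatrix}\right)$. We have $a\bar a\equiv 1\pmod 4$ when $a\equiv1\pmod 2$, since $a\bar a=1+2\operatorname{Tr}(a')+4N(a')$ and $\operatorname{Tr}(a')$ may be odd. Hence the expected statement is $MeM^\ast\equiv e\pmod{2S_K}$, i.e. $g$ fixes $e/2$ modulo $S_K$, and likewise $f/2$. The $U(2)$-part of the discriminant is therefore acted on trivially, which equals $\pm1$ there, because $e/2$ has order $2$.

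Third, the $2$-primary part coming from $A_{S_K}$ when $2\mid D$. Take the generator $P(1)/4$ and, in the even case, $P(\tau)/D$. We compute
\[
MP(z)M^\ast=\operatorname{Tr}(a z\bar b)\,e+\operatorname{Tr}(c z\bar d)\,f+P(a z\bar d+b\bar z\bar c).
\]
Modulo $4S_K$, the diagonal traces are $\operatorname{Tr}(2\cdot\text{stuff})\in 4\mathbb Z$ only if the trace of the stuff is even. This is where care is needed. Using $b,c\in2\mathcal O_K$ and $a,d\equiv1\pmod 2$, the key quantity is $a\bar d \pmod{4}$ together with the trace terms. In the ramified case one expects $a\bar d\equiv \pm1$ modulo the relevant ideal, and the sign depends on $\det M\in\{\pm1\}$ (or $\{\pm1,\pm i\}$ for $\mathbb Q(i)$). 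The aim is to show that every generator is multiplied by the same $\sigma$, namely $\sigma=\det(M)\overline{\det M}$-adjusted, i.e. $\sigma\equiv a\bar d$. For odd ramified $\ell$ (the case $p\equiv1\pmod 4$), Lemma~\ref{lem:odd-ramified-character} gives $\varepsilon_p(g)=\det M \bmod \mathfrak p$, and we must check that this matches the $2$-adic sign $a\bar d\bmod 4$.

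I expect this compatibility between the odd ramified sign $\det M\bmod\mathfrak p$ and the $2$-adic sign to be the main obstacle, together with verifying that the diagonal trace terms vanish modulo $4$ and $D$. To handle it, I would use $\det M=ad-bc\equiv ad\pmod 4$. I would then try to show $a\bar d\equiv ad\equiv\det M\pmod{4\mathcal O_K\cap\mathbb Z\text{-relevant part}}$, using that $d-\bar d\in 2\sqrt{-p}\,\mathbb Z$-type elements. Since $\det M=\pm1$ is rational, both signs then equal $\det M$.

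Finally, having shown that $g$ acts on $A_{S_K(2)}$ by $\sigma=\pm1$, note that $g$ preserves the positive cone containing $I_2$. The lattice $S_K(2)$ has no $(-2)$-vectors, so this positive cone is the ample cone. Proposition~\ref{prop:level-containment} then gives that $g$ is induced by an automorphism, so $P\Gamma_K(2)\subseteq\Aut(X_{K,2})$.
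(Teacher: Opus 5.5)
Your outline is the same as the paper's. You pick $M\equiv I_2\pmod{2\mathcal O_K}$, check the generators $e/2$, $f/2$, $P(1)/4$, $P(\tau)/D$ of $A_{S_K(2)}$ against one sign, and then invoke Proposition~\ref{prop:level-containment}. However, the substance of the lemma is the pair of congruences $MP(1)M^\ast\equiv\delta P(1)\pmod{4S_K}$ and $MP(\tau)M^\ast\equiv\delta P(\tau)\pmod{DS_K}$ with $\delta=\det M$. You only announce these (``I would then try to show''), and the reductions you sketch are too coarse to prove them.

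First, you never show that $\det M$ is a sign when $K=\mathbb Q(i)$. This needs $\det M\equiv 1\pmod{2\mathcal O_K}$, which follows from $M\equiv I_2$, together with $\pm i\not\equiv 1\pmod{2\mathbb Z[i]}$. Second, your worry about the diagonal traces goes away once you use that $D$ is even in all three fields. Then $\mathcal O_K=\mathbb Z[\tau]$ with $\bar\tau=-\tau$, so $x+\bar x\in 2\mathbb Z$ and $x-\bar x\in 2\tau\mathbb Z$ for every $x\in\mathcal O_K$. With $b=2b'$ this gives $a\bar b+b\bar a\in 4\mathbb Z$ and $\tau(a\bar b-b\bar a)\in 4\tau^2\mathbb Z=D\mathbb Z$, and the same holds for the $(2,2)$ entries.

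The off-diagonal step is where your plan actually breaks. For $P(\tau)/D$ you need $\tau(a\bar d-b\bar c)-\delta\tau\in D\mathcal O_K=4\tau^2\mathcal O_K$, that is,
$a\bar d-b\bar c-(ad-bc)=a(\bar d-d)-b(\bar c-c)\in 4\tau\mathcal O_K$. This requires two things:

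\emph{The sharper bound on $\bar d-d$.} You need $\bar d-d=2(\bar d'-d')\in 4\tau\mathbb Z$, which uses $d\equiv 1\pmod 2$. Your bound $d-\bar d\in 2\sqrt{-p}\,\mathbb Z$ is too weak by a factor of $2$.

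\emph{Keeping $b\bar c$ and $bc$ together.} Only their difference $b(c-\bar c)$ lies in $4\tau\mathcal O_K$; individually they are only in $4\mathcal O_K$. Your plan replaces the entry by $a\bar d$ and $\det M$ by $ad$ modulo $4$. This loses exactly the precision needed when $A_{S_K(2)}$ has $8$-torsion, i.e. for $K=\mathbb Q(\sqrt{-2})$. For instance, $M=\left(\begin{smallmatrix}1&2\\2&5\end{smallmatrix}\right)$ over $\mathbb Z[\sqrt{-2}]$ has $\tau(a\bar d-\det M)=4\tau\notin 8\mathcal O_K$, although $\tau(a\bar d-b\bar c-\det M)=0$.

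The ``compatibility of the odd ramified sign with the $2$-adic sign'' that you single out as the main obstacle is not one. Once $\delta=\det M\in\{\pm1\}$ and the identity above is established, checking the single generator $P(\tau)/D$ modulo $DS_K$ handles the $2$-part and the $p$-part simultaneously. Lemma~\ref{lem:odd-ramified-character} already gives $\delta\bmod\mathfrak p$ on the $p$-part in any case. Finally, your aside that $\operatorname{Tr}(a')$ may be odd is false in these fields, but it is harmless, since only $MeM^\ast\equiv e\pmod{2S_K}$ is needed.
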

\begin{proof}
Let $[M]\in P\Gamma_K(2)$, and choose a representative $M=\left(\begin{smallmatrix}a&b\\c&d\end{smallmatrix}\right)\in\operatorname{GL}_2(\mathcal{O}_K)$ such that $M\equiv I_2\pmod{2\mathcal{O}_K}$. This congruence implies $\det(M)\equiv 1\pmod{2\mathcal{O}_K}$. 
For $K\in\{\mathbb{Q}(\sqrt{-2}),\mathbb{Q}(\sqrt{-p})\}$, the global units are $\{\pm 1\}$, which both reduce to $1$ modulo $2$. For $K=\mathbb{Q}(i)$, the global units are $\{\pm 1,\pm i\}$; since $\pm i\not\equiv 1\pmod{2\mathbb{Z}[i]}$, the determinant congruence forces $\det(M)\in\{\pm 1\}$. Thus, in all three cases, \(\det(M)\in\{\pm1\}\); we set
\(\delta:=\det(M)\).

We verify the action $H\mapsto MHM^\ast$ on the generators of $A_{S_K(2)}$. Since $M\equiv I_2\pmod{2\mathcal{O}_K}$, the action of $M$ on $S_K/2S_K$ is trivial. Therefore the classes $e/2$ and $f/2$ are fixed in $A_{S_K(2)}$. Since they are $2$-torsion, multiplication by $+1$ and by $-1$ agree on them. For the discriminant generators, we separate the cases:
\begin{itemize}
    \item \emph{Generator $P(1)$ (all fields):} The generator is $P(1)/4$, so we work modulo $4S_K$. The off-diagonal entry of $MP(1)M^\ast$ is $a\overline{d}+b\overline{c}$. We evaluate the difference from $\delta$:
    $$a\overline{d}+b\overline{c}-\delta=a\overline{d}+b\overline{c}-(ad-bc)=a(\overline{d}-d)+b(\overline{c}+c).$$
    Because $a,d\equiv 1\pmod{2\mathcal{O}_K}$ and $b,c\in 2\mathcal{O}_K$, both terms are divisible by $4$. The diagonal entries $a\overline{b}+b\overline{a}$ and $c\overline{d}+d\overline{c}$ are multiples of $4$ by the same parity argument. Thus, $MP(1)M^\ast\equiv\delta P(1)\pmod{4S_K}$.
    \item \emph{Generator $P(i)$ ($K=\mathbb{Q}(i)$):} Working modulo $4S_K$, the off-diagonal entry of $MP(i)M^\ast$ is $i(a\overline{d}-b\overline{c})$. We check the difference:
    $$i(a\overline{d}-b\overline{c})-i\delta=i\left(a(\overline{d}-d)-b(\overline{c}-c)\right).$$
    The term in parentheses is divisible by $4$, and the diagonal entries are similarly divisible by $4$. Thus, $MP(i)M^\ast\equiv\delta P(i)\pmod{4S_K}$.
    \item \emph{Generator $P(\tau)$ ($K\in\{\mathbb{Q}(\sqrt{-2}),\mathbb{Q}(\sqrt{-p})\}$):} The generator is $P(\tau)/D$, working modulo $DS_K$ (where $D=8$ or $D=4p$). The off-diagonal entry of $MP(\tau)M^\ast$ is $\tau(a\overline{d}-b\overline{c})$. The difference is:
    $$\tau(a\overline{d}-b\overline{c})-\delta\tau=\tau\left(a(\overline{d}-d)-b(\overline{c}-c)\right).$$
    The parenthetical term is divisible by $4\tau$, making the entire expression divisible by $4\tau^2=-D$. The diagonal entries have the form $\tau(a\overline{b}-b\overline{a})$ and $\tau(c\overline{d}-d\overline{c})$. Since $b,c\in 2\mathcal{O}_K$ and $a,d\equiv 1\pmod{2\mathcal{O}_K}$, the expressions $a\overline{b}-b\overline{a}$ and $c\overline{d}-d\overline{c}$ are divisible by $4\tau$. Multiplying by $\tau$, the diagonal entries are divisible by $4\tau^2=-D$. Hence the diagonal entries vanish modulo $DS_K$. Thus, $MP(\tau)M^\ast\equiv\delta P(\tau)\pmod{DS_K}$.
\end{itemize}
Consequently, every element of $P\Gamma_K(2)$ acts on $A_{S_K(2)}$ by the global sign $\delta$.
\end{proof}

\begin{lemma}\label{lem:scalar-defect}
The nontrivial scalar coset in $\operatorname{Bi}_K(2)\setminus P\Gamma_K(2)$ fails the global sign condition.
\end{lemma}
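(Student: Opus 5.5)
The plan is to take an explicit representative of the nontrivial coset. We then show that its action on the single discriminant class $P(1)/4\in A_{S_K(2)}$ is already not multiplication by $\pm1$. By Proposition~\ref{prop:level-containment}, this rules it out as an automorphism.

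First we fix notation. Since $2$ ramifies, $R_K=\mathcal O_K/2\mathcal O_K\cong\mathbb F_2[\epsilon]/(\epsilon^2)$, where $\epsilon$ is the image of a uniformizer $\pi$ of the prime above $2$. Concretely, $\pi=1+i$, $\sqrt{-2}$, or $1+\tau$ with $\tau=\sqrt{-p}$. So $R_K^\times=\{1,1+\epsilon\}$. By the proof of Proposition~\ref{cor:Bi-vs-PGamma-2}, an element $[M]\in\operatorname{Bi}_K(2)\setminus P\Gamma_K(2)$ has a representative with $M\equiv (1+\pi)I_2 \pmod{2\mathcal O_K}$, and this class is not absorbed by global units.

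For $K=\mathbb Q(i)$ we have $1+\pi=2+i\equiv i \pmod 2$, and $i$ is a global unit. Rescaling by $i^{-1}$ shows the coset is empty, so the statement is vacuous there, and we record this first. Hence we may assume $K=\mathbb Q(\sqrt{-2})$ or $\mathbb Q(\sqrt{-p})$ with $p\equiv1\pmod4$. In these cases $\mathcal O_K^\times=\{\pm1\}$ and $\delta:=\det M=\pm1$.

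Next, write $M=\left(\begin{smallmatrix}a&b\\c&d\end{smallmatrix}\right)$ with $a,d\equiv 1+\pi \pmod 2$ and $b,c\in 2\mathcal O_K$. The off-diagonal entry of $MP(1)M^\ast$ is $a\overline d+b\overline c$. Reusing the identity from Lemma~\ref{lem:pgamma-sign}, we get
\[
a\overline d+b\overline c-\delta = a(\overline d-d)+b(\overline c+c).
\]
The term $b(\overline c+c)$ lies in $4\mathcal O_K$. Write $d=1+\pi+2y$. Then
\[
\overline d-d=(\overline\pi-\pi)+2(\overline y-y)\equiv \overline\pi-\pi \pmod{4\mathcal O_K},
\]
since $\overline y-y\in\sqrt{-D}\,\mathbb Z\subset 2\mathcal O_K$ in both remaining cases. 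Here $\overline\pi-\pi=-2\sqrt{-2}$ or $-2\sqrt{-p}$, so it equals $2w$ with $w\in\pi\mathcal O_K\setminus 2\mathcal O_K$. It follows that the off-diagonal entry is $\delta+2u$ modulo $4$, where $u=aw\equiv(1+\pi)w \pmod 2$ is a non-unit that is not divisible by $2$.

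Finally, we compare with the two allowed signs. If the entry were $\equiv\delta \pmod 4$, we would need $u\in 2\mathcal O_K$, which is false. If it were $\equiv -\delta$, we would need $2(\delta+u)\in4\mathcal O_K$, i.e. $\delta+u\in 2\mathcal O_K$. This is impossible, since $\delta$ is a unit and $u\in\pi\mathcal O_K$, so $\delta+u$ is a unit. Therefore $g(P(1)/4)\neq\pm P(1)/4$ in $A_{S_K(2)}$, whatever the diagonal entries are.

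The main obstacle I expect is the bookkeeping of the congruence $\overline d-d\equiv\overline\pi-\pi \pmod 4$ uniformly across the two families. This uses that $\overline y-y\in 2\mathcal O_K$ precisely because $D$ is even. I would also double-check that the class $P(1)/4$ really has order $4$ in $A_{S_K(2)}$, so that a discrepancy of $2u$ modulo $4S_K$ is genuinely nonzero; this follows from Lemma~\ref{lem:dual_generators_Hermitian_lattice} with $N=1$.
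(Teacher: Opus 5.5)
\textbf{There is a genuine gap in the case $K=\mathbb{Q}(\sqrt{-p})$ with $p\equiv 1\pmod 4$.} Your argument is fine for $K=\mathbb{Q}(i)$, where the coset is empty. It is also fine for $K=\mathbb{Q}(\sqrt{-2})$, where it is the paper's test on $P(1)/4$ carried out for an arbitrary representative instead of the single matrix $M_0$.

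The failing step is the claim that $\overline{\pi}-\pi=2w$ with $w\in\pi\mathcal{O}_K\setminus 2\mathcal{O}_K$. Here $w=-\sqrt{-p}$ has odd norm $p$, so it is a unit at the prime $\mathfrak{p}_2$ above $2$; indeed $\tau\equiv 1\pmod{\mathfrak{p}_2}$. Consequently
$u\equiv(1+\pi)w\equiv\tau\cdot(-\tau)=p\equiv 1\pmod{2\mathcal{O}_K}$. So $\delta+u\in 2\mathcal{O}_K$, and the off-diagonal entry is $\delta+2\equiv-\delta\pmod{4\mathcal{O}_K}$. The diagonal entries also vanish modulo $4$. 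Hence every element of the coset sends $P(1)/4$ to $-\delta\,P(1)/4$, which is a legitimate sign. The paper's representative $M_0=\left(\begin{smallmatrix}\tau&2\\-2s&\tau\end{smallmatrix}\right)$, with $\det M_0=-1$, even satisfies $M_0P(1)M_0^\ast=P(1)$ exactly. The same computation shows that the whole $2$-primary part of $A_{S_K(2)}$, generated by $e/2$, $f/2$, $P(1)/4$ and $P(\tau)/4$, is acted on by the single sign $-\delta$. So no purely $2$-adic test can detect this coset.

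\textbf{The fix is to bring in the odd ramified prime $p$.} Since $N=1$, you have $A_{S_K(2),p}\cong A_{S_K,p}$. By Lemma~\ref{lem:odd-ramified-character}, every $[M]\in\operatorname{Bi}_K$ acts on this part by $\det M\bmod\mathfrak{p}=\delta$. The $2$-primary part forces the sign $-\delta$, and $\delta\not\equiv-\delta\pmod p$ because $p$ is odd. So no single global sign exists.

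This is exactly what the paper's test on $P(\tau)/(4p)$ modulo $4pS_K$ encodes. Once $P(1)$ forces $\sigma=+1$, the off-diagonal entry $(2p-1)\tau$ of $M_0P(\tau)M_0^\ast$ would require $2(p-1)\tau\in 4p\mathcal{O}_K$, and this fails at $p$.

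With this replacement for the $p$-family, your representative-independent argument goes through. It then has a small advantage over the paper's proof: it does not need the reduction that testing a single coset representative suffices.
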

\begin{proof}
We evaluate the index $[\operatorname{Bi}_K(2):P\Gamma_K(2)]$. An element $[M]\in\operatorname{Bi}_K(2)$ admits a representative $M\equiv\lambda I_2\pmod{2\mathcal{O}_K}$ for some local unit $\lambda\in(\mathcal{O}_K/2\mathcal{O}_K)^\times$.
\begin{itemize}
    \item \emph{For $K=\mathbb{Q}(i)$:} The local unit group is $(\mathcal{O}_K/2\mathcal{O}_K)^\times=\{1,i\}$. The reduction map from the global units $\mathcal{O}_K^\times=\{\pm 1,\pm i\}$ to $\{1,i\}$ is surjective. We can choose a global unit $u\equiv\lambda\pmod 2$. The representative $u^{-1}M$ satisfies $u^{-1}M\equiv I_2\pmod 2$, meaning $[M]\in P\Gamma_K(2)$. Hence, $\operatorname{Bi}_K(2)=P\Gamma_K(2)$.
    \item \emph{For $K=\mathbb{Q}(\sqrt{-2})$:} The quotient ring is $\mathbb{F}_2[\tau]/(\tau^2)$ with unit group $\{1,\tau+1\}$. Global units $\{\pm 1\}$ both reduce to $1$, yielding $[\operatorname{Bi}_K(2):P\Gamma_K(2)]\leq 2$.
    \item \emph{For $K=\mathbb{Q}(\sqrt{-p})$:} Since $\tau^2=-p\equiv 1\pmod 2$, the quotient ring is $\mathcal{O}_K/2\mathcal{O}_K\cong\mathbb{F}_2[\epsilon]/(\epsilon^2)$ where $\epsilon=\tau+1$. The local unit group is $\{1,\tau\}$. Global units map only to $1$, again yielding $[\operatorname{Bi}_K(2):P\Gamma_K(2)]\leq 2$.
\end{itemize}
Since $\Aut(X_{K,2})$ is a subgroup containing $P\Gamma_K(2)$, and the index is at most $2$, it suffices to test a single representative of the nontrivial coset $\operatorname{Bi}_K(2)\setminus P\Gamma_K(2)$ for the latter two fields.
\begin{itemize}
    \item \emph{For $K=\mathbb{Q}(\sqrt{-2})$:} Let $M_0=\left(\begin{smallmatrix}1+\tau&2\\2&1-\tau\end{smallmatrix}\right)$. We have $\det(M_0)=-1$. Because $M_0\equiv(1+\tau)I_2\pmod{2\mathcal{O}_K}$, it reduces to the nontrivial scalar, confirming the index is exactly $2$. Evaluating its action on $P(1)/4$ modulo $4S_K$ yields:
    $$M_0P(1)M_0^\ast=\begin{pmatrix}4&3+2\tau\\3-2\tau&4\end{pmatrix}.$$
    The off-diagonal entry requires $(3+2\tau)\mp 1\in 4\mathcal{O}_K$. However, $2+2\tau\notin 4\mathcal{O}_K$ and $4+2\tau\notin 4\mathcal{O}_K$. Thus, $M_0$ fails the global sign condition.
    \item \emph{For $K=\mathbb{Q}(\sqrt{-p})$:} Let $s=(p-1)/4$ and $M_0=\left(\begin{smallmatrix}\tau&2\\-2s&\tau\end{smallmatrix}\right)$. We have $\det(M_0)=-1$. Again, $M_0$ reduces to the nontrivial scalar $\tau I_2\pmod{2\mathcal{O}_K}$, confirming the index is exactly $2$. Calculation gives $M_0P(1)M_0^\ast=P(1)$, which requires the global sign to be $+1$. However, testing $P(\tau)$ modulo $4pS_K$ gives:
    $$M_0P(\tau)M_0^\ast=\begin{pmatrix}-4p&(2p-1)\tau\\-(2p-1)\tau&-p(p-1)\end{pmatrix}.$$
    This requires $(2p-1)\tau-\tau=2(p-1)\tau\in 4p\mathcal{O}_K$, which is false.
\end{itemize}
\end{proof}

\begin{lemma}\label{lem:extended-coset}
No element of the extended ideal-class coset $\widehat{\operatorname{Bi}}_K\setminus\operatorname{Bi}_K$ acts on $A_{S_K(2)}$ by a global sign. Furthermore, the same calculation excludes the extended Galois coset $(\widehat{\operatorname{Bi}}_K\setminus\operatorname{Bi}_K)\sigma$.
\end{lemma}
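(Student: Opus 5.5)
For $K=\mathbb{Q}(i)$ and $K=\mathbb{Q}(\sqrt{-2})$ the class number is one, so $\operatorname{Cl}(\mathcal O_K)[2]$ is trivial. By Proposition~\ref{Lem:extendedBianchi} the coset $\widehat{\operatorname{Bi}}_K\setminus\operatorname{Bi}_K$ is then empty and there is nothing to prove. The substantive case is $K=\mathbb{Q}(\sqrt{-p})$ with $p\equiv 1\pmod 4$. Here $-D=-4p$ has two prime divisors, so $[\widehat{\operatorname{Bi}}_K:\operatorname{Bi}_K]=2$, and by Example~\ref{exampleextemdedBianchi} the nontrivial coset is $\operatorname{Bi}_K[W_2]$. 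I will therefore show that for every $M\in\operatorname{GL}_2(\mathcal O_K)$ the normalized isometry $h_W\colon A\mapsto \frac12 WAW^\ast$, with $W:=MW_2$, cannot act on $A_{S_K(2)}$ by a single sign $\varepsilon\in\{\pm1\}$. Note that $\det W=2\det M=\pm 2$, the entries of $W$ lie in $\mathfrak p_2$, and $W\mathcal O_K^2=\mathfrak p_2\mathcal O_K^2$.

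The plan is to split $A_{S_K(2)}$ into its $p$-primary and $2$-primary parts, compute the sign forced on each, and show the two signs are incompatible.

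\emph{The $p$-part.} I will redo the computation of Lemma~\ref{lem:odd-ramified-character} for $h_W$ on $u_p=\frac1p P(\delta)$. The same identity for $WP(\delta)W^\ast$ holds, and the diagonal terms still vanish modulo $p$. The off-diagonal term is $\delta\det W$ modulo $\mathfrak p^2$. After dividing by $2$, $h_W$ acts on $A_{S_K,p}$ by $\det(W)/2=\det M\in\{\pm1\}$ modulo $\mathfrak p$. This forces $\varepsilon=\det M$.

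\emph{The $2$-part.} I will work over $\mathcal O_K\otimes\mathbb Z_2=\mathbb Z_2[\tau]$, where $\mathfrak p_2$ becomes principal with generator $\pi=1+\tau$. Locally $W=\pi W'$ with $W'\in\operatorname{GL}_2(\mathbb Z_2[\tau])$, and the normalized action becomes
\[
A\longmapsto \frac{\pi\bar\pi}{2}\,W'AW'^\ast,
\qquad
\frac{\pi\bar\pi}{2}=\frac{1+p}{2}\equiv 1 \pmod 2 .
\]
The factor $(1+p)/2$ is in fact $\equiv 3\pmod 4$. So, up to an explicit odd scalar, $h_W$ acts on the $2$-primary discriminant $A_{S_K(2),2}$ through a genuine local unimodular matrix $W'$. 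The sign condition on the classes $e/2$, $f/2$ and $P(1)/2$ then gives, exactly as in Corollary~\ref{Bianchicontainment}, that $W'\equiv\lambda I_2\pmod 2$. The finer classes $P(1)/4$ and $P(\tau)/4p$ give, as in Lemmas~\ref{lem:pgamma-sign} and~\ref{lem:scalar-defect}, that the $2$-adic sign equals $\frac{1+p}{2}\det W'$ modulo $4$, up to the $\lambda=\tau$ correction that Lemma~\ref{lem:scalar-defect} shows is itself obstructed. Since
\[
\det W'=\frac{\det W}{\pi^2}=\pm\frac{2}{\pi^2}=\pm\frac{2}{\pi\bar\pi}\cdot\frac{\bar\pi}{\pi},
\]
the $2$-adic sign is $\det M$ multiplied by a fixed factor coming from $\bar\pi/\pi=(1-\tau)/(1+\tau)$ modulo $4$. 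This factor should be the nontrivial one, making the $2$-adic sign $-\det M$ and contradicting the $p$-part.

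This last bookkeeping — tracking $\bar\pi/\pi$ and $(1+p)/2$ modulo $4$ against the generators $P(1)/4$ and $P(\tau)/4p$ — is the step I expect to be the main obstacle. As a check and fallback, I would first carry out the computation directly for $M=I_2$. That is, I would evaluate $\frac12W_2P(1)W_2^\ast$ modulo $4S_K$ and $\frac12W_2P(\tau)W_2^\ast$ modulo $4pS_K$ explicitly, as in Lemma~\ref{lem:scalar-defect}, and confirm that the resulting signs disagree with $\det M=+1$ from the $p$-part. For a general $M$, the sign-acting elements of $\widehat{\operatorname{Bi}}_K$ form a group whose intersection with $\operatorname{Bi}_K$ is $P\Gamma_K(2)$, by Lemmas~\ref{lem:pgamma-sign} and~\ref{lem:scalar-defect} together with Corollary~\ref{Bianchicontainment}. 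If some $[MW_2]$ acted by a sign, then $[W_2]$ would act by a sign modulo the action of $[M]$. I would use this to reduce the general case to finitely many representatives of $\operatorname{Bi}_K/P\Gamma_K(2)$, each handled by the local argument above.

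Finally, for the Galois coset $(\widehat{\operatorname{Bi}}_K\setminus\operatorname{Bi}_K)\sigma$, I will compose with $\sigma(P(z))=P(\bar z)$. The $p$-part computation is unchanged, because $\bar\delta=-\delta$ only introduces a sign compensated on both sides. In the $2$-part, $\sigma$ sends $P(\tau)$ to $-P(\tau)$ while fixing $e$, $f$ and $P(1)$. The argument of Lemma~\ref{lem:galois-coset-exclusion} in the case $N=1$, $2\mid D$ then yields $2\tau\in4\tau\mathcal O_K$, which is a contradiction.
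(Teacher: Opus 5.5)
Your proposal has a genuine gap in the $2$-primary step. The contradiction you aim for, a $2$-adic sign $-\det M$ clashing with the $p$-adic sign $\det M$, does not exist, because the factor you need to control is not a sign. Concretely, $\bar\pi/\pi=\bigl(\tfrac{1-p}{2}-\tau\bigr)/\tfrac{1+p}{2}\equiv\tau\pmod{2}$. Likewise $\det W'=\pm 2/\pi^2=\pm 1/\bigl(\tfrac{1-p}{2}+\tau\bigr)\equiv\tau\pmod 2$. Neither is congruent to $\pm1$, even modulo $2$. So the ``modulo $4$ bookkeeping'' you flag as the main obstacle cannot produce a sign $-\det M$. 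Separately, $(1+p)/2\equiv 3\pmod 4$ holds only for $p\equiv 5\pmod 8$; for $p=17$ it equals $9$. Your $p$-part computation is correct, but it plays no role.

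The obstruction already appears on the level subgroup $S_K/2S_K\subset A_{S_K(2)}$. There $+1$ and $-1$ coincide, so a sign-acting element must act trivially. You correctly deduce from $e/2$, $f/2$, $P(1)/2$ that this forces $W'\equiv\lambda I_2\pmod 2$. But then $\det W'\equiv\lambda^2=1$ in $(\mathcal O_K/2\mathcal O_K)^\times=\{1,\tau\}$, contradicting $\det W'\equiv\tau$, and this holds for every $M$ at once.

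This is the paper's proof in local form. Globally, $\tfrac12 W_2P(1)W_2^\ast=2e+2sf+P(\tau)$ with $s=(p-1)/4$. For any $M\in\GL_2(\mathcal O_K)$ one has $MP(\tau)M^\ast\equiv P(\tau\det M)\equiv P(\tau)\pmod{2S_K}$, because conjugation is trivial modulo $2\mathcal O_K$ and the diagonal entries $\tau(a\bar b-b\bar a)$ lie in $2\mathbb Z$. Hence $[MW_2]$ sends $P(1)$ to a class $\equiv P(\tau)\not\equiv P(1)\pmod{2S_K}$.

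This uniform computation makes your fallback reduction to coset representatives of $\operatorname{Bi}_K/P\Gamma_K(2)$ unnecessary. As you set it up, that reduction still leaned on the flawed local argument anyway. Since $\sigma$ fixes $P(1)$, the identical computation excludes the Galois coset. Your appeal to Lemma~\ref{lem:galois-coset-exclusion} instead starts from a scalar reduction $W'\equiv\lambda I_2\pmod 2$ that never occurs, so the real contradiction there is again the level-$2$ one.
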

\begin{proof}
For $K=\mathbb{Q}(i)$ and $K=\mathbb{Q}(\sqrt{-2})$, the class number is $1$ and the discriminant yields no nontrivial $2$-torsion ideal classes, so $\widehat{\operatorname{Bi}}_K=\operatorname{Bi}_K$. Using the description of Example~\ref{exampleextemdedBianchi}. Computing its action on $P(1)=\left(\begin{smallmatrix}0&1\\1&0\end{smallmatrix}\right)$ directly:
$$W_2P(1)W_2^\ast=\begin{pmatrix}1+\tau&2\\2s&1-\tau\end{pmatrix}\begin{pmatrix}0&1\\1&0\end{pmatrix}\begin{pmatrix}1-\tau&2s\\2&1+\tau\end{pmatrix}=\begin{pmatrix}4&2\tau\\-2\tau&4s\end{pmatrix}.$$
Dividing by $2$ and reducing modulo $2\mathcal{O}_K$, we obtain:
$$\frac{1}{2}W_2P(1)W_2^\ast=\begin{pmatrix}2&\tau\\-\tau&2s\end{pmatrix}\equiv\begin{pmatrix}0&\tau\\\tau&0\end{pmatrix}=P(\tau)\pmod{2\mathcal{O}_K}.$$
Suppose an element of this extended coset is induced by an automorphism of $X_{K,2}$. It must have the form $MW_2$ for some ordinary Bianchi matrix $M=\left(\begin{smallmatrix}a&b\\c&d\end{smallmatrix}\right)\in\operatorname{GL}_2(\mathcal{O}_K)$. By Proposition~\ref{prop:level-containment}, $MW_2$ must act on $A_{S_K(2)}$ by a global sign $\pm 1$. Because $+1\equiv -1\pmod 2$, the induced action of $MW_2$ on the level-$2$ subgroup $S_K/2S_K$ must be the identity.
Its action on $P(1)$ must yield $P(1)$ modulo $2\mathcal{O}_K$. Composing the actions, we require:
$$M\left(\frac{1}{2}W_2P(1)W_2^\ast\right)M^\ast\equiv P(1)\pmod{2\mathcal{O}_K},$$
which simplifies to $MP(\tau)M^\ast\equiv P(1)\pmod{2\mathcal{O}_K}$. We compute the left-hand side directly:
$$MP(\tau)M^\ast\equiv\begin{pmatrix}a&b\\c&d\end{pmatrix}\begin{pmatrix}0&\tau\\\tau&0\end{pmatrix}\begin{pmatrix}\overline{a}&\overline{c}\\\overline{b}&\overline{d}\end{pmatrix}=\begin{pmatrix}\tau(a\overline{b}+b\overline{a})&\tau(a\overline{d}+b\overline{c})\\\tau(c\overline{b}+d\overline{a})&\tau(c\overline{d}+d\overline{c})\end{pmatrix}\pmod{2\mathcal{O}_K}.$$
Since $2\equiv 0\pmod{2\mathcal{O}_K}$, the diagonal entries vanish. For the off-diagonal entry, complex conjugation is trivial modulo $2$, so $a\overline{d}+b\overline{c}\equiv ad-bc=\det(M)\pmod{2\mathcal{O}_K}$. Thus, that congruence requires:
$$\begin{pmatrix}0&\tau\det(M)\\\tau\det(M)&0\end{pmatrix}\equiv\begin{pmatrix}0&1\\1&0\end{pmatrix}\pmod{2\mathcal{O}_K},$$
which implies $\tau\det(M)\equiv 1\pmod{2\mathcal{O}_K}$.
Since $\det(M)\in\{\pm 1\}$ and both signs reduce to $1$ modulo $2$, this forces $\tau\equiv 1\pmod{2\mathcal{O}_K}$. However, this requires $\tau-1\in 2\mathcal{O}_K$. Equivalently, in the quotient ring $\mathcal{O}_K/2\mathcal{O}_K\cong\mathbb{F}_2[\epsilon]/(\epsilon^2)$, the class $\epsilon=\tau+1=\tau-1$ must be zero, which is false. Thus, no such matrix $M$ can exist. Finally, since $\sigma(P(1))=P(1)$, we have $\frac{1}{2}W_2\sigma(P(1))W_2^\ast=\frac{1}{2}W_2P(1)W_2^\ast\equiv P(\tau)\pmod{2\mathcal{O}_K}$. Therefore the same contradiction $\tau\det(M)\equiv 1\pmod{2\mathcal{O}_K}$ follows, excluding the extended Galois coset as well.
\end{proof}

\begin{proof}[Proof of Theorem~\ref{thm:ramified-level-two}]
By Lemma~\ref{lem-twoelementary}, the representation $\rho$ is injective because the discriminant group $A_{S_K(2)}$ is not $2$-elementary in all cases considered (it contains $4$-torsion for $K=\mathbb{Q}(i)$, $8$-torsion for $K=\mathbb{Q}(\sqrt{-2})$, and $4p$-torsion for $K=\mathbb{Q}(\sqrt{-p})$). By the Torelli theorem, since the lattice $S_K(2)$ contains no $(-2)$-classes, the set of roots is empty ($\Delta(S_K(2))=\emptyset$) and the Weyl group is trivial ($W(S_K(2))=1$). Consequently, the ample cone coincides with the positive cone.

We have decomposed the base isometries into the ordinary Bianchi group, the extended ideal-class coset, and the Galois cosets. Lemma~\ref{lem:galois-coset-exclusion} rules out the ordinary Galois coset, and Lemma~\ref{lem:extended-coset} rules out both the extended ideal-class coset and the extended Galois coset. We are therefore restricted to the ordinary Bianchi group $\operatorname{Bi}_K$. By Proposition~\ref{prop:level-containment}, the global sign condition forces any such element to belong to $\operatorname{Bi}_K(2)$. By Lemmas~\ref{lem:pgamma-sign} and~\ref{lem:scalar-defect}, the elements of $\operatorname{Bi}_K(2)$ satisfying the global sign condition are precisely those in the principal congruence subgroup $P\Gamma_K(2)$. Thus, $\rho(\operatorname{Aut}(X_{K,2}))=P\Gamma_K(2)$. Since $\rho$ is injective, this gives $\operatorname{Aut}(X_{K,2})\cong P\Gamma_K(2)$.
\end{proof}

We now turn to the study of higher-level congruence subgroups. The coincidence $\operatorname{Aut}(X_{K,2})\cong P\Gamma_K(2)$ observed at level $2$ might naturally suggest that the principal congruence subgroup $P\Gamma_K(2N)$ rather than the Bianchi congruence subgroup $\operatorname{Bi}_K(2N)$ is always the appropriate group to describe these automorphisms. We conclude this section with examples demonstrating that this assumption is false: at higher levels, the automorphism group is indeed isomorphic to a congruence subgroup, but not the principal one.

\begin{theorem}
\label{prop:split-prime-power-principal-congruence}
Let \(p\) be a prime such that \(p \equiv 7 \pmod 8\), and let \(K=\mathbb Q(\sqrt{-p})\). Let \(N=q^e\), where \(q \equiv 3 \pmod 4\) is an odd prime that splits in \(K\). Then the automorphism group of level \(2N\) coincides with the principal congruence subgroup of level \(2N\):
    \[
        \operatorname{Aut}(X_{K,2N}) = P\Gamma_K(2N).
    \]
\end{theorem}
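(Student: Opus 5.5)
The plan is to reduce everything to the arithmetic description of Theorem~\ref{thm:main-bianchi-aut-group} and then check directly that, under the stated congruence conditions, $\Psi_N^{-1}(\Delta_N)$ is exactly $P\Gamma_K(2N)$.

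\emph{Reducing $\operatorname{Aut}(X)$ to a subgroup of $\operatorname{Bi}_K$.} Since $p\equiv 3\pmod 4$, the discriminant is $-D=-p$, which is odd. Since $-p\equiv 1\pmod 8$, the prime $2$ splits in $K$. As in the proof of Corollary~\ref{cor:class-number-one-level-two}, genus theory gives $\operatorname{Cl}(K)[2]=0$, so Proposition~\ref{Lem:extendedBianchi} yields $\widehat{\operatorname{Bi}}_K=\operatorname{Bi}_K$ and hence $O^+(S_K)=\operatorname{Bi}_K\rtimes\langle\sigma\rangle$. Lemma~\ref{lem:galois-coset-exclusion} excludes the Galois coset, so $\operatorname{Aut}(X)\subseteq \operatorname{Bi}_K$. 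The lattice $S_K(2N)$ has no $(-2)$-classes, so the ample cone is the positive cone. Its discriminant group contains elements of order $2N\ge 6$, so it is not $2$-elementary, and Lemma~\ref{lem-twoelementary} makes $\rho$ injective. Since $q$ splits, it is unramified, and Theorem~\ref{thm:main-bianchi-aut-group} (with $2\nmid D$) gives $\operatorname{Aut}(X)=\Psi_N^{-1}(\Delta_N)\subseteq\operatorname{Bi}_K(2N)$.

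\emph{Computing the characters.} Take $g=[M]\in\operatorname{Bi}_K(2N)$ and put $\delta=\det M\in\{\pm1\}$.
\begin{enumerate}
\item By Lemma~\ref{lem:odd-ramified-character}, the character at the unique odd ramified prime is $\epsilon_p(g)=\delta \bmod \mathfrak p$. This is a genuine sign, since $p\ne 2$.
\item Write $M\equiv uI_2\pmod{q^e}$. Because $q$ splits, $\mathcal O_K/q^e\cong \mathbb Z/q^e\times\mathbb Z/q^e$, with conjugation swapping the two factors. Writing $u=(u_1,u_2)$, we get $\alpha_{q^e}(g)=u_1u_2$.
\item Taking determinants gives $u_1^2\equiv u_2^2\equiv\delta\pmod{q^e}$.
\end{enumerate}
Since $q\equiv 3\pmod 4$, $-1$ is not a square mod $q$. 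Hence $\delta=1$ is forced, and $u_1,u_2\in\{\pm1\}$.

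\emph{The key step.} The global sign condition $\Psi_N(g)\in\Delta_N$ reads $\alpha_{q^e}(g)=\epsilon_p(g)=\delta=1$, i.e.\ $u_1=u_2$. Thus $u\equiv\pm1\pmod{q^e}$. Replacing $M$ by $-M$ if necessary, we get $M\equiv I_2\pmod{q^e}$. For the prime $2$, we use that it splits: $(\mathcal O_K/2)^\times$ is trivial, so, as in Proposition~\ref{cor:Bi-vs-PGamma-2}, $M\equiv I_2\pmod 2$, and this property survives the sign change. The Chinese remainder theorem then gives $M\equiv I_2\pmod{2q^e}$, so $g\in P\Gamma_K(2N)$.

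\emph{The converse.} Let $M\equiv I_2\pmod{2N}$. Then $\det M\equiv 1\pmod{2N}$, and since $2N\ge 6$ this forces $\det M=1$. Hence $\epsilon_p=1$ and $u=1$, so $\alpha_{q^e}=1$, and therefore $\Psi_N(g)=(1,1)\in\Delta_N$.

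The step I expect to need the most care is the splitting computation of $\alpha_{q^e}$. One must make sure conjugation really acts as the swap on $\mathcal O_K/q^e$, and that the sign flip $M\mapsto -M$ is compatible with the condition at $2$. The hypotheses $q\equiv 3\pmod 4$ and $p\equiv 7\pmod 8$ are used exactly at these two points.
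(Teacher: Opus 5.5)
Your proposal is correct and follows essentially the same route as the paper. You split $\mathcal O_K/q^e\mathcal O_K\cong\mathbb Z/q^e\times\mathbb Z/q^e$ with conjugation swapping the factors, combine $u_1^2\equiv u_2^2\equiv\delta$ with the sign condition $u_1u_2=\epsilon_p(g)=\delta$ and $q\equiv 3\pmod 4$ to get $u\equiv\pm1\pmod{q^e}$, and finish using that $2$ splits plus the Chinese remainder theorem. The differences are minor: you rule out $\delta=-1$ directly from $u_1^2\equiv\delta$ before using the sign condition (the paper does these in the opposite order), and you spell out $\operatorname{Aut}(X)\subseteq\operatorname{Bi}_K$ via $\operatorname{Cl}(K)[2]=0$ and Lemma~\ref{lem:galois-coset-exclusion}, which the paper's proof simply assumes.
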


\begin{proof}
Since \(2\) splits in \(K\), we have \(\mathcal O_K^\times=\{\pm1\}\). Let \(g\in\mathcal \Aut(X_{K,2N})\). Because \(\Aut(X_{K,2N})\subseteq \operatorname{Bi}_K(2N)\), we can choose a representative \(M\in\operatorname{GL}_2(\mathcal O_K)\) such that \(M\equiv uI_2 \pmod{2N\mathcal O_K}\) for some \(u\in(\mathcal O_K/2N\mathcal O_K)^\times\). Let \(\delta:=\det(M)\in\{\pm1\}\). Because \(q\) splits in \(K\), there is an isomorphism \(\mathcal O_K/q^e\mathcal O_K \cong \mathbb Z/q^e\mathbb Z \times \mathbb Z/q^e\mathbb Z\), under which complex conjugation exchanges the factors. Writing the \(q^e\)-component of \(u\) as \(u_q=(a,b)\), the relation \(\det(M)=\delta\) modulo \(q^e\mathcal O_K\) implies \(a^2=b^2=\delta\) in \(\mathbb Z/q^e\mathbb Z\).

The action of \(g\) on the \(q^e\)-primary level component of the discriminant group is the norm scalar \(\alpha_{q^e}(g) = u_q\overline{u_q} = (a,b)(b,a) = (ab,ab)\), which we identify with \(ab \in (\mathbb Z/q^e\mathbb Z)^\times\). At the same time, \(g\) acts on the original \(p\)-primary part via multiplication by \(\epsilon_p(g)=\delta\). Since \(g\in\mathcal \Aut(X_{K,2N})\), all primary components share a common sign, meaning \(ab=\delta\). Combining this with \(a^2=\delta\) yields \(ab=a^2\), and since \(a\) is a unit, \(a=b\). Thus, \(u_q=(a,a)\) and \(a^2=\delta\).

If \(\delta=-1\), then \(a^2 \equiv -1 \pmod q\). However, \(q\equiv 3 \pmod 4\) makes \(-1\) a quadratic non-residue modulo \(q\), so this is impossible. Therefore, \(\delta=1\). Consequently, \(a^2\equiv 1 \pmod{q^e}\), which yields \(a\equiv\pm1 \pmod{q^e}\) since \(q\) is odd. Thus, \(M\equiv\varepsilon I_2 \pmod{q^e\mathcal O_K}\) for some \(\varepsilon\in\{\pm1\}\).

Modulo \(2\), \(\mathcal O_K/2\mathcal O_K \cong \mathbb F_2\times\mathbb F_2\) has a trivial unit group, forcing \(M\equiv I_2 \pmod{2\mathcal O_K}\) and \(\varepsilon\equiv 1 \pmod{2\mathcal O_K}\). Since \(2\mathcal O_K\) and \(q^e\mathcal O_K\) are relatively prime, the Chinese remainder theorem gives \(\varepsilon^{-1}M\equiv I_2 \pmod{2q^e\mathcal O_K}\). Hence \(g\in P\Gamma_K(2N)\), proving \(\mathcal \Aut(X_{K,2N})\subseteq P\Gamma_K(2N)\).

Conversely, let \(g\in P\Gamma_K(2N)\). It has a representative \(M\equiv I_2 \pmod{2N\mathcal O_K}\). The norm scalar on the \(q^e\)-primary level component is \(\alpha_{q^e}(g)=1\), and \(\det(M)\equiv 1 \pmod{q\mathcal O_K}\). Since \(\det(M)\in\{\pm1\}\) and \(q\) is odd, \(\det(M)=1\), making \(\epsilon_p(g)=1\). Every primary component is acted on by \(+1\), so \(g\in\mathcal C_N\). This proves \(P\Gamma_K(2N)\subseteq\mathcal \Aut(X_{K,2N})\).
\end{proof}

\begin{theorem}
\label{thm:prime-condition-Aut-Bi}
Let $K=\mathbb Q(\sqrt{-p})$, where $p>3$ is a prime satisfying $p\equiv3\pmod4$.
Let $N>1$ be odd, and assume that every prime divisor $\ell\mid N$ is inert in $K$ and congruent to $1$ modulo $4$.

Let $X_{K,2N}$ be a very general $S_K(2N)$-polarized $K3$ surface. Then, under the normalized Bianchi action, the following hold:
\begin{enumerate}
    \item $\operatorname{Aut}(X_{K,2N}) \cong \Bi_K(2N)$.
    \item $P\Gamma_K(2N) \subsetneq \Bi_K(2N)$.
\end{enumerate}
\end{theorem}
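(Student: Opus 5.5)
The plan is to reduce part~(1) to Theorem~\ref{thm:main-bianchi-aut-group} and show that the subgroup $\Psi_N^{-1}(\Delta_N)$ is all of $\Bi_K(2N)$. Then I will exclude every non-Bianchi coset of $O^+(S_K(2N))$, and finally prove part~(2) by building an explicit element of $\Bi_K(2N)$ with determinant $-1$.

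\emph{Setup.} Since $p\equiv 3\pmod 4$, the discriminant is $-D=-p$, which is odd, so $2\nmid D$. Since $p>3$, $\mathcal O_K^\times=\{\pm1\}$. The hypotheses of Theorem~\ref{thm:main-bianchi-aut-group} hold: primes dividing $N$ are inert, hence unramified. Therefore $\Bi_K\cap\Aut(X)=\Psi_N^{-1}(\Delta_N)$.

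\emph{Every $g\in\Bi_K(2N)$ satisfies $\Psi_N(g)\in\Delta_N$.} Take $g=[M]$ and put $\delta=\det M\in\{\pm1\}$. By Lemma~\ref{lem:odd-ramified-character}, $\epsilon_p(g)=\delta$. For $\ell^e\Vert N$ write $M\equiv uI_2\pmod{\ell^e}$, so $u^2=\delta$ in $\mathcal O_K/\ell^e$.
\begin{itemize}
\item If $\delta=1$: the ring $\mathcal O_K/\ell^e$ is local with residue field $\mathbb F_{\ell^2}$, and $\ell$ is odd. By Hensel, $u=\pm1$, so $\alpha_{\ell^e}(g)=u\bar u=1$.
\item If $\delta=-1$: because $\ell\equiv1\pmod4$, $-1$ has two square roots $\pm\iota$ already in $\mathbb Z/\ell^e$. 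By Hensel these are the only square roots in $\mathcal O_K/\ell^e$, so $u\in\mathbb Z/\ell^e$. Conjugation fixes $u$, giving $\alpha_{\ell^e}(g)=u^2=-1$.
\end{itemize}
In both cases every component of $\Psi_N(g)$ equals $\delta$, so $\Psi_N(g)\in\Delta_N$ and $\Bi_K\cap\Aut(X)=\Bi_K(2N)$.

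\emph{Excluding the other cosets.} The discriminant $-p$ has a single prime divisor. By Proposition~\ref{Lem:extendedBianchi}, $\widehat{\Bi}_K=\Bi_K$, so $O^+(S_K)=\Bi_K\rtimes\langle\sigma\rangle$. Lemma~\ref{lem:galois-coset-exclusion} excludes the Galois coset, so $\Aut(X)\subseteq\Bi_K$.

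\emph{Injectivity and the ample cone.} The group $A_{S_K(2N)}$ contains $p$-torsion, so it is not $2$-elementary and Lemma~\ref{lem-twoelementary} gives injectivity. The lattice $S_K(2N)$ has all squares divisible by $4N$, so it has no $(-2)$-classes. Hence the ample cone is the positive cone, as used in Corollary~\ref{Bianchicontainment}. This proves $\Aut(X_{K,2N})\cong\Bi_K(2N)$.

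\emph{Part~(2).} Using CRT and $\ell\equiv1\pmod4$, choose $c\in\mathbb Z/2N$ with $c\equiv1\pmod2$ and $c^2\equiv-1\pmod N$. I want $M\in\GL_2(\mathcal O_K)$ with $\det M=-1$ and $M\equiv cI_2\pmod{2N}$. Put $J=\mathrm{diag}(1,-1)$ and look for $M=JA$ with $A\in SL_2(\mathcal O_K)$ and
\[
A\equiv J^{-1}cI_2=\mathrm{diag}(c,-c)\pmod{2N}.
\]
The target $\mathrm{diag}(c,-c)$ has determinant $-c^2$, which is $\equiv1$ modulo $N$ and modulo $2$. So it lies in $SL_2(\mathcal O_K/2N)$. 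The lift $A$ then comes from surjectivity of the reduction $SL_2(\mathcal O_K)\to SL_2(\mathcal O_K/2N\mathcal O_K)$.

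That surjectivity is the step needing care. I would justify it by noting that $\mathcal O_K/2N$ is semilocal, so its $SL_2$ is generated by elementary matrices, and elementary matrices lift trivially. Alternatively one can cite strong approximation for $SL_2$.

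With this $M$ we have $[M]\in\Bi_K(2N)$. The only representatives are $\pm M$, and these are $\equiv\pm cI_2\pmod\ell$ for any prime $\ell\mid N$. Since $N>1$ such an $\ell$ exists, and $c\not\equiv\pm1\pmod\ell$ because $c^2\equiv-1$ and $\ell$ is odd. Hence neither representative is $\equiv I_2\pmod{2N}$, so $[M]\notin P\Gamma_K(2N)$ and the inclusion $P\Gamma_K(2N)\subsetneq\Bi_K(2N)$ is strict.
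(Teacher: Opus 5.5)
Your proof is correct. For part~(1) your local analysis matches the paper's. Both use uniqueness of Hensel lifts of the roots of $x^2-\delta$ in the local ring $\mathcal O_K/\ell^e\mathcal O_K$, with $\ell\equiv 1\pmod 4$ needed when $\delta=-1$, to show that $u_\ell$ is conjugation-invariant, so that $\alpha_{\ell^e}(g)=\delta=\epsilon_p(g)$. You pass through $\Psi_N^{-1}(\Delta_N)$ from Theorem~\ref{thm:main-bianchi-aut-group}, while the paper checks the global-sign criterion of Proposition~\ref{prop:level-containment} directly; this is the same content.

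Your version of~(1) is more complete on the reverse inclusion. The paper's proof only verifies that every element of $\Bi_K(2N)$ passes the sign test, and then asserts the isomorphism ``under the normalized Bianchi action''. You explicitly rule out the rest of $O^+(S_K)$: the extended index is trivial because $-p$ has one prime factor, and the Galois coset is excluded by Lemma~\ref{lem:galois-coset-exclusion}. You also invoke Lemma~\ref{lem-twoelementary} for injectivity, exactly as the paper does in Corollary~\ref{cor:class-number-one-level-two}.

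For part~(2) the routes genuinely differ. The paper writes down the explicit matrix $\begin{pmatrix}u&m\\ mc&u+mt\end{pmatrix}\in\GL_2(\mathbb Z)$ of determinant $-1$, congruent to $uI_2$ modulo $m=2N$, which is completely elementary and self-contained. You factor $M=JA$ and obtain $A$ from surjectivity of $SL_2(\mathcal O_K)\to SL_2(\mathcal O_K/2N\mathcal O_K)$. That is shorter to state, but it imports elementary generation of $SL_2$ over semilocal rings (or strong approximation). Since your target $\mathrm{diag}(c,-c)$ has integer entries, you could instead cite the classical surjectivity $SL_2(\mathbb Z)\to SL_2(\mathbb Z/2N\mathbb Z)$, landing in $\GL_2(\mathbb Z)$ as the paper does. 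Your non-membership check modulo a single prime $\ell\mid N$ is equivalent to the paper's check modulo $m$.
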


\begin{proof}
\textit{Proof of (1).} 
Since $p\equiv3\pmod4$, the fundamental discriminant of $K$ is $D=-p$.
In particular, $D$ is odd. Moreover, $\mathcal O_K^\times=\{\pm1\}$, because $p>3$. Let $g=[M]\in\Bi_K(2N)$.
Since $\det(M)$ is a unit, we may write $\delta:=\det(M)\in\{\pm1\}$.
This sign is independent of the chosen integral representative of the projective class $g$. Fix a prime power $\ell^e\Vert N$. Since $g\in\Bi_K(2N)$, there exists $u_\ell\in \bigl(\mathcal O_K/\ell^e\mathcal O_K\bigr)^\times$ such that $M\equiv u_\ell I_2 \pmod{\ell^e\mathcal O_K}$.
Taking determinants gives $u_\ell^2=\delta$ in $\mathcal O_K/\ell^e\mathcal O_K$.
Conjugating this equality gives $\overline{u}_\ell^{\,2}=\delta$.
Therefore $\left(\frac{\overline{u}_\ell}{u_\ell}\right)^2=1$.
Since $\ell$ is odd and $\mathcal O_K/\ell^e\mathcal O_K$ is a local ring, it follows that $\frac{\overline{u}_\ell}{u_\ell}\in\{\pm1\}$.

We determine this sign after reducing modulo $\ell$. Since $\left(\frac{-p}{\ell}\right)=-1$, the prime $\ell$ is inert in $K$, and hence $\mathcal O_K/\ell\mathcal O_K \cong \mathbb F_{\ell^2}$. Conjugation on this residue field is the Frobenius automorphism.
\begin{enumerate}
    \item If $\delta=1$, then $u_\ell$ is a root of $x^2-1$, so $u_\ell\equiv\pm1\pmod{\ell}$.
Thus $u_\ell$ is fixed by conjugation modulo $\ell$. Since $u_\ell$ and $\overline{u}_\ell$ are roots of the same polynomial and $2u_\ell$ is a unit, the lift is unique, and therefore $\overline{u}_\ell=u_\ell$ modulo $\ell^e$.
\item 
If $\delta=-1$. Then $u_\ell$ is a root of $x^2+1$. Since $\ell\equiv1\pmod4$, the roots of $x^2+1$ belong to the prime field $\mathbb F_\ell$. They are therefore fixed by Frobenius. Again, uniqueness of the Hensel lift gives $\overline{u}_\ell=u_\ell$ modulo $\ell^e$.
\end{enumerate}

Thus, in both cases, $\alpha_{\ell^e}(g) = u_\ell\overline{u}_\ell = u_\ell^2 = \delta$. The only odd ramified prime of $K$ is $p$. By the ramified prime calculation, the corresponding discriminant character is $\epsilon_p(g)=\det(M)=\delta$.
Since $D$ is odd, the $2$-primary condition is automatic.
Therefore all local discriminant characters agree with the single global sign $\sigma=\delta$.
By the global-sign criterion, $g|_{A_{S_K(2N)}} = \delta\operatorname{id}_{A_{S_K(2N)}}$.
Hence every element of $\Bi_K(2N)$ satisfies the criterion of Proposition~\ref{prop:level-containment}.

\medskip
\textit{Proof of (2).}
Set $m:=2N$.
For every prime power $\ell^e\Vert N$, the congruence $x^2\equiv-1\pmod{\ell^e}$ has a solution because $\ell\equiv1\pmod4$. Since $1^2\equiv-1\pmod2$, the Chinese remainder theorem gives an integer $u$ such that $u^2\equiv-1\pmod m$.
Since $m>2$, one has $u\not\equiv\pm1\pmod m$.

Write $s:=\frac{u^2+1}{m}\in\mathbb Z$.
Since $u$ is invertible modulo $m$, choose $t\in\mathbb Z$ such that $ut\equiv-s\pmod m$.
Then $c:=\frac{s+ut}{m}$ is an integer. Define
$M= \begin{pmatrix} u&m\\ mc&u+mt \end{pmatrix}$.
A direct computation gives $\det(M) = -1. $
Thus $M\in\operatorname{GL}_2(\mathbb Z) \subseteq \operatorname{GL}_2(\mathcal O_K)$.
Moreover, $M\equiv uI_2\pmod{m\mathcal O_K}$.
Since $u$ is invertible modulo $m$, the projective reduction of $M$ is trivial, and therefore $[M]\in\Bi_K(2N)$.

Suppose that $[M]\in P\Gamma_K(2N)$. Then some global unit $\varepsilon\in\mathcal O_K^\times=\{\pm1\}$ satisfies $\varepsilon M\equiv I_2 \pmod{m\mathcal O_K}$.
Since $M\equiv uI_2$, this implies $\varepsilon u\equiv1\pmod m$, and hence $u\equiv\pm1\pmod m$.
This contradicts the choice of $u$. Therefore $[M]\in \Bi_K(2N)\setminus P\Gamma_K(2N)$, and consequently $P\Gamma_K(2N) \subsetneq \Bi_K(2N)$.
Combining the two parts proves $P\Gamma_K(2N) \subsetneq \Bi_K(2N) \cong \operatorname{Aut}(X_{K,2N})$.
\end{proof}

\subsection{Mordell--Weil group}\label{subsec:MW}
We now examine the basic geometry of these $K3$ surfaces. These observations help us construct concrete models of $X_{K,2}$. From the structure of the lattice $S_K(2N)$, we can immediately deduce the following properties.

\begin{proposition}
    Let $X_{K,2N}$ be a $K3$ surface with $\NS(X_{K,2N}) \cong S_K(2N)$. Then:
    \begin{enumerate}
        \item Each of the primitive isotropic classes $e_1, e_2$ defines a genus-one fibration.
        \item $S_K(2N)$ does not represent $-2$; hence
        $X_{K,2N}$ contains no smooth rational curves.
    \end{enumerate}
\end{proposition}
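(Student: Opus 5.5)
Part (2) is a congruence computation, and part (1) then follows from the standard theory of isotropic classes on a $K3$ surface with no smooth rational curves, so I would prove (2) first.

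For (2), write a general class of $S_K$ as $A=\left(\begin{smallmatrix} a & b\\ \bar b & c\end{smallmatrix}\right)$. Its square in $S_K$ is
\[
q(A)=2\det A=2\bigl(ac-N_{K/\mathbb Q}(b)\bigr),
\]
which is even. In the scaled lattice $S_K(2N)$ the square becomes $2N\cdot 2(ac-N(b))=4N(ac-N(b))$, which is divisible by $4$. Hence no class has square $-2$, that is, $\Delta(S_K(2N))=\emptyset$. Now if $C\subset X_{K,2N}$ were a smooth rational curve, adjunction would give $C^2=-2$, which is impossible. Equivalently, the Weyl group $W(S_K(2N))$ is trivial.

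For (1), I take $e_1,e_2$ to be the classes $e,f$ of Lemma~\ref{lem:dual_generators_Hermitian_lattice}. Each satisfies $e^2=f^2=0$ and is primitive in $S_K(2N)$, because it is a basis vector of the $U(2N)$ summand. Replacing a class by its negative if necessary, both lie in the closure of the positive cone $C^+$ containing the positive definite matrices; indeed $e+f=I_2$ is positive definite. By part (2) there are no $(-2)$-curves, so the nef cone coincides with $\overline{C^+}$ (compare the proof of Corollary~\ref{Bianchicontainment}), and hence $e$ and $f$ are nef. I would then invoke the standard fact (Piatetski-Shapiro--Shafarevich; see \cite{Huy16}, Chapter~2, Proposition~3.10) that a primitive nef isotropic class $E\neq 0$ on a $K3$ surface is the fibre class of an elliptic fibration $|E|\colon X\to\mathbb P^1$. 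The argument goes as follows. Riemann--Roch gives $h^0(E)\ge 2$, since $E$ cannot be anti-effective. Because $E$ is nef with $E^2=0$, the linear system $|E|$ has no fixed components; here one uses once more that there are no $(-2)$-curves that could occur as fixed part. By Saint-Donat, $|E|$ is base point free and $E=kF$ for an elliptic pencil $|F|$, and primitivity forces $k=1$. So $|e|$ and $|f|$ define genus-one fibrations.

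The only delicate point is this last step: justifying base-point-freeness and the absence of a fixed part. I plan to handle it entirely by citing the standard result for nef isotropic classes, since part (2) removes all $(-2)$-curves and with them the usual obstruction. A minor bookkeeping point is to record that $e$ and $f$ lie on the same side of the positive cone, which is checked directly from $(e,f)_{2N}=2N>0$.
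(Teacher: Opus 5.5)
Your proof is correct and matches the argument the paper has in mind. The paper gives no separate proof, only the remark that these properties follow immediately from the lattice: every square in $S_K(2N)$ is $4N\det A$, so there are no $(-2)$-classes, and then, as in the proof of Proposition~\ref{prop:main_existence}, the nef cone is the closed positive cone and \cite[Chapter~2, Proposition~3.10]{Huy16} makes the primitive nef isotropic classes into genus-one fibrations. One wording point: $e$ and $f$ always lie in the closure of the component containing $I_2$, so what may force replacing them by $-e,-f$ is whether that component is the positive cone of $X$ under the chosen isomorphism.
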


To provide a useful contrast, we first recall Totaro's non-arithmeticity criterion for $K3$ surfaces:

\begin{theorem}\cite[Corollary~6.2]{Tot09}
\label{cor:Totaro_K3_non_arithmetic}
Let $X$ be a $K3$ surface with Picard number $\rho(X)\geq 4$. Suppose that (1) $X$ admits a genus-one fibration with no reducible fibers, (2) it admits a second genus-one fibration with positive Mordell--Weil rank, and (3) $X$ contains a $(-2)$-curve. Then the automorphism group $ \Aut(X)\subset O(\Pic(X)) $
is not commensurable with an arithmetic group.
\end{theorem}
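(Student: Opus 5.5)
This is Totaro's criterion \cite[Corollary~6.2]{Tot09}, which the paper only cites. If I had to prove it, I would argue in two halves and play them against each other. The first half says that commensurability with an arithmetic group would force $\Aut(X)$ to have finite index in $O(\Pic(X))$. The second says that the $(-2)$-curve forces $\Aut(X)$ to have infinite index.

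Throughout, put $L=\Pic(X)$ and $G=O(L\otimes\mathbb Q)$. The image of $\Aut(X)$ acts on the hyperbolic space $\mathbb H^{\rho-1}$ obtained from the positive cone. Everything below concerns this image in $O(L)$, so any finite kernel of $\Aut(X)\to O(L)$ is irrelevant.

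\textbf{Step 1 (many unipotents).} Let $F_1$ be the fibration with no reducible fibres. By Shioda--Tate, its Jacobian has Mordell--Weil rank $\rho-2\geq 2$. A finite-index subgroup of the Mordell--Weil group acts on $X$ by translations. On $L$ each such translation acts as a unipotent Eichler-type transformation fixing the isotropic class $f_1$. Together they give a lattice in the full unipotent radical $U_1\cong\mathbb R^{\rho-2}$ of the stabilizer of the cusp $f_1$. The second fibration $F_2$, with positive Mordell--Weil rank, likewise gives nontrivial unipotents fixing a second isotropic class $f_2\neq f_1$.

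\textbf{Step 2 (Zariski density).} Let $H$ be the Zariski closure of $\Aut(X)$ in $G$. It contains all of $U_1$ and also the $\Aut(X)$-conjugates of the unipotents from $F_2$. These fix infinitely many distinct cusps, so $H$ cannot fix a point of $\overline{\mathbb H^{\rho-1}}$. Its identity component is then not contained in a parabolic or compact subgroup, and since it contains the full horospherical group $U_1$, it should be the whole group $SO^0$. Here I would use that $\rho\geq 4$ makes $SO(1,\rho-1)$ simple and that a subgroup containing a full horospherical unipotent radical together with a unipotent at another cusp generates everything.

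\textbf{Step 3 (arithmeticity gives finite index).} Suppose $\Aut(X)$ is commensurable with an arithmetic group $\Gamma$ in some $\mathbb Q$-group. The representation of $\Gamma$ on $L\otimes\mathbb Q$ is Zariski dense in $G$ by Step~2. By Margulis superrigidity (or a direct argument with the unipotent subgroups, which avoids the rank-one issues), the $\mathbb Q$-structure must be compatible with the standard one. Then $\Gamma$, hence $\Aut(X)$, is commensurable with a Zariski-dense arithmetic subgroup of $O(L)$, and so has finite index in $O^+(L)$. I expect this step to be the main obstacle. One has to rule out exotic arithmetic structures, which means comparing the $\mathbb Q$-forms and using that $\Aut(X)\subset O(L)$ already has an integral structure. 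If the general route is too hard, I would first try the concrete version: $\Aut(X)$ contains the lattice $U_1\cap\Aut(X)$ in a horospherical subgroup, so any arithmetic group commensurable with it is a lattice in $G$ (finite covolume). Then $\Aut(X)$ itself has finite covolume, and so has finite index in the discrete group $O^+(L)$.

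\textbf{Step 4 (the $(-2)$-curve gives infinite index).} By Theorem~\ref{thm:torelli_mod_weyl}, $W(L)$ is normal in $O^+(L)$ and meets the image of $\Aut(X)$ trivially, since $\Aut(X)$ preserves the nef cone and $W(L)$ acts simply transitively on its chambers. Hence $[O^+(L):\Aut(X)]\geq |W(L)|$. Let $C$ be the $(-2)$-curve. It is not contained in fibres of $F_1$, because that fibration has no reducible fibres. So $C\cdot f_1>0$, and the unipotent translations move $[C]$ into infinitely many distinct $(-2)$-classes. Their reflections are pairwise distinct, so $W(L)$ is infinite, and therefore $\Aut(X)$ has infinite index in $O^+(L)$. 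This contradicts Step~3.
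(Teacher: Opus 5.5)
The paper does not prove this statement (it is quoted from Totaro), so your sketch has to stand on its own. Steps~1, 2 and~4 are sound. Step~3, which carries the whole content of the theorem, does not work as written.

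\textbf{The abstract reading.} Suppose ``commensurable'' is meant abstractly: a finite-index subgroup of $\Aut(X)$ is isomorphic to one of an arithmetic group $\Gamma\subset G(\mathbb Q)$, for some unrelated $\mathbb Q$-group $G$. Then neither of your mechanisms works.
\begin{enumerate}
\item Margulis superrigidity is unavailable. It needs $\Gamma$ to be an irreducible lattice in a group of real rank at least two (or in $\operatorname{Sp}(n,1)$ or $F_{4(-20)}$). A priori $\Gamma$ could be a lattice in $\operatorname{SO}(1,m)$ or $\operatorname{SU}(1,m)$, or not a lattice in a semisimple group at all.
\item The principle you want is false. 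A non-Fuchsian Schottky subgroup of $\operatorname{PSL}_2(\mathbb Z[i])$ is discrete, Zariski dense in $\operatorname{PSL}_2(\mathbb C)\cong\operatorname{SO}^0(1,3)$, and of infinite index. It is also free, hence abstractly commensurable with $\mathrm{SL}_2(\mathbb Z)$.
\item Your fallback is a non sequitur. Even if $\Gamma$ is a lattice in its own $G(\mathbb R)$, nothing transfers this to finite covolume of $\Aut(X)$ in $\operatorname{SO}(1,\rho-1)$; Mostow rigidity presupposes that both groups are lattices. Moreover, containing a lattice in one horospherical subgroup does not force finite covolume, as $\mathbb Z^2*\mathbb Z$ in Schottky position shows.
\end{enumerate}
Your argument also never uses $\rho\ge 4$: Step~2 works equally in $\operatorname{SO}(1,2)$. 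In the abstract reading this hypothesis is essential. For $\rho=3$, a finitely generated, non-elementary, infinite-covolume discrete subgroup of $\operatorname{Isom}(\mathbb H^2)$ is virtually free, hence abstractly commensurable with $\mathrm{SL}_2(\mathbb Z)$. A proof in this sense must exploit the rank-$(\rho-2)$ cusp group-theoretically. For example, one could force $\Gamma$ to be a non-uniform lattice in $\operatorname{SO}(1,\rho-1)$ and then compare boundaries: the sphere $S^{\rho-2}$ versus a proper limit set. Nothing in the proposal does this.

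\textbf{The embedded reading.} The statement is phrased for the subgroup $\Aut(X)\subset O(\Pic(X))$. Suppose ``arithmetic'' means commensurable with $G(\mathbb Z)=\{g\in G(\mathbb Q): g\,\Pic(X)=\Pic(X)\}$ for a $\mathbb Q$-subgroup $G\subset\GL(\Pic(X)\otimes\mathbb Q)$. Then Step~3 needs neither superrigidity nor covolumes.
\begin{enumerate}
\item The group $\Aut(X)\cap G(\mathbb Z)$ has finite index in $\Aut(X)$. By Step~2 its Zariski closure contains $\operatorname{SO}(\Pic(X)\otimes\mathbb Q)$, which therefore lies in $G$.
\item Hence $G(\mathbb Z)\supseteq\operatorname{SO}(\Pic(X))$.
\item Since $\Aut(X)\cap G(\mathbb Z)$ has finite index in $G(\mathbb Z)$, $\Aut(X)$ has finite index in $O(\Pic(X))$. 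This contradicts Step~4.
\end{enumerate}
If you replace Step~3 by this argument and drop the appeal to superrigidity, your proof is complete in that reading.
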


Our $K3$ surfaces $X_{K,2N}$ contrast with Totaro's situation: they satisfy conditions (1) and (2) but fail condition (3). The absence of smooth rational curves implies that the Weyl group is trivial ($W=1$). 

In the next section, we will review the basics of the Mordell--Weil group of the Jacobian fibration.
\begin{proposition}
\label{prop:cusp_translations_XK2N}
Let $X=X_{K,2N}$ be a very general member of the family
$\NS(X)\cong S_{K}(2N).$ Let $F\in \NS(X)$ be the primitive nef isotropic class
corresponding to a cusp of the Bianchi model, and let
\[
\pi_F:X\to \mathbb P^1
\]
be the induced genus-one fibration. Let $J_F\to \mathbb P^1$ be its associated
Jacobian fibration. Then the unipotent cusp subgroup of $\Aut(X)$ stabilizing
$F$ is naturally identified with the Mordell--Weil group of $J_F$: $U_F\cong \MW(J_F).$
Under the Bianchi identification of the cusp stabilizer at $F$, one has
\[
U_F
\cong
\left\{
\left[
\begin{pmatrix}
1&\lambda\\
0&1
\end{pmatrix}
\right]
:
\lambda\in 2N\mathcal O_K
\right\}
\cong
2N\mathcal O_K
\cong
\mathbb Z^2.
\]
\end{proposition}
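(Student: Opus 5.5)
The plan is to combine the standard lattice-theoretic description of Mordell--Weil groups of $K3$ surfaces with the global Torelli description from Proposition~\ref{prop:level-containment} and Corollary~\ref{Bianchicontainment}.

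\textbf{Step 1: setting up $F$ and the cusp stabilizer.} First take $F=e$ (after moving the cusp by an element of $\Bi_K$, all primitive isotropic classes coming from cusps can be normalized this way). The class $e$ is nef because $X$ has no $(-2)$-curves, so the ample cone equals the positive cone. It is primitive and isotropic, so $|e|$ defines the genus-one fibration $\pi_F$.

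Since $S_K(2N)$ has no roots, every fibre of $\pi_F$ is irreducible. Hence the trivial lattice of $J_F$ is just $U$. The standard Shioda--Tate / Keum description then gives
\[
\MW(J_F)\cong (F^\perp/\mathbb ZF)\cong M_K(2N),
\]
where $M_K(2N)$ is a negative definite lattice of rank $2$. This uses the fact that $J_F$ has the same transcendental lattice up to the index of $F$ in $\NS$. Since $e\cdot f=2N$, the multisection index is $2N$; I will accept the rank-$2$ statement and identify $\MW(J_F)$ with $\mathbb Z^2$ as an abstract group.

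\textbf{Step 2: translations act by Eichler transvections.} Each $\phi\in\MW(J_F)$ acts on $X$ by fibrewise translation, since $J_F$ acts on the torsor $X$. Its action on $\NS(X)$ fixes $F$ and acts on $F^\perp/\mathbb ZF$ trivially. Such isometries are exactly the Eichler transvections $E_{F,v}$ with $v\in F^\perp/F$.

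In Hermitian coordinates, $H\mapsto \begin{pmatrix}1&\lambda\\0&1\end{pmatrix}H\begin{pmatrix}1&0\\\bar\lambda&1\end{pmatrix}$ fixes $e$ and induces exactly such a transvection, with $v=P(\lambda)$. So the unipotent radical of the stabilizer of $e$ in $O^+(S_K)$ is $\{T_\lambda\}\cong\mathcal O_K$. By Lemma~\ref{lem-twoelementary}, $\Aut(X)\to O(S)$ is injective, so $U_F$ equals the set of these $T_\lambda$ that lie in $\Aut(X)$.

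\textbf{Step 3: which unipotents are automorphisms.} By Corollary~\ref{Bianchicontainment}, $T_\lambda\in\Aut(X)$ forces $[T_\lambda]\in\Bi_K(2N)$. Since $\begin{pmatrix}1&\lambda\\0&1\end{pmatrix}\equiv uI_2$ forces $u=1$, this means $\lambda\in2N\mathcal O_K$.

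Conversely, for $\lambda\in 2N\mathcal O_K$ the matrix is $\equiv I_2\pmod{2N}$. I then check directly on the dual generators of Lemma~\ref{lem:dual_generators_Hermitian_lattice} that $T_\lambda$ acts trivially on $A_{S_K(2N)}$. The point is that $T_\lambda(x/2N)-x/2N$ is an integral combination of $e$ and $P(\lambda)$ with coefficients divisible appropriately by $2N$, and $\lambda/2N\in\mathcal O_K$; this is analogous to the calculation in Lemma~\ref{lem:pgamma-sign}. Then Proposition~\ref{prop:level-containment} puts $T_\lambda$ in $\Aut(X)$.

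This gives $U_F\cong 2N\mathcal O_K\cong\mathbb Z^2$.

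\textbf{Step 4: matching with the Mordell--Weil group.} Finally, I identify $U_F$ with $\MW(J_F)$. Translations give an injective homomorphism $\MW(J_F)\to U_F$: injective because a nontrivial translation acts nontrivially, and then Lemma~\ref{lem-twoelementary} applies. For surjectivity, any automorphism in $U_F$ preserves the fibration and acts trivially on the base, since it fixes $F$ and is unipotent, hence of infinite order or trivial on $\mathbb P^1$. So it lies in $\Aut$ of the generic fibre. On a very general member there are no extra automorphisms of the generic fibre beyond translations and $\pm$, and the $-1$ part is excluded by unipotence. Hence it is a translation.

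\textbf{Main obstacle.} I expect the main difficulty to be the surjectivity in Step 4: ruling out automorphisms of the generic genus-one curve over $\mathbb C(t)$ other than translations, namely complex multiplication or the inversion composed with a translation. I would argue via the eigenvalues of the action on $F^\perp/F$. An element of the form $-1$ composed with a translation acts as $-1$ on $F^\perp/F$, hence is not unipotent. Complex multiplication by a nontrivial root of unity is also not unipotent on $F^\perp/F$.
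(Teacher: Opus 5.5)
\textbf{Assessment.} Your Steps 1--3 are sound. Step 3 in fact supplies something the paper's proof only asserts: via Corollary~\ref{Bianchicontainment} and a direct computation on $A_{S_K(2N)}$, it shows that the unipotent transvections at $e$ lying in $\Aut(X)$ are exactly the $T_\lambda$ with $\lambda\in 2N\mathcal O_K$.

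\textbf{The gap.} The genuine gap is the surjectivity in Step~4, and it occurs earlier than the generic-fibre analysis you single out as the main obstacle. You claim that $g\in U_F$ acts trivially on the base ``since it fixes $F$ and is unipotent, hence of infinite order or trivial on $\mathbb P^1$''. This does not follow. For any $g$ preserving $\pi_F$, the induced automorphism $\bar g$ of $\mathbb P^1$ permutes the finitely many singular fibres (there are at least three), so $\bar g$ \emph{always} has finite order, whatever the order of $g$. Nothing in your argument excludes a unipotent $g$ that is a translation composed with a base-moving automorphism. Your description of $\Aut(X_\eta)$ by translations, $\pm1$ and CM applies only once $\bar g=\mathrm{id}$ is known. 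Without it you only get $g^k\in\MW(J_F)$, where $k$ is the order of the image of $U_F$ in $\Aut(\mathbb P^1)$; that is, $\MW(J_F)$ has finite index in $U_F$. Since both groups are abstractly $\mathbb Z^2$, your injection does not yield equality.

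\textbf{A repair.} The cleanest fix avoids the base altogether: compute the image of $\MW(J_F)$ and compare it with Step~3.
\begin{enumerate}
\item There are no reducible or multiple fibres, so restriction to the generic fibre gives $F^\perp/\mathbb ZF\cong\Pic^0(X_\eta)$. By Tsen's theorem, $\Pic^0(X_\eta)=J_F(\mathbb C(t))=\MW(J_F)$. This is the group-level form of the identification you invoked in Step~1.
\item For a translation $t_P$ and any divisor $D$, the class $t_P(D)-D$ lies in $F^\perp$ and restricts on $X_\eta$ to $(D\cdot F)P$. Hence $t_P$ agrees modulo $\mathbb ZF$ with the Eichler transvection $E_{F,v}(x)=x+(x,F)v-(x,v)F-\tfrac12(v,v)(x,F)F$, where $v\in F^\perp/\mathbb ZF$ corresponds to $P$. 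Two isometries fixing $F$ that agree modulo $\mathbb ZF$ are equal, so $t_P=E_{F,v}$ exactly. This also justifies your Step~2 claim that translations act trivially on $F^\perp/\mathbb ZF$.
\item For $F=e$ we have $F^\perp/\mathbb ZF=\{P(\mu):\mu\in\mathcal O_K\}$. Evaluating on $f$ and $P(z)$, using $(e,f)=2N$ in $S_K(2N)$, gives $E_{e,P(\mu)}=T_{2N\mu}$.
\end{enumerate}
Thus the translations exhaust $\{T_\lambda:\lambda\in 2N\mathcal O_K\}=U_F$. The paper's proof also passes from fibrewise translations to $U_F$ by assertion, so it does not settle this point either.

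\textbf{Smaller remarks.} Moving $F$ to $e$ by an element of $\Bi_K$ is harmless. By Proposition~\ref{prop:level-containment}, the absence of roots, and injectivity, $\Aut(X)$ is the preimage of $\{\pm1\}\subset O(A_S)$ in $O^+(S)$, hence normal there. However, such a normalization exists only for cusps $\Bi_K$-equivalent to $\infty$. When $h_K>1$, the $\Bi_K$-orbits of cusps correspond to ideal classes, and at the other cusps $\mathcal O_K$ must be replaced by a suitable ideal. The abstract conclusion $U_F\cong\mathbb Z^2$ survives, and the paper tacitly works at $\infty$ as well.
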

\begin{proof}
Since $F^2=0$ and $F\neq 0$, the Hodge index theorem implies that $F^\perp/\mathbb ZF$
is negative definite. Hence $O(F^\perp/\mathbb ZF)$ is finite. Therefore any
unipotent element in the stabilizer of $F$ acts trivially on
$F^\perp/\mathbb ZF$.

We use the following standard facts about Mordell--Weil groups of elliptic
surfaces. For a Jacobian fibration $\phi:J\to \mathbb P^1$
with zero section, every section acts on the generic fiber by translation, and,
since a $K3$ surface is minimal, the resulting birational self-map extends to an
automorphism of $J$; see \cite[Section~4, equation~(4.1)]{S24}.
Equivalently, there is a natural embedding
$\MW(\phi)\hookrightarrow \Aut(J).$
For a genus-one fibration $\pi_F:X\to \mathbb P^1$, its associated Jacobian
fibration $J_F\to \mathbb P^1$ acts fiberwise on $X$ through the usual torsor
action of the relative Jacobian. Thus the fiberwise translation subgroup of
$\pi_F$ is identified with $\MW(J_F)$.

We now explain why this group is torsion-free in our situation. Let $\phi:J_F\to \mathbb P^1$
be the associated Jacobian fibration, let $f$ be the fiber class, and let $z$ be
the zero section. The classes $f,z$ span the hyperbolic plane $U_\phi=\langle f,z\rangle\subset \NS(J_F).$
Let $\Sigma_\phi$ denote the root lattice generated by the components of
reducible fibers disjoint from the zero section. By
\cite[Proposition~4.1]{S24}, $\Sigma_\phi$ is precisely the lattice
generated by those fiber components, and by
\cite[Theorem~4.3]{S24} one has an isomorphism
\[
\MW(J_F)\cong \NS(J_F)/(U_\phi\oplus \Sigma_\phi).
\]
In our case $X$ has no $(-2)$-curves. Hence the genus-one fibration
$\pi_F$ has no reducible fibers; equivalently the associated Jacobian fibration
has no reducible-fiber root lattice, so $\Sigma_\phi=0$. Therefore
\[
\MW(J_F)\cong \NS(J_F)/U_\phi.
\]
Since $U_\phi\cong U$ is unimodular, it is a primitive direct summand of
$\NS(J_F)$. Hence the quotient $\NS(J_F)/U_\phi$ is torsion-free. Consequently,
$\MW(J_F)=\MW(J_F)_{\free}.$
Combining this with the lattice-theoretic description of the unipotent cusp
stabilizer gives
\[
U_F\cong \MW(J_F).
\]

Finally, in the Bianchi model, the cusp stabilizer corresponding to $F$ is the
level-$2N$ parabolic subgroup. Its unipotent radical is
\[
\lambda\longmapsto
\left[
\begin{pmatrix}
1&\lambda\\
0&1
\end{pmatrix}
\right],
\qquad \lambda\in 2N\mathcal O_K.
\]
Since $\mathcal O_K$ is a rank-two $\mathbb Z$-lattice, this group is
isomorphic to $\mathbb Z^2$.
\end{proof}
\section{Deformations of Kummer surfaces}

In this section, we construct the level-two $K3$ surfaces $X_{K,2}$ as very general deformations of Kummer surfaces. The main point is that the non-exceptional lattice $S_K(2)$ is primitive in the orthogonal complement of the Kummer lattice. This allows us to deform away the exceptional Kummer classes while keeping precisely the rank-four lattice $S_K(2)$ algebraic.

\subsection{Kummer surfaces}

We first recall the classical construction of Kummer surfaces. Let $A$ be an abelian surface and let $\iota:A\longrightarrow A, \ x\longmapsto -x$ be the natural involution.

\begin{construction}\label{const:kummer}
Let $G=\{1,\iota\}\cong \mathbb Z/2\mathbb Z$ act on $A$, and set $Y=A/G$. The fixed locus of $\iota$ is the group $A[2]$ of two-torsion points, which has cardinality $16$. Hence $Y$ has exactly $16$ ordinary double points, one for each point of $A[2]$. Away from these singularities the quotient is étale in codimension one, and the canonical divisor satisfies $K_Y\sim 0$.

Let $\pi:X\longrightarrow Y$ be the minimal resolution. Since the singularities are of type $A_1$, the resolution is crepant, and therefore $K_X=\pi^*K_Y\sim 0$. The exceptional locus consists of $16$ disjoint smooth rational curves $E_1,\ldots,E_{16}$ with $E_i^2=-2$. The resulting smooth surface $X$ is called the \emph{Kummer surface} associated to $A$, and is denoted $X=\operatorname{Km}(A)$. It is a $K3$ surface.
\end{construction}

\begin{definition}\label{def:kummer_lattice}
Let $X=\operatorname{Km}(A)$. Let $E=\langle E_1,\ldots,E_{16}\rangle\cong \langle -2\rangle^{\oplus 16}$ be the lattice generated by the sixteen exceptional curves. Its saturation in $H^2(X,\mathbb Z)$ is called the \emph{Kummer lattice} and will be denoted by $\mathcal K$. Thus $\mathcal K=(E\otimes \mathbb Q)\cap H^2(X,\mathbb Z)$.
\end{definition}

The Kummer lattice accounts for the exceptional part of the Picard group. The part coming from the abelian surface itself lies in the orthogonal complement of $\mathcal K$. The following standard fact is the basic reason for the appearance of a factor of $2$ in Kummer constructions.

\begin{lemma}\label{lem:kummer_orthogonal_complement}
Let $X=\operatorname{Km}(A)$ and let $\mathcal K\subset H^2(X,\mathbb Z)$ be the Kummer lattice. Then there is a Hodge isometry
$\mathcal K^\perp \cong H^2(A,\mathbb Z)(2).$
Under this identification,
$\operatorname{NS}(X)\cap \mathcal K^\perp \cong \operatorname{NS}(A)(2).$
 Then $\operatorname{NS}(A)(2)$ is primitive in $\mathcal K^\perp$.
\end{lemma}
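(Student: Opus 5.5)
The plan is to use the standard description of $H^2(\operatorname{Km}(A),\mathbb Z)$ through the rational quotient map, rather than any lattice classification. Let $\varepsilon\colon\widetilde A\to A$ be the blow-up of the sixteen points of $A[2]$. Let $\widetilde\pi\colon\widetilde A\to X=\operatorname{Km}(A)$ be the induced double cover, branched along $E_1\cup\dots\cup E_{16}$. Define
\[
\pi_* := \widetilde\pi_*\circ\varepsilon^*\colon H^2(A,\mathbb Z)\longrightarrow H^2(X,\mathbb Z).
\]
I will show that $\pi_*$ is injective, that it multiplies the form by $2$, that its image is exactly $\mathcal K^\perp$, and that it respects Hodge structures.

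\emph{Step 1: scaling.} For $x,y\in H^2(A,\mathbb Z)$, the classes $\varepsilon^*x$ and $\varepsilon^*y$ are orthogonal to the exceptional curves of $\widetilde A$ and are $\iota$-invariant, because $\iota^*=\operatorname{id}$ on $H^2(A)$. Since $\widetilde\pi$ has degree $2$, this gives
\[
\widetilde\pi^*\pi_*x=\varepsilon^*x+\iota^*\varepsilon^*x=2\varepsilon^*x.
\]
Hence $(\pi_*x,\pi_*y)=\tfrac12(\widetilde\pi^*\pi_*x,\widetilde\pi^*\pi_*y)=2(x,y)$. This is the isometry $H^2(A,\mathbb Z)(2)\to H^2(X,\mathbb Z)$. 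Orthogonality to each $E_i$ follows from $\widetilde\pi^*E_i=2\widetilde E_i$ together with $\varepsilon^*x\cdot\widetilde E_i=0$. So the image lies in $\mathcal K^\perp$, and it lies in the orthogonal complement of the saturation $\mathcal K$ as well.

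\emph{Step 2: the image is all of $\mathcal K^\perp$.} This is the main obstacle. Ranks agree, since $22-16=6$, so both lattices have rank $6$. It therefore suffices to compare discriminants, or equivalently to show the image is primitive. First, $H^2(A,\mathbb Z)(2)$ has discriminant group $(\mathbb Z/2)^6$. Second, $\mathcal K$ is known (Nikulin) to have index $2^5$ over $E$, so $|A_{\mathcal K}|=2^{16}/2^{10}=2^6$. Since $\mathcal K$ is primitive in the unimodular lattice $H^2(X,\mathbb Z)$, Proposition~\ref{prop:nikulin-primitive-embedding} gives $|A_{\mathcal K^\perp}|=2^6$. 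The image of $\pi_*$ is a finite-index sublattice of $\mathcal K^\perp$ with the same discriminant, so the index is $1$.

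If I prefer not to quote Nikulin's count $[\mathcal K:E]=2^5$, the fallback is a direct primitivity argument. Suppose $\pi_*x=2z$ for some $z\in\mathcal K^\perp$. Pull back by $\widetilde\pi^*$ to get $2\varepsilon^*x=2\widetilde\pi^*z$, and push forward to $A$ to conclude $x\in 2H^2(A,\mathbb Z)$ after checking $z$ comes from $A$. I expect the cleaner route is to cite the standard result (Barth--Peters--Van de Ven, or Huybrechts' K3 book, Ch.~14) directly.

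\emph{Step 3: Hodge and N\'eron--Severi parts.} Both $\widetilde\pi_*$ and $\varepsilon^*$ are morphisms of Hodge structures, so $\pi_*$ is a Hodge isometry onto $\mathcal K^\perp$. A class in $\mathcal K^\perp$ is of type $(1,1)$ exactly when its preimage is, so
\[
\operatorname{NS}(X)\cap\mathcal K^\perp=\pi_*\operatorname{NS}(A)\cong\operatorname{NS}(A)(2).
\]
Finally, $\operatorname{NS}(A)$ is primitive in $H^2(A,\mathbb Z)$, being the kernel of the projection to $H^{0,2}$ intersected with a lattice. The Hodge isometry carries it to $\operatorname{NS}(A)(2)$, which is therefore primitive in $\mathcal K^\perp$.
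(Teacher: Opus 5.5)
The paper gives no proof of this lemma. It records it as a standard fact about Kummer surfaces and uses it only as input to Proposition~\ref{prop:main_existence}. Your argument is the standard one: push forward invariant classes through the double cover $\widetilde A\to X$, then count discriminants. It is correct. Compared with the paper's bare citation, it explains where the factor $2$ comes from, and it reduces surjectivity onto $\mathcal K^\perp$ to a single combinatorial input.

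It is worth being precise about that input. Write $\mathcal K/E\cong\mathbb F_2^{c}$. Then $|\det\mathcal K^\perp|=|\det\mathcal K|=2^{16-2c}$, so your Step~1 sublattice has index $2^{c-5}$ in $\mathcal K^\perp$. You therefore need only Nikulin's upper bound $c\le 5$. That bound comes from his weight argument on the code $\mathcal K/E$: every nonzero word has weight $8$ or $16$, with weight $4$ excluded by a double-cover Euler-characteristic argument. This does not use any description of $\mathcal K^\perp$, so the argument is not circular.

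You should drop the fallback sketch. From $\pi_*x=2z$ you only get $\varepsilon^*x=\widetilde\pi^*z$. The remaining step, ``checking $z$ comes from $A$,'' is exactly the surjectivity you are trying to prove.

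Finally, the last assertion needs less than the full identification. Since $\operatorname{NS}(X)$ is primitive in $H^2(X,\mathbb Z)$, the intersection $\operatorname{NS}(X)\cap\mathcal K^\perp$ is automatically primitive in $\mathcal K^\perp$. Since $\mathcal K^\perp$ is itself primitive in $H^2(X,\mathbb Z)$, this also gives the primitivity of $S_K(2)$ in the $K3$ lattice, which is what the paper later uses.
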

We now prove the existence of the required Kummer deformation.

\begin{proposition}\label{prop:kummer_deformation}
Let $X$ be a complex $K3$ surface, and let $S\subset \operatorname{NS}(X)$ be a primitive hyperbolic sublattice. Then there exists a deformation $X'$ of $X$ such that
$\operatorname{Pic}(X')\cong S.$
Moreover, for a very general such deformation, every class in $\Lambda_{K3}\setminus S$ is not of type $(1,1)$.
\end{proposition}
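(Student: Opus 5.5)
The plan is to work entirely on the period side and use the surjectivity of the period map together with the global Torelli theorem. First fix a marking $\phi\colon H^2(X,\mathbb Z)\xrightarrow{\sim}\Lambda_{K3}$ and regard $S$ as a primitive hyperbolic sublattice of $\Lambda_{K3}$. Put $T:=S^\perp_{\Lambda_{K3}}$, which has signature $(2,20-\rho)$ with $\rho=\operatorname{rk}S$. The current period $[\omega_X]$ lies in $\Omega_S$, because $\omega_X$ is orthogonal to $\operatorname{NS}(X)\supset S$. Moreover, $\Omega_S$ is a (union of two copies of a) bounded symmetric domain of type IV and dimension $20-\rho\ge 0$. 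Its connected component containing $[\omega_X]$ gives the base of the deformation: the local universal deformation of $X$ is a smooth germ over a neighbourhood in the full period domain, and the $S$-polarized deformations are exactly the preimage of $\Omega_S$. By surjectivity of the period map and the Torelli theorem, every point of $\Omega_S$ (in that component) is the period of a marked $K3$ surface $X'$ deformation equivalent to $X$ through surfaces on which $S$ remains of type $(1,1)$.

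Next I would identify $\operatorname{NS}(X')$ for a very general point. For each $v\in\Lambda_{K3}\setminus S$, consider the locus $H_v:=\{[\omega]\in\Omega_S\mid(\omega,v)=0\}$. The key claim is that $H_v$ is a proper closed analytic subset, that is, a hyperplane section. To see this, write $v\otimes\mathbb Q = v_S+v_T$ with $v_T\in T\otimes\mathbb Q$. Since $S$ is primitive and $v\notin S$, we have $v\notin S\otimes\mathbb Q$, so $v_T\neq 0$. The condition $(\omega,v_T)=0$ then cuts a hyperplane in $\mathbb P(T_{\mathbb C})$. Because the period quadric $\Omega_S$ is open in a nondegenerate quadric, it spans $\mathbb P(T_{\mathbb C})$ (the same argument as in Lemma~\ref{lem:very-general-hodge-isometries}), so it is not contained in that hyperplane. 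Since $\Lambda_{K3}$ is countable, the complement of $\bigcup_v H_v$ is dense, and any $[\omega]$ in it satisfies $\omega^\perp\cap\Lambda_{K3}=S$. By the Lefschetz $(1,1)$ theorem this gives $\operatorname{NS}(X')=S$, which proves both assertions. Projectivity of $X'$ follows because $S$ is hyperbolic, so $\operatorname{NS}(X')$ contains a class of positive square. Finally, $\Pic(X')\cong\operatorname{NS}(X')$ since $H^1(X',\mathcal O)=0$.

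The step needing the most care is the degenerate case $\rho=20$, where $T$ is positive definite of rank $2$, $\Omega_S$ consists of two points, and $\operatorname{NS}(X)$ may already equal $S$. In that case no genuine deformation is needed and $X'=X$ works, but one must check that $\operatorname{NS}(X)=S$ and not a strictly larger lattice. Since the rank is $20$ and $S$ is primitive, $\operatorname{NS}(X)\supset S$ of equal rank forces equality. A second subtle point is realizing $\Omega_S$ as an honest family containing $X$: I would take the restriction of the universal local deformation (or the global family over the period domain given by the surjectivity of the period map) and invoke connectedness of the relevant component of $\Omega_S$ to make sure the points reached are deformations of $X$. In the paper's application $\rho=4$, so neither issue is serious.
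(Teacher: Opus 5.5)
Your proposal is correct and follows the same route as the paper: remove from $\Omega_S$ the countably many hyperplane sections $\{(\omega,v)=0\}$ with $v\in\Lambda_{K3}\setminus S$, realize a very general remaining period by a marked $K3$ surface, and conclude $\operatorname{NS}(X')=S$ by the Lefschetz $(1,1)$ theorem. You are more careful than the paper, which never checks that these hyperplane sections are proper (your argument that primitivity gives $v_T\neq 0$ is exactly where primitivity enters) or that $X'$ is a deformation of $X$; for the latter, the local universal deformation restricted to the preimage of $\Omega_S$ already suffices, so your parenthetical global-family and connectedness discussion is unnecessary.
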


\begin{proof}
Consider the period domain
$$\Omega_S = \left\{ [\omega]\in \mathbb P(S^\perp\otimes \mathbb C) \mid (\omega,\omega)=0,\ (\omega,\overline{\omega})>0 \right\}.$$
For each integral class $\delta\in \Lambda_{K3}\setminus S$, the condition that $\delta$ remains algebraic is the hyperplane condition
$(\omega,\delta)=0.$
Since there are only countably many such $\delta$, a very general point of $\Omega_S$ avoids all these hyperplanes. By the surjectivity of the period map, such a period is realized by a marked $K3$ surface. By the Lefschetz $(1,1)$ theorem, its Picard lattice is exactly $S$.
\end{proof}
\subsection{Primitive lattices of abelian surfaces and the equational model}

Let $K = \mathbb{Q}(\sqrt{-d})$ be an imaginary quadratic field with ring of integers $\mathcal{O}_K$. We fix an elliptic curve $E_K \coloneqq \mathbb{C}/\mathcal{O}_K$, and set $A_K \coloneqq E_K \times E_K$. 

\begin{definition}\label{def:graphclass}
We define the standard geometric divisor classes on $A_K$:
\begin{enumerate}
    \item \emph{Fiber divisors:} The horizontal and vertical fibers $F_1 = E_K \times \{0\}$ and $F_2 = \{0\} \times E_K$.
    \item \emph{Graph divisors:} For $\alpha \in \mathcal{O}_K$, the graph of multiplication by $\alpha$, denoted $\Gamma_\alpha = \{(x, \alpha x) \mid x \in E_K\} \subset A_K$.
\end{enumerate}
\end{definition}
Equivalently, in the Hermitian matrix model for $\NS(A_K)$, we use the classes
$e= \left(\begin{smallmatrix} 1&0\\0&0 \end{smallmatrix}\right)$,
$f= \left(\begin{smallmatrix} 0&0\\0&1 \end{smallmatrix}\right)$, and
$P(z)= \left(\begin{smallmatrix} 0&z\\ \overline z&0 \end{smallmatrix}\right)$.
The classes $e$ and $f$ span a copy of $U$. If $\{1,\tau\}$ is an integral basis of $\mathcal O_K$, then the remaining two
classes may be taken to be $P(1)$ and $P(\tau)$. Hence $\NS(A_K)\cong U\oplus M_K$,
where $M_K$ is the negative-definite CM trace lattice. We write
$S_K:=\NS(A_K)\cong U\oplus M_K$ and $S_K(2):=\NS(A_K)(2)\cong U(2)\oplus M_K(2)$.

Let $X_K^{\operatorname{Km}}=\operatorname{Km}(A_K)$.
By Lemma~\ref{lem:kummer_orthogonal_complement}, the non-exceptional part of the
Picard lattice of $X_K^{\operatorname{Km}}$ is
$\NS(X_K^{\operatorname{Km}})\cap \mathcal K^\perp \cong \NS(A_K)(2) \cong S_K(2)$.
In particular, $S_K(2)$ is primitive in $\NS(X_K^{\operatorname{Km}})\cap \mathcal K^\perp$.

We now explain how this deformation is seen in the double-cover equation. Choose
a Weierstrass equation $y^2=4x^3-g_2x-g_3$ for $E_K$. The quotient map
$E_K\longrightarrow E_K/\{\pm 1\}\cong \mathbb P^1$
is a double cover branched over the three finite roots of the cubic and over the
point at infinity. Thus the correct homogeneous branch section is the binary
quartic $$B_K(X,Y)=(4X^3-g_2XY^2-g_3Y^3)Y \in H^0(\mathbb P^1,\mathcal O_{\mathbb P^1}(4)).$$
The singular Kummer quotient of $A_K=E_K\times E_K$ is therefore the double
cover $$Y_K^{\operatorname{sing}} = \left\{ S^2=B_K(X,Y)B_K(U,V) \right\} \subset \operatorname{Spec}_{\mathbb{P}^1\times\mathbb{P}^1} \operatorname{Sym}(\mathcal{O}(-2,-2)).$$
Its branch divisor is the union of four vertical and four horizontal fibers.
Their $16$ intersection points give the $16$ ordinary double points of
$Y_K^{\operatorname{sing}}$, and resolving them gives $X_K^{\operatorname{Km}}$.

A local equational deformation of this double cover is obtained by replacing the
branch equation by a nearby $(4,4)$-form: $$S^2=B_K(X,Y)B_K(U,V)+tH_{4,4}(X,Y;U,V),$$
where $H_{4,4}\in H^0(\mathbb P^1\times\mathbb P^1,\mathcal O(4,4))$.
For a general choice of $H_{4,4}$, the branch curve is smooth and the double
cover is a smooth $K3$ surface. The two ruling classes on
$\mathbb P^1\times\mathbb P^1$ pull back to classes $e,f$ satisfying
$e^2=f^2=0$ and $e\cdot f=2$, so this equational family naturally preserves a copy of $U(2)$.

However, a general $(4,4)$-deformation does not preserve the graph classes.
The additional graph classes are precisely $P(1)$ and $P(\tau)$.
Geometrically, these come from the graphs of the endomorphisms
$x\mapsto x$ and $x\mapsto \tau x$ on $E_K$. After passing to the Kummer quotient and resolving its singularities, their classes lie
in $\NS(X_K^{\operatorname{Km}})\cap \mathcal K^\perp \cong S_K(2)$.

\begin{proposition}
\label{prop:main_existence}
There exists a nonempty Noether--Lefschetz locus in the local
\((4,4)\)-double-cover family such that a very general member \(X\) of this
locus satisfies
\[
    \operatorname{Pic}(X)\cong S_K(2)=U(2)\oplus M_K(2).
\]
\end{proposition}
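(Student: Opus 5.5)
The plan is to combine the lattice-theoretic deformation statement, Proposition~\ref{prop:kummer_deformation}, with the observation that the $(4,4)$ double-cover family is exactly the family of $U(2)$-polarized $K3$ surfaces. Inside that family, $S_K(2)$ cuts out a Noether--Lefschetz sublocus.

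\textbf{Step 1: the lattice input.} Start from $X_K^{\operatorname{Km}}=\operatorname{Km}(A_K)$. By Lemma~\ref{lem:kummer_orthogonal_complement}, the sublattice
\[
S:=\NS(X_K^{\operatorname{Km}})\cap\mathcal K^\perp\cong \NS(A_K)(2)\cong S_K(2)
\]
is primitive in $H^2(X_K^{\operatorname{Km}},\mathbb Z)$. Indeed, $\mathcal K^\perp$ is primitive, being an orthogonal complement, and $S$ is the intersection of $\NS$ with it. $S$ has signature $(1,3)$, so it is hyperbolic. Applying Proposition~\ref{prop:kummer_deformation} to $S$ gives a nonempty period subdomain $\Omega_S$, and for very general $[\omega]\in\Omega_S$ a marked $K3$ surface $X$ with $\Pic(X)\cong S_K(2)$. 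Existence is also consistent with Theorem~\ref{thm:existence-picard-lattice}, since $\rho=4\le10$.

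\textbf{Step 2: placing these surfaces in the double-cover family.} In $S_K(2)=U(2)\oplus M_K(2)$, take $h=e+f$. Then $h^2=4$. Since $S_K(2)$ has no $(-2)$-classes, the positive cone equals the ample cone, so $h$ (or its image under $\pm$) is ample. Because $e$ and $f$ are primitive nef isotropic classes with $e\cdot f=2$, each of $|e|$ and $|f|$ is a base-point-free elliptic pencil, again using the absence of $(-2)$-curves. The morphism $\phi_{|e|}\times\phi_{|f|}\colon X\to\mathbb P^1\times\mathbb P^1$ has degree $e\cdot f=2$. Hence $X$ is a double cover of $\mathbb P^1\times\mathbb P^1$ branched along a smooth $(4,4)$-curve, that is, $X=\{S^2=G_{4,4}\}$.

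Conversely, the $U(2)$-polarized moduli space is irreducible of dimension $18$ and is dominated by the space of $(4,4)$-forms; the dimension count is $25\cdot 1=$ (number of coefficients $25$) minus $1$ minus $\dim \operatorname{Aut}(\mathbb P^1\times\mathbb P^1)=6$, giving $18$. Within it, the locus where $S_K(2)$ stays algebraic is the Noether--Lefschetz locus of codimension $2$. It contains the Kummer point $G=B_K(X,Y)B_K(U,V)$, because the Kummer surface carries $S_K(2)\supset U(2)$ with $e,f$ pulled back from the rulings.

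\textbf{Step 3: writing members in the stated form.} Near the Kummer point, write $G_{4,4}=B_K\cdot B_K+tH_{4,4}$. This parametrizes a neighborhood of the Kummer point, and the period map from this neighborhood of the $(4,4)$-family to the $U(2)$-period domain is a local submersion (a local isomorphism modulo $\operatorname{Aut}(\mathbb P^1\times\mathbb P^1)$). Therefore the preimage of $\Omega_S$ is a nonempty analytic locus of codimension $2$ passing through $t=0$. By Step 1, a very general member of it avoids the countably many further Noether--Lefschetz hyperplanes, so its Picard lattice is exactly $S_K(2)$.

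\textbf{Main obstacle.} The delicate step is the local surjectivity of the period map at the singular Kummer branch curve. At $t=0$ the branch curve is singular: it is a union of eight lines meeting in $16$ nodes. To handle this, I would pass to the simultaneous resolution of the $16$ $A_1$-points, or equivalently work with the marked family after a finite base change, so that the family of smooth $K3$ surfaces extends across $t=0$ with $X_K^{\operatorname{Km}}$ as the central fiber. The local Torelli theorem then gives submersivity onto the $U(2)$-period domain, and the intersection with $\Omega_S$ is nonempty because it contains the Kummer period.
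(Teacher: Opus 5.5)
Your proposal is correct and follows essentially the same route as the paper. The shared steps are: primitivity of $S_K(2)=\NS(X_K^{\operatorname{Km}})\cap\mathcal K^\perp$; a very general period in $\Omega_{S_K(2)}$ avoiding the countably many hyperplanes $(\omega,\delta)=0$; realization of such a surface as a finite double cover of $\mathbb P^1\times\mathbb P^1$ via the pencils $|e|,|f|$, where you get finiteness from ampleness of $e+f$ and the paper from the absence of $(-2)$-classes in $M_K(2)$; and the two Hodge conditions for $P(1),P(\tau)$ cutting out the Noether--Lefschetz locus. Your treatment of the singular Kummer central fiber via simultaneous resolution and local Torelli makes explicit a step the paper only asserts, namely that the resulting branch curve is a small deformation of $B_K(X,Y)B_K(U,V)=0$.
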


\begin{proof}
Let $X_K^{\operatorname{Km}}=\operatorname{Km}(E_K\times E_K).$
By Lemma~\ref{lem:kummer_orthogonal_complement}, $S_K(2)$
is a primitive sublattice of \(K^\perp\), hence as a primitive sublattice of the
marked \(K3\) lattice. The two fiber classes on \(E_K\times E_K\) give the
\(U(2)\)-summand, while the graph classes associated with the endomorphisms
\(1\) and \(\tau\) give the two additional classes $P(1),\,
    P(\tau).$

Now consider the period domain
\[
    \Omega_{S_K(2)}
    =
    \left\{
        [\omega]\in \mathbb{P}\bigl(S_K(2)^\perp\otimes \mathbb{C}\bigr)
        \ \middle|\
        (\omega,\omega)=0,\;(\omega,\overline{\omega})>0
    \right\}.
\]
For every integral class $\delta\in \Lambda_{K3}\setminus S_K(2),$ the condition that \(\delta\) remains algebraic is the hyperplane condition
$(\omega,\delta)=0.$
Since there are only countably many such classes \(\delta\), a very general
period point of \(\Omega_{S_K(2)}\) avoids all these hyperplanes. By the
surjectivity of the period map, there exists a \(K3\) surface \(X\), arbitrarily
close to \(X_K^{\operatorname{Km}}\) in the \(S_K(2)\)-polarized deformation
space, such that $\operatorname{Pic}(X)\cong S_K(2).$

It remains to see that such a surface belongs to the \((4,4)\)-double-cover
family. Let \(F_1,F_2\in \operatorname{Pic}(X)\) be the two primitive isotropic
classes spanning the \(U(2)\)-summand. Thus
\[
    F_1^2=F_2^2=0,
    \qquad
    F_1\cdot F_2=2.
\]
Since \(S_K(2)\) contains no classes of square \(-2\), the nef cone of \(X\)
coincides with the positive cone. After choosing the positive cone component,
we may assume that \(F_1\) and \(F_2\) are nef.

By \cite[Chapter 2 Proposition~3.10]{Huy16} the complete linear systems \(|F_1|\) and \(|F_2|\) define
genus-one fibrations
\[
    \varphi_i:X\longrightarrow \mathbb{P}^1,
    \qquad
    i=1,2.
\]
Consider the product morphism
$\varphi=(\varphi_1,\varphi_2):
    X\longrightarrow \mathbb{P}^1\times \mathbb{P}^1.$
Since \(F_1\cdot F_2=2\), the morphism \(\varphi\) is generically finite of
degree \(2\). Moreover, $\Phi$ contracts no curve. Indeed, if an irreducible curve $C$
were contracted, then $C\cdot F_1=C\cdot F_2=0.$
Hence $[C]$ would lie in the negative-definite orthogonal complement of
$U(2)$ in $S_K(2)$. Since $S_K(2)$ contains no $(-2)$-classes, this is
impossible for an irreducible curve on a $K3$ surface. Thus $\Phi$ is finite. Therefore \(X\) is a double cover of
\(\mathbb{P}^1\times\mathbb{P}^1\). Since \(K_X\sim 0\), the canonical bundle
formula for double covers shows that the branch divisor has class
$-2K_{\mathbb{P}^1\times\mathbb{P}^1}
    =
    \mathcal{O}_{\mathbb{P}^1\times\mathbb{P}^1}(4,4).$
Thus \(X\) is realized as a smooth double cover of
\(\mathbb{P}^1\times\mathbb{P}^1\) branched along a smooth curve of bidegree
\((4,4)\). Since the two nef classes \(F_1,F_2\) deform the two ruling classes of the
Kummer double cover, the resulting branch divisor is a small deformation of
\(B_K(X,Y)B_K(U,V)=0\). Thus this construction lies in the local
\((4,4)\)-double-cover family considered above.

Under the \(U(2)\)-polarized period map for the \((4,4)\)-double-cover family,
the conditions that \(P(1)\) and \(P(\tau)\) remain algebraic are precisely the
two Hodge equations
\[
    (\omega,P(1))=0,
    \qquad
    (\omega,P(\tau))=0.
\]
Hence they cut out a local Noether--Lefschetz locus in the
\((4,4)\)-double-cover family. The preceding construction shows that this
locus is nonempty. For a very general point of this locus, the period avoids all
hyperplanes associated with classes outside \(S_K(2)\), and therefore
\[
    \operatorname{Pic}(X)\cong S_K(2)=U(2)\oplus M_K(2).
\]

\end{proof}

\begin{figure}[htbp]
\centering
\begin{tikzcd}[row sep=large, column sep=large]
    & {Y_{K,2}} \\
    {X_{K,2}} & {A_K/\langle \pm 1\rangle}
    \arrow[squiggly, from=1-2, to=2-1]
    \arrow["{\operatorname{Bl}_{p_1,\ldots,p_{16}}}", from=1-2, to=2-2]
    \arrow[squiggly, from=2-2, to=2-1]
\end{tikzcd}
\caption{
The Kummer surface \(Y_{K,2}\) resolves the singular quotient
\(A_K/\langle \pm 1\rangle\), and both lie on the boundary of the deformation
family whose very general member is \(X_{K,2}\).
}
\label{fig:kummer-deformation}
\end{figure}

\subsection{Parabolic generation}

We now study the geometric generators of \(\operatorname{Aut}(X_{K,2})\) in the
cases $K=\mathbb{Q}(\sqrt{-2})$ \text{and} $\quad
    K=\mathbb{Q}(\sqrt{-7}).$
We begin by defining the relevant congruence subgroup and establishing how the projective group splits over the principal congruence subgroup.

\begin{definition}
\label{def:PSL-level-two}
Let \(K=\mathbb{Q}(\sqrt{-d})\). We define
\[
    \Gamma_K(2)
    :=
    \left\{
        [M]\in \operatorname{PSL}_2(\mathcal{O}_K)
        \ \middle|\
        \text{there exists } M\in \operatorname{SL}_2(\mathcal{O}_K)
        \text{ with } M\equiv I_2 \pmod{2\mathcal{O}_K}
    \right\}.
\]
\end{definition}

\begin{lemma}
\label{lem:PGamma-splits-over-Gamma}
Let \(K=\mathbb{Q}(\sqrt{-d})\) with \(d\in\{2,7\}\). Then $P\Gamma_K(2)
    =
    \Gamma_K(2)\rtimes \langle \eta\rangle,$
where \(\eta=[J]\) is represented by $J=
    \begin{pmatrix}
        1 & 0\\
        0 & -1
    \end{pmatrix}.$
Moreover, under the Hermitian action on \(S_K=\operatorname{Herm}_2(\mathcal{O}_K)\),
the element \(\eta\) acts by
\[
    e\longmapsto e,
    \qquad
    f\longmapsto f,
    \qquad
    P(z)\longmapsto -P(z).
\]
\end{lemma}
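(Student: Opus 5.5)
The plan is to exhibit $P\Gamma_K(2)$ as a split extension of $\{\pm1\}$ by $\Gamma_K(2)$, using the determinant as the splitting character. For $d\in\{2,7\}$ we have $\mathcal O_K^\times=\{\pm1\}$. So every $[M]\in P\Gamma_K(2)$ has exactly two representatives $\pm M$ in $\operatorname{GL}_2(\mathcal O_K)$, and $\det(M)\in\{\pm1\}$. Since $\det(-M)=\det(M)$ for $2\times2$ matrices, I define a homomorphism
\[
\det\colon P\Gamma_K(2)\longrightarrow\{\pm1\},\qquad [M]\longmapsto \det(M).
\]
It is well defined and multiplicative.

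First I identify its kernel with $\Gamma_K(2)$. The natural map $\operatorname{PSL}_2(\mathcal O_K)=\operatorname{SL}_2(\mathcal O_K)/\{\pm1\}\to\operatorname{PGL}_2(\mathcal O_K)$ is injective, because the scalars in $\operatorname{SL}_2(\mathcal O_K)$ are exactly $\pm I_2=\mathcal O_K^\times I_2$. We regard $\Gamma_K(2)$ as a subgroup of $\operatorname{PGL}_2(\mathcal O_K)$ via this map. If $[M]\in\Gamma_K(2)$, it has a representative in $\operatorname{SL}_2$ congruent to $I_2$ modulo $2$, so $[M]\in P\Gamma_K(2)$ and $\det[M]=1$. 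Conversely, suppose $[M]\in P\Gamma_K(2)$ with $\det(M)=1$. Choose the representative $M\equiv I_2\pmod{2\mathcal O_K}$. Both $M$ and $-M$ lie in $\operatorname{SL}_2(\mathcal O_K)$, and $-M\equiv I_2$ as well because $-1\equiv1\pmod 2$. Hence $[M]\in\Gamma_K(2)$. Thus $\Gamma_K(2)=\ker(\det)$, which in particular is normal in $P\Gamma_K(2)$.

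Next I check that $\eta=[J]$ gives a splitting. We have $J\equiv I_2\pmod{2\mathcal O_K}$ since $-1\equiv 1$, so $\eta\in P\Gamma_K(2)$, and $\det\eta=-1$, so $\det$ is surjective. Moreover $J^2=I_2$ and $J\neq\pm I_2$, so $\eta$ has order exactly $2$ in $\operatorname{PGL}_2(\mathcal O_K)$. Hence $\langle\eta\rangle\cap\Gamma_K(2)=1$, and $\langle\eta\rangle$ maps isomorphically onto $\{\pm1\}$. The split exact sequence
\[
1\to\Gamma_K(2)\to P\Gamma_K(2)\xrightarrow{\det}\{\pm1\}\to1
\]
then yields $P\Gamma_K(2)=\Gamma_K(2)\rtimes\langle\eta\rangle$.

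Finally, the Hermitian action is a direct computation. For $H=\left(\begin{smallmatrix}a&b\\ \bar b&c\end{smallmatrix}\right)$ we get $JHJ^\ast=\left(\begin{smallmatrix}a&-b\\ -\bar b&c\end{smallmatrix}\right)$. Therefore $e\mapsto e$, $f\mapsto f$, and $P(z)\mapsto -P(z)$. Since $|\det J|=1$, the normalized action agrees with this.

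There is no real obstacle here. The only point needing care is that, with only units $\pm1$ and $-1\equiv1\pmod 2$, the sign ambiguity of representatives does not interfere with either the congruence condition or the determinant. That is exactly where the hypothesis $d\in\{2,7\}$, equivalently $\mathcal O_K^\times=\{\pm1\}$, is used.
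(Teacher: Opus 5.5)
Your proof is correct and follows the paper's argument: the determinant is a well-defined character $P\Gamma_K(2)\to\{\pm1\}$ with kernel $\Gamma_K(2)$, $\eta=[J]$ splits it, and the computation of $JHJ^\ast$ gives the action. Your extra checks (injectivity of $\operatorname{PSL}_2(\mathcal O_K)\to\operatorname{PGL}_2(\mathcal O_K)$, and $\eta$ having order exactly $2$) only make explicit what the paper leaves implicit; note also that the argument uses only $\mathcal O_K^\times=\{\pm1\}$, which holds for every $d\notin\{1,3\}$, so this is a consequence of $d\in\{2,7\}$ rather than equivalent to it.
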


\begin{proof}
Since \(d\in\{2,7\}\), we have $\mathcal{O}_K^\times=\{\pm 1\}.$ Hence the determinant gives a well-defined homomorphism $\det:P\Gamma_K(2)\longrightarrow \{\pm 1\},$ because replacing a representative \(M\) by \(-M\) does not change
\(\det(M)\).

The kernel of this homomorphism is exactly \(\Gamma_K(2)\). Indeed, if
\([M]\in P\Gamma_K(2)\) and \(\det(M)=1\), then \(M\) may be chosen in
\(\operatorname{SL}_2(\mathcal{O}_K)\) with
\[
    M\equiv I_2 \pmod{2\mathcal{O}_K},
\]
so \([M]\in \Gamma_K(2)\). Conversely, every element of \(\Gamma_K(2)\) has
determinant \(1\).

The element $J=
    \begin{pmatrix}
        1 & 0\\
        0 & -1
    \end{pmatrix}$
satisfies $J\equiv I_2 \pmod{2\mathcal{O}_K}$ because \(-1\equiv 1\pmod{2\mathcal{O}_K}\), and $\det(J)=-1.$ Therefore \(\eta=[J]\) gives a splitting of the determinant map, and we obtain
\[
    P\Gamma_K(2)=\Gamma_K(2)\rtimes \langle \eta\rangle.
\]

Finally, for
\[
    e=
    \begin{pmatrix}
        1 & 0\\
        0 & 0
    \end{pmatrix},
    \qquad
    f=
    \begin{pmatrix}
        0 & 0\\
        0 & 1
    \end{pmatrix},
    \qquad
    P(z)=
    \begin{pmatrix}
        0 & z\\
        \overline{z} & 0
    \end{pmatrix},
\]
a direct computation gives
\[
    JeJ^*=e,
    \qquad
    JfJ^*=f,
    \qquad
    JP(z)J^*=-P(z).
\]
This proves the claim.
\end{proof}

One notable consequence of the decomposition $P\Gamma_K(2)
    =
    \Gamma_K(2)\rtimes\langle\eta\rangle$
is that the special-linear congruence subgroup contributes no torsion.
Every torsion element of \(P\Gamma_K(2)\) lies in the nontrivial coset
\(\Gamma_K(2)\eta\). Moreover, composing \(\eta\) with the parabolic
translations in \(\Gamma_K(2)\) produces infinitely many distinct
involutions. 
\begin{lemma}
\label{lem:level-two-torsion}
Let $K$ be an imaginary quadratic field.

\begin{enumerate}
    \item The group $\Gamma_K(2)$ is torsion-free.
    \item The group $P\Gamma_K(2)$ contains infinitely many distinct involutions.
    \item Every nontrivial finite-order element of $\operatorname{Bi}_K(2)$, and hence of $P\Gamma_K(2)$, has order $2$.
\end{enumerate}
\end{lemma}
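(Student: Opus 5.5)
The plan is to prove (1) by a trace argument. Part (3) then reduces to (1) by squaring, and part (2) comes from an explicit family.

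\emph{Step 1 (torsion-freeness of \(\Gamma_K(2)\)).} Let \([M]\in\Gamma_K(2)\) have finite order, with \(M\in\operatorname{SL}_2(\mathcal O_K)\) and \(M\equiv I_2\pmod{2\mathcal O_K}\). Then some power \(M^n\) equals \(\pm I_2\), so the eigenvalues \(\zeta,\zeta^{-1}\) of \(M\) are roots of unity. Hence \(t:=\operatorname{tr}(M)=\zeta+\overline{\zeta}\) is real and satisfies \(|t|\le 2\). Since \(t\in\mathcal O_K\cap\mathbb R=\mathbb Z\), we get \(t\in\{0,\pm1,\pm2\}\). The congruence \(M\equiv I_2\) gives \(t\equiv 2\pmod{2\mathcal O_K}\), so \(t\in\{0,\pm2\}\).

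If \(t=\pm2\), then \(\zeta=\pm1\) is a repeated eigenvalue. A finite-order matrix is diagonalizable, so \(M=\pm I_2\) and \([M]\) is trivial.

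If \(t=0\), then Cayley--Hamilton gives \(M^2=-I_2\). Writing \(M=I_2+2A\) with \(A\) integral, this becomes \(2I_2=-4(A+A^2)\), that is, \(I_2=-2(A+A^2)\). This forces \(1\in2\mathcal O_K\), which is a contradiction. Hence \(\Gamma_K(2)\) is torsion-free.

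\emph{Step 2 (part (3)).} Let \(g=[M]\in\operatorname{Bi}_K(2)\) have finite order, with \(M\equiv aI_2\pmod{2\mathcal O_K}\). Set \(N:=\det(M)^{-1}M^2\). Since \(\det M\in\mathcal O_K^\times\), the matrix \(N\) lies in \(\operatorname{SL}_2(\mathcal O_K)\). Moreover \(N\equiv a^{-2}a^2I_2=I_2\pmod 2\), because \(\det M\equiv a^2\). So \([N]\in\Gamma_K(2)\), and \([N]\) represents \(g^2\) in \(\operatorname{PGL}_2\).

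Because \(g^2\) has finite order, Step 1 gives \(N=\pm I_2\), so \(g^2=1\). Thus every nontrivial torsion element of \(\operatorname{Bi}_K(2)\supseteq P\Gamma_K(2)\) has order exactly \(2\). Here I need to check compatibility between trivial classes in \(\operatorname{PSL}_2\) and in \(\operatorname{PGL}_2\). A scalar \(uI_2\) of determinant \(1\) has \(u=\pm1\), so this compatibility holds.

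\emph{Step 3 (part (2)).} For \(\lambda\in2\mathcal O_K\), put \(M_\lambda:=\left(\begin{smallmatrix}1&\lambda\\0&-1\end{smallmatrix}\right)=J\left(\begin{smallmatrix}1&\lambda\\0&1\end{smallmatrix}\right)\). Then \(M_\lambda\equiv I_2\pmod 2\), \(\det M_\lambda=-1\), and \(M_\lambda^2=I_2\) by direct multiplication. So each \([M_\lambda]\) is an involution in \(P\Gamma_K(2)\).

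These involutions are pairwise distinct. Indeed, \(uM_\lambda=M_\mu\) with \(u\in\mathcal O_K^\times\) forces \(u=1\) by comparing \((1,1)\)-entries, and hence \(\lambda=\mu\). Since \(2\mathcal O_K\) is infinite, this yields infinitely many involutions.

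The main obstacle is Step 1. Specifically, one must justify that \(t\) is a real rational integer even when \(K\) has extra units, such as \(\mathbb Q(i)\) or \(\mathbb Q(\sqrt{-3})\). The fact that finite-order elements of \(\operatorname{SL}_2(\mathbb C)\) have real trace handles this uniformly.
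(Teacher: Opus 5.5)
Your proof is correct. Parts (1) and (2) follow the paper closely.

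In (1) you use the same trace argument. The only difference is how $t=0$ is excluded. The paper writes $\operatorname{tr}M=2-4\det A$ with $\det A\in\mathbb{Z}$, so $t\equiv 2\pmod 4$. You instead use Cayley--Hamilton to get $M^2=-I_2$ and compare with $M^2\equiv I_2\pmod 4$. In (2) you use the same involutions $J\left(\begin{smallmatrix}1&\lambda\\0&1\end{smallmatrix}\right)$, $\lambda\in 2\mathcal O_K$, with the same distinctness check.

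Part (3) is where your route genuinely differs. The paper works directly in $\operatorname{Bi}_K(2)$ with the projective invariant $q=t^2/\det M=2+\zeta+\zeta^{-1}$. It shows $q\in 4\mathbb{Z}\cap[0,4]$, hence $q=0$, so $t=0$ and $M^2=-\det(M)I_2$.

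You instead note that $\det(M)^{-1}M^2$ lies in $\operatorname{SL}_2(\mathcal O_K)$, is congruent to $I_2$ modulo $2$, and represents $g^2$. Then (3) follows formally from (1), once you check, as you do, that finite order and triviality pass between $\operatorname{PGL}_2$ and $\operatorname{PSL}_2$ because a scalar of determinant $1$ is $\pm I_2$.

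What your route buys is that it avoids a second eigenvalue computation and shows the structural reason: squaring maps $\operatorname{Bi}_K(2)$ into the torsion-free group $\Gamma_K(2)$. The paper's argument has the advantage of being independent of (1), and it gives the trace-zero description of the involutions directly. That description also follows from your argument: a non-scalar $M$ with $M^2=cI_2$ has minimal, hence characteristic, polynomial $x^2-c$, so its trace is $0$.
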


\begin{proof}
We first prove that $\Gamma_K(2)$ is torsion-free. Let $[M] \in \Gamma_K(2)$ have finite order. By definition, we may choose a representative $M \in \operatorname{SL}_2(\mathcal{O}_K)$ satisfying $M \equiv I_2 \pmod{2\mathcal{O}_K}$.
Write $M = I_2 + 2A$ with $A \in M_2(\mathcal{O}_K)$. Since $\det(M) = 1$, we have
\[
    1 = \det(I_2 + 2A) = 1 + 2\operatorname{tr}(A) + 4\det(A),
\]
and therefore $\operatorname{tr}(M) = 2 + 2\operatorname{tr}(A) = 2 - 4\det(A)$.

Because $[M]$ has finite order in $\operatorname{PSL}_2(\mathcal{O}_K)$, some power of $M$ is equal to $\pm I_2$. Hence the eigenvalues of $M$ are roots of unity. Since $\det(M) = 1$, its trace is real, and thus $\operatorname{tr}(M) \in \mathcal{O}_K \cap \mathbb{R} = \mathbb{Z}$. Moreover, its eigenvalues have absolute value one, so $\operatorname{tr}(M) \in \{-2, -1, 0, 1, 2\}$. On the other hand,
\[
    \det(A) = \frac{2 - \operatorname{tr}(M)}{4} \in \mathcal{O}_K \cap \mathbb{Q} = \mathbb{Z}.
\]
Consequently, $\operatorname{tr}(M) \equiv 2 \pmod{4}$. Among the five possible traces above, this leaves only $\operatorname{tr}(M) = \pm 2$. A finite-order matrix over a field of characteristic zero is semisimple. Hence $M = I_2$ when $\operatorname{tr}(M) = 2$, and $M = -I_2$ when $\operatorname{tr}(M) = -2$. In either case, $[M]$ is the identity in $\operatorname{PSL}_2(\mathcal{O}_K)$. This proves~(1).

For $z \in \mathcal{O}_K$, set
\[
    U_z := \begin{pmatrix} 1 & 2z \\ 0 & 1 \end{pmatrix} \in \Gamma_K(2), \qquad J := \begin{pmatrix} 1 & 0 \\ 0 & -1 \end{pmatrix}.
\]
Then
\[
    U_z J = \begin{pmatrix} 1 & -2z \\ 0 & -1 \end{pmatrix} \equiv I_2 \pmod{2\mathcal{O}_K},
\]
and therefore $\tau_z := [U_z J] \in P\Gamma_K(2)$. Furthermore, $(U_z J)^2 = I_2$, so $\tau_z$ is an involution.

The elements $\tau_z$ are pairwise distinct. Indeed, if $\tau_z = \tau_w$ in $\operatorname{PGL}_2(\mathcal{O}_K)$, then
\[
    \begin{pmatrix} 1 & -2z \\ 0 & -1 \end{pmatrix} = u \begin{pmatrix} 1 & -2w \\ 0 & -1 \end{pmatrix}
\]
for some $u \in \mathcal{O}_K^\times$. Comparing the $(1,1)$-entries gives $u = 1$, and then comparing the $(1,2)$-entries gives $z = w$. Since $\mathcal{O}_K$ is infinite, this proves~(2).

We finally determine the possible orders of torsion elements in $\operatorname{Bi}_K(2)$. Let $[M] \in \operatorname{Bi}_K(2)$ have finite order, and choose a representative $M \in \operatorname{GL}_2(\mathcal{O}_K)$. Since $[M]$ lies in the projective level-two kernel, there exists $a \in \mathcal{O}_K$ such that $M \equiv a I_2 \pmod{2\mathcal{O}_K}$.
Set $t := \operatorname{tr}(M)$ and $\delta := \det(M) \in \mathcal{O}_K^\times$. The preceding congruence gives $t \equiv 2a \equiv 0 \pmod{2\mathcal{O}_K}$, so $t \in 2\mathcal{O}_K$.

Let $\lambda_1$ and $\lambda_2$ be the eigenvalues of $M$. Since $[M]$ has finite order in the projective group, the ratio $\zeta := \frac{\lambda_1}{\lambda_2}$ is a root of unity. Consider the projectively invariant quantity $q := \frac{t^2}{\delta}$. Using $t = \lambda_1 + \lambda_2$ and $\delta = \lambda_1 \lambda_2$, we obtain $q = 2 + \zeta + \zeta^{-1}$.
In particular, $q$ is real and $0 \leq q \leq 4$. Moreover, since $\delta$ is a unit, $q \in \mathcal{O}_K$. Thus $q \in \mathcal{O}_K \cap \mathbb{R} = \mathbb{Z}$. Since $t \in 2\mathcal{O}_K$, we also have $q = \frac{t^2}{\delta} \in 4\mathcal{O}_K$. Therefore $q \in 4\mathcal{O}_K \cap \mathbb{Z} = 4\mathbb{Z}$. Together with $0 \leq q \leq 4$, this gives $q \in \{0, 4\}$.

If $q = 4$, then $\zeta + \zeta^{-1} = 2$, and hence $\zeta = 1$. Since a projective finite-order matrix is semisimple, $M$ is scalar, so $[M]$ is the identity.

Thus every nontrivial finite-order element satisfies $q = 0$. It follows that $t = 0$, and the Cayley--Hamilton identity gives $M^2 - tM + \delta I_2 = 0$, hence $M^2 = -\delta I_2$. Therefore $[M]^2 = 1$ in $\operatorname{PGL}_2(\mathcal{O}_K)$. This proves~(3).
\end{proof}

The next point is to identify the covering involution in the Hermitian
lattice model.

\begin{lemma}
\label{lem:covering-involution-Hermitian-action}
Let \(X\) be a very general member of the \(S_K(2)\)-polarized
Noether--Lefschetz locus in the \((4,4)\)-double-cover family $\pi:X\longrightarrow \mathbb{P}^1\times\mathbb{P}^1.$  Let $\iota:X\longrightarrow X$ be the covering involution. Under the identification
\[
    \operatorname{Pic}(X)\cong S_K(2)
    =
    U(2)\oplus M_K(2),
\]
where \(U(2)\) is generated by the pullbacks of the two ruling classes and
\(M_K(2)\) is generated by the graph classes \(P(1)\) and \(P(\tau)\), the
action of \(\iota^*\) is given by
\[
    e\longmapsto e,
    \qquad
    f\longmapsto f,
    \qquad
    P(z)\longmapsto -P(z).
\]
Equivalently, \(\iota^*\) agrees with the Hermitian action of $J=
    \begin{pmatrix}
        1 & 0\\
        0 & -1
    \end{pmatrix}.$
\end{lemma}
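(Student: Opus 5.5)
The plan is to determine $\iota^*$ separately on the two summands $U(2)$ and $M_K(2)$, and then compare the result with the Hermitian action of $J$ computed in Lemma~\ref{lem:PGamma-splits-over-Gamma}.

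\emph{Step 1: the $U(2)$ summand.} Since $\pi\circ\iota=\pi$, the involution preserves every pullback $\pi^*L$ with $L\in\operatorname{Pic}(\mathbb P^1\times\mathbb P^1)$. The classes $e,f$ are the pullbacks of the two rulings, so $\iota^*e=e$ and $\iota^*f=f$.

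\emph{Step 2: the complement $M_K(2)$.} I will use the standard fact that $\iota^*+\mathrm{id}=\pi^*\pi_*$ on $H^2(X,\mathbb Z)$. This holds because $\pi_*\pi^*=2$ and $\pi^*\pi_*(x)=x+\iota^*x$ for a double cover. Rationally, the invariant part $H^2(X,\mathbb Q)^{\iota}$ equals $\pi^*H^2(\mathbb P^1\times\mathbb P^1,\mathbb Q)$, which is spanned by $e$ and $f$.

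Now take $x\in M_K(2)$, so $x$ is orthogonal to $e$ and $f$. Then $x+\iota^*x$ is $\iota$-invariant, hence lies in $\mathbb Q e+\mathbb Q f$. By the projection formula it is also orthogonal to $e$ and $f$, since $\iota^*$ fixes $e,f$ and is an isometry. Because $U(2)\otimes\mathbb Q$ is nondegenerate, this forces $x+\iota^*x=0$. Hence $\iota^*P(z)=-P(z)$.

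The step needing the most care is the equality $H^2(X,\mathbb Q)^\iota=\pi^*H^2(\mathbb P^1\times\mathbb P^1,\mathbb Q)$. I will justify it by transfer: $H^2(X,\mathbb Q)^\iota\cong H^2(X/\iota,\mathbb Q)$, and $X/\iota=\mathbb P^1\times\mathbb P^1$, whose $H^2$ has rank $2$. Alternatively, I can avoid this by restricting to $\operatorname{Pic}$ and arguing directly. The $\iota$-invariant part of $\operatorname{Pic}(X)\otimes\mathbb Q$ consists of pullbacks of $\mathbb Q$-divisor classes from the quotient, since invariant line bundles descend up to a power. Both routes are standard, and I would cite the first.

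\emph{Step 3: comparison with $J$.} The Hermitian computation already done in Lemma~\ref{lem:PGamma-splits-over-Gamma} gives $JeJ^*=e$, $JfJ^*=f$ and $JP(z)J^*=-P(z)$. Since $\{e,f,P(1),P(\tau)\}$ spans $S_K(2)$ rationally and $\iota^*$ is linear, $\iota^*$ agrees with the action of $[J]$ on all of $S_K(2)$.

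As a consistency check, $J\in P\Gamma_K(2)$ acts on $A_{S_K(2)}$ by the sign $\det J=-1$ (Lemma~\ref{lem:pgamma-sign}). This matches $\iota^*|_T=-\mathrm{id}$, which holds because $\iota$ is non-symplectic: it acts on $H^{2,0}$ by $-1$, since the quotient is rational.
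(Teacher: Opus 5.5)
Your proposal is correct and follows essentially the same route as the paper. Both arguments rest on $H^2(X,\mathbb Q)^{\iota}=\pi^*H^2(\mathbb P^1\times\mathbb P^1,\mathbb Q)=\mathbb Q e\oplus\mathbb Q f$, deduce that $\iota^*=-\mathrm{id}$ on the orthogonal complement $M_K(2)$, and then compare with $H\mapsto JHJ^*$; your step using $x+\iota^*x=\pi^*\pi_*x$ and the nondegeneracy of $U(2)$ is just a more explicit form of the paper's remark that an involution with no nonzero invariant vectors on that complement must act as $-\mathrm{id}$.
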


\begin{proof}
The quotient of \(X\) by the involution \(\iota\) is
\(\mathbb{P}^1\times\mathbb{P}^1\). Hence the invariant part of
\(H^2(X,\mathbb{Q})\) under \(\iota^*\) is
\[
    H^2(X,\mathbb{Q})^{\iota^*}
    =
    \pi^*H^2(\mathbb{P}^1\times\mathbb{P}^1,\mathbb{Q}).
\]
In particular, on \(\operatorname{Pic}(X)\), the invariant sublattice is
generated by the pullbacks of the two ruling classes. These are precisely the
classes \(e\) and \(f\), and they span the \(U(2)\)-summand.

Since \(X\) is very general in the \(S_K(2)\)-polarized Noether--Lefschetz
locus, we have $\operatorname{Pic}(X)=U(2)\oplus M_K(2).$
Therefore the orthogonal complement of \(U(2)\) in \(\operatorname{Pic}(X)\) is
exactly \(M_K(2)\), generated by \(P(1)\) and \(P(\tau)\). Because the
\(\iota^*\)-invariant part of \(\operatorname{Pic}(X)_{\mathbb{Q}}\) is already
\(U(2)_{\mathbb{Q}}\), the involution \(\iota^*\) acts as \(-\operatorname{id}\)
on \(M_K(2)_{\mathbb{Q}}\). Since \(M_K(2)\) is an integral lattice preserved by
\(\iota^*\), this gives
\[
    \iota^*P(1)=-P(1),
    \qquad
    \iota^*P(\tau)=-P(\tau).
\]

On the other hand, the Hermitian action of $J=
    \begin{pmatrix}
        1 & 0\\
        0 & -1
    \end{pmatrix}$ sends
\[
    e\longmapsto e,
    \qquad
    f\longmapsto f,
    \qquad
    P(z)\longmapsto -P(z).
\]
Thus \(\iota^*\) agrees with the action of \(J\) on \(S_K(2)\).
\end{proof}

We now establish that the congruence subgroup is generated by parabolic elements, count the relevant cusps, and map these parabolic stabilizers to the Mordell-Weil groups of the genus-one fibrations.

\begin{theorem}
\label{thm:psl-level-two-parabolic-generation}
Let \(K=\mathbb{Q}(\sqrt{-d})\) be an imaginary quadratic field of class
number one, and assume \(d\notin\{1,3\}\). Then the level-two principal
congruence subgroup \(\Gamma_K(2)\) is generated by its parabolic elements if
and only if $d\in\{2,7\}.$
\end{theorem}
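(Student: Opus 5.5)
The statement has two directions. The class-number-one fields with $d\notin\{1,3\}$ are $d\in\{2,7,11,19,43,67,163\}$, so the claim is that the normal closure $\Pi_K\trianglelefteq\Gamma_K(2)$ of the parabolic elements equals $\Gamma_K(2)$ exactly for $d=2,7$. I would treat the two directions by different methods.

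\emph{The ``if'' direction ($d=2,7$).} I would start from the classical finite presentations of $\operatorname{PSL}_2(\mathcal O_K)$ for these Euclidean fields, due to Swan and Fine. Both are generated by the translations $T=\left(\begin{smallmatrix}1&1\\0&1\end{smallmatrix}\right)$ and $T_\tau=\left(\begin{smallmatrix}1&\tau\\0&1\end{smallmatrix}\right)$, the involution $S=\left(\begin{smallmatrix}0&-1\\1&0\end{smallmatrix}\right)$, and one further torsion element.
\begin{enumerate}
\item Apply Reidemeister--Schreier to the finite-index normal subgroup $\Gamma_K(2)$, with quotient $\operatorname{PSL}_2(\mathcal O_K/2\mathcal O_K)$. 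Since $2$ ramifies for $d=2$ and splits for $d=7$, this quotient is small and explicitly computable.
\item Check that each Schreier generator is a product of conjugates of the level-two translations $T^2$, $T_\tau^2$, $T\,T_\tau^{\pm1}$ and their conjugates by coset representatives. Equivalently, show that adding the relations ``all parabolics $=1$'' to the Reidemeister--Schreier presentation collapses the group.
\end{enumerate}
Geometrically, this says that filling every cusp of the hyperbolic manifold $\mathbb H^3/\Gamma_K(2)$ (which is torsion-free by Lemma~\ref{lem:level-two-torsion}) gives a simply connected space. I would cross-check this using the known cusp counts of $\Gamma_K(2)$ in these two cases.

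\emph{The ``only if'' direction ($d=11,19,43,67,163$).} For each such $d$ I would produce a nontrivial quotient of $\Gamma_K(2)$ in which every parabolic element dies.
\begin{enumerate}
\item The cleanest invariant is the cokernel
\[
  H_1(\Gamma_K(2),\mathbb Z)\Big/\operatorname{im}\Big(\textstyle\bigoplus_{c}H_1(\operatorname{Stab}(c)\cap\Gamma_K(2))\Big),
\]
where $c$ runs over the cusps of $\Gamma_K(2)$. This is the abelianization of $\Gamma_K(2)/\Pi_K$, so it suffices to show it is nonzero.
\item For $d\ge 19$, the field is non-Euclidean. Cohn's theorem says $\operatorname{SL}_2(\mathcal O_K)\neq GE_2(\mathcal O_K)$, and the filled orbifold for $\operatorname{PSL}_2(\mathcal O_K)$ already has nontrivial $\pi_1$ (Zimmert, Grunewald--Schwermer, Vogtmann). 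I would pull a nontrivial parabolic-killing quotient of $\operatorname{PSL}_2(\mathcal O_K)$ back to $\Gamma_K(2)$. The point needing care is that the restriction does not become trivial; I would verify this on the level of $H_1$ via a transfer argument.
\item For $d=11$, the full group $\operatorname{PSL}_2(\mathcal O_K)$ \emph{is} generated by elementary matrices, so the obstruction must genuinely appear at level two. Here $2$ is inert, so $\Gamma_K(2)$ has index $60$, with quotient $\operatorname{PSL}_2(\mathbb F_4)\cong A_5$. I would compute $H_1(\Gamma_K(2))$ and the cusp contributions explicitly, by Reidemeister--Schreier on Fine's presentation or by machine computation with the Bianchi orbifold cell structure, and exhibit a surviving class.
\end{enumerate}
Alternatively, a half-lives-half-dies rank count on the compactified manifold would suffice: $b_1(\Gamma_K(2))$ exceeding the boundary contribution forces a nonzero cokernel.

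\emph{Main obstacle.} I expect the $d=11$ case to be the hardest. The obstruction is invisible at full level, so no general theorem can be quoted. The argument will rest on an explicit (likely computer-assisted) abelianization and cusp count for an index-$60$ subgroup, and I would try the Reidemeister--Schreier computation from Fine's presentation first.
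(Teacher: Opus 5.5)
Your plan differs from the paper, which proves this statement in one line by quoting Baker--Goerner--Reid. The route you propose could work, but your uniform treatment of $d\ge 19$ breaks down at $d=19$.

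\textbf{The gap at $d=19$.} Cohn's theorem only says that the non-normal subgroup $E_2(\mathcal O_K)$ is proper. Your pull-back argument needs something else: that $Q_K:=\operatorname{PSL}_2(\mathcal O_K)/\langle\!\langle\text{parabolics}\rangle\rangle$ be nontrivial. For $d=19$ this quotient is trivial. Put $\tau=\frac{1+\sqrt{-19}}{2}$.
\begin{enumerate}
\item By Swan's presentation, the group is generated by $T$, $T_\tau$, $A=\left(\begin{smallmatrix}0&-1\\1&0\end{smallmatrix}\right)$ and $B=\left(\begin{smallmatrix}\tau-1&2\\2&-\tau\end{smallmatrix}\right)$; the Ford domain only involves isometric spheres with $|c|=1$ or $2$.
\item $A=\pm T\left(\begin{smallmatrix}1&0\\-1&1\end{smallmatrix}\right)T$ is a product of parabolics.
\item $\operatorname{tr}B=-1$, so $B$ has order $3$; and $\operatorname{tr}(AB)=0$, so $AB$ has order $2$.
\end{enumerate}
Hence in $Q_K$ one gets $A=1$, and then $B=AB$ has order dividing both $2$ and $3$. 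So $Q_K=1$, which is consistent with the vanishing of cuspidal cohomology at $d=19$.

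Thus for $d=19$ there is nothing to pull back. Exactly as for $d=11$, the obstruction is invisible at full level. Since $2$ is inert in $\mathbb Q(\sqrt{-19})$, you face a second explicit index-$60$ computation with quotient $\operatorname{PSL}_2(\mathbb F_4)$, and your plan does not contain it.

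\textbf{The cases $d=43,67,163$.} Your argument works here, but the operative input is that cuspidal cohomology is nonzero, so $Q_K$ has infinite abelianization. If $\Gamma_K(2)$ were generated by parabolics, then $\Gamma_K(2)\subseteq\langle\!\langle\text{parabolics}\rangle\rangle$. Then $Q_K$ would be a quotient of the finite group $\operatorname{PSL}_2(\mathcal O_K)/\Gamma_K(2)$, a contradiction. No transfer argument is needed. Conversely, mere nontriviality of $Q_K$ would not suffice, since $\Gamma_K(2)$ could map trivially into a finite $Q_K$.

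\textbf{The explicit computations.} For $d=11,19$, a nonzero $H_1$-cokernel is only a sufficient test, because $\Gamma_K(2)/\Pi_K$ could be perfect. A cleaner formulation uses class number one and the normality of $\Gamma_K(2)$: $\Pi_K$ is the normal closure in $\operatorname{PSL}_2(\mathcal O_K)$ of $T^2$ and $T_\tau^2$. Hence $\Gamma_K(2)=\Pi_K$ if and only if adding $t^2=u^2=1$ to Swan's presentation gives a group of order exactly $[\operatorname{PSL}_2(\mathcal O_K):\Gamma_K(2)]$. That index is $48$ or $36$ for $d=2,7$. In the ``only if'' cases you must exhibit a strictly larger quotient, possibly nonabelian.

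\textbf{Smaller slips.}
\begin{enumerate}
\item $TT_\tau^{\pm1}\notin\Gamma_K(2)$, since $1\pm\tau\notin 2\mathcal O_K$. The level-two translations at $\infty$ are generated by $T^2$ and $T_\tau^2$ alone.
\item If you cross-check against cusp counts, for $d=2$ the count is $48/4=12$, not $|\mathbb P^1(\mathcal O_K/2\mathcal O_K)|=6$. The residue unit $1+\sqrt{-2}$ does not lift to a global unit.
\end{enumerate}
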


\begin{proof}
By the theorem of Baker, Goerner and Reid~\cite{BGR19}, among the
class-number-one imaginary quadratic fields, the level-two principal
congruence subgroup \(\Gamma_K(2)\) is generated by parabolic elements
precisely for $d\in\{1,2,3,7\}.$
Under the standing assumption \(d\notin\{1,3\}\), this list reduces exactly to $d\in\{2,7\}.$
This proves the claim.
\end{proof}

\begin{proposition} \label{prop:cusp_counting}
Let $K=\mathbb{Q}(\sqrt{-d})$ have class number one. Then the cusps of $\mathbb{H}^3/\Gamma_K(2)$ are naturally in bijection with $\mathbb{P}^1(R_2)$, where $R_2=\mathcal{O}_K/2\mathcal{O}_K.$ In particular, $\#\operatorname{Cusps} ( \mathbb{H}^3/\Gamma_{\mathbb{Q}(\sqrt{-2})}(2) ) = 6$, whereas $\#\operatorname{Cusps} ( \mathbb{H}^3/\Gamma_{\mathbb{Q}(\sqrt{-7})}(2) ) = 9$.
\end{proposition}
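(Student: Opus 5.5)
The plan is to identify cusps of $\Gamma_K(2)$ with $\Gamma_K(2)$-orbits on $\mathbb{P}^1(K)$, and then show that reduction modulo $2$ gives a bijection from these orbits onto $\mathbb{P}^1(R_2)$.

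\emph{Step 1: cusps of the full group.} Since $K$ has class number one, every point of $\mathbb{P}^1(K)$ can be written as $[x:y]$ with $x,y\in\mathcal O_K$ coprime, that is, $x\mathcal O_K+y\mathcal O_K=\mathcal O_K$. Such a pair $(x,y)$ is unique up to a unit. It follows that $\operatorname{SL}_2(\mathcal O_K)$ acts transitively on $\mathbb{P}^1(K)$, so the full group has a single cusp. Write $\operatorname{Stab}(\infty)$ for the stabilizer of $\infty$ in $\operatorname{PSL}_2(\mathcal O_K)$. The cusps of the normal finite-index subgroup $\Gamma_K(2)$ are then the double cosets
\[
\Gamma_K(2)\backslash \operatorname{PSL}_2(\mathcal O_K)/\operatorname{Stab}(\infty).
\]

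\emph{Step 2: the reduction map.} Send $[x:y]$ with coprime $x,y$ to $[\bar x:\bar y]\in\mathbb{P}^1(R_2)$. Coprimality means the pair $(\bar x,\bar y)$ is unimodular in $R_2$, so it defines a point of $\mathbb{P}^1(R_2)$. The map does not depend on the choice of representative, because changing $(x,y)$ by a unit rescales $(\bar x,\bar y)$ by a unit of $R_2$. The map is also $\Gamma_K(2)$-invariant, since $\Gamma_K(2)$ acts trivially modulo $2$. Hence it descends to a map from cusps to $\mathbb{P}^1(R_2)$.

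\emph{Step 3: surjectivity.} Since $R_2$ is finite, hence semilocal, $\operatorname{SL}_2(\mathcal O_K)\to\operatorname{SL}_2(R_2)$ is surjective (strong approximation, or elementary matrices over a Euclidean-free argument via CRT). Also $\operatorname{SL}_2(R_2)$ acts transitively on unimodular columns. These two facts together give surjectivity.

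\emph{Step 4: injectivity.} This is the step I expect to be the main obstacle. Suppose $(x,y)$ and $(x',y')$ are coprime pairs with $(\bar x',\bar y')=\bar u(\bar x,\bar y)$ for some $\bar u\in R_2^\times$. I first want to absorb $\bar u$ into the choice of representative. This requires knowing that $\bar u$ is a reduction of a global unit, or at least a square times such a reduction.

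\begin{itemize}
\item For $d=2$: $R_2^\times=\{1,1+\tau\}$.
\item For $d=7$: $2$ splits, so $R_2^\times=\{1\}$.
\end{itemize}

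In general I will handle this as follows. Complete $(x,y)$ to a matrix $\gamma\in\operatorname{SL}_2(\mathcal O_K)$ with first column $(x,y)$, and similarly $\gamma'$ for $(x',y')$. Then $\bar\gamma^{-1}\bar\gamma'$ fixes $\infty$ modulo $2$ up to the scalar $\bar u$, so it is upper triangular mod $2$. Modifying $\gamma'$ on the right by an element of $\operatorname{Stab}(\infty)$ (a diagonal unit matrix times a translation), I can arrange $\gamma\equiv\gamma'\pmod 2$. If the diagonal $\operatorname{diag}(\bar u,\bar u^{-1})$ is not liftable, a single unipotent correction is not enough. In that case I will instead reinterpret the cusp set through the quotient $\operatorname{SL}_2(R_2)/\overline{\operatorname{Stab}}$. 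This expresses the cusp count as $|\operatorname{SL}_2(R_2)|/|\text{image of }\operatorname{Stab}(\infty)|$. I then check that this equals $|\mathbb{P}^1(R_2)|$ in the stated cases; the point to verify is that the diagonal torus of $\operatorname{SL}_2(R_2)$ is $\{\operatorname{diag}(v,v^{-1})\}$, which is trivial when $|R_2^\times|\le 2$ with $v=v^{-1}$.

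\emph{Step 5: counting.}
\begin{itemize}
\item For $d=7$: $R_2\cong\mathbb{F}_2\times\mathbb{F}_2$, so $|\mathbb{P}^1(R_2)|=3\cdot3=9$.
\item For $d=2$: $R_2=\mathbb{F}_2[\epsilon]/(\epsilon^2)$. The points of $\mathbb{P}^1(R_2)$ are $[1:y]$ with $y\in R_2$, giving $4$ points, and $[x:1]$ with $x\in\mathfrak m$, giving $2$ more points. This totals $6$. Equivalently, $|\mathbb{P}^1(R)|=q^{n-1}(q+1)$ with $q=2$, $n=2$.
\end{itemize}
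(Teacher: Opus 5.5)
\textbf{Verdict: genuine gap.} Step 4 cannot be completed, because the statement is false for $d=2$.

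Your set-up is right, and so is the formula you fall back on in Step 4. Since $-1\equiv 1\pmod 2$, one has $\mathrm{PSL}_2(\mathcal O_K)/\Gamma_K(2)\cong \mathrm{SL}_2(R_2)$, and the cusps are $\mathrm{SL}_2(R_2)/\overline{\operatorname{Stab}(\infty)}$. The gap is in how you evaluate this quotient. The claim that the diagonal torus $\{\operatorname{diag}(v,v^{-1})\}$ is trivial when $v=v^{-1}$ is false. For $d=2$ we have $R_2=\mathbb F_2[\epsilon]/(\epsilon^2)$, and $v=1+\epsilon$ gives the nontrivial scalar $(1+\epsilon)I_2\in\mathrm{SL}_2(R_2)$. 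This element is not in the image of $\operatorname{Stab}(\infty)$, because the only global units $\pm1$ both reduce to $1$. That image consists only of the four translations $\left(\begin{smallmatrix}1&\bar b\\0&1\end{smallmatrix}\right)$, whereas the stabilizer of $[1:0]\in\mathbb P^1(R_2)$ is the Borel subgroup of order $8$. Since $|\mathrm{SL}_2(R_2)|=96/2=48$, your formula gives $48/4=12$ cusps, not $6$.

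Concretely, $\infty=[1:0]$ and $(1+\sqrt{-2})/2=[1+\sqrt{-2}:2]$ (a coprime pair, of norms $3$ and $4$) both map to $[1:0]\in\mathbb P^1(R_2)$. Yet they are not $\Gamma_K(2)$-equivalent: a matrix $M\equiv I_2\pmod 2$ carrying $(1,0)$ to $\pm(1+\sqrt{-2},2)$ would force $(1,0)\equiv(1+\epsilon,0)$ in $R_2^2$.

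What your method actually proves (complete a primitive vector to a matrix in $\mathrm{SL}_2(\mathcal O_K)$, then correct by a translation) is the following: the cusps of $\Gamma_K(2)$ correspond to unimodular vectors in $R_2^2$ modulo the image of $\mathcal O_K^\times$. This coincides with $\mathbb P^1(R_2)$ exactly when $\mathcal O_K^\times\to R_2^\times$ is surjective, i.e.\ for $d\in\{1,3,7\}$ among class-number-one fields. For $d=2$ one gets $12$. For $d\in\{11,19,43,67,163\}$, where $2$ is inert, one gets $15$ instead of $|\mathbb P^1(\mathbb F_4)|=5$. Your $d=7$ argument is fine: $R_2^\times$ is trivial, so the translation correction suffices and the count $9$ is correct.

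The paper's own proof has the same defect. It simply asserts that reduction modulo $2$ identifies $\Gamma_K(2)\backslash\mathbb P^1(K)$ with $\mathbb P^1(R_2)$, without addressing whether local units lift to global ones. So you located the real weak point, but the correct conclusion there is that the count for $\mathbb Q(\sqrt{-2})$ is $12$. Passing to $P\Gamma_K(2)$ does not rescue the count $6$, since $\operatorname{diag}(1,-1)\equiv I_2\pmod 2$. Only the larger group $\operatorname{Bi}_K(2)$, which contains an element congruent to $(1+\sqrt{-2})I_2$, has $6$ cusp orbits.
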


\begin{proof}
Since $K$ has class number one, the group $\operatorname{PSL}_2(\mathcal{O}_K)$ acts transitively on $\mathbb{P}^1(K)=K\cup\{\infty\}$. Passing to the principal congruence subgroup of level $2$, the cusp orbits are obtained by reduction modulo $2$. Therefore $\Gamma_K(2)\backslash \mathbb{P}^1(K) \cong \mathbb{P}^1(R_2).$

We now count this finite projective line in the two cases. First let $K=\mathbb{Q}(\sqrt{-2})$. Then $R_2 = \mathcal{O}_K/2\mathcal{O}_K \cong \mathbb{F}_2[t]/(t^2)$, where $t$ is the image of $\sqrt{-2}$. This is a local ring with maximal ideal $(t)$. Every point of $\mathbb{P}^1(R_2)$ is represented uniquely either in the form $[1:a]$ for $a\in R_2$, or in the form $[b:1]$ for $b\in (t)$. The first type gives $\#R_2=4$ points, and the second type gives $\#(t)=2$ points. Hence $\#\mathbb{P}^1(R_2)=4+2=6.$

Next let $K=\mathbb{Q}(\sqrt{-7})$. Since $2$ splits in $\mathcal{O}_K$, we have $R_2 = \mathcal{O}_K/2\mathcal{O}_K \cong \mathbb{F}_2\times \mathbb{F}_2.$ Therefore $\mathbb{P}^1(R_2) \cong \mathbb{P}^1(\mathbb{F}_2)\times \mathbb{P}^1(\mathbb{F}_2).$ Since $\#\mathbb{P}^1(\mathbb{F}_2)=3$, we obtain $\#\mathbb{P}^1(R_2)=3\cdot 3=9.$ This proves the claim.
\end{proof}

\begin{lemma} \label{lem:parabolic_mw}
Let $X$ be a very general $S_K(2)$-polarized K3 surface, and let $\ell_c\in S_K(2)$ be a primitive nef isotropic class. Let $\varphi_{\ell_c}:X\longrightarrow \mathbb{P}^1$ be the genus-one fibration defined by $\ell_c$. Then the translation subgroup of the parabolic stabilizer of $\ell_c$ is naturally identified with the Mordell--Weil group $\operatorname{MW}(J_{\ell_c}).$
\end{lemma}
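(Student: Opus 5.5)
The plan is to reduce Lemma~\ref{lem:parabolic_mw} to the argument already given for Proposition~\ref{prop:cusp_translations_XK2N} with $N=1$, and to make explicit the two ingredients that were only sketched there. The first ingredient is that Mordell--Weil translations act on $\operatorname{NS}(X)$ as unipotent elements of the stabilizer of $\ell_c$. The second is the converse: every unipotent element of that stabilizer lying in $\operatorname{Aut}(X)$ is a fiberwise translation.

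\textbf{Step 1: translations are unipotent.} Since $S_K(2)$ has no $(-2)$-classes, the positive cone equals the nef cone. Hence $\ell_c$ is nef, and $|\ell_c|$ defines $\varphi_{\ell_c}$ with no reducible fibers. The relative Jacobian $J_{\ell_c}$ acts on $X$ fiberwise as a torsor. Following \cite[Section~4]{S24}, each section of $J_{\ell_c}$ gives a birational translation of $X$, which extends to an automorphism because $X$ is a minimal $K3$ surface. This yields a homomorphism $\operatorname{MW}(J_{\ell_c})\to \operatorname{Aut}(X)$ fixing $\ell_c$. By Lemma~\ref{lem-twoelementary}, $\operatorname{Aut}(X)\to O(S_K(2))$ is injective, since $A_{S_K(2)}$ has $4$-torsion coming from the $e/2$-type and $P(1)/4$-type classes, or from $P(\tau)/D$ for odd $D$. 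A translation acts trivially on the finite group $\ell_c^\perp/\mathbb Z\ell_c$, because it preserves each fiber and acts as the identity on the (trivial) fiber-component lattice. Therefore it is an Eichler-type transvection, that is, a unipotent element of the stabilizer.

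\textbf{Step 2: unipotent stabilizer elements are translations, and the map is bijective.} Let $g\in\operatorname{Aut}(X)$ fix $\ell_c$ and act unipotently. Then $g$ permutes the fibers of $\varphi_{\ell_c}$ and induces an automorphism of the base $\mathbb P^1$. This base action has finite order, because it must preserve the finite set of singular fibers, of which there are at least three for a non-isotrivial very general fibration. Passing to a power, $g$ preserves every fiber. It then acts on the generic fiber as a translation composed with a finite-order group automorphism. The finite-order part is detected on $\ell_c^\perp/\mathbb Z\ell_c\otimes\mathbb Q$, or alternatively on $T_X$ by very generality. Unipotency forces it to be trivial, so $g$ is a translation by an element of $\operatorname{MW}(J_{\ell_c})$.

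To remove the passage to a power, we will use that $\operatorname{MW}(J_{\ell_c})\cong \operatorname{NS}(J)/U$ is torsion-free, as shown in Proposition~\ref{prop:cusp_translations_XK2N}, together with the fact that the unipotent group $U_{\ell_c}\cong\mathbb Z^2$ is torsion-free. Injectivity of the composite $\operatorname{MW}\to\operatorname{Aut}(X)\to O(S_K(2))$ follows because a nonzero section gives a nontrivial translation, whose image in the rank-two free group of transvections is nonzero by the height pairing.

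\textbf{Main obstacle.} The hardest step is expected to be the descent in Step~2, namely showing that $g$ itself, and not merely a power of it, acts trivially on the base. The first approach to try is that a unipotent $g$ acting on the base $\mathbb P^1$ with finite order $k$ has $g^k$ a translation. Moreover, $U_{\ell_c}$ is the full unipotent radical, which is isolated in the stabilizer. Combining this with Proposition~\ref{prop:cusp_translations_XK2N}, which identifies $U_{\ell_c}$ with $\{[\begin{smallmatrix}1&\lambda\\0&1\end{smallmatrix}]:\lambda\in 2\mathcal O_K\}$, we then compare ranks: both sides are $\mathbb Z^2$. To show the inclusion is an equality rather than of finite index, we compute the images of explicit sections, namely the graph-class differences $P(1)$ and $P(\tau)$ transported to $J_{\ell_c}$, and check that they generate.
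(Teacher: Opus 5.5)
\textbf{There is a genuine gap, at exactly the descent you flag.}

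Step~2 only shows that some power $g^k$ of a unipotent automorphism fixing $\ell_c$ is a Mordell--Weil translation. Your way of removing the power does not work. Torsion-freeness of $\operatorname{MW}(J_{\ell_c})$ and of $U_{\ell_c}$ says nothing about whether the translation image is saturated in $U_{\ell_c}$; compare $2\mathbb Z^2\subset\mathbb Z^2$.

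This is not a formality. Write $E_{\ell,v}(x)=x+(x,\ell)v-(x,v)\ell-\tfrac12(v,v)(x,\ell)\ell$ for the transvection formed with the $S_K(2)$-form. For $w\in\ell_c^\perp$, the transvection $E_{\ell_c,w/2}$ equals the $S_K$-transvection $E_{\ell_c,w}$, so it already lies in $O(S_K(2))=O(S_K)$. At $\ell_c=e$ these are the matrices $\left[\begin{smallmatrix}1&\lambda\\0&1\end{smallmatrix}\right]$ with $\lambda\in\mathcal O_K$. Translations give only $\lambda\in2\mathcal O_K$. So the unipotent isometries fixing $\ell_c$ contain the translation image as a proper subgroup, of index $4$ at $e$. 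Only the discriminant-form condition can decide which of them are automorphisms; no rank or torsion argument can.

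Your fallback in the last paragraph does not close the gap either. It cites Proposition~\ref{prop:cusp_translations_XK2N}, whose statement already contains $U_F\cong\operatorname{MW}(J_F)$, so the argument is circular. The decisive step, ``transport $P(1),P(\tau)$ to $J_{\ell_c}$ and check they generate'', has no mechanism attached.

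Step~1 also has a faulty justification. $\ell_c^\perp/\mathbb Z\ell_c$ is free of rank two, not finite. The fiber-component lattice, which here is just $\mathbb Z\ell_c$, does not control it.

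\textbf{The missing idea} is the identification
$\ell_c^\perp/\mathbb Z\ell_c\cong\operatorname{Pic}^0(X_\eta)\cong J_\eta(\mathbb C(t))=\operatorname{MW}(J_{\ell_c})$. It holds because fibers are irreducible (there are no $(-2)$-curves) and non-multiple (this is automatic for genus-one fibrations on a $K3$), and because $\operatorname{Br}(\mathbb C(t))=0$ by Tsen's theorem.

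Translation by $P$ changes a degree-$d$ class on $X_\eta$ by $-d\hat P$, so $t_P^*=E_{\ell_c,-\hat P}$. This is the correct reason translations are unipotent. It also shows that their image is \emph{exactly} the group of integral transvections $E_{\ell_c,w}$ with $w\in\ell_c^\perp$. This identification is also what your fallback computation would need.

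Conversely, let $g\in\operatorname{Aut}(X)$ be unipotent and fix $\ell_c$. Then $g^*=E_{\ell_c,v}$ with $v\in\ell_c^\perp\otimes\mathbb Q$, and by Proposition~\ref{prop:level-containment} it acts on $A_S$ by a sign.
\begin{itemize}
\item The sign $-1$ is impossible. Since $(g^*-1)^3=0$, it would give $8A_S=0$, which contradicts the odd part of $A_S$ when $D$ has an odd prime factor. For $D\in\{4,8\}$, move $\ell_c$ to $e$ and test on $P(1)/4$.
\item For the sign $+1$, evaluate on some $y\in S^\vee$ with $(y,\ell_c)=1$, which exists because $\ell_c$ is primitive. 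This gives $v\in S+\mathbb Q\ell_c$.
\end{itemize}
Hence $g^*=t_P^*$ for some $P$, and injectivity of $\operatorname{Aut}(X)\to O(S)$ gives $g=t_P$. This argument never needs to analyse the action on the base.

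The paper's own proof simply asserts that the parabolic translation subgroup consists of fiberwise translations, so your concern is legitimate. As written, however, your proposal does not supply the argument.
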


\begin{proof}
The primitive nef isotropic class $\ell_c$ defines a genus-one fibration $\varphi_{\ell_c}:X\longrightarrow \mathbb{P}^1.$ The stabilizer of the ray $\mathbb{R}_{\geq 0}\ell_c$ in the automorphism group is the automorphism group of this genus-one fibration. Its parabolic translation subgroup consists exactly of automorphisms acting fiberwise by translations. For a genus-one fibration on a $K3$ surface, the group of fiberwise translations is the Mordell--Weil group of the fibration. Hence this parabolic translation subgroup is identified with $\operatorname{MW}(J_{\ell_c})$.
\end{proof}

Finally, we synthesize the algebraic splitting, the geometric covering involution, and the parabolic generators into a complete description of the automorphism group.

\begin{corollary} \label{cor:final_generation}
Let $K=\mathbb{Q}(\sqrt{-d})$ with $d\in\{2,7\}$, and let $X_{K,2}$ be a very general $S_K(2)$-polarized double-cover deformation of $\operatorname{Km}(E_K\times E_K)$. Then $\operatorname{Aut}(X_{K,2}) \cong \Gamma_K(2)\rtimes \langle \iota\rangle$, where $\iota$ is the covering involution. Moreover, the subgroup $\Gamma_K(2)$ is generated by the Mordell--Weil translation groups associated to genus-one fibrations: $\Gamma_K(2) = \langle \operatorname{MW}(J_{\ell_c}) \mid c\in \Gamma_K(2)\backslash \mathbb{P}^1(K) \rangle.$

For $K=\mathbb{Q}(\sqrt{-2})$, there are $6$ cusp orbits. The standard cusp translation subgroup is generated by $\begin{pmatrix} 1&2\\ 0&1 \end{pmatrix}$ and $\begin{pmatrix} 1&2\sqrt{-2}\\ 0&1 \end{pmatrix}.$ For $K=\mathbb{Q}(\sqrt{-7})$, there are $9$ cusp orbits. The standard cusp translation subgroup is generated by $\begin{pmatrix} 1&2\\ 0&1 \end{pmatrix}$ and $\begin{pmatrix} 1&1+\sqrt{-7}\\ 0&1 \end{pmatrix}.$
\end{corollary}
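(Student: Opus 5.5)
The plan is to combine three earlier results: the arithmetic identification of the automorphism group, the semidirect-product splitting, and the parabolic generation. First, by Theorem~\ref{thm:ramified-level-two} for $d=2$ and Corollary~\ref{cor:class-number-one-level-two} for $d=7$ (here $7\equiv 3\pmod 4$), we have $\operatorname{Aut}(X_{K,2})\cong P\Gamma_K(2)$. The isomorphism is realized by the injective representation $\rho$ into $O(S_K(2))$ under the Hermitian action. By Proposition~\ref{prop:main_existence}, the very general $S_K(2)$-polarized surface is a double cover of $\mathbb P^1\times\mathbb P^1$ deforming the Kummer model, so it carries the covering involution $\iota$.

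Next we apply Lemma~\ref{lem:PGamma-splits-over-Gamma}, which gives $P\Gamma_K(2)=\Gamma_K(2)\rtimes\langle\eta\rangle$ with $\eta=[J]$. By Lemma~\ref{lem:covering-involution-Hermitian-action}, $\rho(\iota)$ coincides with the Hermitian action of $J$, that is, $\rho(\iota)=\eta$. Since $\rho$ is injective, transporting the splitting yields $\operatorname{Aut}(X_{K,2})\cong\Gamma_K(2)\rtimes\langle\iota\rangle$.

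For the generation statement, Theorem~\ref{thm:psl-level-two-parabolic-generation} tells us that $\Gamma_K(2)$ is generated by its parabolic elements when $d\in\{2,7\}$. Since $\Gamma_K(2)$ is torsion-free (Lemma~\ref{lem:level-two-torsion}), each parabolic element fixes a unique cusp $c\in\mathbb P^1(K)$ and lies in the unipotent radical of the stabilizer of $c$. A conjugate of a parabolic element is parabolic at the translated cusp. Hence the group is generated by the unipotent cusp stabilizers for $c$ in any set of representatives of $\Gamma_K(2)\backslash\mathbb P^1(K)$ together with their $\Gamma_K(2)$-conjugates. Each such cusp $c$ corresponds to a primitive isotropic class $\ell_c\in S_K(2)$, namely the rank-one Hermitian matrix $vv^\ast$ with $v$ primitive representing $c$. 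This class is nef because the nef cone equals the positive cone, as there are no $(-2)$-classes. By Lemma~\ref{lem:parabolic_mw} and Proposition~\ref{prop:cusp_translations_XK2N}, the translation subgroup at $\ell_c$ is exactly $\operatorname{MW}(J_{\ell_c})$.

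A subtle point is that the unipotent stabilizer inside $P\Gamma_K(2)$ must coincide with the one inside $\Gamma_K(2)$. This holds because unipotent elements have determinant $1$ (as $(\det)^2=1$ and the trace is $\pm2$ after normalizing), so all parabolics of $P\Gamma_K(2)$ lie in $\Gamma_K(2)$. The cusp counts $6$ and $9$ then follow from Proposition~\ref{prop:cusp_counting}.

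The explicit generators come from conjugating to $c=\infty$, where the stabilizer consists of the matrices $\begin{pmatrix}1&\lambda\\0&1\end{pmatrix}$ with $\lambda\in2\mathcal O_K$. A $\mathbb Z$-basis of $2\mathcal O_K$ is $\{2,2\sqrt{-2}\}$ for $d=2$ and $\{2,1+\sqrt{-7}\}$ for $d=7$, using $2\tau=1+\sqrt{-7}$.

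I expect the main obstacle to be justifying that the abstract parabolic generation from \cite{BGR19} matches the geometric Mordell--Weil groups. Specifically, one has to check that the isotropic class attached to each cusp is primitive and nef in $S_K(2)$, as opposed to only in $S_K$, and that the identification in Lemma~\ref{lem:parabolic_mw} is compatible with $\rho$. I would verify this first at $c=\infty$, where $\ell_\infty=e$, and then transport to the other cusps by conjugation.
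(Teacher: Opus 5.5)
Your proposal is correct and follows the paper's proof step for step. Like the paper, you use the Torelli identification $\Aut(X_{K,2})\cong P\Gamma_K(2)$, the splitting of Lemma~\ref{lem:PGamma-splits-over-Gamma} with $[J]$ realized by $\iota$ via Lemma~\ref{lem:covering-involution-Hermitian-action}, the parabolic generation from \cite{BGR19}, Lemma~\ref{lem:parabolic_mw} at each cusp, and Proposition~\ref{prop:cusp_counting} for the counts.

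Your extra checks are points the paper leaves implicit, and they make the argument more precise. First, parabolic elements of $P\Gamma_K(2)$ already lie in $\Gamma_K(2)$: for $d=2,7$ a parabolic of determinant $-1$ would have trace $\pm 2\sqrt{-1}\in\mathcal O_K$, which is impossible. Second, one needs the conjugates of the cusp stabilizers, i.e.\ all cusps, not just one representative per orbit.
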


\begin{proof}
By Lemma~\ref{lem:PGamma-splits-over-Gamma}, we have $P\Gamma_K(2) = \Gamma_K(2)\rtimes \langle [J]\rangle,$ and $[J]$ is realized geometrically by the covering involution $\iota$. Using the Torelli identification $\operatorname{Aut}(X_{K,2}) \cong P\Gamma_K(2),$ we therefore obtain $\operatorname{Aut}(X_{K,2}) \cong \Gamma_K(2)\rtimes \langle \iota\rangle.$

By Theorem~\ref{thm:psl-level-two-parabolic-generation}, the group $\Gamma_K(2)$ is generated by its parabolic elements for $d\in\{2,7\}$. Each cusp corresponds to a primitive isotropic ray in the positive cone, hence to a genus-one fibration on $X_{K,2}$. By Lemma~\ref{lem:parabolic_mw}, the parabolic translation subgroup at this cusp is the Mordell--Weil group of the corresponding genus-one fibration. Therefore $\Gamma_K(2) = \langle \operatorname{MW}(J_{\ell_c}) \mid c\in \Gamma_K(2)\backslash \mathbb{P}^1(K) \rangle.$

Finally, the number of cusp orbits is computed in Proposition~\ref{prop:cusp_counting}. It is $6$ for $K=\mathbb{Q}(\sqrt{-2})$ and $9$ for $K=\mathbb{Q}(\sqrt{-7})$. The displayed matrices are the standard translation generators at the cusp $\infty$, corresponding respectively to the rank-two lattices $2\mathcal{O}_{\mathbb{Q}(\sqrt{-2})}$ and $2\mathcal{O}_{\mathbb{Q}(\sqrt{-7})}$.
\end{proof}

\section{Complete-intersection models}
\label{subsec:rank-four-complete-intersections}
We now treat the two fields 
\(K=\mathbb{Q}(i)\) and \(K=\mathbb{Q}(\sqrt{-3})\). In these cases the surfaces
\(X_{K,2}\) admit especially symmetric complete-intersection models, and the
corresponding automorphism groups are generated by natural deck involutions.

\begin{definition}
\label{def:type-I-II}
Let $\mathbb P_I:=\mathbb P^5\times(\mathbb P^1)^3,$
and let $h,e_1,e_2,e_3$ denote the hyperplane classes pulled back from the four factors. A $K3$ surface of \emph{type I} is a smooth complete intersection
$$X_I=D_{11}\cap D_{12}\cap D_{21}\cap D_{22}\cap D_{31}\cap D_{32}\subset \mathbb P_I,$$
where
$$D_{i1},D_{i2}\in |h+e_i|, \qquad i=1,2,3.$$

Let $\mathbb P_{II}:=(\mathbb P^1)^4,$
and let $g_1,g_2,g_3,g_4$ denote the hyperplane classes pulled back from the four factors. A $K3$ surface of \emph{type II} is a smooth complete intersection
$$X_{II}=D_1\cap D_2\subset \mathbb P_{II}, \qquad D_1,D_2\in |g_1+g_2+g_3+g_4|.$$
\end{definition}

By the adjunction formula, these are indeed $K3$ surfaces.

\begin{proposition}
\label{prop:type-I-intersection-lattice}
Let $X_I$ be a type I surface, and set
$$H:=h|_{X_I}, \qquad E_i:=e_i|_{X_I} \quad (i=1,2,3).$$
The lattice $G_I = \langle H, E_1, E_2, E_3 \rangle$ generated by these restrictions has the Gram matrix
$$G_I=
\begin{pmatrix}
8&4&4&4\\
4&0&2&2\\
4&2&0&2\\
4&2&2&0
\end{pmatrix}.$$
Moreover, $G_I\cong S_{\mathbb Q(i)}(2) $, and $S_{\mathbb Q(i)}(2)$ is primitive in $\mathrm{Pic}(X_I)$.
\end{proposition}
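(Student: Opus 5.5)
The proof splits into three steps: computing the Gram matrix by intersection theory on $\mathbb P_I$, exhibiting an explicit lattice isomorphism $G_I\cong S_{\mathbb Q(i)}(2)$, and proving primitivity via a discriminant (or index) argument.

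\emph{Step 1: the Gram matrix.} On $\mathbb P_I=\mathbb P^5\times(\mathbb P^1)^3$ the class of $X_I$ is
\[
\prod_{i=1}^3(h+e_i)^2=\prod_{i=1}^3(h^2+2he_i),
\]
using $e_i^2=0$. Intersection numbers are computed by multiplying by the appropriate pair of classes and extracting the coefficient of the top monomial $h^5e_1e_2e_3$, which has degree $1$.
\begin{itemize}
\item For $H^2$, multiply $h^2\cdot\prod_i(h^2+2he_i)$ and extract the coefficient of $h^5e_1e_2e_3$. This gives $2^3=8$.
\item For $H\cdot E_1$, the factor $e_1$ kills the $e_1$-term of the first factor. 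The coefficient of $h^5e_1e_2e_3$ in $he_1\cdot h^2(h^2+2he_2)(h^2+2he_3)$ is $4$.
\item For $E_1\cdot E_2$, we need the coefficient of $h^5e_1e_2e_3$ in $e_1e_2\cdot h^2\cdot h^2\cdot(h^2+2he_3)$, which is $2$.
\item $E_i^2=0$ holds trivially.
\end{itemize}
This reproduces the matrix $G_I$.

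\emph{Step 2: isomorphism with $S_{\mathbb Q(i)}(2)$.} Here $S_{\mathbb Q(i)}=U\oplus\langle-2\rangle^2$, with Hermitian basis $e,f,P(1),P(i)$. We look for Hermitian matrices whose pairwise forms reproduce $G_I/2$, namely three isotropic classes with pairwise product $1$ and a fourth class of square $4$ pairing to $2$ with each. Natural candidates are
\[
E_1\mapsto e=\begin{pmatrix}1&0\\0&0\end{pmatrix},\qquad
E_2\mapsto f,\qquad
E_3\mapsto\begin{pmatrix}1&1\\1&1\end{pmatrix}.
\]
These have determinant $0$, and $(e,f)=1$, $(e,E_3)=1$, $(f,E_3)=1$ under the bilinear form $\det(X+Y)-\det X-\det Y$. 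For $H$ we solve for a Hermitian $\begin{pmatrix}a&b\\ \bar b&c\end{pmatrix}$ with $c=2$ (pairing with $e$), $a=2$ (pairing with $f$), $2\det=8$, and pairing $2$ with $E_3$; for instance $\begin{pmatrix}2&1+i\\1-i&2\end{pmatrix}$ should work after checking. We then verify that these four matrices form a $\mathbb Z$-basis of $\operatorname{Herm}_2(\mathbb Z[i])$ by computing that the change-of-basis determinant is $\pm1$. Equivalently, we compare discriminants: $\det G_I$ should equal $\det S_{\mathbb Q(i)}(2)=2^4\cdot(-4)=-64$, which can be checked directly from $G_I$.

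\emph{Step 3: primitivity.} This is the main obstacle, since the pulled-back classes could a priori lie in a proper finite-index overlattice of their span. There are two routes.
\begin{itemize}
\item \emph{Direct route.} Show $G_I$ is primitive in $H^2(X_I,\mathbb Z)$. Any proper overlattice $L\supsetneq G_I$ inside $\mathrm{Pic}$ corresponds to a nonzero isotropic subgroup of $A_{G_I}\cong(\mathbb Z/2)^2\oplus(\mathbb Z/4)^2$. Such an overlattice must be even and contain no $(-2)$-classes incompatible with the ampleness of $H$. One exhibits the finitely many candidate isotropic vectors of $q_{G_I}$ (for instance $(E_1-E_2)/2$-type elements) and rules each out. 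For each candidate, the glued class $x$ would satisfy $x\cdot E_i\in\mathbb Z$ with values forcing $x\cdot E_i$ odd or half-integral against a fiber of the genus-one fibration $|E_i|$ of degree $2$, or would produce a class of square $-2$ or $0$ with $H\cdot x$ contradicting effectivity or Riemann--Roch.
\item \emph{Fallback route.} Argue by deformation. For $X_I$ very general in the family, Lefschetz-type considerations show that $\mathrm{Pic}(X_I)$ has rank $4$ and coincides with the saturation of $G_I$. One then identifies the family with the $S_{\mathbb Q(i)}(2)$-polarized family, for instance via Proposition~\ref{prop:main_existence}. In that family the projection $X_I\to\mathbb P^1\times\mathbb P^1$ via $(e_1,e_2)$ is the $(4,4)$ double cover, whose Picard lattice contains the ruling classes primitively.
\end{itemize}
I would attempt the direct discriminant-form route first: since $A_{G_I}$ is small, the enumeration of isotropic subgroups is finite and explicit.
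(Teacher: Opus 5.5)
Your Steps 1 and 2 are correct. In Step 2 the condition on $H$ should be $\det=2$ (square $4$ in $S_{\mathbb Q(i)}$), not $2\det=8$. Your matrix $2e+2f+P(1)+P(i)$ does have determinant $2$. Together with $e$, $f$, $e+f+P(1)$ its change-of-basis determinant is $-1$, so your explicit Hermitian embedding is a fine alternative to the paper's diagonalizing basis $E_1,E_2,E_1+E_2-E_3,H-E_1-E_2-E_3$, which exhibits $U(2)\perp\langle-4\rangle^2$.

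The genuine gap is Step 3. You flag it as the crux but never carry it out. Your direct route is the paper's strategy in outline, but you do not determine the isotropic classes. Your sample candidate $\tfrac12(E_1-E_2)$ has square $-1$, so it lies in no even overlattice. In fact every nonzero $x\in A_{G_I}$ with $q_{G_I}(x)\in 2\mathbb Z$ has order $2$, and these are exactly $\tfrac12E_i$, $\tfrac12(H+E_i)$ for $i=1,2,3$, and $\tfrac12H$. Any proper even overlattice contains one of them.

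The integrality mechanism you describe cannot give a contradiction. Elements of $G_I^\vee$ pair integrally with $H$ and the $E_i$ by definition; for example, $\tfrac12E_1$ pairs to $1,1,2$ with $E_2,E_3,H$. The needed input is geometric: in each coset, find a $(-2)$-class $w$ such that neither $w$ nor $-w$ can be effective, contradicting Riemann--Roch. The paper uses:
\begin{itemize}
\item $w=\tfrac12E_1-E_2$, with $w\cdot E_3=-1$ and $(-w)\cdot E_2=-1$;
\item $w=\tfrac12(E_1-H)+E_2$, with $w\cdot E_2=-1$ and $(-w)\cdot E_3=-1$;
\item $w=\tfrac12H-E_1$, with $w\cdot H=0$ and $H$ ample.
\end{itemize}
These are combined with nefness of the $E_i$ and symmetry in the indices.

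Your fallback route does not fill the gap. Picard rank $4$ for a very general member only says that $\operatorname{Pic}$ is the saturation of $G_I$, not that the saturation equals $G_I$. Identifying the type I family with the $S_{\mathbb Q(i)}(2)$-polarized family of Proposition~\ref{prop:main_existence} presupposes $\operatorname{Pic}\cong S_{\mathbb Q(i)}(2)$, which is the conclusion. The paper's Noether--Lefschetz statement, Theorem~\ref{lem:period-maps-dominant}, itself relies on this proposition, so the argument is circular. Primitivity of the ruling classes is also too weak. The sublattice $\langle E_1,E_2\rangle$ remains primitive in the overlattices $G_I+\mathbb Z\tfrac12H$, $G_I+\mathbb Z\tfrac12E_3$ and $G_I+\mathbb Z\tfrac12(H+E_i)$. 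So that argument would exclude only the gluings by $\tfrac12E_1$ and $\tfrac12E_2$, leaving five of the seven candidates untouched.
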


\begin{proof}
In the Chow ring of $\mathbb P^5\times(\mathbb P^1)^3$, we have $h^6=0$ and $e_i^2=0$. The class of $X_I$ is $[X_I]=(h+e_1)^2(h+e_2)^2(h+e_3)^2$. Intersections on $X_I$ are computed by taking the coefficient of $h^5e_1e_2e_3$. This yields $H^2=8$, $H\cdot E_i=4$, $E_i^2=0$, and $E_i\cdot E_j=2$ (for $i\neq j$), resulting in the displayed Gram matrix. With respect to the unimodular change of basis $E_1$, $E_2$, $E_1+E_2-E_3$, $H-E_1-E_2-E_3$, the Gram matrix becomes $U(2)\perp \langle -4\rangle\perp \langle -4\rangle$.

To show primitivity, assume $S_{\mathbb Q(i)}(2)$ is not primitive. Then there exists an element $v \in S_{\mathbb Q(i)}^\vee(2) \setminus S_{\mathbb Q(i)}(2)$ such that $v \in \mathrm{Pic}(X_I)$ and $v^2 \in 2\mathbb{Z}$. A direct computation in the discriminant group \(A_{G_I}\) shows that the only
nonzero classes which can generate an even overlattice are, up to permutation of
\(E_1,E_2,E_3\), represented by $\frac{1}{2}E_1,\,
    \frac{1}{2}(H+E_1),\,
    \frac{1}{2}H.$ Each leads to a geometric contradiction:
\begin{enumerate}
    \item $v \equiv \frac{1}{2}E_1 \pmod{S_{\mathbb Q(i)}(2)}$. Set $w := \frac{1}{2}E_1 - E_2$. We have $w^2 = -2$, so by Riemann-Roch, either $w$ or $-w$ is effective. However, $w \cdot E_3 = -1$ and $(-w) \cdot E_2 = -1$, contradicting the nefness of the base-point free divisors $E_2$ and $E_3$.
    \item $v \equiv \frac{1}{2}(H + E_1) \pmod{S_{\mathbb Q(i)}(2)}$. Set $w := \frac{1}{2}E_1 - \frac{1}{2}H + E_2$. Here $w^2 = -2$, implying $w$ or $-w$ is effective. However, $w \cdot E_2 = -1$ and $(-w) \cdot E_3 = -1$, again contradicting nefness.
    \item $v \equiv \frac{1}{2}H \pmod{S_{\mathbb Q(i)}(2)}$. Set $w := \frac{1}{2}H - E_1$. We have $w^2 = -2$, so $w$ or $-w$ is effective. Note that $w \cdot H = (-w) \cdot H = 0$. Because the projection $X_I \to \mathbb P^5$ is an isomorphism onto a complete intersection of three quadrics, $H$ is strictly ample on $X_I$. Its intersection with any effective curve must be strictly positive, providing an immediate contradiction.
\end{enumerate}
Thus, no such fractional class exists, and $S_{\mathbb Q(i)}(2)$ is primitive.
\end{proof}

\begin{proposition}
\label{prop:type-II-intersection-lattice}
Let $X_{II}$ be a type II surface, and set
$$G_i:=g_i|_{X_{II}} \quad (i=1,2,3,4).$$
The lattice $G_{II} = \langle G_1, G_2, G_3, G_4 \rangle$ generated by these restrictions has the Gram matrix
$$G_{II}=
\begin{pmatrix}
0&2&2&2\\
2&0&2&2\\
2&2&0&2\\
2&2&2&0
\end{pmatrix}.$$
Moreover, $G_{II}\cong S_{\mathbb Q(\sqrt{-3})}(2)$ and $S_{\mathbb Q(\sqrt{-3})}(2)$ is primitive in $\mathrm{Pic}(X_{II})$.
\end{proposition}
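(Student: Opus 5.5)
I will follow the template of the proof of Proposition~\ref{prop:type-I-intersection-lattice}, in three steps.

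\textbf{Gram matrix.} First I compute the intersection numbers in the Chow ring of $(\mathbb P^1)^4$. There we have $g_i^2=0$, and $[X_{II}]=(g_1+g_2+g_3+g_4)^2$. For $i\neq j$, the number $G_i\cdot G_j$ is the coefficient of $g_1g_2g_3g_4$ in $g_ig_j(g_1+\dots+g_4)^2$. Only the two remaining variables can contribute, each with coefficient $2$, so $G_i\cdot G_j=2$. Also $G_i^2=0$ because $g_i^2=0$. This gives the displayed matrix.

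\textbf{Identification with $S_{\mathbb Q(\sqrt{-3})}(2)$.} For $K=\mathbb Q(\sqrt{-3})$ we have $D=3$, so $S_K(2)\cong U(2)\oplus\Lambda_3(2)$. Here $\Lambda_3(2)$ has Gram matrix $\begin{pmatrix}-4&-2\\-2&-4\end{pmatrix}$. I take $G_1,G_2$ as the $U(2)$ part and look for a unimodular change of basis. A natural candidate is $x=G_1+G_2-G_3$ and $y=G_1+G_2-G_4$. Then $x\cdot G_1=2+0-2=0$, and likewise $x\cdot G_2=0$ and $y\cdot G_1=y\cdot G_2=0$. For the squares, $x^2=(G_1+G_2)^2-2(G_1+G_2)\cdot G_3+G_3^2=4-8+0=-4$, and similarly $y^2=-4$. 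Finally $x\cdot y=4-4-4+2=-2$. The basis change $(G_1,G_2,G_3,G_4)\mapsto(G_1,G_2,x,y)$ is unimodular, so $G_{II}\cong U(2)\perp\Lambda_3(2)=S_{\mathbb Q(\sqrt{-3})}(2)$.

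\textbf{Primitivity.} This is the main step. The discriminant group of $U(2)\perp\Lambda_3(2)$ is $(\mathbb Z/2)^2\oplus A_{\Lambda_3(2)}$, with $A_{\Lambda_3(2)}\cong(\mathbb Z/2)^2\oplus\mathbb Z/3$ of order $12$. A proper even overlattice inside $\mathrm{Pic}(X_{II})$ must come from a nonzero isotropic element of the discriminant form. The $3$-part has nonzero $q$-values, so only $2$-torsion classes $v\in\frac12G_{II}/G_{II}$ with $v^2\in2\mathbb Z$ can occur.

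I will write $v=\frac12\sum a_iG_i$ with $a_i\in\{0,1\}$ and compute $v^2=\frac14\cdot 4\sum_{i<j}a_ia_j=\binom{k}{2}$, where $k=\#\{i: a_i=1\}$. This is even exactly when $k\in\{0,1,4\}$. The full symmetric group $S_4$ permutes the factors and preserves the setup, so up to permutation the candidates are $\frac12G_1$ and $\frac12(G_1+G_2+G_3+G_4)$. I will recheck this classification carefully, since it is the step most likely to hide an error.

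\textbf{Excluding $\frac12G_1$.} Suppose $\frac12G_1\in\mathrm{Pic}(X_{II})$ and set $w=\frac12G_1-G_2$. Then $w^2=-2$, so by Riemann--Roch either $w$ or $-w$ is effective. But $w\cdot G_3=1-2=-1$ and $(-w)\cdot G_2=-1$, and the classes $G_2,G_3$ are nef because they are pulled back from the base-point-free $\mathcal O(1)$ on a $\mathbb P^1$ factor. This is a contradiction.

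\textbf{Excluding $\frac12\sum G_i$.} Set $L=G_1+\dots+G_4$; this class is very ample, as the restriction of the Segre polarization. Suppose $\frac12L\in\mathrm{Pic}(X_{II})$. I will search for a combination $w=\frac12L-G_i-G_j$ and check whether $w^2=-2$ and whether $w\cdot L=0$. If both hold, ampleness of $L$ contradicts the effectivity of $\pm w$, exactly as in case (3) of the type~I proof. If the numbers do not work out, I will instead pair $\pm w$ against suitable $G_k$ with negative intersection, as in the previous case.
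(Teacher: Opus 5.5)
Your proposal is correct and follows essentially the same route as the paper: Chow-ring intersection numbers, an explicit unimodular change of basis to $U(2)\perp\Lambda_3(2)$, and exclusion of the isotropic $2$-torsion discriminant classes by producing a $(-2)$-class and invoking Riemann--Roch plus positivity. Your pending check for $\frac12L$ does go through, since $w=\frac12L-G_1-G_2$ satisfies $w^2=6-12+4=-2$ and $w\cdot L=12-6-6=0$; up to sign this $w$ is the paper's class $\frac12(G_1+G_2-G_3-G_4)$, which the paper instead pairs against the nef classes $G_1$ and $G_3$.

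Your case analysis is in fact more complete than the paper's. The paper asserts that only $\frac12(G_1+G_2+G_3+G_4)$ can occur and so overlooks the isotropic classes $\frac12G_i$ (for which $(\frac12G_i)^2=0$); your argument with $w=\frac12G_1-G_2$ correctly disposes of that case.
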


\begin{proof}
In the Chow ring of $(\mathbb P^1)^4$, $g_i^2=0$ and the class of $X_{II}$ is $[X_{II}]=(g_1+g_2+g_3+g_4)^2$. Consequently, $G_i^2=0$. For $i\neq j$, $G_i\cdot G_j=[g_1g_2g_3g_4]\, g_ig_j(g_1+g_2+g_3+g_4)^2=2$. With respect to the unimodular change of basis $G_1$, $G_2$, $G_1+G_2-G_3$, $-G_1-G_2+G_4$, the Gram matrix becomes $U(2)\perp A_2(-2)$.

To show primitivity, assume $S_{\mathbb Q(\sqrt{-3})}(2)$ is not primitive. There must exist $v \in S_{\mathbb Q(\sqrt{-3})}^\vee(2) \setminus S_{\mathbb Q(\sqrt{-3})}(2)$ such that $v \in \mathrm{Pic}(X_{II})$ and $v^2 \in 2\mathbb{Z}$. A direct computation gives $v \equiv \frac{1}{2}(G_1 + G_2 + G_3 + G_4) \pmod{S_{\mathbb Q(\sqrt{-3})}(2)}$. Set
$$w := \frac{1}{2}G_1 + \frac{1}{2}G_2 - \frac{1}{2}G_3 - \frac{1}{2}G_4.$$
Then $w \in \mathrm{Pic}(X_{II})$ and $w^2 = -2$. By the Riemann-Roch theorem for $K3$ surfaces, either $w$ or $-w$ is an effective class. However, $w \cdot G_1 = -1$ and $(-w) \cdot G_3 = -1$. Because each $G_i$ is a pullback of a globally generated line bundle from the factors, they are nef and must intersect any effective class non-negatively. This geometric contradiction proves $S_{\mathbb Q(\sqrt{-3})}(2)$ is primitive.
\end{proof}

Next, we show that for very general members of these families, the Picard group is generated by the pullbacks of the hyperplane sections.
\begin{theorem}
\label{lem:period-maps-dominant}
For both the Type I and Type II families, the global period map is generically
dominant onto the corresponding lattice-polarized \(K3\) moduli space. Consequently, a
very general Type I surface has Picard lattice \( S_{\mathbb Q(i)}(2) \),
and a very general Type II surface has Picard lattice \( S_{\mathbb Q(\sqrt{-3})}(2) \).
\end{theorem}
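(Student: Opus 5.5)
We prove dominance by a dimension count followed by a comparison of the Picard lattice with $G_I$ and $G_{II}$.

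\textbf{Target.} The lattice-polarized moduli space of $S$-polarized $K3$ surfaces, for $S=S_{\mathbb Q(i)}(2)$ or $S_{\mathbb Q(\sqrt{-3})}(2)$, has dimension $20-\operatorname{rk}S=16$. By Propositions~\ref{prop:type-I-intersection-lattice} and~\ref{prop:type-II-intersection-lattice}, every smooth member of each family carries a primitive embedding $S\hookrightarrow\operatorname{Pic}$. The period map from the (smooth locus of the) parameter space therefore lands in the $16$-dimensional period domain $\Omega_S$ modulo the appropriate arithmetic group.

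\textbf{Parameter count.} For type II, pairs of $(1,1,1,1)$-forms give $\operatorname{Gr}(2,16)$, of dimension $28$. Dividing by $\operatorname{Aut}((\mathbb P^1)^4)^\circ=\operatorname{PGL}_2^4$, of dimension $12$, leaves $16$ moduli. For type I, each factor gives a pencil in $|h+e_i|$, i.e.\ a $2$-plane in $H^0(\mathcal O(1,1))=\mathbb C^{12}$, so each contributes $\dim\operatorname{Gr}(2,12)=20$, for a total of $60$. Dividing by $\operatorname{PGL}_6\times\operatorname{PGL}_2^3$, of dimension $35+9=44$, leaves $16$.

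\textbf{Dominance.} The counts match, so dominance follows once the differential of the period map is injective at one smooth point, i.e.\ the family has finite stabilizers and is locally effective. We verify this infinitesimally. By Griffiths' description, the differential is $H^1(X,T_X)\supset$ the image of the tangent space to the parameter space, obtained via the normal bundle sequence $0\to T_X\to T_{\mathbb P}|_X\to N_{X}\to 0$. The kernel of $H^0(N_X)\to H^1(T_X)$ is the image of $H^0(T_{\mathbb P}|_X)$, which equals $\mathfrak{pgl}$ of the ambient automorphism group. One then checks:
\begin{itemize}
\item $H^0(T_{\mathbb P}|_X)$ equals the Lie algebra of the ambient group, via Koszul/Euler-sequence restriction together with the vanishing $H^0(T_X)=0$;
\item the image in $H^1(T_X)\cong H^{1,1}$ lies in, and fills, the $16$-dimensional subspace orthogonal to $S$.
\end{itemize}
These checks reduce to cohomology of line bundles on complete intersections, computed by Koszul resolutions. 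The key vanishings are, for example, $H^1(\mathcal O_X(-e_i+e_j))$-type groups coming from the Euler sequences of the $\mathbb P^1$-factors. A cleaner alternative is to exhibit one explicit smooth member and identify it, e.g.\ with a degeneration to the Kummer double cover of Section~4.

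\textbf{Main obstacle.} The main obstacle is this injectivity/finite-stabilizer check: showing that no nontrivial ambient infinitesimal automorphism preserves $X$, and that no extra first-order deformations are killed. I would attempt it first through the type II case, where $(\mathbb P^1)^4$ makes the Koszul computation symmetric, and then adapt it to type I.

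\textbf{Conclusion.} Given dominance, the image of the parameter space contains a dense open subset of $\Omega_S/\Gamma$. A very general period in $\Omega_S$ avoids the countably many hyperplanes $\delta^\perp$ for $\delta\notin S$, by the argument of Proposition~\ref{prop:kummer_deformation}, so a very general member of each family has $\operatorname{Pic}=S$. Primitivity, from Propositions~\ref{prop:type-I-intersection-lattice} and~\ref{prop:type-II-intersection-lattice}, guarantees that the equality is with $S$ itself rather than with an overlattice.
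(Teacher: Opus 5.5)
Your route differs from the paper's and it can be completed. As written, though, the only nontrivial step is announced rather than done. You label the Kodaira--Spencer rank the ``main obstacle'' and say how you would attack it, so dominance is not yet proved.

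\textbf{Closing the rank computation.} The vanishings you anticipate (groups like $H^1(\mathcal O_X(-e_i+e_j))$) are not the relevant ones. Only $H^1(\mathcal O_X)=0$, $H^0(T_X)=0$, Riemann--Roch and Koszul vanishing enter.
\begin{itemize}
\item \emph{Type II.} Here $N_X\cong L^{\oplus 2}$ with $L=\mathcal O_X(1,1,1,1)$ ample and $L^2=24$, so $h^0(N_X)=2\cdot 14=28$. Also $T_{\mathbb P}|_X\cong\bigoplus_j\mathcal O_X(2G_j)$. The restricted Euler sequence $0\to\mathcal O_X\to\mathcal O_X(G_j)^{\oplus 2}\to\mathcal O_X(2G_j)\to 0$ gives $h^0(\mathcal O_X(2G_j))=2\cdot 2-1=3$, hence $h^0(T_{\mathbb P}|_X)=12$.
\item \emph{Type I.} Since $H+E_i$ is nef and big with $(H+E_i)^2=16$, we get $h^0(N_X)=3\cdot 2\cdot 10=60$. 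The Euler sequences give $h^0(T_{\mathbb P^5}|_X)=6\cdot 6-1=35$ and $h^0(\mathcal O_X(2E_i))=3$, so $h^0(T_{\mathbb P}|_X)=44$.
\end{itemize}
Since $H^0(T_X)=0$, the image of $H^0(N_X)\to H^1(T_X)$ has dimension $28-12=16$, resp.\ $60-44=16$.

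You also need the tangent map from $\operatorname{Gr}(2,16)$, resp.\ $\operatorname{Gr}(2,12)^3$, to $H^0(N_X)$ to be an isomorphism. Concretely, for each defining degree $D$, $H^0(\mathcal I_X(D))$ must be exactly the given pencil and $H^1(\mathcal I_X(D))=0$. This follows from the same Koszul complex. Apart from the summand $\mathcal O^{\oplus 2}$ that produces the pencil, every twisted summand has an $\mathcal O_{\mathbb P^1}(-1)$ factor or an $\mathcal O_{\mathbb P^5}(-m)$ factor with $1\le m\le 5$.

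The family stays $S$-polarized, so its Kodaira--Spencer image lies in the $16$-dimensional tangent space of $\Omega_S$. The local period map is therefore a submersion onto $\Omega_S$ at every smooth member, which is the open image you need. The Kummer-degeneration ``alternative'' cannot replace this: exhibiting one member says nothing about the rank.

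\textbf{Comparison with the paper.} The paper argues in the opposite direction. It uses the Koszul complex only to get $H^0(\mathbb P,\mathcal O(D))\cong H^0(X,\mathcal O_X(D))$ for the ambient generators $D$. From this, a lattice-polarized surface recovers its ambient model. The period map on the $16$-dimensional quotient of the parameter space therefore has finite generic fibres, and equality of dimensions gives dominance. The rank-$16$ differential is deduced afterwards.

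Your infinitesimal version does not need to justify that the quotient by $\operatorname{PGL}_6\times\operatorname{PGL}_2^3$ (resp.\ $\operatorname{PGL}_2^4$) is $16$-dimensional with finite generic stabilizers. Nor does it need the global fibre argument. It also gives the stronger statement that every smooth member is a submersion point. The price is computing $h^0(N_X)$ and $h^0(T_{\mathbb P}|_X)$ explicitly. Your final Noether--Lefschetz step is the same as the paper's.
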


\begin{proof}
Let \(G_I\) and \(G_{II}\) be the primitive lattices generated by the ambient
divisor classes on the Type I and Type II complete intersections. By
Propositions~\ref{prop:type-I-intersection-lattice} and
\ref{prop:type-II-intersection-lattice}, we have
\[
    G_I \cong S_{\mathbb Q(i)}(2),
    \qquad
    G_{II} \cong S_{\mathbb Q(\sqrt{-3})}(2),
\]
and both lattices have rank \(4\). Hence the corresponding lattice-polarized
\(K3\) moduli spaces have dimension \( 20-4=16 \).

We next compute the dimensions of the two parameter spaces modulo ambient
automorphisms. For Type I, the equations are given by three pencils
\[
    \langle D_{i1},D_{i2}\rangle
    \subset
    H^0\!\left(\mathbb P^5\times(\mathbb P^1)^3,\mathcal O(h+e_i)\right),
    \qquad i=1,2,3.
\]
Each vector space has dimension \(12\), so the parameter space is \( \operatorname{Gr}(2,12)^3 \).
After quotienting by \( \operatorname{PGL}_6\times \operatorname{PGL}_2^3 \),
the dimension is
\[
    3\dim\operatorname{Gr}(2,12)-\dim\operatorname{PGL}_6
    -3\dim\operatorname{PGL}_2
    =
    60-35-9
    =
    16.
\]
For Type II, the equations form a pencil in \( H^0\!\left((\mathbb P^1)^4,\mathcal O(1,1,1,1)\right) \),
which has dimension \(16\). Hence the parameter space is \( \operatorname{Gr}(2,16) \),
and after quotienting by \(\operatorname{PGL}_2^4\) its dimension is
\[
    \dim\operatorname{Gr}(2,16)-4\dim\operatorname{PGL}_2
    =
    28-12
    =
    16.
\]

It remains to show that the period maps have finite generic fibers. We claim
that a general surface in either family reconstructs its ambient complete
intersection model from the polarized \(K3\) surface together with the ambient
divisor classes.

For Type II, let \( X_{II}\subset \mathbb P_{II}:=(\mathbb P^1)^4 \)
be a complete intersection of two divisors of multidegree \((1,1,1,1)\). The
Koszul resolution of \(\mathcal I_{X_{II}}\) is
\[
    0
    \longrightarrow
    \mathcal O(-2,-2,-2,-2)
    \longrightarrow
    \mathcal O(-1,-1,-1,-1)^{\oplus 2}
    \longrightarrow
    \mathcal I_{X_{II}}
    \longrightarrow
    0.
\]
Tensoring by any coordinate divisor \(g_j\) gives line bundles with an
\(\mathcal O_{\mathbb P^1}(-1)\)-factor. Therefore, by the Künneth formula,
 $H^k\!\left(\mathbb P_{II},\mathcal I_{X_{II}}(g_j)\right)=0
    \quad
    \text{for } k=0,1.$
Thus
\[
    H^0\!\left(\mathbb P_{II},\mathcal O_{\mathbb P_{II}}(g_j)\right)
    \xrightarrow{\sim}
    H^0\!\left(X_{II},\mathcal O_{X_{II}}(g_j)\right).
\]
Hence the four pencils \(|g_1|,\ldots,|g_4|\) on \(X_{II}\) recover the map \( X_{II}\longrightarrow (\mathbb P^1)^4 \).
The image is the original complete intersection.

For Type I, let \( X_I\subset \mathbb P_I:=\mathbb P^5\times(\mathbb P^1)^3 \)
be the complete intersection cut out by the rank-six bundle
\[
    \mathcal E
    =
    \bigoplus_{i=1}^3 \mathcal O(h+e_i)^{\oplus 2}.
\]
The Koszul resolution of \(\mathcal I_{X_I}\) has terms \( \bigwedge^a \mathcal E^\vee \).
After tensoring by \(h\) or by one of the \(e_i\), every summand has either an
\(\mathcal O_{\mathbb P^1}(-1)\)-factor or a \(\mathbb P^5\)-factor
\(\mathcal O_{\mathbb P^5}(-m)\) with \(1\leq m\leq 5\), and hence has no
cohomology by the Künneth formula. Therefore
\[
    H^k\!\left(\mathbb P_I,\mathcal I_{X_I}(D)\right)=0
    \qquad
    \text{for } k=0,1
\]
and for \( D\in\{h,e_1,e_2,e_3\} \).
Consequently,
\[
    H^0\!\left(\mathbb P_I,\mathcal O_{\mathbb P_I}(D)\right)
    \xrightarrow{\sim}
    H^0\!\left(X_I,\mathcal O_{X_I}(D)\right)
\]
for \(D=h,e_1,e_2,e_3\). Thus the complete linear systems \( |h|,\ |e_1|,\ |e_2|,\ |e_3| \)
on \(X_I\) reconstruct the ambient embedding \( X_I\longrightarrow \mathbb P^5\times(\mathbb P^1)^3 \).

Thus a very general fiber of the period map is finite: indeed, once the ordered
ambient divisor classes are fixed by the lattice polarization, the above
Koszul reconstruction determines the ambient projective model and the defining
equations up to finitely many choices; forgetting the ordering can only quotient
by a finite group of lattice symmetries.

The source and target of each period map have dimension \(16\). Since the
period maps have finite generic fibers, their images also have dimension
\(16\). Hence the period maps are generically dominant. In particular, the
differential of the period map has rank \(16\) at a general point.

Finally, the locus where the Picard lattice strictly contains the primitive
ambient lattice is a countable union of proper Noether--Lefschetz divisors.
Therefore a very general member has no algebraic classes beyond the ambient
lattice. Hence
\[
    \operatorname{Pic}(X_I)=G_I\cong S_{\mathbb Q(i)}(2),
    \qquad
    \operatorname{Pic}(X_{II})=G_{II}\cong S_{\mathbb Q(\sqrt{-3})}(2).
\]
\end{proof}

We now produce involutions from pairs of genus-one pencils.

\begin{lemma}
\label{lem:degree-two-product-fibrations}
Let $X$ be a smooth projective $K3$ surface, and let $F,G\in \operatorname{NS}(X)$ be primitive, nef, isotropic divisor classes such that $F^2=G^2=0$ and $F\cdot G=2$. Then:
\begin{enumerate}
  \item The complete linear systems $|F|$ and $|G|$ define genus-one fibrations $\varphi_F, \varphi_G \colon X \to \mathbb P^1$.
  \item The product morphism $\Phi = (\varphi_F, \varphi_G) \colon X \longrightarrow \mathbb P^1\times\mathbb P^1$ is generically finite of degree $2$.
  \item The nontrivial deck transformation of $\Phi$ defines an automorphism of $X$ of order two.
\end{enumerate}
\end{lemma}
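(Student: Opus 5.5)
For (1) I will follow the standard argument already invoked in the proof of Proposition~\ref{prop:main_existence}. A primitive nef class $F$ with $F^2=0$ on a $K3$ surface is base-point free, and $|F|$ is a pencil whose general member is a smooth elliptic curve. This is \cite[Chapter~2, Proposition~3.10]{Huy16}; primitivity rules out $|F|=|mF_0|$ with $m\ge2$. By Riemann--Roch, $h^0(F)\ge 2$, and nefness together with $F^2=0$ gives $h^0(F)=2$. Hence $\varphi_F\colon X\to\mathbb P^1$ is a genus-one fibration with connected fibers in the class $F$. The same argument applies to $G$.

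For (2) I will compute the degree of $\Phi$ numerically. The two pullbacks satisfy $\Phi^*\mathcal O(1,0)=F$ and $\Phi^*\mathcal O(0,1)=G$. If $\Phi$ were not dominant, its image would be a curve, and every fiber of $\varphi_F$ would then be contained in a union of fibers of $\varphi_G$, forcing $F\cdot G=0$, which contradicts $F\cdot G=2$. So $\Phi$ is surjective and generically finite, and
\[
\deg\Phi\cdot\bigl(\mathcal O(1,0)\cdot\mathcal O(0,1)\bigr)=F\cdot G=2,
\]
so $\deg\Phi=2$.

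For (3) I will use the Stein factorization $X\xrightarrow{\psi} Y\xrightarrow{\nu}\mathbb P^1\times\mathbb P^1$, where $\nu$ is finite of degree $2$ and $Y$ is normal. A degree-two finite cover of a smooth surface is Galois, since the field extension $\mathbb C(X)/\mathbb C(\mathbb P^1\times\mathbb P^1)$ has degree $2$. Its covering involution therefore gives a birational involution $\iota$ of $X$. Because $X$ is a minimal surface of Kodaira dimension $0$ (a $K3$), every birational self-map of $X$ is biregular, so $\iota\in\Aut(X)$. Finally, $\iota\ne\mathrm{id}$ because it exchanges the two sheets of $\Phi$ over a general point, so $\iota$ has order exactly two.

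I expect the main obstacle to be this extension step. The deck transformation is a priori only defined on the normalization $Y$, or on $X$ away from the curves contracted by $\Phi$. Those curves are exactly the curves orthogonal to both $F$ and $G$, and by the Hodge index theorem they are $(-2)$-curves. The fix is to invoke the uniqueness of minimal models, namely that birational maps between minimal surfaces with nef canonical class are isomorphisms; I will cite this rather than construct the lift by hand. Alternatively, one can note that $X$ is the minimal resolution of $Y$, which has only ADE singularities, so the involution of $Y$ lifts to $X$ by functoriality of minimal resolutions.
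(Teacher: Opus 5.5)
Your proposal is correct and follows essentially the same route as the paper: you cite \cite[Proposition~3.10]{Huy16} for the two pencils, get the degree from the projection formula, and extend the birational deck involution using the fact that birational self-maps of a $K3$ surface are regular. The differences are cosmetic. The paper gets generic finiteness from bigness of $F+G=\Phi^*(H_1+H_2)$ and reads off $\deg\Phi$ from $(F+G)^2=4=2\deg\Phi$, whereas you argue dominance directly and use $F\cdot G=\deg\Phi\,(H_1\cdot H_2)$. You also add a correct Stein-factorization/ADE account of the contracted $(-2)$-curves, which the paper leaves implicit.
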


\begin{proof}
By \cite[Proposition~3.10]{Huy16} a primitive nef divisor $F$ on a $K3$ surface with $F^2=0$ defines a base-point-free genus-one pencil. These define the fibrations $\varphi_F$ and $\varphi_G$. Let $H_1, H_2$ be the ruling classes on $\mathbb P^1\times\mathbb P^1$. Then $\Phi^*H_1=F$ and $\Phi^*H_2=G$, which yields $\Phi^*(H_1+H_2)=F+G$. Since $(F+G)^2 = F^2+2F\cdot G+G^2 = 4 > 0$, the divisor $F+G$ is big, meaning $\Phi$ is generically finite. By the projection formula, $(F+G)^2 = \deg(\Phi)(H_1+H_2)^2$, which implies $4 = 2\deg(\Phi)$, so $\deg(\Phi)=2$. 

The degree-two function field extension $\mathbb C(\mathbb P^1\times\mathbb P^1) \subset \mathbb C(X)$ induces a nontrivial birational deck involution on $X$. Because $X$ is a smooth $K3$ surface, every birational self-map is a regular automorphism.
\end{proof}

\begin{proposition}
\label{prop:natural-involutions-type-I-II}
A very general type I surface admits $12$ natural deck involutions arising from pairs of isotropic classes with intersection $2$. A very general type II surface admits $6$ such natural deck involutions.

Furthermore, each distinguished isotropic class corresponds to
an ideal vertex of the associated regular ideal octahedron, respectively
regular ideal tetrahedron.
\end{proposition}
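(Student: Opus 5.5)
We count primitive isotropic nef classes in the ambient lattices $G_I$ and $G_{II}$ that pair to $2$, apply Lemma~\ref{lem:degree-two-product-fibrations} to each such pair, and then read off the hyperbolic picture.

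\textbf{Type II.} By Theorem~\ref{lem:period-maps-dominant}, a very general $X_{II}$ has $\operatorname{Pic}(X_{II})=G_{II}$, and this lattice contains no $(-2)$-classes. Hence the nef cone equals the closed positive cone, and each $G_i$ is nef, primitive and isotropic. The classes $G_1,\dots,G_4$ satisfy $G_i\cdot G_j=2$ for $i\neq j$. This gives $\binom42=6$ unordered pairs, and Lemma~\ref{lem:degree-two-product-fibrations} attaches to each pair $\{G_i,G_j\}$ a deck involution $\iota_{ij}$ of $\Phi_{ij}=(\varphi_{G_i},\varphi_{G_j})$.

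These six involutions are pairwise distinct. To see this, compute the action of $\iota_{ij}$ on $\operatorname{Pic}$, as in Lemma~\ref{lem:covering-involution-Hermitian-action}. The invariant part of $\operatorname{Pic}_{\mathbb Q}$ is $\Phi_{ij}^*H^2(\mathbb P^1\times\mathbb P^1,\mathbb Q)=\langle G_i,G_j\rangle_{\mathbb Q}$, and $\iota_{ij}^*$ acts by $-1$ on its orthogonal complement. Explicitly, $\iota_{ij}^*$ fixes $G_i,G_j$ and sends
\[
G_k\longmapsto 2(G_i+G_j)-G_k .
\]
Integrality is automatic because the map is induced by an automorphism. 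The invariant sublattices $\langle G_i,G_j\rangle$ are different for different pairs, so the actions differ. By Lemma~\ref{lem-twoelementary}, the action is faithful, since $A_{G_{II}}$ is not $2$-elementary ($U(2)\perp A_2(-2)$ has $6$-torsion). Therefore the six involutions are distinct.

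\textbf{Type I.} We take as distinguished isotropic classes $E_1,E_2,E_3$ together with the three further primitive isotropic classes
\[
E_i' := H-E_j-E_k, \qquad \{i,j,k\}=\{1,2,3\}.
\]
These satisfy
\[
E_i'^2 = 8-8-8+4 = 0,\qquad E_i'\cdot E_i = 4-2-2 = 0,\qquad E_i'\cdot E_j = 4-0-2 = 2\ (j\neq i),\qquad E_i'\cdot E_j' = 8-4-4-4-4+2\cdot 4+\dots
\]
The last product needs to be checked carefully. The target is that $E_i'\cdot E_j'=2$ for $i\neq j$ and that $E_i\cdot E_i'=0$, which should be the same as saying $E_i'=H-E_j-E_k$ is the class of the other ruling of the quadric-surface fibration. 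The resulting configuration is six classes in three antipodal pairs $\{E_i,E_i'\}$, in which every non-antipodal pair meets with intersection $2$. This is exactly the vertex and edge structure of an octahedron: $\binom62-3=12$ edges. Each such class is nef, again because there are no $(-2)$-classes. Lemma~\ref{lem:degree-two-product-fibrations} then gives $12$ deck involutions, and they are distinct by the same invariant-lattice argument. Faithfulness comes from the $4$-torsion in $A_{G_I}$.

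\textbf{Hyperbolic interpretation.} The projectivized positive cone is $\mathbb H^3$, and isotropic rays correspond to ideal points. We identify $G_{II}\cong S_{\mathbb Q(\sqrt{-3})}(2)$ (respectively $G_I\cong S_{\mathbb Q(i)}(2)$) with the Hermitian model, taking the basis change from Propositions~\ref{prop:type-II-intersection-lattice} and~\ref{prop:type-I-intersection-lattice}. Under this identification, the distinguished classes become rank-one Hermitian matrices $vv^*$ for $v$ running over $(1,0),(0,1),(1,1),(1,\omega)$, up to scaling (respectively $(1,0),(0,1),(1,\pm1),(1,\pm i)$). These are the ideal points $\infty,0,1,\dots$ of the standard regular ideal tetrahedron (respectively octahedron) in $\mathbb H^3$. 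Isometric ideal polyhedra are determined by their vertex sets. The equal pairings $2$ show that all vertex pairs are at equal "horoball distance," which, combined with the counts $4$ and $6$, yields regularity.

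The main obstacle is the Type I bookkeeping. One must find the correct three extra isotropic classes, check that their pairwise intersections are exactly $2$, and confirm that no other primitive isotropic class of the form considered is "natural." I would first try the candidates $H-E_j-E_k$ from the two rulings of each quadric fiber. If the intersection numbers fail, I would instead solve $x^2=0$, $x\cdot E_j=2$ directly in the basis $U(2)\perp\langle-4\rangle^2$.
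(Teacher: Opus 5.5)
\textbf{Type II is fine; Type I has a genuine gap.} Your Type II half is the paper's argument: the six pairs $\{G_i,G_j\}$ with $G_i\cdot G_j=2$, nefness from the absence of $(-2)$-classes, and Lemma~\ref{lem:degree-two-product-fibrations}. Your invariant-sublattice distinctness check is a harmless addition, and it does not need faithfulness.

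The gap is in Type I: your extra classes are not isotropic. From $H^2=8$, $H\cdot E_i=4$ and $E_j\cdot E_k=2$ you get $(H-E_j-E_k)^2=8-8-8+4=-4$, not $0$. These are $(-4)$-classes, like $E_1+E_2-E_3$ and $H-E_1-E_2-E_3$ in the paper's basis change. So they define no genus-one pencil, and Lemma~\ref{lem:degree-two-product-fibrations} does not apply. The configuration you are aiming for also cannot exist. Two non-proportional isotropic classes in the closure of the same component of the positive cone of a signature-$(1,3)$ lattice have strictly positive product. Hence no antipodal pair can satisfy $E_i\cdot E_i'=0$.

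\textbf{The missing idea: the residual ruling is $H-E_i$.} Eliminating the $\mathbb P^1_i$-coordinate from $D_{i1},D_{i2}$ shows that $X_I$ lies on a rank-four quadric $Q_i\subset\mathbb P^5$. A hyperplane through a $\mathbb P^3$ of one family on $Q_i$ meets $Q_i$ in that $\mathbb P^3$ plus one $\mathbb P^3$ of the other family. So on $X_I$ the two rulings have classes $E_i$ and $H-E_i$. One checks:
\begin{itemize}
\item $(H-E_i)^2=0$;
\item $(H-E_i)\cdot E_j=(H-E_i)\cdot(H-E_j)=2$ for $j\neq i$;
\item $(H-E_i)\cdot E_i=4$.
\end{itemize}
Thus the non-edges are the three pairs $\{E_i,H-E_i\}$, with product $4$, and the other $15-3=12$ pairs have product $2$. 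This is exactly what the paper reads off from the Gram matrix.

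Your fallback of solving $x^2=0$, $x\cdot E_j=2$ would not single these classes out. For instance, $x^2=0$, $x\cdot E_1=x\cdot E_2=2$ has four nef solutions: $E_3$, $H-E_3$, $2E_1+2E_2-E_3$ and $2E_1+2E_2+E_3-H$. The last two are the images of the first two under the deck involution of the edge $\{E_1,E_2\}$. So the word ``natural'' has to come from the ambient model, not from numerics.

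\textbf{Secondary errors in the Hermitian vertex lists.} The pairing of rank-one classes in $S_K(2)$ is $2|\det(v,w)|^2$.
\begin{itemize}
\item Eisenstein case: $(1,1)$ and $(1,\omega)$ pair to $2|\omega-1|^2=6$, not $2$.
\item Gaussian case: $(1,1)$ and $(1,-1)$ pair to $8$. Since isometries preserve primitivity, these primitive classes cannot be images of classes whose pairwise products are only $2$ and $4$.
\end{itemize}
This step is unnecessary anyway. The paper obtains the regular ideal octahedron and tetrahedron directly from the intersection graph, namely $K_6$ minus a perfect matching and $K_4$ respectively, with equal edge products.
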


\begin{proof}
For type I, Proposition~\ref{prop:type-I-intersection-lattice} gives six
distinguished primitive isotropic classes
$E_1,E_2,E_3,H-E_1,H-E_2,H-E_3.$
They lie on the boundary of the positive cone, since each has square zero.
Equivalently, after projectivizing the positive cone, their rays determine
ideal boundary points of the associated hyperbolic space
$\mathbb H(S_I)
    =
    \{x\in S_I\otimes \mathbb R \mid x^2>0\}/\mathbb R_{>0}.$ The intersection table from Proposition~\ref{prop:type-I-intersection-lattice}
shows that, among these six classes, the three pairs
$\{E_i,H-E_i\}, i=1,2,3,$
are the only non-adjacent pairs, while every other pair has intersection
number \(2\). Therefore the graph whose vertices are the six isotropic rays
and whose edges are the pairs with intersection \(2\) is the complete graph on
six vertices with three disjoint edges removed. This is precisely the
one-skeleton of an octahedron, with opposite vertices given by $E_i$ \text{and} $H-E_i.$
Thus the six natural isotropic rays are naturally identified with the six
ideal vertices of an ideal octahedron. Since all edge-pairs have the same
intersection number, this is the regular ideal octahedron in the hyperbolic
model determined by \(S_I\). Hence the \(12\) pairs with intersection \(2\)
are exactly the \(12\) edges of this ideal regular octahedron.

For type II, the four ambient classes
$G_1,G_2,G_3,G_4$ satisfy
$G_i^2=0,\qquad G_i\cdot G_j=2 \quad (i\neq j).$
Thus their projective rays are four ideal boundary points of
\(\mathbb H(S_{II})\). Since every pair of distinct vertices has the same
intersection number, the complete graph on these four isotropic rays is the
one-skeleton of a regular ideal tetrahedron. Consequently the classes
\(G_1,G_2,G_3,G_4\) are naturally identified with the four ideal vertices of
this tetrahedron, and the six pairs \(\{G_i,G_j\}\) with \(i\neq j\) are exactly
its six edges.

Finally, by Lemma~\ref{lem:degree-two-product-fibrations}, every adjacent
pair of primitive nef isotropic classes \(F,G\) with \(F\cdot G=2\) gives a
degree-two morphism
$X \longrightarrow \mathbb P^1\times \mathbb P^1$ and hence a deck involution. Therefore the type I surface has one natural
deck involution for each edge of the ideal octahedron, namely \(12\), and the
type II surface has one natural deck involution for each edge of the ideal
tetrahedron, namely \(6\).
\end{proof}

We now bridge the deck involution with a purely algebraic edge half-turn on the lattice.

\begin{lemma}
\label{lem:deck-involution-edge-isometry}
Let $X$ be a smooth K3 surface with Picard rank $\rho(X)=4$, and let $f \colon X \longrightarrow \mathbb{P}^1 \times \mathbb{P}^1$ be a finite degree-two cover with deck involution $\iota$. Let $\alpha=f^*H_1$ and $\beta=f^*H_2$ be the pullbacks of the two ruling classes of $\mathbb{P}^1 \times \mathbb{P}^1$. Then $\alpha$ and $\beta$ are primitive isotropic classes satisfying $(\alpha,\beta)=2$, and the induced action of $\iota^*$ on $\operatorname{NS}(X)$ is an involutive isometry given by the formula:
$$ \iota^*(x) = -x + (x,\beta)\alpha + (x,\alpha)\beta $$
\end{lemma}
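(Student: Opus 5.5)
The plan is to determine $\iota^*$ by splitting $\operatorname{NS}(X)_{\mathbb Q}$ into its $\pm1$-eigenspaces, identifying the $+1$-eigenspace with $\langle\alpha,\beta\rangle_{\mathbb Q}$, and then checking that the displayed formula is exactly the reflection-type map that is the identity there and $-1$ on the complement.

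First, the numerical facts. Since $H_i^2=0$ and $H_1\cdot H_2=1$ on $\mathbb P^1\times\mathbb P^1$, the projection formula for the finite degree-two map $f$ gives
\[
\alpha^2=2H_1^2=0,\qquad \beta^2=0,\qquad (\alpha,\beta)=2H_1\cdot H_2=2.
\]
For primitivity, $\alpha$ is the class of a fiber of the composite $X\to\mathbb P^1\times\mathbb P^1\to\mathbb P^1$. Its Stein factorization is a genus-one fibration whose fiber class $F$ satisfies $\alpha=kF$. If $k\ge2$, then $(\alpha,\beta)=2$ forces $k=2$ and $(F,\beta)=1$. In that case the other projection restricted to a fiber $F$ has degree one onto $\mathbb P^1$, which is impossible for a genus-one curve. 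Hence $k=1$, and the same argument applies to $\beta$.

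Next, the eigenspace decomposition. Exactly as in Lemma~\ref{lem:covering-involution-Hermitian-action}, the invariant part satisfies $H^2(X,\mathbb Q)^{\iota^*}=f^*H^2(\mathbb P^1\times\mathbb P^1,\mathbb Q)$, because $X/\iota\cong\mathbb P^1\times\mathbb P^1$ and rational cohomology of the quotient is the invariant cohomology. It follows that $\operatorname{NS}(X)_{\mathbb Q}^{\iota^*}=\langle\alpha,\beta\rangle_{\mathbb Q}$. Since $\iota^*$ is an involutive isometry, $\operatorname{NS}(X)_{\mathbb Q}$ splits orthogonally into its $\pm1$-eigenspaces, so $\iota^*$ acts by $-1$ on $\langle\alpha,\beta\rangle^\perp$. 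The hypothesis $\rho=4$ only fixes the dimensions here: rank $2$ for each eigenspace.

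Then verify the formula. Define $\phi(x)=-x+(x,\beta)\alpha+(x,\alpha)\beta$. For $x=\alpha$ this gives $-\alpha+2\alpha+0=\alpha$, and likewise $\phi(\beta)=\beta$. For $x\perp\alpha,\beta$ it gives $\phi(x)=-x$. So $\phi$ and $\iota^*$ agree on a rational basis, and therefore $\iota^*=\phi$. That $\phi$ is an integral isometry and an involution then follows automatically, because it coincides with $\iota^*$. One can also see integrality directly, since the coefficients $(x,\beta)$ and $(x,\alpha)$ are integers.

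The main obstacle is the invariant-cohomology identification for the quotient. This needs $f$ to be finite, or at least that $\iota$ is the deck transformation with quotient exactly $\mathbb P^1\times\mathbb P^1$. Finiteness is assumed in the statement, so the standard transfer argument applies. The primitivity step is the only other point requiring care.
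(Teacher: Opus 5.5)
Your proposal is correct and follows essentially the same route as the paper. Both arguments identify $\operatorname{NS}(X)_{\mathbb Q}^{\iota^*}$ with $\langle\alpha,\beta\rangle_{\mathbb Q}$ via the quotient (the paper uses the transfer identities $f_*f^*=2\,\mathrm{id}$ and $f^*f_*=\mathrm{id}+\iota^*$), deduce that $\iota^*=-\mathrm{id}$ on the orthogonal complement, and check that the displayed formula has the same eigenspace behaviour. You also verify isotropy, $(\alpha,\beta)=2$ and primitivity, which the paper only asserts; in the primitivity step you tacitly use the standard fact that the fiber class of a genus-one fibration on a K3 surface is primitive, which is what turns $k=1$ into primitivity of $\alpha$.
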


\begin{proof}
First, we determine the $\iota$-invariant subspace of the N\'eron--Severi group. For a finite morphism $f$ of degree two, the pullback and pushforward operators satisfy $f_*f^* = 2\operatorname{id}$ on $H^2(\mathbb{P}^1\times\mathbb{P}^1,\mathbb{Q})$ and $f^*f_* = \operatorname{id} + \iota^*$ on $H^2(X,\mathbb{Q})$. If a class $x \in H^2(X,\mathbb{Q})$ is $\iota$-invariant ($x = \iota^*x$), then:
$$ x = \frac{1}{2}(x + \iota^*x) = \frac{1}{2}f^*f_*x $$
Thus, every $\iota$-invariant cohomology class is pulled back from $\mathbb{P}^1\times\mathbb{P}^1$. Since the reverse inclusion is immediate (as $\iota$ acts trivially on the quotient), and $H^2(\mathbb{P}^1\times \mathbb{P}^1,\mathbb{Q}) = \mathbb{Q} H_1 \oplus \mathbb{Q} H_2$ where both classes are algebraic, we obtain:
$$ \operatorname{NS}(X)_{\mathbb{Q}}^{\iota} = f^*\operatorname{NS}(\mathbb{P}^1\times \mathbb{P}^1)_{\mathbb{Q}} = \mathbb{Q}\alpha \oplus \mathbb{Q}\beta $$

Next, consider the purely algebraic map defined by $\phi(x) := -x + (x,\beta)\alpha + (x,\alpha)\beta$. Direct calculation shows that $\phi(\alpha) = \alpha$ and $\phi(\beta) = \beta$. For any $x$ in the orthogonal complement $(\mathbb{Q}\alpha \oplus \mathbb{Q}\beta)^\perp$, we have $(x,\alpha) = (x,\beta) = 0$, meaning $\phi(x) = -x$. Since this gives an orthogonal direct-sum decomposition and $\phi$ preserves the intersection form on each summand, $\phi$ is an involutive isometry.

Finally, we characterize the geometric involution $\iota^*$ on $\operatorname{NS}(X)_{\mathbb{Q}}$. Because $\rho(X)=4$, the orthogonal complement $K := (\mathbb{Q}\alpha \oplus \mathbb{Q}\beta)^\perp$ has dimension $2$. The involution $\iota^*$ preserves $K$. Since we established that the invariant subspace $\operatorname{NS}(X)_{\mathbb{Q}}^{\iota}$ is exactly $\mathbb{Q}\alpha \oplus \mathbb{Q}\beta$, $\iota^*$ possesses no non-zero invariant vectors in $K$, so that it must act as $-\mathrm{id}_K$. Therefore, $\iota^*$ acts as the identity on $\mathbb{Q}\alpha \oplus \mathbb{Q}\beta$ and as minus the identity on $K$, 
which agrees with the behavior of the algebraic map $\phi$. We conclude that $\iota^*(x) = -x + (x,\beta)\alpha + (x,\alpha)\beta$. Under the hyperbolic realization of the positive cone, this involutive isometry corresponds exactly to the hyperbolic half-turn about the geodesic joining the two ideal points $[\alpha]$ and $[\beta]$.
\end{proof}

We now show that, in each case, these involutions generate the automorphism group $\Bi_K(2)$. We do this by comparing arithmetic indices and hyperbolic covolumes. By Lemma~\ref{lem:deck-involution-edge-isometry}, each natural deck involution associated
with a pair of ambient isotropic classes is the corresponding edge half-turn
in the hyperbolic model of the positive cone.
\subsection{$K=\mathbb Q(i)$}
Let $O\subset \mathbb H^3$ be a regular ideal octahedron. For each face $F$ of $O$, let $s_F$ denote the hyperbolic reflection in the totally geodesic plane containing $F$. Define $W_O := \langle s_F \mid F \text{ is a face of } O\rangle$. The reflections in the faces of a regular ideal octahedron generate the
Coxeter reflection group of the regular ideal octahedral tessellation of
\(\mathbb{H}^3\).

\begin{lemma}
\label{lem:edge-half turn}
Let $F_1$ and $F_2$ be two faces of $O$ meeting along an edge $e$. Then $s_{F_1}s_{F_2}=\iota_e$, where $\iota_e$ is the edge half-turn about $e$.
\end{lemma}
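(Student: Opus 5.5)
I would prove this by a direct computation with the linear parts of the two reflections, which avoids any appeal to the global geometry of the octahedral tessellation. Work in the hyperboloid model, or equivalently in the Minkowski space $V=\mathbb R^{1,3}$ whose projectivized positive cone is $\mathbb H^3$. Each face plane is then cut out by a spacelike vector, and the reflection is an explicit linear map:
\[
s_{F_k}(x)=x-2\frac{(x,n_k)}{(n_k,n_k)}\,n_k,\qquad k=1,2,
\]
where $n_k$ is a unit normal of the plane containing $F_k$.

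The key steps, in order, are the following.

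First, I observe that the composition $s_{F_1}s_{F_2}$ fixes pointwise the two-dimensional subspace $W=\langle n_1,n_2\rangle^\perp$. This subspace is the span of the two ideal endpoints of the edge $e$, because both face planes contain $e$. Since $W$ contains two independent isotropic vectors (the endpoints of $e$), it is a Lorentzian plane, so $W^\perp=\langle n_1,n_2\rangle$ is negative definite, i.e. a positive definite plane in the spacelike convention.

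Second, I compute the action on $W^\perp$. There, $s_{F_1}$ and $s_{F_2}$ are Euclidean reflections in lines, so their product is a rotation by twice the angle between the two lines. That angle is the dihedral angle $\theta$ of $O$ along $e$, or its supplement, depending on the orientation chosen for the normals.

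Third, I input the one geometric fact needed: every dihedral angle of the regular ideal octahedron equals $\pi/2$. I would justify this via the link of an ideal vertex. That link is a Euclidean square, since four faces meet at each vertex and regularity makes the link a regular quadrilateral. The dihedral angles at the four edges through the vertex are therefore the angles of a Euclidean square, namely $\pi/2$. Equivalently, the normals of adjacent faces are orthogonal, $(n_1,n_2)=0$.

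Fourth, I conclude. With $(n_1,n_2)=0$, the two reflections commute and their product acts as $-\mathrm{id}$ on $W^\perp$ and as $\mathrm{id}$ on $W$. This is precisely the half-turn about the geodesic joining the endpoints of $e$, and it matches the formula of Lemma~\ref{lem:deck-involution-edge-isometry} once $W$ is spanned by the isotropic vectors $\alpha,\beta$.

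The main obstacle is the right-angle claim, which the argument must pin down correctly. A sloppy count (for example, conflating the angle with that of the ideal tetrahedron, $\pi/3$) would give a rotation of order $3$ instead of a half-turn. I would therefore verify it concretely. In the upper half-space model, place a vertex at $\infty$. The four faces through $\infty$ are then vertical planes over the sides of a square, say $x=0$, $x=1$, $y=0$, $y=1$, and adjacent ones visibly meet at right angles. The regular ideal octahedron has vertices $\infty,0,1,i,1+i$ and the center of the square, with symmetry transitive on edges, so this one check covers every edge.
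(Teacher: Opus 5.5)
Your proposal is correct and follows essentially the same route as the paper. Both arguments use two facts: the product of reflections in two planes meeting at angle $\theta$ is a rotation by $2\theta$ about their common line, and the regular ideal octahedron has dihedral angle $\pi/2$. The only difference is that you justify both ingredients, via the linear-algebra computation on $\langle n_1,n_2\rangle$ and the vertex-link and upper-half-space check of the right angle, whereas the paper simply asserts them.
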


\begin{proof}
A regular ideal octahedron has dihedral angles of $\pi/2$. In hyperbolic space, the product of two reflections in totally geodesic planes meeting at angle $\theta$ is the elliptic rotation of angle $2\theta$ about their intersection. With $\theta=\pi/2$, $s_{F_1}s_{F_2}$ is the rotation of angle $\pi$ about $e$, which is precisely $\iota_e$.
\end{proof}

The involutions $\iota_{\alpha,\beta}$ from the previous section are precisely the edge half-turns of $O$. This description is encoded by the sign homomorphism $\varepsilon_O:W_O\to \{\pm 1\}$ defined by $\varepsilon_O(s_F)=-1$. We define the even Coxeter subgroup $W_O^+ := \ker(\varepsilon_O)$. Thus $W_O^+$ consists of elements expressible as products of an even number of face reflections. 

\begin{lemma}
\label{lem:octahedron-generation}
The even Coxeter subgroup $W_O^+$ is generated by the edge half-turns of the regular ideal octahedron $O$.
\end{lemma}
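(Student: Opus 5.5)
Since $W_O^+$ is by definition the kernel of the sign character on the Coxeter group $W_O=\langle s_F\rangle$, it is generated by the products $s_{F}s_{F'}$ of pairs of face reflections. Indeed, any even word $s_{F_1}s_{F_2}\cdots s_{F_{2k-1}}s_{F_{2k}}$ is a product of the $k$ elements $s_{F_{2j-1}}s_{F_{2j}}$. So it suffices to show that every product $s_Fs_{F'}$ of two face reflections lies in the subgroup $H$ generated by the edge half-turns $\iota_e$. By Lemma~\ref{lem:edge-half turn}, whenever $F$ and $F'$ meet along an edge $e$ we already have $s_Fs_{F'}=\iota_e\in H$.

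For a general pair of faces, I would use the connectivity of the face-adjacency graph of the octahedron. This is the graph whose vertices are the eight faces, with two faces joined when they share an edge; it is the $1$-skeleton of the cube, which is connected. Given faces $F,F'$, choose a chain $F=F_0,F_1,\dots,F_r=F'$ of consecutive adjacent faces. Using $s_{F_i}^2=1$, telescope:
\[
s_Fs_{F'}=(s_{F_0}s_{F_1})(s_{F_1}s_{F_2})\cdots(s_{F_{r-1}}s_{F_r}).
\]
Each factor is the half-turn about the common edge of $F_{i-1}$ and $F_i$, again by Lemma~\ref{lem:edge-half turn}. Hence $s_Fs_{F'}\in H$.

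It follows that $W_O^+\subseteq H$. The reverse inclusion holds because each $\iota_e=s_{F_1}s_{F_2}$ is a product of two reflections, hence lies in $\ker\varepsilon_O$. Note that $\varepsilon_O$ is well defined, since all Coxeter relations $(s_Fs_{F'})^{m}=1$ have even length. This gives $W_O^+=H$.

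The only step needing care is the telescoping identity, which relies on $s_{F_i}^2=1$, together with the connectedness of the dual cube graph. Neither is a real obstacle, so the argument is short and purely combinatorial.
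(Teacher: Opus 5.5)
Your proof is correct and follows the same route as the paper: you write $W_O^+$ as generated by products of two face reflections, use connectivity of the face-adjacency graph to reduce to adjacent pairs, and identify those products with edge half-turns via Lemma~\ref{lem:edge-half turn}. Beyond the paper's brief argument, you also write out the telescoping identity, the reverse inclusion, and the well-definedness of $\varepsilon_O$, which the paper leaves implicit.
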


\begin{proof}
By definition, $W_O^+$ is generated by products of two face reflections. Since the face-adjacency graph of the octahedron is connected, it suffices to take products of reflections in adjacent faces. By Lemma~\ref{lem:edge-half turn}, the product of reflections in two adjacent faces is precisely the half-turn about their common edge.
\end{proof}

\begin{proposition}
\label{prop:even-octahedral-domain}
Choose a face $F_0$ of $O$, and let $s_{F_0}$ be the reflection in the plane containing $F_0$. The set $D := O\cup s_{F_0}(O)$ is a fundamental domain for the action of $W_O^+$ on $\mathbb H^3$.
\end{proposition}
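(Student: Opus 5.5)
The plan is to use the index-two subgroup structure: $W_O^+=\ker(\varepsilon_O)$ has index $2$ in $W_O$, because $\varepsilon_O$ is a well-defined surjective homomorphism. It is well defined since the Coxeter relations $s_F^2=1$ and $(s_Fs_{F'})^{m}=1$ have even length, and it is onto because $\varepsilon_O(s_{F_0})=-1$. The regular ideal octahedron $O$ is a fundamental domain for the Coxeter group $W_O$; this is Poincaré's polyhedron theorem for a finite-volume polyhedron with dihedral angles $\pi/2$, or it can be taken as the standard fact already invoked in the paper that the face reflections generate the group of the octahedral tessellation. The general principle is then that if $P$ is a fundamental domain for $W$ and $H\le W$ has coset representatives $g_1,\dots,g_k$, then $\bigcup_i g_i^{-1}P$ (equivalently $\bigcup g_iP$, with the appropriate convention) is a fundamental domain for $H$. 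Since $W_O=W_O^+\sqcup W_O^+s_{F_0}$ and $s_{F_0}^{-1}=s_{F_0}$, this yields $D=O\cup s_{F_0}(O)$.

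I will carry out the argument in the following order. First, I record that the $W_O$-translates of $O$ tile $\mathbb H^3$: they cover it, and distinct translates have disjoint interiors. For covering, let $x\in\mathbb H^3$ and write $x=w(y)$ with $y\in O$ and $w\in W_O$. If $w\in W_O^+$, then $x\in w(O)\subset W_O^+\cdot D$. Otherwise $w=w's_{F_0}$ with $w'\in W_O^+$, so $x\in w'(s_{F_0}O)\subset W_O^+\cdot D$. For disjointness of interiors, suppose $g\in W_O^+\setminus\{1\}$ and $g(\operatorname{int}D)\cap\operatorname{int}D\neq\emptyset$. The interior of $D$ is the union of the interiors of $O$ and $s_{F_0}O$ together with the open face $F_0$, so we obtain $ga(\operatorname{int}O)\cap b(\operatorname{int}O)\neq\emptyset$ for some $a,b\in\{1,s_{F_0}\}$. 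The case where a point lies on the interior of $F_0$ can be reduced to this one by perturbing slightly into an open chamber. The tiling property then forces $ga=b$, so $g=ba^{-1}\in\{1,s_{F_0}\}$. But $g$ is nontrivial and lies in $W_O^+$, whereas $s_{F_0}\notin W_O^+$ because $\varepsilon_O(s_{F_0})=-1$; this is a contradiction.

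The main point requiring care is the claim that $O$ is a strict fundamental domain for $W_O$, in particular that $W_O$ acts simply transitively on the chambers, so that no nontrivial element of $W_O$ fixes $O$. I would invoke Poincaré's polyhedron theorem, or Vinberg's theory of hyperbolic reflection groups. All dihedral angles equal $\pi/2=\pi/m$ with $m=2$, and the ideal vertices are cusps with Euclidean vertex links that are squares with right angles, so the hypotheses hold. The theorem then gives that $W_O$ is discrete with fundamental polyhedron $O$ and that the stabilizer of $\operatorname{int}O$ is trivial. Beyond that point, the rest of the argument is the formal coset argument above, with the only extra check being that $\varepsilon_O$ is well defined on $W_O$.
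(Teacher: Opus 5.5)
Your proposal is correct and follows the same route as the paper. Covering comes from the coset decomposition $W_O=W_O^+\sqcup W_O^+s_{F_0}$, and disjointness of interiors comes from the tiling property of $O$ under $W_O$ together with $s_{F_0}\notin W_O^+$. You additionally justify two points the paper takes for granted: that $O$ is a fundamental domain for $W_O$ (via Poincar\'e's polyhedron theorem), and that $\varepsilon_O$ is well defined.
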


\begin{proof}
Since $W_O^+$ has index two in $W_O$ with coset decomposition $W_O = W_O^+ \sqcup W_O^+ s_{F_0}$, we can regroup the tiling as
$$ \mathbb H^3 = \bigcup_{g\in W_O^+} g \left(O \cup s_{F_0}(O)\right) = \bigcup_{g\in W_O^+} g(D). $$
To see that distinct translates have disjoint interiors, suppose $\operatorname{Int}(gD) \cap \operatorname{Int}(hD) \neq \varnothing$ for some $g,h \in W_O^+$. This implies $\operatorname{Int}(g\delta O) \cap \operatorname{Int}(h\delta' O) \neq \varnothing$ for some $\delta, \delta' \in \{1, s_{F_0}\}$. Since $O$ is a fundamental domain for $W_O$, we must have $g\delta = h\delta'$, or $h^{-1}g = \delta' \delta^{-1}$. The left-hand side lies in $W_O^+$, but $s_{F_0} \notin W_O^+$. Thus $\delta'\delta^{-1}$ cannot be $s_{F_0}$, forcing $\delta'\delta^{-1} = 1$ and therefore $g = h$.
\end{proof}

\begin{proposition}
\label{thm:gaussian-level-2-quotient}
Let $G_{\mathrm{deck}} \subseteq \Bi_{\mathbb Q(i)}(2)$ denote the subgroup generated by the natural deck involutions. Then $G_{\mathrm{deck}} = \Bi_{\mathbb Q(i)}(2)$.
\end{proposition}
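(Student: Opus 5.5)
The plan is a comparison of covolumes (equivalently, arithmetic indices) between two groups, one contained in the other. By Lemma~\ref{lem:deck-involution-edge-isometry} and Proposition~\ref{prop:natural-involutions-type-I-II}, the $12$ natural deck involutions of a very general type I surface act on $\mathbb H(S_I)$ as the edge half-turns of the regular ideal octahedron $O$ whose vertices are the rays of $E_1,E_2,E_3,H-E_1,H-E_2,H-E_3$. By Lemma~\ref{lem:octahedron-generation}, these generate $W_O^+$, so
\[
G_{\mathrm{deck}}=W_O^+ .
\]
Each deck involution is induced by an automorphism. By Theorem~\ref{realization} together with Proposition~\ref{cor:Bi-vs-PGamma-2} (for $K=\mathbb Q(i)$ the unit reduction is surjective, cf.\ Lemma~\ref{lem:scalar-defect}), we have $\Aut(X_I)\cong P\Gamma_{\mathbb Q(i)}(2)=\Bi_{\mathbb Q(i)}(2)$. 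Hence $W_O^+\subseteq \Bi_{\mathbb Q(i)}(2)$, and it remains to prove equality.

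\textbf{Step 1: covolume of $W_O^+$.} By Proposition~\ref{prop:even-octahedral-domain}, $D=O\cup s_{F_0}(O)$ is a fundamental domain for $W_O^+$. Therefore
\[
\operatorname{vol}(\mathbb H^3/W_O^+)=2\operatorname{vol}(O)=16\,\Lambda(\pi/4)\approx 7.3277,
\]
using the standard formula $\operatorname{vol}(O)=8\Lambda(\pi/4)\approx 3.6638$.

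\textbf{Step 2: covolume of $\Bi_{\mathbb Q(i)}(2)$.} I would use Humbert's formula,
\[
\operatorname{vol}\bigl(\mathbb H^3/\operatorname{PSL}_2(\mathbb Z[i])\bigr)=\frac{|D|^{3/2}}{4\pi^2}\zeta_K(2)\approx 0.3053,
\]
which equals $\operatorname{vol}(O)/12$. Then I compute indices: $[\operatorname{PGL}_2(\mathbb Z[i]):\operatorname{PSL}_2(\mathbb Z[i])]=2$, since $\det$ takes values in $\mathcal O_K^{\times}/(\mathcal O_K^{\times})^2=\{\pm1,\pm i\}/\{\pm1\}$. Next, $[\operatorname{PGL}_2(\mathcal O_K):\Bi_K(2)]=|\operatorname{PGL}_2(R_2)|$ where $R_2=\mathbb F_2[\epsilon]/(\epsilon^2)$, provided the reduction map $\operatorname{GL}_2(\mathcal O_K)\to\operatorname{GL}_2(R_2)$ is surjective. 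Surjectivity on $\operatorname{SL}_2$ holds by strong approximation, and the determinant surjects onto $R_2^\times=\{1,i\}$ since $i$ reduces to the nontrivial unit.

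We have $|\operatorname{GL}_2(R_2)|=16\cdot|\operatorname{GL}_2(\mathbb F_2)|=96$ and $|R_2^\times|=2$, so $|\operatorname{PGL}_2(R_2)|=48$. Since $\operatorname{PGL}_2$ acts with the same covolume normalization modulo the orientation convention, this gives
\[
\operatorname{vol}(\mathbb H^3/\Bi_{\mathbb Q(i)}(2))=\frac{\operatorname{vol}(O)}{12}\cdot\frac{48}{2}=2\operatorname{vol}(O).
\]

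\textbf{Step 3: conclusion.} The inclusion $W_O^+\subseteq\Bi_{\mathbb Q(i)}(2)$ is of finite index, and its index equals the ratio of covolumes, which is $1$. Hence $G_{\mathrm{deck}}=\Bi_{\mathbb Q(i)}(2)$.

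\textbf{Main obstacle.} The hard part is the bookkeeping in Step 2, for two reasons. First, one must check that $\Bi_{\mathbb Q(i)}(2)$ acts effectively on $\mathbb H^3$, which holds because scalars are quotiented out. Second, the orientation-reversing part must be handled correctly: $W_O^+$ is orientation preserving, and $\operatorname{PGL}_2(\mathcal O_K)$ acting via $M\mapsto |\det M|^{-1}MAM^*$ is also orientation preserving, with $\operatorname{PSL}_2$ of index $2$ inside it. The factor $2$ must therefore be counted exactly once.

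If the volume bookkeeping proves delicate, I would fall back on a direct check. One verifies that $D$ contains a fundamental domain of $\Bi_{\mathbb Q(i)}(2)$ by showing that the stabilizer of $D$ in $\Bi_{\mathbb Q(i)}(2)$ is trivial. Concretely, an element preserving $O$ permutes its six vertices; by Corollary~\ref{Bianchicontainment} it acts trivially modulo $2$, and combined with the cusp count $\#\mathbb P^1(R_2)=6$ from Proposition~\ref{prop:cusp_counting}, this should force it to fix all six vertices and hence be trivial.
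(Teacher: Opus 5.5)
Your proposal is correct and follows essentially the same route as the paper. You identify $G_{\mathrm{deck}}$ with $W_O^+$, use the fundamental domain $D=O\cup s_{F_0}(O)$ to get covolume $2\Vol(O)$, and match this against $\Bi_{\mathbb Q(i)}(2)$ via $|\PGL_2(\mathbb Z[i]/2)|=48$ and $\Vol(\Bi_{\mathbb Q(i)}\backslash\mathbb H^3)=\Vol(O)/24$. The only difference is that you derive this last covolume from Humbert's formula and $[\PGL_2(\mathbb Z[i]):\operatorname{PSL}_2(\mathbb Z[i])]=2$ rather than citing it, and your fallback argument is not needed.
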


\begin{proof}
By Lemma~\ref{lem:octahedron-generation}, we identify $G_{\mathrm{deck}}$ with $W_O^+$. We will prove equality by comparing the arithmetic index $[\Bi_{\mathbb Q(i)}:\Bi_{\mathbb Q(i)}(2)]$ with the geometric index $[\Bi_{\mathbb Q(i)}:G_{\mathrm{deck}}]$ computed from the octahedral covolume. 

Let $R_i:=\mathbb Z[i]/(2).$
Since $(2)=-(1+i)^2,$
we have $R_i\cong \mathbb F_2[\epsilon]/(\epsilon^2), \qquad \epsilon=1+i.$
The reduction map $\mathrm{GL}_2(\mathbb Z[i])\longrightarrow \mathrm{GL}_2(R_i)$
is surjective. Hence $\Bi_{\mathbb Q(i)}/\Bi_{\mathbb Q(i)}(2) \cong \mathrm{PGL}_2(R_i).$
The quotient map $R_i\to \mathbb F_2$ induces a homomorphism
$$ \mathrm{PGL}_2(R_i)\longrightarrow \mathrm{PGL}_2(\mathbb F_2) \cong \mathrm{GL}_2(\mathbb F_2) \cong S_3. $$
Its kernel consists of projective classes of matrices of the form $I+\epsilon A, \quad A\in M_2(\mathbb F_2),$
where two such matrices define the same projective class if they differ by a scalar matrix $1+\epsilon\lambda, \qquad \lambda\in \mathbb F_2.$
Thus the kernel is $M_2(\mathbb F_2)/\mathbb F_2 I,$
which has order $8$.
Since $|\mathrm{PGL}_2(\mathbb F_2)|=|S_3|=6,$
we obtain $|\mathrm{PGL}_2(R_i)|=8\cdot 6=48.$
Therefore $[\Bi_{\mathbb Q(i)}:\Bi_{\mathbb Q(i)}(2)]=48.$

We now compare this with the geometric index. By Proposition~\ref{prop:even-octahedral-domain}, the group $G_{\mathrm{deck}} = W_O^+$ has fundamental polyhedron
$ D=O\cup s_{F_0}(O), $
where $O$ is the regular ideal octahedron. Hence
$\Vol(G_{\mathrm{deck}}\backslash \mathbb H^3)=\Vol(D)=2\Vol(O).$
By \cite[Section~4.5]{MaclachlanReid}
$$ \Vol(\Bi_{\mathbb Q(i)}\backslash \mathbb H^3) = \frac{1}{24}\Vol(O). $$
Therefore
$$ [\Bi_{\mathbb Q(i)}:G_{\mathrm{deck}}] = \frac{\Vol(G_{\mathrm{deck}}\backslash\mathbb H^3)} {\Vol(\Bi_{\mathbb Q(i)}\backslash\mathbb H^3)} = \frac{2\Vol(O)}{\Vol(O)/24} = 48. $$
Thus both the arithmetic and geometric indices are $48$. Since we already have the inclusion $G_{\mathrm{deck}} \subseteq \Bi_{\mathbb Q(i)}(2),$
and both subgroups have index $48$ in $\Bi_{\mathbb Q(i)}$, it follows that $G_{\mathrm{deck}} = \Bi_{\mathbb Q(i)}(2).$
\end{proof}

\subsection{The case \(K=\mathbb Q(\sqrt{-3})\)}
Let $ \omega=\frac{-1+\sqrt{-3}}{2}, \quad \mathcal{O}_K=\mathbb{Z}[\omega]. $ We use the regular ideal cubic honeycomb in \(\mathbb{H}^3\). A regular ideal cube has dihedral angle \(\pi/3\), so six cubes meet around each edge; in Schläfli notation this is the honeycomb $ \{4,3,6\}.$ The reflections in the square faces of one regular ideal cube generate the corresponding Coxeter reflection group. We use Coxeter's standard description of this honeycomb~\cite{CoxeterHoneycombs}. In the Eisenstein normalization, take the regular ideal tetrahedron \[ T=[\infty,0,1,-\omega]. \] The regular ideal cube can be obtained from \(T\) by adjoining the four tetrahedra adjacent to \(T\) across its four faces. Namely, if \(s_F\) denotes reflection in the plane containing a face \(F\subset T\), then \[ D = T\cup \bigcup_{F\subset T}s_F(T) \] is a regular ideal cube, subdivided into five regular ideal tetrahedra.

\begin{figure}[htbp]
    \centering
    \includegraphics[width=0.55\textwidth]{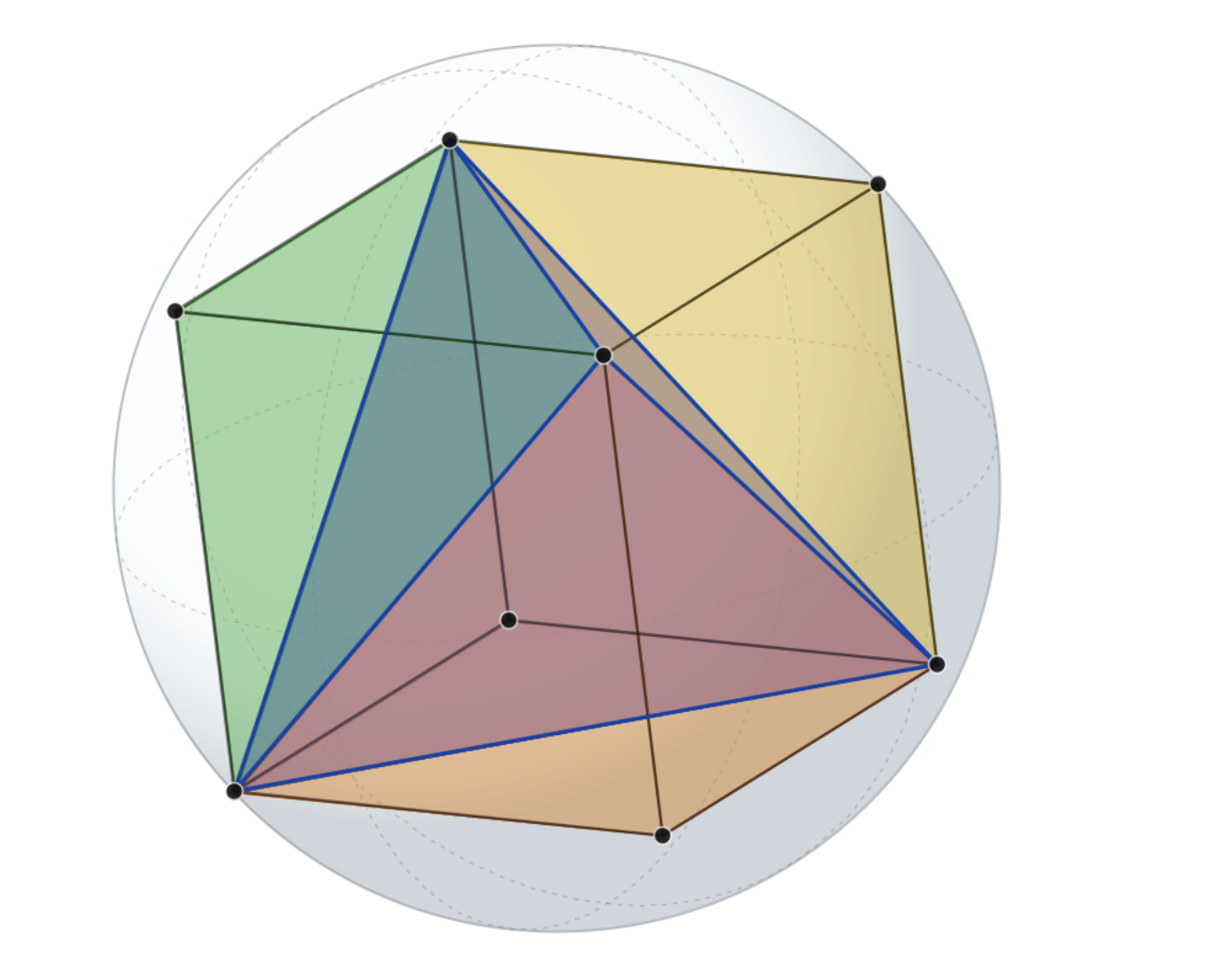}
    \caption{The union of the central ideal tetrahedron $T$ with its four adjacent Coxeter chambers $T_1, \dots, T_4$.}
\end{figure}

\begin{lemma}
\label{lem:tetrahedron-half-turns-domain}
Let \(T\subset \mathbb{H}^3\) be a regular ideal tetrahedron, and let
\[
    D:=T\cup \bigcup_{F\subset T}s_F(T),
\]
where \(s_F\) denotes reflection in the plane containing the face \(F\) of
\(T\). Then \(D\) is a regular ideal cube. Moreover, if \(e\) runs over the six
edges of \(T\), the half-turn \(\iota_e\) about \(e\) gives the side pairing of
the square face of \(D\) having \(e\) as a diagonal. Consequently, the group
\[
    G_T:=\langle \iota_e \mid e\subset T \text{ an edge}\rangle
\]
has \(D\) as a fundamental polyhedron.
\end{lemma}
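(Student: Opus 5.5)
I will work in the upper half-space model and use the Eisenstein normalization $T=[\infty,0,1,-\omega]$. A regular ideal tetrahedron has all dihedral angles $\pi/3$. The argument has three parts: identify $D$ as a regular ideal cube, check that the edge half-turns pair its faces, and then apply Poincaré's polyhedron theorem.

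\textbf{Step 1: $D$ is a regular ideal cube.} Each reflected tetrahedron $s_F(T)$ is again a regular ideal tetrahedron. It shares the face $F$ with $T$ and contributes one new ideal vertex $v_F$, the reflection of the vertex of $T$ opposite $F$. So $D$ has $4+4=8$ ideal vertices. For $T=[\infty,0,1,-\omega]$ these new vertices can be computed explicitly from the plane reflections; for instance, reflection in the vertical plane through $0,1$ sends $-\omega$ to $-\bar\omega=1+\omega$. Consider an edge $e=[a,b]$ of $T$. Exactly two faces of $T$ contain $e$, say with opposite vertices $c$ and $d$. Then the four ideal points $v_{F_c},a,v_{F_d},b$ lie in one plane. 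This is the key observation: $s_{F}(T)$ and $s_{F'}(T)$ each contain a face through $e$ making angle $\pi/3$ with the corresponding face of $T$, and these combine with the angle $\pi/3$ of $T$ to total $\pi$. Indeed, around $e$ the three dihedral angles are $\pi/3+\pi/3+\pi/3=\pi$, so the outer faces of the two adjacent tetrahedra are coplanar. Hence $\partial D$ consists of six ideal quadrilaterals, one for each edge of $T$, with that edge as a diagonal. The combinatorics (8 vertices, 6 square faces, each vertex of $T$ adjacent to three new vertices) is that of a cube. Regularity follows because the symmetry group of $T$ (order 24, acting as $S_4$ on vertices) permutes the $s_F(T)$ and acts transitively on the faces, edges and vertices of $D$. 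The dihedral angles of $D$ are $\pi/3$, since at an edge $[a,v_F]$ of the cube only one tetrahedron contributes, namely $s_F(T)$, with angle $\pi/3$.

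\textbf{Step 2: side pairings.} The half-turn $\iota_e$ about $e=[a,b]$ fixes $a$ and $b$, preserves the plane containing the square $Q_e=[a,v_{F_c},b,v_{F_d}]$, and swaps $v_{F_c}$ with $v_{F_d}$. The half-turn therefore maps $Q_e$ to itself, reversing the two halves of the square, and it sends the interior of $D$ near $Q_e$ to the opposite side. So $\iota_e$ is a side pairing of the face $Q_e$ with itself, and each such pairing is an involution. In explicit coordinates I would verify this for one edge, say $[0,\infty]$, where $\iota(z)=-z$ on the boundary, and then transport by symmetry.

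\textbf{Step 3: Poincaré's theorem.} This is the expected main obstacle. The edge cycles have to be checked. Every edge of $D$ has angle $\pi/3$, and the cube has 12 edges. I need to show that the pairing transformations group the edges into cycles whose angle sums are $2\pi$ and whose cycle transformations are trivial. Because $\iota_e$ maps the edge $[a,v_{F_c}]$ of $Q_e$ to $[b,v_{F_d}]$, I expect the 12 edges to fall into two cycles of six, each with angle sum $6\cdot\pi/3=2\pi$. I would track these cycles combinatorially using the $S_4$ symmetry. Since the faces are self-paired by involutions, each fixed geodesic $e$ lies in the interior of a face, and the reflection condition there is automatic, so it contributes no extra cycle condition. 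At the ideal vertices I must check the cusp condition: the link of each vertex is a Euclidean triangle with angles $\pi/3$, and the induced pairings must generate a lattice of translations, that is, they must be parabolic and give a complete Euclidean structure. I would check this at $\infty$ explicitly. Once these conditions hold, Poincaré's theorem shows that $G_T$ is discrete with fundamental polyhedron $D$, and it also produces a presentation with relations $\iota_e^2=1$ together with the cycle relations.
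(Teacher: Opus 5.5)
Your route is the paper's route: Coxeter's five-tetrahedron subdivision of the ideal cube, the edge half-turns as self-pairings of the six square faces, and then Poincar\'e's polyhedron theorem. Steps 1 and 2 are essentially right, with one slip. $\operatorname{Sym}(T)$ does not act transitively on the vertices of $D$, because it preserves the set of four old vertices. Regularity should instead come from the fact that all twelve dihedral angles equal $\pi/3$ (Rivin's rigidity of ideal polyhedra), or from an explicit symmetry exchanging an old and a new vertex. The genuine gap is Step 3. The cycle data you predict are wrong, so the conditions you propose to verify are false for this polyhedron.

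\textbf{The error.} You claim that $\iota_e$ sends $[a,v_{F_c}]$ to $[b,v_{F_d}]$. But, as you say in Step 2, $\iota_e$ fixes both $a$ and $b$, so it sends $[a,v_{F_c}]$ to $[a,v_{F_d}]$.

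\textbf{Consequences for the edge cycles.} The three cube edges at an old vertex $a$ are permuted only among themselves. The twelve edges therefore form four cycles of length three, each with angle sum $\pi$:
$[a,v_{F_c}]\mapsto[a,v_{F_d}]\mapsto[a,v_{F_b}]\mapsto[a,v_{F_c}]$ under the half-turns $\iota_{ab},\iota_{ac},\iota_{ad}$ about $[a,b],[a,c],[a,d]$.
The cycle transformation is not trivial. For $T=[\infty,0,1,-\omega]$ and $a=\infty$, these half-turns act on $\mathbb{C}$ by $z\mapsto -z$, $z\mapsto 2-z$ and $z\mapsto -2\omega-z$. Their composite is $z\mapsto 2\omega^2-z$, the half-turn about the cube edge $[\infty,\omega^2]$. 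So your check ``angle sum $2\pi$, trivial cycle transformation'' fails. You need the orbifold form of Poincar\'e's theorem: angle sum $2\pi/k$ with a cycle transformation of order $k$. This holds here with $k=2$, adds the relations $(\iota_{ad}\iota_{ac}\iota_{ab})^2=1$, and shows that $G_T$ contains the half-turns about the edges of $D$.

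\textbf{Consequences for the cusps.} The cusp groups are not lattices of parabolics either. At $\infty$ the three pairings are point reflections in $0$, $1$ and $-\omega$, which are the midpoints of the sides of the link triangle $[1+\omega,1-\omega,\omega^2]$. They generate a $(2,2,2,2)$ group whose translation lattice is $2\mathbb{Z}[\omega]$. The four new vertices form a single cycle. Its cross-section is four equilateral triangles glued as the boundary of a tetrahedron, with cone angle $\pi$ at each corner. What must be checked is that these cross-sections are complete Euclidean orbifolds. This holds: $\iota_e$ is the reflection in the plane of $Q_e$ composed with a symmetry of $D$, so it matches $\operatorname{Sym}(D)$-invariant horospheres.

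With these corrections your argument goes through. The paper's own one-line justification (``six cubes meet around each edge'') does not by itself verify the cycle conditions either. The six cubes around an edge come from a three-cycle combined with the order-two edge stabilizer, not from a six-cycle.
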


\begin{proof}
By Coxeter's description of the regular ideal cubic honeycomb
\(\{4,3,6\}\), a regular ideal cube is subdivided into five regular ideal
tetrahedra: one central tetrahedron \(T\), together with the four tetrahedra
adjacent to \(T\) across its four faces~\cite{CoxeterHoneycombs}. Hence
\[
    D=T\cup \bigcup_{F\subset T}s_F(T)
\]
is a regular ideal cube.

The boundary of \(D\) consists of twelve triangular faces, grouped into six
squares. These six square faces are naturally indexed by the six edges of the
central tetrahedron \(T\). If \(e\subset T\) is an edge, let \(Q_e\) be the
corresponding square face of \(D\). Then \(e\) is a diagonal of \(Q_e\). The
half-turn \(\iota_e\) about \(e\) preserves the plane containing \(Q_e\) and
exchanges the two half-spaces bounded by this plane. Therefore \(\iota_e\)
preserves \(Q_e\) setwise and exchanges the two half-spaces bounded by
the plane containing \(Q_e\). Hence \(\iota_e(D)\) is the cube adjacent
to \(D\) across \(Q_e\).

Thus the six half-turns \(\iota_e\), one for each edge \(e\subset T\), are
exactly the side-pairing transformations of the six square faces of the cube
\(D\). Since the regular ideal cubic honeycomb \(\{4,3,6\}\) has dihedral angle
\(\pi/3\), six cubes meet around each edge. Hence the side pairings satisfy the
cycle conditions of the Poincar\'e polyhedron theorem~\cite[cf.~Theorem~H.11]{Maskit}. Therefore the group $G_T=\langle \iota_e \mid e\subset T\rangle$
is discrete and has \(D\) as a fundamental polyhedron.

\end{proof}

\begin{proposition}\label{prop:G-equals-Gamma2-volume}
Let $G_{\mathrm{deck}} \subseteq \Bi_{\mathbb Q(\sqrt{-3})}(2)$ denote the subgroup generated by the natural deck involutions. Then $G_{\mathrm{deck}} = \Bi_{\mathbb Q(\sqrt{-3})}(2)$.
\end{proposition}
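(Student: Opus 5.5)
The argument follows the proof of Proposition~\ref{thm:gaussian-level-2-quotient}: compare the arithmetic index of $\Bi_{\mathbb Q(\sqrt{-3})}(2)$ in $\Bi_{\mathbb Q(\sqrt{-3})}$ with a covolume index for $G_{\mathrm{deck}}$. First I would identify $G_{\mathrm{deck}}$ geometrically. By Proposition~\ref{prop:natural-involutions-type-I-II}, the four classes $G_1,\dots,G_4$ are the ideal vertices of a regular ideal tetrahedron $T$. By Lemma~\ref{lem:deck-involution-edge-isometry}, the six natural deck involutions are the half-turns $\iota_e$ about the six edges of $T$. Hence $G_{\mathrm{deck}}=G_T$. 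Lemma~\ref{lem:tetrahedron-half-turns-domain} then says that $G_{\mathrm{deck}}$ is discrete with fundamental polyhedron the regular ideal cube $D$, which is the union of five regular ideal tetrahedra. Therefore
\[
\Vol(G_{\mathrm{deck}}\backslash\mathbb H^3)=5\Vol(T).
\]
The inclusion $G_{\mathrm{deck}}\subseteq \Bi_{\mathbb Q(\sqrt{-3})}(2)$ is given in the statement. It also follows from Theorem~\ref{realization} together with Proposition~\ref{cor:Bi-vs-PGamma-2}(1), since $2$ is inert here.

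Next I would compute the arithmetic index. Here $R=\mathcal O_K/2\mathcal O_K\cong\mathbb F_4$. Reduction $\GL_2(\mathcal O_K)\to\GL_2(\mathbb F_4)$ is surjective. To see this, note that $\SL_2(\mathcal O_K)\to\SL_2(\mathbb F_4)$ is surjective by strong approximation (or by elementary matrices, since $\mathcal O_K$ is Euclidean). The determinant is also surjective onto $\mathbb F_4^\times$, because the units $\pm\omega^{j}$ reduce onto $\mathbb F_4^\times$. It follows that
\[
\Bi_K/\Bi_K(2)\cong\PGL_2(\mathbb F_4)\cong A_5,
\]
so $[\Bi_K:\Bi_K(2)]=60$.

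For the geometric index I would use the known covolume of the Eisenstein Bianchi group. The covolume of $\PSL_2(\mathcal O_{\mathbb Q(\sqrt{-3})})$ is $\Vol(T)/12$, since the regular ideal tetrahedron has volume $\tfrac{|D|^{3/2}}{4\pi^2}\zeta_K(2)\cdot 12$ when $D=3$ \cite[Section~4.5]{MaclachlanReid}. The step I would check most carefully is the passage from $\PSL_2$ to $\PGL_2(\mathcal O_K)$. The index of $\PSL_2$ in $\PGL_2$ is the order of $\mathcal O_K^\times/(\mathcal O_K^\times)^2$, which is $2$ here, because $\det$ modulo squares of units has image of size $2$. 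Hence
\[
\Vol(\Bi_K\backslash\mathbb H^3)=\Vol(T)/24,
\]
and therefore
\[
[\Bi_K:G_{\mathrm{deck}}]=\frac{5\Vol(T)}{\Vol(T)/24}=120.
\]

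This gives index $120$ rather than $60$, so $[\Bi_K(2):G_{\mathrm{deck}}]=2$ and the proposal does not close as stated. This normalization is the main obstacle, and I would resolve it as follows. One option is to verify whether the $\PSL_2$ covolume is $\Vol(T)/12$ or $\Vol(T)/24$. The $\PSL_2$ quotient is known to be the $[3,3,6]$ orbifold, whose tetrahedron has volume $\Vol(T)/24$; if that is the $\PSL_2$ covolume, then $\Vol(\Bi_K\backslash\mathbb H^3)=\Vol(T)/48$ and the index becomes $240$, which makes the mismatch worse. The other option is to recheck the fundamental domain, for instance whether the side pairing by $\iota_e$ actually identifies pairs of faces so that $D$ should be halved. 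I would carry out both checks with the explicit matrices representing $\iota_e$ for $T=[\infty,0,1,-\omega]$. For example, the half-turn about $[0,\infty]$ is represented by $\mathrm{diag}(1,-1)$ and should be computed alongside the others. I would then confirm directly that $G_{\mathrm{deck}}$ contains the generators of $\Bi_K(2)$, namely the unipotents $\begin{pmatrix}1&2\\0&1\end{pmatrix}$ and $\begin{pmatrix}1&2\omega\\0&1\end{pmatrix}$ together with their conjugates at the four cusps of $T$. This would prove $\Bi_K(2)\subseteq G_{\mathrm{deck}}$ without relying on any volume.
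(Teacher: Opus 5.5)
Your strategy is the paper's own: you compare the arithmetic index $|\PGL_2(\mathbb F_4)|=60$ with the covolume ratio coming from the fundamental cube $D$ of volume $5v_3$. As written, though, the proposal does not prove the statement, and the mismatch you found is a numerical slip rather than a real obstruction. The faulty step is the covolume of $\mathrm{PSL}_2(\mathcal O_K)$. Humbert's formula gives
$\Vol(\mathrm{PSL}_2(\mathcal O_K)\backslash\mathbb H^3)=3^{3/2}\zeta_K(2)/(4\pi^2)\approx 0.16916$.
Since $v_3\approx 1.01494$, this is $v_3/6$, not $v_3/12$. Your identity $v_3=12\cdot 3^{3/2}\zeta_K(2)/(4\pi^2)$ is off by a factor of $2$. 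Its right-hand side is $2v_3$, the volume of the figure-eight knot complement, which is an index-$12$ subgroup of $\mathrm{PSL}_2(\mathcal O_K)$. Your passage from $\mathrm{PSL}_2$ to $\PGL_2$ via $|\mathcal O_K^\times/(\mathcal O_K^\times)^2|=2$ is correct. Hence $\Vol(\Bi_K\backslash\mathbb H^3)=v_3/12$, exactly the value the paper uses, and
$[\Bi_K:G_{\mathrm{deck}}]=5v_3/(v_3/12)=60=[\Bi_K:\Bi_K(2)]$.
Together with $G_{\mathrm{deck}}\subseteq\Bi_K(2)$, this finishes the proof.

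Neither of your proposed repairs is needed, and both rest on misidentifications. The $[3,3,6]$ Coxeter tetrahedron of volume $v_3/24$ is the quotient by the full extended group $\PGL_2(\mathcal O_K)\rtimes\langle\sigma\rangle$. That group includes orientation-reversing elements and contains $\PGL_2(\mathcal O_K)$ with index $2$ and $\mathrm{PSL}_2(\mathcal O_K)$ with index $4$.

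Nor should $D$ be halved. Each $\iota_e$ pairs the square face $Q_e$ with itself, swapping its two triangular halves across the diagonal $e$. The twelve cube edges fall into four cycles of length $3$, namely the three edges at each vertex of $T$. Each cycle has angle sum $3\cdot\pi/3=\pi$, and its cycle transformation is a half-turn. So Poincar\'e's theorem gives $D$ itself as a fundamental polyhedron.

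Finally, the unipotent fallback would not work as stated. Unipotents and their conjugates lie in the image of $\mathrm{PSL}_2(\mathcal O_K)$. But $\Bi_K(2)$ contains $[\mathrm{diag}(1,-1)]$, whose determinant $-1$ is not in $(\mathcal O_K^\times)^2$. Also, $\Bi_K(2)$ has five cusp classes, one for each point of $\mathbb P^1(\mathbb F_4)$, not four.
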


\begin{proof}
By Corollary~\ref{cor:class-number-one-level-two}, $G_{\text{deck}}\subset \Bi_{\mathbb{Q}(\sqrt{-3})}$. 
By Lemma~\ref{lem:tetrahedron-half-turns-domain}, the deck involution group $G_{\mathrm{deck}}$ is precisely $G_T$. We verify the subgroup equality by confirming that $[\Bi_{\mathbb Q(\sqrt{-3})} : \Bi_{\mathbb Q(\sqrt{-3})}(2)]$ coincides with the geometric index $[\Bi_{\mathbb Q(\sqrt{-3})} : G_{\mathrm{deck}}]$.

First, we compute the arithmetic index of $\Bi_{\mathbb Q(\sqrt{-3})}(2)$. Since $2$ is inert in $\mathbb Z[\omega]$, we have $\mathbb Z[\omega]/(2)\cong \mathbb F_4$. The reduction map $\mathrm{PGL}_2(\mathbb Z[\omega])\longrightarrow \mathrm{PGL}_2(\mathbb F_4)$ is surjective, so
$ [\Bi_{\mathbb Q(\sqrt{-3})}:\Bi_{\mathbb Q(\sqrt{-3})}(2)] = |\mathrm{PGL}_2(\mathbb F_4)|. $
We calculate $|\operatorname{GL}_2(\mathbb F_4)| = (4^2-1)(4^2-4) = 15\cdot 12 = 180$. Projectivizing by the scalar subgroup of order $3$ gives $|\mathrm{PGL}_2(\mathbb F_4)|= 180/3 = 60$. Thus,
$ [\Bi_{\mathbb Q(\sqrt{-3})}:\Bi_{\mathbb Q(\sqrt{-3})}(2)]=60. $

On the other hand, we compute the geometric index. By Lemma~\ref{lem:tetrahedron-half-turns-domain}, $D$ is a fundamental domain for $G_{\mathrm{deck}}$. Since $D$ is the union of five regular ideal tetrahedra, $\Vol(G_{\mathrm{deck}}\backslash\mathbb H^3)=5v_3$. Using the covolume formula in $\mathrm{PGL}_2$ (see \cite[Chapter~1.4.3]{MaclachlanReid}), we have $\Vol(\Bi_{\mathbb Q(\sqrt{-3})}\backslash\mathbb H^3) = v_3/12$. We obtain the index:
$$ [\Bi_{\mathbb Q(\sqrt{-3})}:G_{\mathrm{deck}}] = \frac{5v_3}{v_3/12} = 60. $$
Since both indices equal $60$, the inclusion $G_{\mathrm{deck}} \subseteq \Bi_{\mathbb Q(\sqrt{-3})}(2)$ is an exact equality, yielding $G_{\mathrm{deck}} = \Bi_{\mathbb Q(\sqrt{-3})}(2)$.
\end{proof}
\begin{theorem}
\label{thm:deck-generates-automorphism}
For $K = \mathbb Q(i)$ and $K = \mathbb Q(\sqrt{-3})$, the group of natural deck involutions $G_{\mathrm{deck}}$ generates the full automorphism group $\operatorname{Aut}(X_{K,2})$.
\end{theorem}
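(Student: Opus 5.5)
The plan is to combine the two volume computations, Propositions~\ref{thm:gaussian-level-2-quotient} and \ref{prop:G-equals-Gamma2-volume}, with the arithmetic determination of $\operatorname{Aut}(X_{K,2})$. The first step identifies $\operatorname{Aut}(X_{K,2})$ with $\Bi_K(2)$ inside $O^+(S_K(2))$.

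For $K=\mathbb Q(\sqrt{-3})$, the prime $2$ is inert and $-D=-3$ is odd. Hence Corollary~\ref{cor:index-bound} with $N=1$ (where $\omega(1)=0$) gives $\Bi_K\cap\operatorname{Aut}(X_{K,2})=\Bi_K(2)$. The class number is one, so $\widehat{\Bi}_K=\Bi_K$ by Proposition~\ref{Lem:extendedBianchi}, and Lemma~\ref{lem:galois-coset-exclusion} removes the Galois coset. Since $O^+(S_K)=\widehat{\Bi}_K\rtimes\langle\sigma\rangle$, this yields $\operatorname{Aut}(X_{K,2})=\Bi_K(2)$, the representation being injective by Lemma~\ref{lem-twoelementary}.

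For $K=\mathbb Q(i)$, Theorem~\ref{thm:ramified-level-two} gives $\operatorname{Aut}(X_{K,2})\cong P\Gamma_K(2)$, and Lemma~\ref{lem:scalar-defect} shows $P\Gamma_K(2)=\Bi_K(2)$ because the units surject onto $(\mathbb Z[i]/2)^\times$. In both cases $X_{K,2}$ is realized by the Type I or Type II models through Theorem~\ref{lem:period-maps-dominant}. I will check that the identification $\operatorname{Pic}(X)\cong S_K(2)$ used there is compatible with the Hermitian model, so that the isometry group is the same subgroup of $O^+(S_K(2))$.

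The second step shows that the natural deck involutions are automorphisms lying in this group and are the edge half-turns. By Proposition~\ref{prop:natural-involutions-type-I-II} and Lemma~\ref{lem:degree-two-product-fibrations}, each edge pair of ambient isotropic classes gives a deck involution of $X$. By Lemma~\ref{lem:deck-involution-edge-isometry}, its action on $\operatorname{NS}(X)$ is the half-turn about the corresponding edge of the ideal octahedron (resp.\ tetrahedron). These are automorphisms, so under the injective representation they lie in $\operatorname{Aut}(X_{K,2})=\Bi_K(2)$. This justifies the inclusion $G_{\mathrm{deck}}\subseteq\Bi_K(2)$ assumed in the two propositions.

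The third step is the conclusion. Propositions~\ref{thm:gaussian-level-2-quotient} and \ref{prop:G-equals-Gamma2-volume} then give $G_{\mathrm{deck}}=\Bi_K(2)=\operatorname{Aut}(X_{K,2})$. Because the representation is injective, the subgroup of $\operatorname{Aut}(X_{K,2})$ generated by the deck involutions is the whole group.

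I expect the main obstacle to be the compatibility step: the half-turn group $W_O^+$ (resp.\ $G_T$) must be the group generated by the twelve (resp.\ six) half-turns about edges of the particular polyhedron spanned by the ambient isotropic rays. For this I will match the rays $E_1,E_2,E_3,H-E_i$ (resp.\ $G_1,\dots,G_4$) with the standard cusps, such as $\infty,0,1,-\omega$ in the Eisenstein case, under the change of basis in Propositions~\ref{prop:type-I-intersection-lattice} and \ref{prop:type-II-intersection-lattice}. I will then verify that the polyhedron is the one used in the covolume computation. If that matching fails for some chosen isometry, I would replace it by a $\Bi_K$-conjugate; conjugation preserves the normal subgroup $\Bi_K(2)$ and the index count, so the argument still goes through.
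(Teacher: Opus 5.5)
Your proposal is correct and is essentially the paper's argument: identify $\operatorname{Aut}(X_{K,2})$ with $\Bi_K(2)$ via the Section~3 results, and note that the deck involutions are automorphisms and hence lie in $\Bi_K(2)$, which is the inclusion Propositions~\ref{thm:gaussian-level-2-quotient} and~\ref{prop:G-equals-Gamma2-volume} use without comment; then conclude by the covolume index comparison and the injectivity of $\operatorname{Aut}(X_{K,2})\to O(S_K(2))$. The polyhedron-matching step you flag as the main obstacle is not needed, because $[\Bi_K:G_{\mathrm{deck}}]$ is computed from the volume of a fundamental domain for the half-turn group, and that volume depends only on the shape of the regular ideal octahedron or tetrahedron, not on where it sits relative to the standard cusps.
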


\begin{proof}
By Proposition~\ref{thm:gaussian-level-2-quotient} and Proposition~\ref{prop:G-equals-Gamma2-volume}, the geometric subgroup generated by the deck involutions is precisely $G_{\mathrm{deck}} = \Bi_K(2)$ in both cases. By the arithmetic Torelli identification established in Section~3, we have $\operatorname{Aut}(X_{K,2}) \cong \Bi_K(2)$. Therefore, we conclude that $G_{\mathrm{deck}} = \operatorname{Aut}(X_{K,2})$.
\end{proof}
\FloatBarrier
\bibliographystyle{abbrv}
\bibliography{bib}
\end{document}